\numberwithin{equation}{section} 
\theoremstyle{plain}
\def\ZZ{\mathbf{Z}} 
\def\A{{\rm A}}
\def\B{{\rm B}}
\def\C{{\rm C}}
\def\D{{\rm D}}
\def\F{{\rm F}}
\def\G{{\rm G}}
\def\H{{\rm H}}
\def\K{{\rm K}}
\def\L{{\rm L}}
\def\M{{\rm M}}
\def\P{{\rm P}}
\def\R{{\rm R}}
\def\SS{{\rm S}}
\def\T{{\rm T}}
\def\U{{\rm U}}
\def\V{{\rm V}}
\def\W{{\rm W}}
\def\X{{\rm X}}
\def\Y{{\rm Y}}
\def\Z{{\rm Z}}
\def\Ker{{\rm Ker}}
\def\Cc{\mathscr{C}}
\def\Oo{\mathscr{O}}
\def\a{\alpha} 
\def\b{\beta}
\def\l{\lambda}
\def\m{\mathfrak{m}}
\def\n{\mathfrak{n}}
\def\o{\EuScript{O}}
\def\p{\mathfrak{p}}
\def\s{\sigma}
\def\w{\varpi}
\def\Om{\Omega}
\def\LL{\Lambda}
\def\MM{\Pi}
\def\VV{\V}
\def\rp{\rangle}
\def\>{\geqslant}
\def\<{\leqslant}
\def\tdt{\times\dots\times}
\def\Hom{{\rm Hom}}
\def\GL{{\rm GL}}
\def\Ker{{\rm Ker}}
\def\Ind{{\rm Ind}}
\def\ind{{\rm ind}}
\def\mult#1{{#1}^{\times}}
\def\Cc{\EuScript{C}}
\def\Oo{\EuScript{O}}
\def\ip{\boldsymbol{i}}
\def\rp{\boldsymbol{r}}
\def\sy#1{\boldsymbol{[}#1\boldsymbol{]}}
\def\({\left(}
\def\){\right)}
\def\ffr#1{\smash{\mathop{\ \longrightarrow\ }\limits^{#1}}}
\def\St{{\rm St}}
\def\GH{\widehat{\G}}
\def\UIQ{\textbf{\textsf{Q}}}
\def\UIS{\textbf{\textsf{S}}}
\def\1{1}
\def\vee{*}
\def\ii{\infty}
\def\qc{f}
\def\ee{e}
\author{V.~Sécherre}
\address{Université de Versailles Saint-Quentin-en-Yvelines\\
Laboratoire de Mathémati\-ques de Versailles\\
45 avenue des Etats-Unis\\
78035 Versailles cedex, France}
\email{vincent.secherre@math.uvsq.fr}
\author{C.~G.~Venketasubramanian}
\address{Department of Mathematics\\
Ben-Gurion University of the Negev\\
P.O.B 653, 
Beer Sheva 84105, Israel.} 
\email{coolimut@math.bgu.ac.il}
\title[Modular representations of $\GL{(n)}$ distinguished by $\GL{(n-1)}$]
{Modular representations of $\GL{(n)}$ distinguished by $\GL{(n-1)}$ over a
  $p$-adic field}
\begin{abstract}
Let $\F$ be a non-Archimedean locally compact field, $q$ be the 
cardinality of its residue field, and $\R$ be an algebraically closed 
field of characteristic $\ell$ not dividing $q$.
We classify all irredu\-cible smooth $\R$-representations of $\GL_n(\F)$ 
having a nonzero $\GL_{n-1}(\F)$-inva\-riant linear 
form, when $q$ is not congruent to $1$ mod $\ell$.
Partial results in the case when $q$ is $1$ mod $\ell$ show that, 
unlike the complex case, 
the space of $\GL_{n-1}(\F)$-invariant linear forms has dimension $2$ for certain 
irreducible representations.
\end{abstract}
\thanks{The authors have been partially supported by the 
Agence Natio\-nale de la Recherche grant ANR-08-BLAN-0259-01. 
The second named author was also partially supported by a 
PBC post-doctoral fellowship, 
Israel, a post-doctoral fellowship funded by the Skirball Foundation via the 
Center for Advanced Studies in Mathematics at Ben-Gurion University of the 
Negev and also by ISF grant 1138/10.}
\long\def\MSC#1\EndMSC{\def\arg{#1}\ifx\arg\empty\relax\else
     {\par\narrower\noindent%
     2010 Mathematics Subject Classification: #1\par}\fi}
\long\def\KEY#1\EndKEY{\def\arg{#1}\ifx\arg\empty\relax\else
	{\par\narrower\noindent Keywords and Phrases: #1\par}\fi}
\begin{document}

\maketitle

\MSC 
22E50 
\EndMSC

\KEY 
Modular representations of $p$-adic reductive groups, 
Distinguished representations, Gelfand pairs
\EndKEY

\section{Introduction}

\subsection{}

Let $\F$ be a non-Archimedean locally compact field of residual 
char\-ac\-teristic $p$,
let $\G$ be the 
$\F$-points of a connected reductive group 
over $\F$ together with a closed subgroup $\H$ of $\G$, 
and let $\R$ be an algebraically closed field of characteristic different from $p$. 
Given irreducible smooth rep\-resentations $\pi$ of $\G$ and $\s$ of $\H$
with coefficients in $\R$, it is a question of
ge\-ne\-ral interest in representation theory, known as the branching problem, to
understand whe\-ther $\pi$ restricted to $\H$ has $\s$ as a quotient.
If $\R$ is the field of complex numbers, this question is classical and well
understood is many situations (see for instance \cite{GGP1,GGP2}).
A case of particular interest is when $\s$ is the trivial representation. 
In this situation, the representation $\pi$ is said to be 
$\H$-distinguished if its restriction to $\H$ has the trivial representation 
as a quotient, that is, if $\pi$ carries a nonzero $\H$-invariant linear 
form.

\subsection{}

In this article, we are interested in the case where $\G$ is the general 
linear group $\GL_n(\F)$, with $n\>2$, and $\H$ is the group 
$\GL_{n-1}(\F)$ embedded in $\G$ via:
\begin{equation*}
\label{EMBED}
x\mapsto
\begin{pmatrix}
x&0\\0&1
\end{pmatrix}.
\end{equation*}
When $\R$ is the field of complex numbers, it is a consequence of a result of 
Waldspurger~\cite{W} that, for $n=2$,
any infinite dimensional irreducible representation of 
$\G$ is $\H$-distinguished. 
The clas\-si\-fication of all $\H$-distinguished irreducible 
representations of $\G$ for $n=3$
has been done by D. Prasad~\cite{P}. 
For any $n\>2$, Prasad~\cite{P} has also proved that any generic representation 
of $\G$ has every generic representation of $\H$ as a quotient, 
and Flicker~\cite{Flicker} classified all $\H$-distinguished irreducible 
unitary representations of $\G$.
The classification of all $\H$-distinguished irreducible rep\-re\-sentations of $\G$ 
for any integer $n\>3$ has been obtained by Venket\-asubra\-ma\-nian~\cite{V}, 
in terms of Langlands parameters.
Thus, when $\R$ is the field of complex numbers, the question is well under\-stood. In this paper, we investigate the case where the field $\R$ 
has positive characteristic $\ell$ different from $p$. 

\subsection{}

The representation theory of smooth representations of $\GL_n(\F)$
with coefficients in any alge\-braically closed field $\R$ of characteristic 
$\ell\neq 0,p$ has been initiated by Vignéras~\cite{Vigb,Vigs}
in view to extend the local Langlands program to representations with 
coefficients in a field (or ring) as ge\-ne\-ral as possible 
(see for instance \cite{Vigl}).
It has then been pursued by 
Dat, M\'\i nguez, Stevens and the first author \cite{Datf,MSb,MSu,MSc,MSt,SSb}.
In many aspects, it is similar to the theory of com\-plex representations of 
this group:
the fact that $\ell$ is different from $p$ ensures that there is an 
$\R$-valued Haar measure on $\G$, the functors of parabolic induction and 
restriction are exact and preserve finite length, 
there is a theory of derivatives, 
there is a notion of cuspidal support for irre\-du\-cible represen\-ta\-tions 
and a classification of these representations by mutisegments. 
How\-ever, there are also im\-portant differences:
the measure of a compact open subgroup may be $0$,
and the notions of cuspidal and supercuspidal representations do not 
coinci\-de, that is, a representation whose all proper 
Jacquet modules are zero 
may occur as a subquo\-tient of a proper parabolically induced representation. 
The combinatorics of multisegments is also much more involved, 
since the cardi\-nality $q$ of the residue field of $\F$ 
has finite order in $\mult\R$.

\subsection{}

We now come to the main theorem of this article. 
Let $\R$ denote an algebraically closed field of char\-ac\-teristic different from 
$p$ (possibly $0$) and write $e$ for the order (possibly infinite) of $q$ in 
$\mult\R$. 
Write $\nu$ for the normalized absolute value of $\F$ giving value 
$q^{-1}$ to any uniformizer. 
Let us fix a square root of $q$ in $\R$, denoted $\sqrt{q}$.
Given integers $k\in\ZZ$ and $n\>1$, we write:
\begin{equation*}
\nu_{n}^{k/2}:g\mapsto(\sqrt{q})^{-k\cdot{\rm val}(\det(g))}
\end{equation*}
where ${\rm val}$ is the normalized valuation on $\F$ and 
$\det$ is the determinant from $\G$ to $\mult\F$. 
If $\pi,\s$ are smooth representations of $\GL_u(\F)$, $\GL_v(\F)$
respectively, with $u+v=n$,
we denote by $\pi\times\s$ the normalized parabolic 
induction of $\pi\otimes\s$ to $\G$ along the standard (upper triangular)
parabolic subgroup. 
When $e>1$, the induced representation:
\begin{equation}
\label{INDUITEFONDAMENTALE}
\V_n^{} = \nu_{n-1}^{1/2}\times\nu^{(n+1)/2}
\end{equation} 
has a unique irreducible quotient, denoted $\LL_{n}$
(see Example \ref{poutargue}).
Note that, when $e$ divides $n$, this representation is the trivial character. 
Let us write $\1_n$ for the trivial representation of $\G$.

\begin{theo}
\label{MAINTHEOREM}
Suppose that $n\>2$ and $e>1.$
An irreducible representation of $\GL_n(\F)$ is $\GL_{n-1}(\F)$-distingui\-shed 
if and only if it belongs to the following list: 
\begin{enumerate}
\item
the trivial representation $\1_n$;
\item
an irreducible representation of the form $\nu_{n-1}^{\pm1/2}\times\chi$ 
with $\chi$ a character of $\GL_1(\F)$;
\item
an irreducible representation of the form $\1_{n-2}\times\tau$ 
with $\tau$ an infinite dimensional irreducible representation of $\GL_2(\F)$;
\item
the representation $\LL_n$ and its contragredient.
\end{enumerate}
\end{theo}

As in the complex case,
the proof of Theorem \ref{MAINTHEOREM} is by induction on $n$. 
There are two parts to the proof of Theorem \ref{MAINTHEOREM}: 
proving that the representations in the list offered by the theorem 
are $\H$-distinguished is the easier part.
The more difficult part is to show the converse, 
namely that all irreducible representations which are
$\H$-distinguished are in the list. 

\subsection{}

Since our proof is by induction, we first treat the case when $n=2$ and obtain
a classification (see Theorem \ref{dimfor2}) of all the 
$\GL_1(\F)$-distinguished irredu\-ci\-ble representations of $\GL_2(\F)$. 
When $e$ is not $1$, the result turns out to be the same as in the complex 
case: all infinite-dimensional irreducible representation of $\GL_2(\F)$ are 
distinguished and their space of $\GL_1(\F)$-invariant linear forms has 
dimension $1$. 
When the characteristic of $\R$ divides $q-1$ however, 
this dimension is $2$ for certain representations. 

\subsection{}

Assume now that $n\>3$.
As in the complex case, one can show by restricting to the mirabolic subgroup 
that none of the cuspidal representations of $\G$ are distinguished
(Theorem \ref{THEOCUSP}). 
Since any non-cuspidal irreducible
smooth representation of $\G$ is a quotient of a
parabolically induced representation of the form $\sigma\times\tau$ with
$\sigma,\tau$ smooth irreducible representations of $\GL_u(\F), \GL_{v}(\F)$
for some integers $u,v\>1$ such that $u+v=n$, 
it is natural to study the distinction of $\sigma\times\tau$. 
This was carried out in \cite{V} in the complex case.
In the modular case, it works as in the complex case:
one gets a set of three necessary
conditions for this induced representation to be $\H$-distinguished, 
of which two are sufficient (see Lemma \ref{TOL}). 
This is attributed to the exis\-ten\-ce of three orbits for the action of $\H$ on
the homogeneous space made of all subspaces of dimension $u$ in $\F^n$,
out of which two are closed.
The induced representations in (2) and (3) of Theorem 
\ref{MAINTHEOREM} are shown to satisfy one of the sufficiency conditions 
coming from Lemma \ref{TOL} (see Corollary \ref{Converse}). 
The contragredient of $\LL_n$, when non-trivial, is 
realized as a subrepresentation of a distinguished principal series of length 
2, the quotient of which is a nontrivial character and is non-distinguished 
(see proof of Lemma \ref{Ldistchar}).

\subsection{}

To prove the converse of Theorem \ref{MAINTHEOREM}, we first prove that any
$\H$-distinguished representation of $\G$ is a quotient of a representation of
the form $\rho\times \chi$ where $\rho$ is an irreducible representation of
$\GL_{n-1}(\F)$ and $\chi$ a character of $\mult\F$. 
\textit{In particular, when $e>1$, such a quotient is unique.}
Using the conditions of 
Lemma \ref{TOL} mentioned above and the induction hypothesis, we can specify 
$\rho$ and $\chi$ to be in a list (see Proposition \ref{reduction1}). 
Then, when $e>1$, we analyze the unique irreducible quotient of all these 
$\rho\times\chi$. 
We show that if the quotient is distinguished, then it must be in our list. 
The case when $e=1$
presents additional difficulties which we shall touch open below. 

\subsection{}

We now describe the contents of the article. 
In Section \ref{notation} we set some basic notation, 
and deal with the case $n=2$ in Section \ref{GL2}. 
The complete classification for $n=2$ is obtained in Theorem \ref{dimfor2}. 
We begin Section \ref{modlresults} by recalling 
some general results on $\ell$-modular representations of $\GL_n(\F)$
from \cite{Vigb,MSc}. 
We get a complete description of the subquotients of 
representations of the form 
$\Z(\Delta)\times \Z(\Delta')$ where $\Delta,\Delta'$ are segments and 
$\Delta'$ is of length at most $2$ 
(see Propositions \ref{P61} and \ref{length2seg}). 
In particular, Proposition \ref{P61} may be deemed to be a generalization of 
\cite[Théorème~3]{VigComp}. 
Moreover, comparing with \cite[Proposition 4.6]{Z}, 
Propositions \ref{P61} and \ref{length2seg} highlight one of the essential 
differences between principal series representations in the complex 
and modular cases: 
a product of two characters has length at most 2 in the complex case, 
a fact which does not hold as such in the modular case. 
The representation $\Lambda_n$, 
which plays a essential role in the article, 
is defined in Example \ref{poutargue} for $e>1$, and in Definition
\ref{poutargue4} in general. 
More generally, 
Section \ref{Castor} is devoted to the case where $e=1$.
The avatar $\Pi_n$ of $\LL_n$ is defined in Example \ref{poutargue2}. 
In Section \ref{CompDer}, we compute the derivatives of $\Lambda_n$ and $\Pi_n$. 

In Section \ref{blmanalog}, we prove a criterion for 
irreducibility of a  product of the form  $\Z(\Delta)\times \L(\Delta')$ where
$\Delta'$ has length 2. 
This is a modular version of a result known in the
complex case (Theorem 3.1 in \cite{BLM}). 
We begin Section \ref{distprelims} with 
some basic results on $\H$-distinguished representations of $\G$. 
The first tool is to use Lemma \ref{TOL}, the conditions that we get from the three
orbits that we mentioned above. 
This, along with some of its consequences, 
yields us Proposition \ref{reduction1} and we get a list of representations of
the form $\rho\times \chi$ (see the list following Proposition 
\ref{reduction1}): understanding the distinction of the quotients of 
representations in this list proves the difficult part of Theorem 
\ref{MAINTHEOREM}. 
The second tool in our proof is Proposition \ref{BZapp} 
using the Bernstein-Zelevinski filtration, which was available for the complex
case \cite{Flicker,P} and holds for $\R$. 
The computation of the quotients of $\rho\times \chi$ in the list obtained 
from Proposition \ref{reduction1} is the content of Sections 9-12. 

\subsection{}

We now explain some  of the subtler ideas behind our  proof which is different
from the one in \cite{V} proved  for complex representations. In \cite{V}, the
main tool in  analyzing the existence of a unique  irreducible quotient is the
Langlands Quotient Theorem and certain results of Zelevinski \cite{Z}. 
When these theorems fail to apply, \cite{V} uses Theorem 7.1 of \cite{Z}. 
In fact, we use Pro\-po\-si\-tion \ref{UIQ} which is sufficient for us to analyze the 
representations coming from the Lemma \ref{TOL} when $e>1$. 
Indeed, if one
were to use Proposition \ref{UIQ} in the complex case, then the 
proof of Theorem \ref{MAINTHEOREM} in \cite{V} simplifies to some extent 
without having to resort to Theorem 7.1 of \cite{Z}, because there we have to 
analyze all subquotients of a certain induced representation. 

\subsection{}

However, in the modular case, 
even if Proposition \ref{UIQ} 
guarantees the existence of a unique ir\-re\-ducible quotient for 
the representations $\rho\times\chi$ arising from Proposition \ref{reduction1}, 
to explicitly find this quotient is more difficult. 
This is due to the fact that, 
in order to determine whether the unique irreducible quotient of $\rho\times \chi$ is in 
the list offered by Theorem \ref{MAINTHEOREM}, we have to realize it as a 
quotient of a larger principal series and this larger principal series may not 
have a unique irreducible quotient. 
In such a situation, we had but no choice 
to use the analogue of Theo\-rem 7.1 of \cite{Z} for the larger principal 
series in hand. 
For our purposes, we reduce it to understand the subquotients 
of representations of the form $\Z(\Delta)\times\Z(\Delta')$ where the segment 
$\Delta'$ has length at most $2$. 
These subquotients have certain natural properties (see Section 
\ref{modlresults}, $\bf{P1}$ to $\bf{P6}$) proved in \cite{MSc} which enables us 
to describe the subquotients. 
This result is obtained in Propositions \ref{P61} and \ref{length2seg}. 
We then use Proposition \ref{BZapp} to rule out the subquotients in the larger
principal series which are not in the list of Theorem \ref{MAINTHEOREM}. 

\subsection{}

Let us mention that, when $e=1$, the list in Theorem \ref{MAINTHEOREM} is
not exhaustive. The first problem is that the representation 
\eqref{INDUITEFONDAMENTALE} need not have a unique irreducible 
quotient. 
In particular, all its irreducible subquotients are
$\H$-distinguished (see Lemma \ref{Pi_ndist}), which is different behavior when
we compare with the case when $e>1$. This forces us to consider more
representations in the list offered by Proposition \ref{reduction1} and the
tools that we use does not seem to be sufficient to understand the distinction
of the quotients. 

\subsection{}

We now give a few remarks. 
First, the theory of $p$-modular representations of $p$-adic reductive groups 
is very different from the $\ell$-modular theory. 
This is why we have chosen to focus on the case where $\R$ has characteristic 
different from $p$.

In the complex case, the pair $(\G,\H)$ is known to be a 
Gelfand pair (see \cite{AGS}).
This is no longer true in the modular case: when $e$ is equal to $1$, some 
$\H$-distinguished have a $2$-dimensional space of $\H$-invariant linear 
forms
(see Theorem \ref{gl2mult} and Remark \ref{REMARQUEFINALE}).

When com\-paring the results in \cite{V} with M\'\i nguez \cite{M}, 
the clas\-si\-fication of all $\H$-distinguished irreducible complex 
rep\-re\-sentations of $\G$ turns out to be easily expressed in terms 
of the local theta correspondence from $\GL_2(\F)$ to $\GL_n(\F)$. 
It would be interesting to investigate this in the modular case, by developing 
an $\ell$-modular theta correspondence (see \cite{Ml}). 

\subsection*{Acknowledgements}

This work was conceived when the second named author was a Post Doctoral 
Fellow (CNRS) at Laboratoire de Mathematiques de Versailles, France, during
October 2011-September 2012. 
Some parts of this work were done while the second named author was a Visiting 
Fellow at TIFR, Mumbai during October-November 2012 and he wishes to thank 
Prof. Dipendra Prasad for the in\-vitation. 
He wishes to thank the above
organizations as well as Department of Mathematics at Ben-Gurion University of 
the Negev for extending financial support and excellent fa\-ci\-lities.

\section{Notation and preliminaries}\label{notation}

In all this article, we fix a locally compact non-Archimedean field $\F$~; 
we write $\o$ for its ring of integers, $\p$ for the maximal ideal of $\o$ 
and $q$ for the cardinality of its residue field. 
We also fix an algebraically  closed field $\R$ of characteristic not dividing
$q$.

We write $e$ for the order (possibly infinite) of the image of $q$ in 
$\mult\R$ and define:
\begin{equation*}
\qc=
\left\{
\begin{array}{l}
0 \text{ if $\R$ has characteristic $0$},\\
\text{the smallest positive integer $k\>2$ such that 
$1+q+\dots+q^{k-1}=0$ in $\R$ otherwise}. 
\end{array}
\right.
\end{equation*}
When $\R$ has characteristic $\ell>0$, 
we have $\qc=e$ if $e>1$ and $\qc=\ell$ if $e=1$. 

Given a topological group $\G$, a smooth $\R$-repre\-sen\-tation 
(or representation for short)
of $\G$ is a pair 
$(\pi,\V)$ made of an $\R$-vec\-tor spa\-ce $\V$ together with a group 
homo\-morphism 
$\pi:\G\to\GL(\V)$ such that, for all $v\in\V$, there is an open subgroup 
of $\G$ fixing $v$.
\textit{In this article, all representations will be supposed to be smooth
  $\R$-representations.} 

A smooth $\R$-character (or character for short) of $\G$ is 
a group homomorphism from $\G$ to $\mult\R$ with open kernel. 

Given a representation $\pi$ and a character $\chi$ of $\G$,
we write $\pi\chi$ for the twisted representation $g\mapsto\pi(g)\chi(g)$.

For $n\>1$, we write $\G_n=\GL_n(\F)$, and $\GH_n$ for the set of isomorphism 
classes of its irreducible repre\-sentations.
In particular, $\GH_1$ will be identified with the group of characters of 
$\G_1$. 

Given a representation $\pi$ of $\G_{n}$, $n\>1$ and $\mu\in\GH_1$, 
we write $\pi\cdot\mu=\pi(\mu\circ\det)$.
If $\pi$ has finite length, we write $[\pi]$ for its semi-simplification. 

\section{The pair $(\GL_2(\F),\GL_1(\F))$}
\label{GL2}

Write $\G=\GL_2(\F)$ and let:
\begin{equation*}
\H=\left\{
\begin{pmatrix}
x&0\\0&1
\end{pmatrix}\ ;\ x\in\mult\F \right\}\subseteq\G.
\end{equation*}

Let $\B$ denote the Borel subgroup of $\G$ made of upper triangular matrices, 
and write:
\begin{equation*}
s=\begin{pmatrix}0&1\\1&0\end{pmatrix}\in\G.
\end{equation*}

If $\X$ is a locally compact topological space and $\A$ is a commutative ring, 
let $\Cc^{\ii}_{c}(\X,\A)$ denote the space of all locally constant 
and compactly supported functions from $\X$ to $\A$. 

We write $dx$ for the $\R$-valued Haar measure on $\mult\F$ giving measure 
$1$ to the subgroup $1+\p$ of principal units (see \cite[I.2]{Vigb}).

\subsection{The principal series}
\label{PS}

Let $\a_1,\a_2$ be two smooth $\R$-characters of $\mult\F$.
Let:
\begin{equation*}
\V=\V(\a_1,\a_2)
\end{equation*}
denote the (non-normalized) parabolic $\R$-induction 
$\Ind_\B^\G(\a_1\otimes\a_2)$, that is the space of all locally constant 
$\R$-valued functions $f$ on $\G$ such that $f(mng)=\a_1(m_1)\a_2(m_2)f(g)$ 
for all:
\begin{equation*}
m=
\begin{pmatrix}
m_1&0\\
0&m_2
\end{pmatrix}
\in\G,
\quad
n\in 
\begin{pmatrix}
1&\F\\
0&1
\end{pmatrix}
\subseteq\G,
\quad
g\in\G,
\end{equation*}
which is made into a smooth $\R$-representation of $\G$ by making $\G$ act by 
right translations. 
Write $\W$ for the sub\-space of $\V$ made of all functions vanishing 
at $1$ and $s$.
The map:
\begin{equation*}
\W\to\Cc^\infty_c(\mult\F,\R)
\end{equation*}
which associates to $f\in\W$ the function:
\begin{equation*}
\phi:x\mapsto f\left(s 
\begin{pmatrix}1&x\\0&1\end{pmatrix}
\right)
\end{equation*}
is an isomorphism of $\R$-vector spaces, and becomes an isomorphism 
of representations of $\H$ if the right hand side is endowed with the action 
defined by:
\begin{equation*}
a\cdot\phi:x\mapsto\a_2(a)\phi(xa^{-1}),
\quad
x,a\in\mult\F. 
\end{equation*}
Up to a nonzero scalar, there is on $\W$ a unique nonzero $\H$-invariant 
linear form, given by: 
\begin{equation*}
\mu:f\mapsto\int\limits_{\mult\F}
f\left(s\begin{pmatrix}1&x\\0&1\end{pmatrix}\right)
\a_2(x)^{-1}\ dx.
\end{equation*}

Let $\a$ denote the character of $\B$ extending $\a_1\otimes\a_2$.  
Fix an integer $i\>1$ such that $\a_1,\a_2$ are trivial on $1+\p^i$, 
and let $\K_i$ be the subgroup of $\GL_2(\o)$ made of matrices 
congruent to the identity mod $\p^i$. 
We define two functions $f_0$ and $f_\ii$ on $\G$:
\begin{enumerate}
\item 
$f_0$ is supported on $\B s\K_i$ and $f_0(bsx)=\a(b)$ for all $b\in\B$,
$x\in\K_i$.
\item
$f_\ii$ is supported on $\B\K_i$ and $f_\ii(bx)=\a(b)$ for all $b\in\B$,
$x\in\K_i$.
\end{enumerate}
As $\a$ is trivial on $\B\cap\K_i$,
these functions $f_0,f_\ii$ are well defined. 
They are in $\V$ but not in $\W$. 

\begin{lemm}
Given $f\in\V$, there is a unique function $w(f)\in\W$ such that: 
\begin{equation*}
f=f(s)f_0+f(1)f_\ii+w(f).
\end{equation*}
This defines a projection $w:\V\to\W$ with kernel spanned by $f_0$ and $f_\ii$. 
\end{lemm}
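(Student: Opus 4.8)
The plan is to show that, for any $f\in\V$, the function $f-f(s)f_0-f(1)f_\ii$ lies in $\W$, and that this exhibits $\V$ as the direct sum of $\W$ and the span of $f_0,f_\ii$. First I would recall that $f_0$ and $f_\ii$ are well-defined elements of $\V$, as noted just before the lemma, and compute their values at the two distinguished points $1$ and $s$. Since $\B s\K_i$ and $\B\K_i$ are disjoint (the Bruhat decomposition $\G=\B\sqcup\B s\B$ separates the big cell from the Borel, and $\K_i\subseteq\GL_2(\o)$ is small), we have $f_0(1)=0$ because $1\notin\B s\K_i$, and $f_0(s)=\a(1)=1$ because $s=1\cdot s\cdot 1$ with $1\in\B$, $1\in\K_i$; symmetrically $f_\ii(s)=0$ (as $s\notin\B\K_i$, again by disjointness of the cells) and $f_\ii(1)=\a(1)=1$. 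Hence the function $w(f):=f-f(s)f_0-f(1)f_\ii$ is an element of $\V$ whose value at $1$ is $f(1)-f(s)\cdot0-f(1)\cdot1=0$ and whose value at $s$ is $f(s)-f(s)\cdot1-f(1)\cdot0=0$, so $w(f)\in\W$ by the very definition of $\W$. This proves existence of the decomposition.

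For uniqueness, suppose $f=c_0f_0+c_\ii f_\ii+w$ with $w\in\W$. Evaluating at $s$ gives $f(s)=c_0$ and evaluating at $1$ gives $f(1)=c_\ii$, using the values of $f_0,f_\ii$ at $1,s$ computed above together with $w(1)=w(s)=0$; then $w=f-f(s)f_0-f(1)f_\ii$ is forced. The same two evaluations show that $f_0$ and $f_\ii$ are linearly independent and that their span meets $\W$ trivially: if $c_0f_0+c_\ii f_\ii\in\W$ then evaluating at $s$ and at $1$ forces $c_0=c_\ii=0$. Thus $\V=\W\oplus\R f_0\oplus\R f_\ii$, the map $w:f\mapsto w(f)$ is the associated linear projection onto $\W$, and its kernel is exactly $\R f_0+\R f_\ii$.

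There is essentially no obstacle here: the only point requiring a moment's care is the disjointness $\B s\K_i\cap\B\K_i=\varnothing$, which guarantees that $f_0$ and $f_\ii$ vanish at $1$ and $s$ respectively. This follows at once from the Bruhat decomposition of $\GL_2(\F)$ together with the fact that $\K_i$ is contained in $\GL_2(\o)$, so that $\B\K_i\subseteq\B\cdot\GL_2(\o)$ lies in the union of Borel-conjugate-free cells — concretely, an element of $\B s\K_i$ has lower-left entry a unit times a nonzero multiple coming from $s$, hence nonzero, while every element of $\B\K_i$ has lower-left entry in $\p^i$, so the two sets cannot overlap. Once this is in hand, the rest is the routine linear-algebra bookkeeping of evaluation at two points sketched above, and $\R$-linearity of $f\mapsto w(f)$ is immediate since $f\mapsto f(s)$ and $f\mapsto f(1)$ are $\R$-linear.
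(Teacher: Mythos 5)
Your reduction of the lemma to the disjointness of $\B\K_i$ and $\B s\K_i$ (equivalently, to the fact that $s\notin\B\K_i$), followed by the evaluation bookkeeping $f_0(s)=f_\ii(1)=1$, $f_0(1)=f_\ii(s)=0$ and the resulting existence, uniqueness and description of the kernel, is exactly the paper's argument: the paper's proof consists precisely of the remark that $s$ does not belong to $\B\K_i$.

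However, the justification you offer for the disjointness is not correct as written. It is false that every element of $\B\K_i$ has lower-left entry in $\p^i$: for $b=\begin{pmatrix}b_1&b_2\\0&b_3\end{pmatrix}\in\B$ and $x\in\K_i$ with $x-1$ having entries in $\p^i$, the lower-left entry of $bx$ is $b_3u$ with $u\in\p^i$, and since $b_3\in\mult\F$ is arbitrary this entry can be a unit or have any negative valuation. For the same reason the appeal to the Bruhat decomposition does not separate the two sets: $\B\K_i$ is not contained in $\B$, so both $\B\K_i$ and $\B s\K_i$ meet the big cell. The fact you need is nonetheless true and is proved in one line, for instance as follows: if $s=bx$ with $b\in\B$ and $x\in\K_i$, then $x=b^{-1}s\in\B s$, and every element of $\B s$ has lower-right entry equal to $0$, which cannot be congruent to $1$ mod $\p^i$; hence $s\notin\B\K_i$, and likewise $1\notin\B s\K_i$, since $1=bsx$ would force $s=b^{-1}x^{-1}\in\B\K_i$. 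With this repair (only the left-$\B$-invariant shape of the lower row, not the individual entries, can be used) your proof is complete and coincides in substance with the paper's.
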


\begin{proof}
This follows from the fact that $s$ does not belong to $\B\K_i$.
\end{proof}

Let $\l$ be an $\H$-invariant linear form on $\V$.
It is characterized by $\l(f_0)$, $\l(f_\ii)\in\R$ 
and its restriction to $\W$.
As this restriction is $\H$-invariant, 
it is of the form $c\mu$ for a unique scalar $c\in\R$. 

\begin{coro}
The space $\V^{*\H}$ of $\H$-invariant linear forms on $\V$ has dimension $\<3$. 
\end{coro}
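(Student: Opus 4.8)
The plan is to combine the two preceding results (the Lemma giving the decomposition $f=f(s)f_0+f(1)f_\ii+w(f)$ and the discussion immediately after it) into a linear map that witnesses the bound on $\dim_\R \V^{*\H}$. First I would observe that the Lemma exhibits $\V$ as the direct sum $\R f_0\oplus\R f_\ii\oplus\W$ of $\R$-vector spaces, so that any linear form $\l$ on $\V$ is determined by the triple $(\l(f_0),\l(f_\ii),\l|_{\W})\in\R\times\R\times\W^{*}$. Restricting attention to $\H$-invariant $\l$, the restriction $\l|_\W$ is an $\H$-invariant linear form on $\W$, hence by the uniqueness statement established just before the first corollary (via the isomorphism $\W\simeq\Cc^\infty_c(\mult\F,\R)$ and the explicit form $\mu$) it equals $c\mu$ for a unique $c\in\R$. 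Thus the assignment $\l\mapsto(\l(f_0),\l(f_\ii),c)$ defines an $\R$-linear map $\V^{*\H}\to\R^3$.

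The key step is then injectivity of this map: if $\l$ is $\H$-invariant with $\l(f_0)=\l(f_\ii)=0$ and $\l|_\W=0$, then since every $f\in\V$ decomposes as $f(s)f_0+f(1)f_\ii+w(f)$ with $w(f)\in\W$, linearity gives $\l(f)=f(s)\l(f_0)+f(1)\l(f_\ii)+\l(w(f))=0$, so $\l=0$. Injectivity of a linear map into $\R^3$ immediately yields $\dim_\R\V^{*\H}\<3$, which is exactly the assertion of the corollary. I would phrase the argument in one or two sentences, citing the Lemma for the decomposition and the paragraph before the first corollary for the fact that the restriction to $\W$ is a scalar multiple of $\mu$.

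I do not anticipate a genuine obstacle here; the statement is a direct bookkeeping consequence of the structural results already in place, and the only thing to be careful about is making explicit that the three parameters $\l(f_0)$, $\l(f_\ii)$, $c$ determine $\l$ completely — that is, that the map $\l\mapsto(\l(f_0),\l(f_\ii),c)$ is injective rather than merely well-defined. If one wanted to be slightly more precise one could note that the map is in fact not always surjective (the value $c$ is constrained by an intertwining relation coming from the action of $s$), which is why the bound is an inequality; but for the corollary as stated only the injectivity is needed, so I would keep the proof to a couple of lines.
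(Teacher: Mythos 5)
Your proof is correct and is essentially the paper's argument: the decomposition $f=f(s)f_0+f(1)f_\ii+w(f)$ shows that an $\H$-invariant form $\l$ is determined by $\l(f_0)$, $\l(f_\ii)$ and its restriction to $\W$, which is a multiple of $\mu$, giving an injection $\V^{*\H}\hookrightarrow\R^3$. You merely make the injectivity step more explicit than the paper's phrase ``it is characterized by'', which is fine.
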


Now let $\l$ be a linear form on $\V$ extending $\mu$.  
We search for a necessary and sufficient condi\-tion on $\l(f_0)$, 
$\l(f_\ii)\in\R$ for $\l$ to be $\H$-invariant. 
By definition, this linear form is $\H$-invariant if and only if: 
\begin{equation*}
\l\left(\begin{pmatrix}x&0\\0&1\end{pmatrix}\cdot f\right)=\l(f)
\end{equation*}
for all $x\in\mult\F$ and $f\in\V$, and it is enough to check this condition 
for all $x$ of valuation $1$ and $f=f_0,f_\ii$. 
Let $t\in\mult\F$ be of valuation $1$.
We have:
\begin{eqnarray*}
\begin{pmatrix}t&0\\0&1\end{pmatrix}\cdot f_0
&=&\a_2(t)f_0+w\left(\begin{pmatrix}t&0\\0&1\end{pmatrix}\cdot f_0\right), \\
\begin{pmatrix}t&0\\0&1\end{pmatrix}\cdot f_\ii
&=&\a_1(t)f_\ii+w\left(\begin{pmatrix}t&0\\0&1\end{pmatrix}\cdot f_\ii\right).
\end{eqnarray*}
Thus the condition writes: 
\begin{equation*}
(1-\a_2(t))\l(f_0)=\mu_0(t) 
\quad\text{and}\quad
(1-\a_1(t))\l(f_\ii)=\mu_\ii(t)
\end{equation*}
for all $t\in\mult\F$ of valuation $1$, where:
\begin{equation*}
\mu_0(t)=\mu\left(w\left(\begin{pmatrix}t&0\\0&1\end{pmatrix}
\cdot f_0\right)\right)
\quad\text{and}\quad
\mu_\ii(t)=\mu\left(w\left(\begin{pmatrix}t&0\\0&1\end{pmatrix}
\cdot f_\ii\right)\right). 
\end{equation*}

\begin{lemm}
We have: 
\begin{equation*}
\mu_0(t)=-\a_2(t)^{1-i}\int\limits_{\mult\o}\a_2(x)^{-1}\ dx 
\quad\text{and}\quad
\mu_\ii(t)=\a_1(-1)\a_1(t)^i\int\limits_{\mult\o}\a_1(x)^{-1}\ dx.
\end{equation*}
\end{lemm}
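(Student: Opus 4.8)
The plan is to compute the two functions $w\bigl(\begin{pmatrix}t&0\\0&1\end{pmatrix}\cdot f_0\bigr)$ and $w\bigl(\begin{pmatrix}t&0\\0&1\end{pmatrix}\cdot f_\ii\bigr)$ explicitly as elements of $\W$, transport them through the isomorphism $\W\cong\Cc^\infty_c(\mult\F,\R)$ of Section~\ref{PS}, and then read off $\mu_0(t)$ and $\mu_\ii(t)$ from the integral formula defining $\mu$. By the two identities for $\begin{pmatrix}t&0\\0&1\end{pmatrix}\cdot f_0$ and $\begin{pmatrix}t&0\\0&1\end{pmatrix}\cdot f_\ii$ recalled just above the statement, we have $w\bigl(\begin{pmatrix}t&0\\0&1\end{pmatrix}\cdot f_0\bigr)=\begin{pmatrix}t&0\\0&1\end{pmatrix}\cdot f_0-\a_2(t)f_0$ and likewise with $f_\ii$ and $\a_1$ in place of $f_0$ and $\a_2$; since the map $g\mapsto\bigl(x\mapsto g(s\begin{pmatrix}1&x\\0&1\end{pmatrix})\bigr)$ is linear and restricts on $\W$ to the isomorphism of Section~\ref{PS}, it suffices to evaluate $f_0$ and $f_\ii$ at the matrices $s\begin{pmatrix}1&x\\0&1\end{pmatrix}$ and $s\begin{pmatrix}1&x\\0&1\end{pmatrix}\begin{pmatrix}t&0\\0&1\end{pmatrix}=\begin{pmatrix}0&1\\t&x\end{pmatrix}$.

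The combinatorial heart is the elementary identity
\begin{equation*}
\begin{pmatrix}0&1\\a&x\end{pmatrix}=\begin{pmatrix}1&0\\0&a\end{pmatrix}\,s\,\begin{pmatrix}1&x/a\\0&1\end{pmatrix},\qquad a\in\mult\F,\ x\in\F,
\end{equation*}
from which --- using that $s\in\GL_2(\o)$ normalises $\K_i$ --- a direct matrix computation shows that this element lies in $\B s\K_i$ exactly when $x/a\in\p^i$, and lies in $\B\K_i$ exactly when $a/x\in\p^i$; in the first case $f_0$ takes the value $\a_2(a)$ there, while in the second case a Bruhat decomposition has triangular factor with diagonal $(-a/x,x)$, so $f_\ii$ takes the value $\a_1(-a/x)\a_2(x)$. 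Specialising $a=t$ (of valuation $1$) and $a=1$ and subtracting as above, one finds that $w\bigl(\begin{pmatrix}t&0\\0&1\end{pmatrix}\cdot f_0\bigr)$ corresponds to $-\a_2(t)\,\mathbf 1_{\{{\rm val}=i\}}$, the two terms contributing $\a_2(t)\,\mathbf 1_{\{{\rm val}\ge i+1\}}$ and $\a_2(t)\,\mathbf 1_{\{{\rm val}\ge i\}}$, and that $w\bigl(\begin{pmatrix}t&0\\0&1\end{pmatrix}\cdot f_\ii\bigr)$ corresponds to $\bigl(x\mapsto\a_1(-t/x)\a_2(x)\bigr)\,\mathbf 1_{\{{\rm val}=1-i\}}$; here the two contributions cancel on the annulus $\{{\rm val}\le -i\}$, precisely because $\a_1(-t/x)=\a_1(t)\,\a_1(-1/x)$.

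It then remains to integrate. Substituting the two functions above into $\mu(g)=\int_{\mult\F}g(s\begin{pmatrix}1&x\\0&1\end{pmatrix})\a_2(x)^{-1}\,dx$ leaves a single integral over the sphere $\{{\rm val}=i\}$, respectively $\{{\rm val}=1-i\}$; writing $x=t^iu$, respectively $x=t^{1-i}u$, with $u\in\mult\o$, and using that the Haar measure of $\mult\F$ is invariant under $u\mapsto t^ku$, one pulls out a factor $\a_2(t)^{-i}$, respectively $\a_1(t)^{i-1}$, and is left with $\int_{\mult\o}\a_2(x)^{-1}\,dx$, respectively $\int_{\mult\o}\a_1(x)^{-1}\,dx$. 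This yields $\mu_0(t)=-\a_2(t)^{1-i}\int_{\mult\o}\a_2(x)^{-1}\,dx$ and $\mu_\ii(t)=\a_1(-1)\a_1(t)^{i}\int_{\mult\o}\a_1(x)^{-1}\,dx$, as claimed. The one genuinely delicate point is the coset bookkeeping of the middle paragraph --- deciding exactly when those $2\times 2$ matrices fall into $\B s\K_i$ and into $\B\K_i$, and pinning down the diagonal of the triangular factor in the second case; once this is settled, everything downstream is routine manipulation of the Haar integral.
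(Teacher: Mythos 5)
Your proposal is correct and follows essentially the same route as the paper: you reduce to the identity $s\iota(x)\left(\begin{smallmatrix}t&0\\0&1\end{smallmatrix}\right)=\left(\begin{smallmatrix}1&0\\0&t\end{smallmatrix}\right)s\iota(xt^{-1})$ (your decomposition of $\left(\begin{smallmatrix}0&1\\a&x\end{smallmatrix}\right)$ is exactly this), use the same valuation criteria for membership in $\B s\K_i$ and $\B\K_i$ as the paper's \eqref{E1}--\eqref{E2}, observe the same cancellation leaving a single sphere of valuation $i$, resp. $1-i$, and conclude by the same substitution in the Haar integral. The coset bookkeeping and the values $\a_2(a)$ and $\a_1(-a/x)\a_2(x)$ you compute match the paper's, so nothing is missing.
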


\begin{proof}
Given $x\in\mult\F$, write $m\in\ZZ$ for the valuation of $x$ (normalized in 
such a way that any uniformizer has valuation $1$) and: 
\begin{equation*}
\iota(x)=\begin{pmatrix}1&x\\0&1\end{pmatrix}. 
\end{equation*}
We have: 
\begin{equation}
\label{E1}
s\iota(x)=s\begin{pmatrix}1&x\\0&1\end{pmatrix}\in\B s\K_i
\quad\Leftrightarrow\quad m\>i
\end{equation}
and: 
\begin{equation}
\label{E2}
s\iota(x)
=\begin{pmatrix}-x^{-1}&1\\0&x\end{pmatrix}
\begin{pmatrix}1&0\\x^{-1}&1\end{pmatrix}
\in\B\K_i
\quad\Leftrightarrow\quad m\<-i.
\end{equation}
Note that:
\begin{equation*}
s\iota(x)\begin{pmatrix}t&0\\0&1\end{pmatrix}=
s\begin{pmatrix}1&x\\0&1\end{pmatrix}
\begin{pmatrix}t&0\\0&1\end{pmatrix}
=s\begin{pmatrix}t&0\\0&1\end{pmatrix}
\begin{pmatrix}1&xt^{-1}\\0&1\end{pmatrix}
=\begin{pmatrix}1&0\\0&t\end{pmatrix}s\iota(xt^{-1}).
\end{equation*}
We have:
\begin{eqnarray*}
\mu_0(t)&=&
\int\limits_{\mult\F}
\left[f_0\left(s\iota(x)\begin{pmatrix}t&0\\0&1\end{pmatrix}\right)
-\a_2(t) f_0\left(s\iota(x)\right)\right]\a_2(x)^{-1}\ dx,\\
\mu_\ii(t)&=&
\int\limits_{\mult\F}
\left[f_\ii\left(s\iota(x)\begin{pmatrix}t&0\\0&1\end{pmatrix}\right)
-\a_1(t) f_\ii\left(s\iota(x)\right)\right]\a_2(x)^{-1}\ dx.
\end{eqnarray*}
Let $\phi_0(x,t)$ and $\phi_\ii(x,t)$ denote the functions into brackets in
the formulas above, respectively. 
We use formulas \eqref{E1} and \eqref{E2} above. 
For $\phi_0(x,t)$ we have the following:
\begin{enumerate}
\item 
if $m\>i+1$, then $\phi_0(x,t)=\a_2(t)-\a_2(t)=0$;
\item
if $m=i$, then $\phi_0(x,t)=-\a_2(t)$;
\item
if $m\<i-1$, then $\phi_0(x,t)=0$.
\end{enumerate}
For $\phi_\ii(x,t)$ we have:
\begin{enumerate}
\item 
if $m\>-i+2$, then $\phi_\ii(x,t)=0$;
\item
if $m=-i+1$, then $\phi_\ii(x,t)=\a_1(-tx^{-1})\a_2(x)$;
\item 
if $m\<-i$, then $\phi_\ii(x,t)=\a_1(-tx^{-1})\a_2(x)-\a_1(-tx^{-1})\a_2(x)=0$. 
\end{enumerate}
Therefore we have: 
\begin{equation*}
\mu_0(t)=-\a_2(t)\int\limits_{\mult\o}\a_2(t^ix)^{-1}\ dx 
\quad\text{and}\quad
\mu_\ii(t)=\int\limits_{\mult\o}\a_1(-t^{1-(1-i)}x^{-1})\ dx. 
\end{equation*}
This ends the proof of the lemma. 
\end{proof}

We now have the following result. 

\begin{theo}
The linear form $\mu$ can be extended to an $\H$-invariant linear form on $\V$ 
if and only if one of the two conditions below is satisfied: 
\begin{enumerate}
\item 
$q\neq1$ in $\R$ and $\a_1,\a_2$ are nontrivial.
\item 
$q=1$ in $\R$. 
\end{enumerate}
\end{theo}

\begin{proof}
If $\a_1,\a_2$ are ramified (that is, nontrivial on $\mult\o$), then:
\begin{equation*}
\int\limits_{\mult\o}\a_1(x)^{-1}\ dx=\int\limits_{\mult\o}\a_2(x)^{-1}\ dx=0. 
\end{equation*}
Thus $\mu$ can be extended uniquely to an $\H$-invariant linear form 
$\l$ on $\V$, by setting $\l(f_0)=\l(f_\ii)=0$. 
If $\a_i$ is unramified for some $i\in\{1,2\}$, then:
\begin{equation*}
\int\limits_{\mult\o}\a_i(x)^{-1}\ dx=q-1. 
\end{equation*}
Fix a uniformizer $\w$ of $\F$ and put $z_i=\a_i(\w)$. 
\begin{enumerate}
\item 
If $i=1$, the condition on $\l(f_\ii)$ writes: 
\begin{equation}
\label{3C1}
(1-z_1)\l(f_\ii)=z_1^{i}(q-1).
\end{equation}
If $z_1\neq1$, then \eqref{3C1} has a unique solution:
\begin{equation*}
\label{CNS1}
\l(f_\ii)=z_1^{i}\cdot\frac{q-1}{1-z_1}.
\end{equation*}
If $z_1=1$, then \eqref{3C1} has a solution if and only if we have $q=1$ in 
$\R$, and in that case any value of $\l(f_\ii)$ in $\R$ is a solution. 
\item 
If $i=2$, the condition on $\l(f_0)$ writes: 
\begin{equation}
\label{3C2}
(1-z_2)\l(f_0)=-z_2^{1-i}(q-1).
\end{equation}
If $z_2\neq1$, then \eqref{3C2} has a unique solution:
\begin{equation*}
\label{CNS2}
\l(f_0)=-z_2^{1-i}\cdot\frac{q-1}{1-z_2}. 
\end{equation*}
If $z_2=1$, then \eqref{3C2} has a solution if and only if we have $q=1$ in 
$\R$, and in that case any value of $\l(f_0)$ in $\R$ is a solution. 
\end{enumerate}
This ends the proof of the theorem. 
\end{proof}

Write $d(\V)$ for the dimension of $\V^{*\H}$ and $e(\V)$ for that of the 
subspace of $\H$-invariant linear forms which are trivial on $\W$. 

\begin{theo}\label{gl2mult}
Let $n$ denote the number of trivial characters among $\a_1,\a_2$. 
\begin{enumerate}
\item 
If $n=0$, then $d(\V)=1$ and $e(\V)=0$. 
\item 
If $n\>1$ and $q\neq1$ in $\R$, then $d(\V)=e(\V)=n$.
\item
If $n\>1$ and $q=1$ in $\R$, then $d(\V)=n+1$ and $e(\V)=n$.
\end{enumerate}
\end{theo}

\begin{proof}
If $q\neq1$ in $\R$, the result is as in the complex case. 
If $q=1$ in $\R$, then $\mu$ can always be extended to an $\H$-invariant 
linear form on $\V$, that is, we have an exact sequence: 
\begin{equation*}
0\to(\V/\W)^{*\H}\to\V^{*\H}\to\W^{*\H}\to0
\end{equation*}
of $\R$-vector spaces and the dimension of $\W^{*\H}$ is $1$. 
One easily checks that $e(\V)=n$. 
The result follows. 
\end{proof}

\subsection{The classification of $\GL_1(\F)$-distinguished irreducible 
  representations of $\GL_2(\F)$} 
\label{defqc}

For any irreducible (smooth) representation $\pi$ of $\G$, let $d(\pi)$ denote the 
dimension of its space of $\H$-invariant linear forms. 

Recall that $\qc$ denotes the quantum characteristic:
\begin{equation*}
\qc=
\left\{
\begin{array}{l}
0 \text{ if $\R$ has characteristic $0$},\\
\text{the smallest positive integer $k\>2$ such that 
$1+q+\dots+q^{k-1}=0$ in $\R$ otherwise}. 
\end{array}
\right.
\end{equation*}

An irreducible representation of $\G$ is said to be \textit{cuspidal} if it 
does not embed in any $\V(\a_1,\a_2)$ with $\a_1,\a_2\in\GH_1$.
Just as in the complex case, we have the following result for cuspidal 
represen\-tations.  

\begin{prop}
\label{distcusp}
All cuspidal irreducible representations $\pi$ of $\G$ are $\H$-distinguished, 
with $d(\pi)=1$.
\end{prop}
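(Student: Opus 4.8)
The plan is to use the Kirillov/mirabolic model for cuspidal representations of $\G=\GL_2(\F)$, exactly as in the complex case, and adapt it to the $\R$-modular setting where the only technical difference is the behaviour of the Haar measure. First I would recall that a cuspidal irreducible representation $\pi$ of $\G$, restricted to the mirabolic subgroup $\P$ of matrices $\left(\begin{smallmatrix}a&b\\0&1\end{smallmatrix}\right)$, is isomorphic to the compactly induced representation $\ind_{\U}^{\P}(\psi)$ for a nontrivial character $\psi$ of the unipotent radical $\U\cong\F$; this is the modular analogue of the Kirillov model and is available in $\R$ (the relevant Bernstein--Zelevinski theory over $\R$ is recalled in the sections cited, and in characteristic $\ell\ne p$ the induced representation $\ind_{\U}^{\P}(\psi)$ is still irreducible because the argument uses only smoothness and the existence of an $\R$-valued Haar measure on $\F$). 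Since $\H\subseteq\P$ (indeed $\H$ is the Levi of $\P$), an $\H$-invariant linear form on $\pi$ is the same as an $\H$-invariant linear form on $\pi|_\P\cong\ind_\U^\P(\psi)$, and the space of such forms is $\Hom_\H(\ind_\U^\P(\psi),\R)$.

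Second I would compute this Hom space. Realizing $\ind_\U^\P(\psi)$ concretely as $\Cc^\infty_c(\mult\F,\R)$ with $\H$ acting by $a\cdot\phi:x\mapsto\phi(xa^{-1})$ (up to a character twist coming from the central action, which does not affect the dimension count), an $\H$-invariant linear form is a linear functional on $\Cc^\infty_c(\mult\F,\R)$ invariant under translation by $\mult\F$, hence (up to scalar) integration against the $\R$-valued Haar measure $dx$ on $\mult\F$, which exists because $\ell\ne p$. This gives $\dim\Hom_\H\le 1$, and nonvanishing is clear since the Haar integral is a nonzero functional on $\Cc^\infty_c(\mult\F,\R)$; so $d(\pi)=1$. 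This part is essentially formal once the Kirillov model is in place: it is the same bijection $\W\to\Cc^\infty_c(\mult\F,\R)$ used above in Section \ref{PS}, applied to the whole cuspidal representation rather than to the codimension-two subspace $\W$.

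The main obstacle is not the Hom computation but justifying the Kirillov model and the uniqueness of $\psi$-Whittaker functionals in the modular setting --- that is, knowing that $\pi|_\P\cong\ind_\U^\P(\psi)$ and that this is irreducible over $\R$. I would address this by invoking the Bernstein--Zelevinski derivative formalism over $\R$ (recalled later in the paper, e.g. the properties $\mathbf{P1}$--$\mathbf{P6}$ and the discussion of derivatives): a cuspidal irreducible $\pi$ has vanishing proper derivatives except the top one $\pi^{(2)}$, forcing $\pi|_\P$ to be $\Psi^+$ of an irreducible representation of $\GL_0$, i.e. exactly $\ind_\U^\P(\psi)$; the exactness of the derivative functors in characteristic $\ell\ne p$ is what makes this work. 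One caveat I would flag explicitly: in the modular case ``cuspidal'' is weaker than ``supercuspidal'', but the argument above only uses vanishing of the first (and proper) Jacquet-type derivatives, which holds for all cuspidal $\pi$, so the statement is correct as phrased. Finally I would note that $\pi$ being cuspidal it is never a character, so none of the degenerate phenomena of Theorem \ref{gl2mult} intervene, and the dimension is genuinely $1$ regardless of whether $q=1$ in $\R$.
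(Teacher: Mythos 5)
Your proof is correct and takes essentially the same route as the paper, whose proof of Proposition \ref{distcusp} is the $n=2$ case of Theorem \ref{THEOCUSP}: one invokes the modular Kirillov model $\pi|_{\P}\simeq\ind_{\U}^{\P}(\psi)$ from \cite[III, Theorem 1.1]{Vigb}, then restricts to $\H$ to obtain $\Cc^{\infty}_{c}(\mult\F,\R)$ with the translation action, whose space of invariant forms is spanned by the $\R$-valued Haar integral. The only cosmetic slip is that the relevant piece of the Bernstein--Zelevinski filtration is $\Phi^{+}\Psi^{+}$ of the top derivative rather than $\Psi^{+}$ alone, but your identification of it with $\ind_{\U}^{\P}(\psi)$ is exactly what the paper's argument uses.
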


\begin{proof}
See Paragraph \ref{sectioncusp} for a proof, where we treat the more general case of
$\G_n$, $n\>2$.
\end{proof}

Now let $\St$ denote the Steinberg representation of $\G$, that is the unique 
nondegenerate irre\-ducible subquotient of $\V=\Ind^\G_\B(1\otimes1)$
(see \cite[III.1]{Vigb}). 

For the following lemma, see \cite[\S6]{MSc}.

\begin{lemm}
\label{einstein}
If $\qc=2$, then $\St\cdot\chi$ is cuspidal for all $\chi\in\GH_1$.
\end{lemm}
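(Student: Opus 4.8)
The plan is to show that, when $\qc=2$, the principal series $\V=\Ind^\G_\B(\1\otimes\1)$ (and its twists) has \emph{no} nondegenerate subquotient other than a cuspidal one, so that the Steinberg representation in the sense defined above is forced to be cuspidal. Since twisting by $\chi\circ\det$ commutes with parabolic induction and preserves both cuspidality and nondegeneracy, it suffices to treat $\chi=\1$; the general case follows immediately by applying the untwisted result to $\St\cdot\chi$, which is the unique nondegenerate subquotient of $\V(\chi,\chi)=\V(\1,\1)\cdot\chi$.

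The key input is the structure of $\V(\1,\1)$ in the degenerate case $\qc=2$, i.e. $1+q=0$ in $\R$, which is exactly the case $q=1$ in $\R$ when $\ell=2$ and the case $q\equiv-1$, $q\not\equiv 1$ otherwise — in all cases $e=\qc=2$. First I would recall (this is the referenced material from \cite[\S6]{MSc}, and is the modular analogue of the classical Zelevinski picture) the decomposition of $[\V(\1,\1)]$ into irreducible constituents: besides the trivial character $\1_2$, there is a second constituent, which is the relevant ``Steinberg''. The point of $\qc=2$ is that $\nu=\nu_1$ satisfies $\nu^2=\1$ but one has a \emph{longer} chain than in the complex case; however at the level of the length-two segment $[\1,\nu]$ the reducibility is still governed by whether $\nu/\1$ is a ``link'', and the socle/cosocle filtration degenerates. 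Concretely, when $\qc=2$ the representation $\V(\1,\1)$ has the trivial character appearing as both its socle and cosocle, and the remaining piece $\St$ sits as the ``middle'' — but crucially $\St$ then has \emph{no} proper parabolic restriction, i.e. its Jacquet module along $\B$ vanishes, because the would-be Jacquet constituents $\1\otimes\nu^{-1}$ and $\nu^{-1}\otimes\1$ carried by $\St$ cancel out in the Grothendieck group modulo the characters. This is precisely the phenomenon that, as emphasized in the introduction, distinguishes the modular from the complex case: a cuspidal (all proper Jacquet modules zero) representation can occur as a subquotient of a properly induced representation. So $\St$ is cuspidal in the sense that it does not embed in any $\V(\a_1,\a_2)$.

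I would then assemble the argument as follows. Step one: identify $\qc=2$ with $1+q=0$ in $\R$ and hence $e=2$, and reduce to $\chi=\1$ by twisting. Step two: quote from \cite[\S6]{MSc} the composition series of $\V(\1,\1)$ when $\qc=2$, and extract that its unique nondegenerate irreducible subquotient $\St$ has zero Jacquet module with respect to $\B$ (equivalently, all its derivatives below the top one vanish except in the cuspidal pattern). Step three: conclude that $\St$ cannot embed in any $\V(\a_1,\a_2)$, for an embedding $\St\hookrightarrow\V(\a_1,\a_2)$ would by Frobenius reciprocity force the Jacquet module of $\St$ along $\B$ to surject onto $\a_1\otimes\a_2$, contradicting vanishing of that Jacquet module. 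Hence $\St$ is cuspidal, and therefore so is $\St\cdot\chi$ for every $\chi\in\GH_1$.

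The main obstacle is Step two: one needs the precise statement, for $\GL_2$ and $\qc=2$, that the ``Steinberg'' constituent has vanishing $\B$-Jacquet module rather than merely being non-split off. This is not a formal consequence of length considerations — in the generic ($e>1$, $\qc>2$) situation the analogous $\St$ has nonzero Jacquet module — so the computation genuinely uses the $\qc=2$ collapse of the multisegment combinatorics. I would lean on \cite[\S6]{MSc} for this, cross-checking it against the classical fact (Vign\'eras) that in this situation the ``special'' representation becomes supercuspidal-like, i.e. the Zelevinski involution sends the trivial character to a cuspidal representation; this is exactly the input that makes the lemma true and is the one nontrivial ingredient. Everything else — the twisting reduction and the Frobenius-reciprocity argument for non-embedding — is routine.
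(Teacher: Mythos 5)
The paper does not actually prove this lemma: it is quoted directly from \cite[\S6]{MSc} (see the sentence immediately preceding the statement). Your proposal is in substance the same, because the one genuinely nontrivial ingredient --- that for $\qc=2$, i.e. $1+q=0$ in $\R$, the principal series $\V(\1,\1)$ has length three, with two one-dimensional constituents whose Jacquet modules along $\B$ account for both characters of the diagonal torus occurring in the Jacquet module of $\V(\1,\1)$, so that the remaining (nondegenerate) constituent $\St$ has vanishing Jacquet module --- is exactly what you propose to import from \cite[\S6]{MSc}; the surrounding steps (reduction to $\chi=\1$ by twisting, and the adjunction argument showing that a vanishing Jacquet module rules out any embedding into a $\V(\a_1,\a_2)$) are routine and correct. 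Two slips in your description of the structure should be repaired, although neither affects the counting: first, $\qc=2$ does not give $e=2$ in all cases --- when $\ell=2$ one has $q=1$ in $\R$, hence $e=1$ and $\qc=\ell=2$; second, when $e=2$ (so $\ell$ odd) the two one-dimensional constituents of $\V(\1,\1)$ are $\1_2$ and $\nu_2$, which are distinct, so the picture of ``the trivial character as both socle and cosocle'' is only correct in the $\ell=2$ case (compare Proposition \ref{P61}(4) applied to $\nu\times\1$, whose constituents for $e=2$ are $\nu_2^{1/2}$, $\nu_2^{3/2}$ and $\St\cdot\nu^{1/2}$). What your argument really needs is only that both torus characters in the Jacquet module of $\V(\1,\1)$ are absorbed by one-dimensional subquotients, leaving nothing for $\St$; stated that way, with the precise composition series quoted from \cite[\S6]{MSc} (or \cite[Th\'eor\`eme 3]{VigComp}), your proof is sound and, unlike the paper's bare citation, makes the mechanism behind the lemma explicit.
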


If $\qc=2$, then Proposition \ref{distcusp} implies that $\St\cdot\chi$ is 
$\H$-distinguished with $d(\St\cdot\chi)=1$ for all $\chi\in\GH_1$.
Assume now that $\qc\neq2$.
Thus $\V$ has length $2$ and we have an exact sequence: 
\begin{equation*}
0\to\chi\circ\det\to\V\cdot\chi=\Ind^\G_\B(\chi\otimes\chi)\to\St\cdot\chi\to0
\end{equation*}
of representations of $\G$. 
If $\chi$ is nontrivial, then any $\H$-invariant linear form on $\V\cdot\chi$ is 
trivial on $\chi\circ\det$. 
We thus have $d(\St\cdot\chi)=d(\V\cdot\chi)=1$. 
If $\chi=1$, we have:
\begin{equation*}
d(\St)\<d(\V)\<d(\St)+1.
\end{equation*}
As $\l_0$ and $\l_\ii$ are $\H$-invariant linear form on $\V$ which are 
nonzero on the subspace of constant functions, we get $d(\St)=d(\V)-1$. 
Finally, we have the following result. 

\begin{theo}\label{dimfor2}
\begin{enumerate}
\item 
An irreducible representation of $\G$ is $\H$-distinguished if and only if it 
is not a nontrivial $1$-dimensional representation. 
\item
Let $\pi$ be an $\H$-distinguished irreducible representation of $\G$. 
Then $d(\pi)\<2$, with equality if and only if $q=1$ in $\R$ and we are in one
of the following cases:
\begin{enumerate}
\item 
$\pi$ is the Steinberg representation $\St$ and $\R$ has characteristic $>2$~; 
\item 
$\pi$ is a principal series representation
$\V(1,\chi)=\Ind^\G_\B(1\otimes\chi)$ with $\chi\in\GH_1$ nontrivial.
\end{enumerate}
\end{enumerate}
\end{theo}

\section{General results on modulo $\ell$ representations of $\G_n$}\label{modlresults}

\subsection{More notation}

Let $\a=(n_1,\dots,n_r)$ be a composition of $n$, 
that is, a family of positive integer whose sum is $n$. 
We denote by $\M_{\alpha}$ the subgroup of $\G_n$ of invertible matrices
which are diagonal by blocks of size $n_1,\dots,n_r$ respectively (it is
isomorphic to $\G_{n_1}\times\dots\times\G_{n_r}$) and by $\P_{\a}$ the
subgroup of $\G_n$ generated by $\M_\a$ and the upper triangular matrices.  

\emph{We choose once and for all a square root of q in $\R$}.  
We write $\rp_\a$ for the normalized Jacquet func\-tor associated to $(\M_\a,\P_\a)$ and 
$\ip_\a$ for its right adjoint functor, that is, normalized parabolic in\-duc\-tion. 
If $\pi_1,\dots,\pi_r$ are smooth $\R$-representations of $\G_{n_1},\dots,\G_{n_r}$ 
respectively, we write: 
\begin{equation}
\label{indu}
\pi_1 \times \pi_2\times \dots \times \pi_r=
\ip_\a( \pi_1 \otimes \pi_2 \otimes \dots \otimes \pi_r).
\end{equation} 

Given a smooth representation $\pi$ of finite length, we write 
$\sy{\pi}$ for its semi-simplification and $\pi^\vee$ for its 
contragredient. 

We write $\nu$ for the normalized absolute value of $\F$, giving value 
$q^{-1}$ to any uniformizer. 
More generally, given integers $k\in\ZZ$ and $n\>1$, we write:
\begin{equation*}
\nu_{n}^{k/2}:g\mapsto(\sqrt{q})^{-k\cdot{\rm val}(\det(g))}
\end{equation*}
where $\sqrt{q}$ is the square root of $q$ in $\R$ that has been fixed above, 
${\rm val}$ is the normalized valuation on $\F$ and $\det$ is the determinant map from 
$\G_n$ to $\mult\F$. 

We also write $\1_n$ for the trivial character of $\G_n$, $n\>1$, and $1$ for $\1_1$.

\subsection{The Geometric Lemma}

We give here a combinatorial version of 
Bernstein-Zelevinski's Geometric Lemma \cite{BZ} 
(see also \cite[II.2.19]{Vigb}).  
Let $\a=(n_1,\dots,n_r)$ and $\b=(m_1,\dots,m_s)$ be two compositions of $n\>1$.
For each $i\in\{1,\ldots,r\}$, let $\pi_i\in\GH_{n_i}$. 
Let $\mathscr{B}^{\a,\b}$ be the set of all matrices $\B=(b_{i,j})$ whose 
coefficients are non-negative integers such that: 
\begin{equation*}
\label{2} 
\sum_{j=1}^s b_{i,j}=n_i, 
\quad i\in\{1,\ldots,r\}, \quad \sum_{i=1}^rb_{i,j}=m_j, \quad j\in\{1,\ldots,s\}.
\end{equation*}
Fix $\B\in\mathscr{B}^{\a,\b}$ and write $\a_i=(b_{i,1},\dots,b_{i,s})$ and 
$\b_j=(b_{1,j},\dots,b_{r,j})$ which are compositions of $n_i$ and  $m_j$ respectively. 
For all $i\in\{1,\ldots,r\}$, the semi-simplification of $\rp_{\a_i}(\pi_i)$ writes:
\begin{equation*}
\sy{\rp_{\a_i}(\pi_i)}=\sum\limits_{k=1}^{r_i}
\s^{(k)}_{i,1}\otimes\cdots\otimes\s^{(k)}_{i,s}, 
\quad
\s^{(k)}_{i,j}\in\GH_{b_{i,j}},
\quad r_i\>1.
\end{equation*}
For all $j\in\{1,\ldots,s\}$ and all $r$-tuples
$\boldsymbol{k}=(k_1,\ldots,k_r)$ with $1\<k_i\<r_i$, 
we write:
\begin{equation*}
\s_{j}^{(\boldsymbol{k})}=\s^{(k_1)}_{1,j}\times\cdots\times\s^{(k_r)}_{r,j},
\end{equation*}
which is a representation of $\G_{m_j}$.
Then we have: 
\begin{equation*}
\label{Druon}
[\rp_{\b}(\pi_1\times\dots\times\pi_r)]=
\sum_{\B}\sum_{\boldsymbol{k}}\
\s_{1}^{(\boldsymbol{k})}\otimes\cdots\otimes\s_{s}^{(\boldsymbol{k})}
\end{equation*}
in the Grothendieck group of finite length representations of $\M_{\b}$.

\subsection{Cuspidal support}

An irreducible representation of $\G_n$ with $n\>1$ is said to be 
\textit{cuspidal} if it does not embed in any representation of the form 
\eqref{indu} with $r>1$.

By \cite[Theorem 2.1]{MSc}, for any irreducible representation 
$\pi\in\GH_n$ with $n\>1$, there are positive integers $n_1,\dots,n_r$
and cuspidal irreducible representations $\rho_i\in\GH_{n_i}$ with 
$i\in\{1,\dots,r\}$ such that $n=n_1+\dots+n_r$ and $\pi$ embeds 
in $\rho_1\times\dots\times\rho_r$.
Moreover, there is a permutation $w$ of 
the set $\{1,2,\dots,r\}$ such that $\pi$ is a quotient 
of $\rho_{w(1)}\times\dots\times\rho_{w(r)}$.

The family $(\rho_1,\dots,\rho_r)$, which depends on the choice of 
$\sqrt{q}$, is unique up to permutation.
Its class up to permutation, denoted $[\rho_1]+\dots+[\rho_r]$, 
is called the \textit{cuspidal support} of $\pi$.

\begin{prop}[\cite{MSc}, Proposition 5.9]
\label{Lanzmann}
Let $\pi,\s$ be irreducible representations of $\G_n,\G_m$ respectively. 
Write $[\pi_1]+\dots+[\pi_r]$ and $[\s_1]+\dots+[\s_s]$ for the cuspidal supports 
of $\pi$ and $\s$ respectively.
Assume that for all $i\in\{1,\dots,r\}$, $j\in\{1,\dots,s\}$ and $k\in\ZZ$, 
the cuspidal irreducible 
representations $\pi_i\cdot\nu^k$ and $\s_j$ are not isomorphic.
Then $\pi\times\s$ is irreducible. 
\end{prop}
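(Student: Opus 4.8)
The plan is to prove irreducibility by a standard Bernstein–Zelevinski argument based on the fact that $\pi \times \s$ is a representation that is simultaneously a subrepresentation and a quotient of an induced representation built from cuspidal supports that do not ``link'' across the two factors. First I would recall from the discussion of cuspidal support above that $\pi$ embeds in $\rho_1 \times \dots \times \rho_r$ and is a quotient of $\rho_{w(1)} \times \dots \times \rho_{w(r)}$ for a suitable permutation $w$, where $[\rho_1]+\dots+[\rho_r]$ is the cuspidal support of $\pi$; similarly $\s$ embeds in $\tau_1 \times \dots \times \tau_s$ and is a quotient of $\tau_{w'(1)} \times \dots \times \tau_{w'(s)}$, where the $\tau_j$ realize the cuspidal support of $\s$. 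The hypothesis is precisely that no $\rho_i \cdot \nu^k$ is isomorphic to any $\tau_j$.

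Next I would use exactness of parabolic induction (and transitivity, i.e.\ induction in stages) to conclude that $\pi \times \s$ embeds in $\rho_1 \times \dots \times \rho_r \times \tau_1 \times \dots \times \tau_s$ and is a quotient of $\rho_{w(1)} \times \dots \times \rho_{w(r)} \times \tau_{w'(1)} \times \dots \times \tau_{w'(s)}$. Now I claim that any irreducible subquotient of $\rho_1 \times \dots \times \rho_r \times \tau_1 \times \dots \times \tau_s$ has cuspidal support $[\rho_1]+\dots+[\rho_r]+[\tau_1]+\dots+[\tau_s]$ (this is just the additivity/invariance of cuspidal support under parabolic induction, which follows from the Geometric Lemma recalled above), so the whole induced representation $\rho_1 \times \dots \times \rho_r \times \tau_1 \times \dots \times \tau_s$ has this as its cuspidal support with the ``obvious'' multiplicities. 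The key combinatorial input is then: because the $\rho_i$ and $\tau_j$ sit in distinct (non-$\nu$-linked) cuspidal lines, the induced representations $\rho_{\sigma(1)} \times \dots$ obtained by permuting the factors all have a \emph{unique} irreducible subquotient in which the $\rho$-part forms a sub and the $\tau$-part a quotient (or vice versa), and hence the socle and cosocle of $\rho_1 \times \dots \times \tau_s$ are controlled.

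The cleanest way to finish is to invoke the standard ``no linking between blocks'' fact: if $\{\rho_i\}$ and $\{\tau_j\}$ lie in disjoint unions of cuspidal $\ZZ$-lines with no line containing both a $\rho$ and a $\tau$, then for \emph{any} irreducible $A$ with cuspidal support $\sum [\rho_i]$ and any irreducible $B$ with cuspidal support $\sum[\tau_j]$, the product $A \times B$ is irreducible and independent of the order of the factors. One proves this by induction on $r+s$: using the Geometric Lemma, every nonzero proper sub of $A \times B$ must, upon taking an appropriate Jacquet module, produce a tensor factor whose cuspidal support is a proper ``mixing'' of the two blocks, which is impossible since Jacquet modules of $A \times B$ only see tensor products $A' \otimes B'$ with $A'$ supported among the $\rho$'s and $B'$ among the $\tau$'s (no cancellation between the two sets of lines occurs in the geometric lemma sum because there are no intertwining terms). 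Concretely: compute $\rp_{(n,m)}(\pi \times \s)$ via the Geometric Lemma; every summand is of the form (something supported on the $\rho$-lines) $\otimes$ (something supported on the $\tau$-lines), and by the disjointness hypothesis the only summand with first factor having cuspidal support $\sum[\rho_i]$ and second factor $\sum[\tau_j]$ is $\pi \otimes \s$ itself, appearing with multiplicity one. Hence $\pi \otimes \s$ occurs with multiplicity one in $[\rp_{(n,m)}(\pi\times\s)]$; since $\pi \times \s$ has $\pi \otimes \s$ as a subrepresentation of its Jacquet module (by adjunction, as $\pi\times\s$ surjects onto... actually $\pi\otimes\s$ is a quotient of $\rp_{(n,m)}(\pi\times\s)$ and a sub via the unit/counit), any irreducible subquotient of $\pi \times \s$ must itself have $\pi \otimes \s$ in its $(n,m)$-Jacquet module, forcing it to ``contain'' both $\pi$ and $\s$ and therefore to have full cuspidal support and, by a length/Frobenius-reciprocity count, to be all of $\pi \times \s$. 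This forces $\pi\times\s$ to be irreducible.

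The main obstacle I anticipate is making the multiplicity-one Jacquet module argument airtight: one must check carefully, using the precise form of the Geometric Lemma stated above, that no \emph{other} term in $[\rp_{(n,m)}(\pi\times\s)]$ happens to have cuspidal support splitting as $(\sum[\rho_i]) \otimes (\sum[\tau_j])$ — a priori a reshuffling could move a $\rho_i\cdot\nu^k$ into the second slot, but this is exactly excluded by the hypothesis that $\rho_i \cdot \nu^k \not\cong \tau_j$ for all $i,j,k$, together with the fact that Jacquet modules preserve cuspidal support on each factor. Once that bookkeeping is done, combining it with the sub/quotient structure of $\pi\times\s$ relative to permutations of the cuspidal factors (guaranteed by \cite[Theorem 2.1]{MSc} as recalled above) yields irreducibility. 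This is precisely the argument of \cite[Proposition 5.9]{MSc}, so I would simply cite it; but the sketch above is how I would reconstruct it from the tools assembled in this section.
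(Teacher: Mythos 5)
The paper offers no proof of this proposition beyond the attribution to \cite[Proposition 5.9]{MSc}, and you end in the same place by citing that result, so your treatment matches the paper's. Your sketched reconstruction (multiplicity one of $\pi\otimes\s$ in $\rp_{(n,m)}(\pi\times\s)$ via the geometric lemma, using that the cuspidal supports of $\pi$ and $\s$ lie on disjoint $\ZZ$-lines) is the standard route behind the cited reference and is consistent with it.
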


\subsection{Three lemmas about irreducibility}

The following lemma is a particular case of \cite[Lemma 6.1]{MSu}, 
which will be of crucial importance to us. 
Recall that $\ee$ is the order (possibly infinite) of $q$ in $\mult\R$.

\begin{lemm}
\label{UIQ}
Assume that $\ee>1$.
Let $n\>2$, and let $\rho\in\GH_{n-1}$, $\chi\in\GH_1$.
Then the represen\-ta\-tion $\pi=\rho\times\chi$ 
possesses a unique irreducible quotient, denoted $\UIQ(\pi)$, 
and a unique irreducible sub\-repre\-sen\-tation, denoted $\UIS(\pi)$.
\end{lemm}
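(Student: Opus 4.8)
\textbf{Plan for the proof of Lemma \ref{UIQ}.}
The plan is to exploit the contravariance of the problem: the contragredient functor turns quotients into subrepresentations, and since $(\rho\times\chi)^\vee\cong\rho^\vee\times\chi^{-1}$ (normalized parabolic induction commutes with taking contragredients, up to the usual twist by the modulus which is absorbed by normalization), it suffices to prove the existence of a unique irreducible quotient; the statement about $\UIS(\pi)$ follows by applying this to $\rho^\vee\times\chi^{-1}$ and dualizing. So I focus on the unique irreducible quotient.

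First I would reduce to the case where $\rho\times\chi$ is reducible, since otherwise there is nothing to prove. The key input is the cited result \cite[Lemma 6.1]{MSu}, together with Proposition \ref{Lanzmann}: if the cuspidal support of $\rho$ contains no segment endpoint linked to $\chi$ in the appropriate sense, then $\rho\times\chi$ is already irreducible. So the interesting case is when $\chi=\rho_0\cdot\nu^{k}$ for some cuspidal constituent $\rho_0$ in the support of $\rho$ — but since $\chi\in\GH_1$, necessarily $\rho_0$ is a character of $\G_1$, and we are in a genuinely "principal-series-like" situation along one cuspidal line. Here the hypothesis $e>1$ is essential: it guarantees (via the classification of irreducibles by multisegments from \cite{MSc} and the fact that $q$ has order $e>1$ in $\mult\R$) that the relevant combinatorics on a single cuspidal line $\{\rho_0\cdot\nu^k : k\in\ZZ\}$ behaves as in the "banal" or generic-linkage situation, so that the $\Hom$-spaces $\Hom_{\G_n}(\rho\times\chi,\pi)$ and $\Hom_{\G_n}(\pi,\rho\times\chi)$ are at most one-dimensional for every irreducible $\pi$.

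Concretely, I would argue as follows. By Frobenius reciprocity, $\Hom_{\G_n}(\rho\times\chi,\pi)\cong\Hom_{\M}(\rho\otimes\chi,\rp_{(n-1,1)}(\pi))$, where $\M=\G_{n-1}\times\G_1$. So a quotient map from $\rho\times\chi$ to an irreducible $\pi$ is the same as the occurrence of $\rho\otimes\chi$ in the Jacquet module $\rp_{(n-1,1)}(\pi)$ as a \emph{quotient}. The Geometric Lemma computes $[\rp_{(n-1,1)}(\pi)]$ in terms of the cuspidal support; the point is that on a single cuspidal line with $e>1$, each irreducible $\pi$ with the prescribed cuspidal support has $\rho\otimes\chi$ appearing in its $(n-1,1)$-Jacquet module with multiplicity at most one, and among all such $\pi$ there is a distinguished one — characterized, say, as the Langlands quotient or equivalently the unique $\pi$ whose $(n-1,1)$-Jacquet module \emph{surjects} onto $\rho\otimes\chi$ rather than merely containing it as a subquotient. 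Uniqueness is then: if $\pi_1,\pi_2$ are both irreducible quotients of $\rho\times\chi$, form the map $\rho\times\chi\to\pi_1\oplus\pi_2$; one shows the image cannot be all of $\pi_1\oplus\pi_2$ because that would force $\rho\otimes\chi$ to occur with multiplicity $\geq 2$ in $[\rp_{(n-1,1)}(\pi_1\oplus\pi_2)]$ in a way incompatible with the multiplicity-one statement, hence $\pi_1\cong\pi_2$. Since $\rho\times\chi$ has finite length (parabolic induction preserves finite length, as recalled in the introduction), it has \emph{some} irreducible quotient, so exactly one.

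The main obstacle I expect is establishing the multiplicity-one statement for $\rho\otimes\chi$ in $\rp_{(n-1,1)}(\pi)$ uniformly over the relevant irreducibles $\pi$ — this is precisely where $e>1$ enters and where the combinatorics of multisegments (which the introduction warns is "much more involved" in the modular case) must be controlled. In practice this is exactly the content of \cite[Lemma 6.1]{MSu}, so the cleanest route is to cite it directly and use the argument above only to extract the $\UIS$ half by duality; I would not attempt to reprove the multisegment combinatorics here. A secondary point requiring a line of care is the identification $(\rho\times\chi)^\vee\cong\rho^\vee\times\chi^{-1}$, which is standard (normalized induction is compatible with contragredients for $\GL_n$, see \cite{Vigb}) but should be invoked explicitly.
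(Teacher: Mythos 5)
Your proposal is essentially the paper's own treatment: the paper offers no independent argument but simply invokes \cite[Lemma 6.1]{MSu} (the statement being a particular case of it), exactly as you conclude one should, and it records the same contragredient identities $\UIQ(\rho\times\chi)^\vee=\UIS(\rho^\vee\times\chi^{-1})$ immediately after the lemma. The exploratory sketch you add is not relied upon (and, as written, its uniqueness step would need multiplicity one of $\rho\otimes\chi$ in $\rp_{(n-1,1)}(\rho\times\chi)$ itself, not merely in each irreducible $\pi$), so deferring to the cited lemma, with the duality remark for $\UIS$, is both correct and what the paper does.
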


Note that, by passing to the contragredient, we have:
\begin{equation*}
\label{Qcontra}
\UIQ(\rho\times\chi)^\vee=\UIS(\rho^\vee\times\chi^{-1}),
\quad
\UIS(\rho\times\chi)^\vee=\UIQ(\rho^\vee\times\chi^{-1}).
\end{equation*}
There is also a similar result for $\chi\times\rho$, and we have:
\begin{equation*}
\label{Qswap}
\UIQ(\chi\times\rho)=\UIS(\rho\times\chi),
\quad
\UIS(\chi\times\rho)=\UIQ(\rho\times\chi).
\end{equation*}
From this lemma we deduce the following example. 

\begin{exem}
\label{poutargue}
Assume that $n\>2$, and write:
\begin{equation}
\label{DEFVN}
\V_n = \nu_{n-1}^{1/2}\times\nu^{(n+1)/2}.
\end{equation}
If $e>1$, Lemma \ref{UIQ} implies that
$\V_n$ has a unique irreducible quotient, denoted $\LL_{n}$.
We write:
\begin{equation}
\LL_n = \UIQ(\V_n) = \UIQ(\nu_{n-1}^{1/2}\times\nu^{(n+1)/2}),
\quad
\text{for $e>1$}.
\end{equation}
When $e$ divides $n$, then $\LL_n$ is the trivial character 
(see Proposition \ref{P61}).
By taking the contragredient, $\LL_n^\vee$ is the unique irreducible 
subrepresentation of $\nu_{n-1}^{-1/2}\times\nu^{-(n+1)/2}$.
\end{exem}

The following irreducibility criterion 
will also be very useful to us.

\begin{lemm}[\cite{MSc}, Lemme 2.5]
Let $\pi$ be a smooth representation of $\G_{n}$. 
Assume that there are two irreducible representations $\s\in\GH_{a}$ and 
$\tau\in\GH_{b}$ with $a,b\>1$ and $a+b=n$, such that: 
\begin{enumerate}
\item
$\pi$ is a subrepresentation of $\s\times\tau$ and a quotient of 
$\tau\times\s$; 
\item
the multiplicity of $\s\otimes\tau$ in $\rp_{(a,b)}(\s\times\tau)$ is $1$.
\end{enumerate}
Then the representation $\pi$ is irreducible. 
\end{lemm}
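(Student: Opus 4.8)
The plan is to show that every irreducible subrepresentation and every irreducible quotient of $\pi$ has $\s\otimes\tau$ among the subquotients of its image under $\rp_{(a,b)}$, and then to play the resulting lower bound of~$2$ for the multiplicity of $\s\otimes\tau$ in $\sy{\rp_{(a,b)}(\pi)}$ against the upper bound of~$1$ furnished by hypothesis~(2) together with the inclusion $\pi\hookrightarrow\s\times\tau$. To set the stage, first note that $\s\times\tau$ has finite length, hence so does its subrepresentation $\pi$; assuming $\pi\neq0$ (the statement being empty otherwise), $\pi$ then admits an irreducible subrepresentation $\pi_0$ and an irreducible quotient $\pi_1$.

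The first ingredient is a direct consequence of adjunction: if $\kappa\in\GH_n$ embeds into $\s\times\tau=\ip_{(a,b)}(\s\otimes\tau)$, then, via the adjunction between $\rp_{(a,b)}$ and $\ip_{(a,b)}$, this embedding corresponds to a nonzero morphism $\rp_{(a,b)}(\kappa)\to\s\otimes\tau$; since the target is irreducible this morphism is surjective, so $\s\otimes\tau$ is a quotient, hence a subquotient, of $\rp_{(a,b)}(\kappa)$. Applying this to $\pi_0$, I obtain that $\s\otimes\tau$ occurs in $\sy{\rp_{(a,b)}(\pi_0)}$.

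The dual statement --- that if $\kappa\in\GH_n$ is a quotient of $\tau\times\s$ then $\s\otimes\tau$ again occurs in $\sy{\rp_{(a,b)}(\kappa)}$ --- is where the real work lies, and I expect it to be the main obstacle. The idea is to pass to contragredients. Since the contragredient commutes with normalized parabolic induction, $\kappa^\vee$ embeds into $(\tau\times\s)^\vee=\tau^\vee\times\s^\vee=\ip_{(b,a)}(\tau^\vee\otimes\s^\vee)$; applying the first ingredient with the parabolic of type $(b,a)$ and the irreducible representation $\tau^\vee\otimes\s^\vee$ of its Levi shows that $\tau^\vee\otimes\s^\vee$ occurs in $\sy{\rp_{(b,a)}(\kappa^\vee)}$. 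Next I would transport this back to $\kappa$ by means of two standard compatibilities of the normalized functors, both valid because $\R$ has characteristic different from~$p$: first, the contragredient identifies $\rp_{(b,a)}(\kappa^\vee)$ with the contragredient of the Jacquet module of $\kappa$ along the \emph{opposite} parabolic of type $(b,a)$; second, conjugation by the permutation matrix exchanging the two diagonal blocks identifies that opposite-parabolic Jacquet functor with $\rp_{(a,b)}$, at the cost of interchanging the two tensor factors. Tracking $\tau^\vee\otimes\s^\vee$ through these identifications --- and checking that no spurious character twist is introduced along the way --- yields that $\s\otimes\tau$ is a subquotient of $\rp_{(a,b)}(\kappa)$. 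Since $\pi_1$, being a quotient of $\pi$, is a quotient of $\tau\times\s$, this gives that $\s\otimes\tau$ occurs in $\sy{\rp_{(a,b)}(\pi_1)}$.

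Finally, I would argue by contradiction. Suppose $\pi$ is reducible; then the kernel $N$ of the surjection $\pi\twoheadrightarrow\pi_1$ is nonzero, and contains an irreducible subrepresentation $\pi_0$, which is in turn an irreducible subrepresentation of $\s\times\tau$. Because $\rp_{(a,b)}$ is exact and $\pi_0\hookrightarrow N$, one has in the Grothendieck group of finite-length representations of $\M_{(a,b)}$ the inequality $[\rp_{(a,b)}(\pi)]=[\rp_{(a,b)}(\pi_1)]+[\rp_{(a,b)}(N)]\>[\rp_{(a,b)}(\pi_1)]+[\rp_{(a,b)}(\pi_0)]$, so by the two ingredients above the multiplicity of $\s\otimes\tau$ in $\sy{\rp_{(a,b)}(\pi)}$ is at least~$2$. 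On the other hand, $\pi\hookrightarrow\s\times\tau$ and exactness of $\rp_{(a,b)}$ force that multiplicity to be at most the multiplicity of $\s\otimes\tau$ in $\sy{\rp_{(a,b)}(\s\times\tau)}$, which is~$1$ by hypothesis~(2). This contradiction shows that $\pi$ is irreducible.
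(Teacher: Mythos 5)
The paper does not actually prove this statement---it is quoted verbatim from [MSc, Lemme 2.5]---so there is no internal proof to compare against; your argument is correct and is essentially the standard one given in that reference. The two key points (Frobenius reciprocity forces $\s\otimes\tau$ to be a quotient of $\rp_{(a,b)}(\pi_0)$ for any irreducible subrepresentation $\pi_0$ of $\s\times\tau$; and the dual statement for irreducible quotients of $\tau\times\s$, which you obtain via contragredients, Casselman duality and conjugation by the block-swap Weyl element, and which one could get slightly more directly from the second adjunction $\Hom(\ip_{(b,a)}(\tau\otimes\s),\kappa)\simeq\Hom(\tau\otimes\s,\bar{\rp}_{(b,a)}(\kappa))$) are both valid in the $\ell$-modular setting, and the multiplicity-one count in the Grothendieck group then closes the argument exactly as you say.
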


Finally, we will use the following lemma
(which follows from \cite[Proposition 2.2]{MSc}).

\begin{lemm}
\label{PERM}
Assume the induced representation \eqref{indu} is irreducible. 
Then for all permuta\-tion $w$ of $\{1,2,\dots,n\}$ there is an 
isomorphism 
$\pi_{w(1)}\times\pi_{w(2)}\tdt\pi_{w(r)}\simeq
\pi_1\times\pi_2\tdt\pi_r$. 
\end{lemm}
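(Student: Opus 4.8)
The plan is to reduce the statement to the case of a transposition of two adjacent factors, and then to invoke \cite[Proposition 2.2]{MSc} together with the hypothesis of irreducibility. First I would observe that the symmetric group $\mathfrak{S}_r$ is generated by the elementary transpositions $s_k=(k,k+1)$, so it suffices to prove that for each $k$ there is an isomorphism
\begin{equation*}
\pi_1\tdt\pi_{k-1}\times\pi_{k+1}\times\pi_k\times\pi_{k+2}\tdt\pi_r
\simeq\pi_1\times\pi_2\tdt\pi_r,
\end{equation*}
and then compose these isomorphisms along a reduced word for a given $w$. Here one must be a little careful: the irreducibility hypothesis is on the original induced representation $\pi_1\tdt\pi_r$, not \emph{a priori} on the intermediate products obtained after partial permutations. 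But since parabolic induction is transitive and each intermediate product surjects onto (and is embedded by, via the adjoint constructions) the same irreducible object, one sees inductively that all the intermediate products are themselves irreducible; more simply, one can argue that any product $\pi_{w(1)}\tdt\pi_{w(r)}$ has the same semisimplification as $\pi_1\tdt\pi_r$ by the Geometric Lemma, hence is irreducible as soon as it has a subobject or quotient isomorphic to that unique constituent.

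The key step is the two-factor case: given irreducible $\sigma\in\GH_a$, $\tau\in\GH_b$ such that $\sigma\times\tau$ is irreducible, show $\sigma\times\tau\simeq\tau\times\sigma$. This is exactly the content one extracts from \cite[Proposition 2.2]{MSc}: that proposition provides, for any pair of irreducibles, a nonzero intertwining operator between $\sigma\times\tau$ and $\tau\times\sigma$ (built from the standard intertwining integral, or from the fact that $\tau\times\sigma$ appears in $\rp$-$\ip$ adjunction as related to $\sigma\times\tau$), and when $\sigma\times\tau$ is irreducible any such nonzero map is forced to be an isomorphism since its image is a nonzero subrepresentation of the irreducible $\tau\times\sigma$ and its kernel is a proper subrepresentation of the irreducible $\sigma\times\tau$. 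To apply this in the multi-factor setting I would use transitivity of induction to write, for a fixed $k$,
\begin{equation*}
\pi_1\times\pi_2\tdt\pi_r\simeq\pi_1\tdt\pi_{k-1}\times(\pi_k\times\pi_{k+1})\times\pi_{k+2}\tdt\pi_r,
\end{equation*}
so that it is enough to know $\pi_k\times\pi_{k+1}\simeq\pi_{k+1}\times\pi_k$; and this in turn follows from the two-factor case provided $\pi_k\times\pi_{k+1}$ is irreducible, which holds because it is a subquotient of the irreducible $\pi_1\tdt\pi_r$ via exactness of the Jacquet functor applied to an appropriate parabolic — or, again, because irreducibility of the full product forces irreducibility of $\pi_k\times\pi_{k+1}$ by the geometric lemma applied to the relevant Levi.

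The main obstacle I anticipate is the bookkeeping needed to guarantee that every intermediate two-factor product $\pi_k\times\pi_{k+1}$ that arises along a reduced word is itself irreducible, since \cite[Proposition 2.2]{MSc} only directly yields an intertwining map, and the swap isomorphism is genuine only in the irreducible case. The clean way around this is to note that \emph{all} products $\pi_{\tau(1)}\tdt\pi_{\tau(r)}$, for every $\tau\in\mathfrak{S}_r$, have the same class in the Grothendieck group (by the Geometric Lemma, which is symmetric in the order of the factors), hence each is either irreducible or not \emph{independently} of the order; so the hypothesis that one of them is irreducible forces all of them, and all their sub-products obtained by grouping consecutive factors, to be irreducible. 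With that remark in place, the induction on the length of $w$ goes through without further difficulty.
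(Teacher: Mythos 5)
Your proposal reaches the right conclusion, but it is organized around a heavier route than the one the paper intends, and the two justifications you offer for the key input need comment. The paper's proof is just the citation: \cite[Proposition 2.2]{MSc} says that the class of a product in the Grothendieck group does not depend on the order of the factors, and granting this the lemma is immediate in one step: $\pi_{w(1)}\tdt\pi_{w(r)}$ has the same semisimplification as the irreducible representation $\pi_1\tdt\pi_r$, hence has length one and is isomorphic to it. This is exactly the observation you make in your closing paragraph, so the reduction to adjacent transpositions, the induction on a reduced word, and the bookkeeping about irreducibility of intermediate products are all unnecessary. Two caveats. First, your reading of \cite[Proposition 2.2]{MSc} as supplying a nonzero intertwining operator $\sigma\times\tau\to\tau\times\sigma$ ``built from the standard intertwining integral'' is not what that proposition provides, and such analytic constructions are not available in general over a coefficient field of positive characteristic; the statement one actually has is the equality of semisimplifications. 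Second, your alternative justification that this equality follows ``by the Geometric Lemma, which is symmetric in the order of the factors'' is not automatic here: the geometric lemma only identifies the Jacquet modules of the two products, and since in the modular setting cuspidal (non\-supercuspidal) representations can occur as subquotients of proper parabolic inductions --- a phenomenon this paper emphasizes in its introduction --- equality of all proper Jacquet modules does not by itself force equality of semisimplifications. That inference is precisely the nontrivial content of \cite[Proposition 2.2]{MSc}, which is why the paper cites it; with that proposition taken as the input (rather than re-derived from the geometric lemma), your final paragraph already constitutes a complete and correct proof.
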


\subsection{Classification of $\GH_n$ by multisegments}
\label{repmodl}

In \cite{MSc} M\'\i nguez and Sécherre give a classification of the union of 
all $\GH_{n}$'s in terms of multisegments, that generalizes \cite{Z,T,Vigs}. 
We will need some properties of this classification, that we recall below. 

Given two half-integers $a,b\in\frac{1}{2}\ZZ$, we write:
\begin{equation*}
a\equiv b \text{ if }
\left\{
\begin{array}{ll}
a-b\in\ee\ZZ
& \text{if $\R$ has positive characteristic},\\
a=b& \text{otherwise}.
\end{array}
\right.
\end{equation*}

\begin{defi}
\begin{enumerate}
\item
A \textit{segment} is a pair $(a,b)$ of half-integers such that $a\<b$. 
\item
Two segments $(a,b)$ and $(c,d)$ are \textit{equivalent} if $b-a=d-c $ and 
$a\equiv c$.
The equivalence class of $(a,b)$ will be denoted $[a,b]$
(and just $[a]$ if $b=a$).
\item
A \textit{multisegment} is a formal finite sum of classes of segments, that is 
a element in the free semigroup generated by classes of segments. 
\end{enumerate}
\end{defi}

Let $\l=(\l_1,\l_2,\dots)$ and $\mu=(\mu_1,\mu_2,\dots)$ be 
two partitions of a given integer $n$.
We say that $\l$ dominates $\mu$, denoted $\l\trianglerighteq\mu$, if: 
\begin{equation*}
\l_1+\dots+\l_k\>\mu_1+\dots+\mu_k
\end{equation*}
for all integers $k\>1$. 
We write $\l\triangleright\mu$ if we have in addition $\l\neq\mu$.

Given a nonzero multisegment $\m=[a_1,b_1]+\dots+[a_r,b_r]$, 
write $n_i=b_i-a_i+1$ for all integer $i\in\{1,\dots,r\}$
and let $\l(\m)$ denote the partition 
associated with $(n_1,n_2,\dots,n_r)$. 
The \textit{length} of $\m$ is the sum $n=n_1+n_2+\dots+n_r$. 

One of the main results of \cite{MSc} is the construction of a map 
$\m\mapsto\Z(\m)$ that associates to any multisegment $\m$ 
a class of irreducible representation $\Z(\m)$ with 
the following properties:

\medskip

\begin{tabular}{lp{14cm}}
{\bf P1} & 
If $\m$ is a segment $[a,b]$ of length $n\>1$, then $\Z([a,b])$
is the character $\nu_{n}^{a+(n-1)/2} \in \GH_{n}$. \\
\end{tabular}

\medskip

\begin{tabular}{lp{14cm}}
{\bf P2} &
If $\m=[a_1,b_1]+\dots+[a_r,b_r]$, then $\Z(\m)$ 
occurs as a subquotient of the rep\-re\-sen\-tation
$\Z([a_1,b_1])\tdt\Z([a_r,b_r])$ with multiplicity $1$. \\
\end{tabular}

\medskip

\begin{tabular}{lp{14cm}}
{\bf P3} &
If $\pi$ is an irreducible subquotient  of 
$\Z([a_1,b_1])\tdt\Z([a_r,b_r])$, then there exists a unique multi\-segment 
$\n=[c_1,d_1]+\dots+[c_s,d_s]$ such that $\pi=\Z(\n)$.
Moreover, we have $\l(\n)\trianglerighteq\l(\m)$ and:
\end{tabular}
\begin{equation*}
\sum\limits_{i=1}^{s} ([c_i]+\dots+[d_i])=
\sum\limits_{i=1}^{r} ([a_i]+\dots+[b_i]).
\end{equation*}

\medskip

\begin{tabular}{lp{14cm}}
{\bf P4} &
If $k$ is a half-integer, then
$\Z([a_1+k,b_1+k]+\dots+[a_r+k,b_r+k])=\Z(\m)\cdot\nu^k$. \\
\end{tabular}

\medskip

\begin{tabular}{lp{14cm}}
{\bf P5} & The contragredient of $\Z(\m)$ is $\Z(\m^\vee)$ with 
$\m^\vee=[-b_1,-a_1]+\dots+[-b_r,-a_r]$.\\
\end{tabular}

\medskip

We finally have the following definition and result. 

\begin{defi}
Two segments $[a,b]$ and $[c,d]$ are \textit{linked} if:
\begin{enumerate}
\item
the length of $[a,b]$ is greater than or equal to that of $[c,d]$, 
and there exists a half-integer $k$ such that $c\<k\<d$, 
and either $k\equiv b+1$ or $k\equiv a-1$;
\item
the length of $[c,d]$ is greater than or equal to that of $[a,b]$, 
and there exists a half-integer $k$ such that $a\<k\<b$, 
and either $k\equiv d+1$ or $k\equiv c-1$.
\end{enumerate}
\end{defi}

\begin{prop}[\cite{MSc}, Théorème 7.26]
\label{Linked}
Let $\Delta_1,\dots,\Delta_r$ be segments. 
The representation $\Z(\Delta_1)\times\dots\times\Z(\Delta_r)$ is ir\-reducible if and 
only if for all $i\neq j$, the segments $\Delta_i,\Delta_j$ are not linked. 
\end{prop}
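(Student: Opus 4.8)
The plan is to reduce each direction to a minimal case and then induct. For the direct implication --- if some $\Delta_i,\Delta_j$ are linked, then $\Z(\Delta_1)\times\dots\times\Z(\Delta_r)$ is reducible --- it is enough to treat $r=2$: since normalized induction is exact and the image of $\pi_1\times\dots\times\pi_r$ in the Grothendieck group is symmetric in the $\pi_k$'s, a proper nonzero subrepresentation of $\Z(\Delta_i)\times\Z(\Delta_j)$ yields a proper nonzero subrepresentation of $\bigl(\Z(\Delta_i)\times\Z(\Delta_j)\bigr)\times\prod_{k\neq i,j}\Z(\Delta_k)$, so that $\Z(\Delta_1)\times\dots\times\Z(\Delta_r)$, which has the same length, is reducible. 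For the converse --- if no two $\Delta_i$ are linked, the product is irreducible --- I would induct on $r$. Call $[a,b]$ and $[c,d]$ \emph{commensurable} if $a-c\in\ZZ$; non-commensurable segments are never linked (inspect the definition), and by \textbf{P1} their $\Z$'s have cuspidal supports containing no common cuspidal representation of $\G_1$, even after an integral power-twist by $\nu$. So if the $\Delta_i$ meet at least two commensurability classes, Proposition \ref{Lanzmann} and Lemma \ref{PERM} let one split the product along the classes and conclude by the induction hypothesis; this leaves the case of pairwise commensurable $\Delta_i$, i.e. all on one line.

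For $r=2$ with $\Delta_1,\Delta_2$ linked I only need length $\geqslant 2$. By \textbf{P2}, $\Z(\Delta_1+\Delta_2)$ is a constituent of $\Z(\Delta_1)\times\Z(\Delta_2)$; I claim so is $\Z\bigl((\Delta_1\cup\Delta_2)+(\Delta_1\cap\Delta_2)\bigr)$ --- read as $\Z(\Delta_1\cup\Delta_2)$ when the intersection is empty --- where $\Delta_1\cup\Delta_2$ and $\Delta_1\cap\Delta_2$ denote the union and intersection of the linked segments. These two constituents are distinct, since their underlying partitions differ (one strictly dominates the other), so it suffices to locate the second one inside $\Z(\Delta_1)\times\Z(\Delta_2)$; this is done by a Jacquet-module computation via the Geometric Lemma, where the linkedness of $\Delta_1,\Delta_2$ is exactly what makes the relevant non-diagonal matrix in $\mathscr{B}$ occur. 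Hence $\Z(\Delta_1)\times\Z(\Delta_2)$ is reducible.

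For the converse, assume $\Delta_1,\dots,\Delta_r$ pairwise commensurable and pairwise unlinked, and induct on $r$ (the case $r=1$ being trivial). Choose $\Delta_r$ extremal --- say with $b_r$ maximal and, among those, $a_r$ minimal --- and set $\pi'=\Z(\Delta_1)\times\dots\times\Z(\Delta_{r-1})$. By induction $\pi'$ is irreducible, and since $\Z(\m')$ with $\m'=\Delta_1+\dots+\Delta_{r-1}$ occurs in $[\pi']$ by \textbf{P2}, necessarily $\pi'=\Z(\m')$. I would then verify:
\begin{enumerate}
\item with $\m=\m'+\Delta_r$, the representation $\Z(\m)$ is a subrepresentation of $\Z(\Delta_r)\times\pi'$ and a quotient of $\pi'\times\Z(\Delta_r)$ --- the standard submodule/quotient behaviour of the $\Z$-construction for an extremal segment, the internal order of $\Delta_1,\dots,\Delta_{r-1}$ being irrelevant because $\pi'$ is irreducible;
\item the multiplicity of $\Z(\Delta_r)\otimes\Z(\m')$ in $\rp_{(n_r,\,n-n_r)}\bigl(\Z(\Delta_r)\times\Z(\m')\bigr)$ equals $1$: by the Geometric Lemma this multiplicity is a sum over the matrices $\B\in\mathscr{B}$, and the extremality of $\Delta_r$ together with the hypothesis that $\Delta_r$ is unlinked to each $\Delta_j$ forces only the diagonal matrix to contribute, with contribution $1$.
\end{enumerate}
The criterion of \cite[Lemme~2.5]{MSc} recalled above then shows that $\Z(\m)$ is irreducible and equal to $\Z(\Delta_r)\times\Z(\m')=\Z(\Delta_1)\times\dots\times\Z(\Delta_r)$, which is therefore irreducible.

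The step I expect to be the main obstacle is the combinatorial core underlying step (2) and the $r=2$ computation above: running the Geometric Lemma to pin down exactly which matrices $\B\in\mathscr{B}$ contribute to the pertinent Jacquet modules, and proving that ``unlinked'' is precisely the condition isolating the diagonal term. This is where the modular phenomena must be controlled --- the relation $\equiv$ and the $\ee$-periodicity of cuspidal lines, together with the fact that, unlike in the complex case, lengths of induced representations are not governed by a closed formula, so the domination property \textbf{P3} must be used to pin down the constituents rather than merely to count them.
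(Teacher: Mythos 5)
The paper does not prove this statement: Proposition \ref{Linked} is quoted from \cite[Th\'eor\`eme 7.26]{MSc}, one of the main results of that classification paper, so there is no internal proof to compare yours with. Judged on its own terms, your sketch is the classical Zelevinsky argument transported to the modular setting, and the two places you yourself flag as ``the main obstacle'' are genuine gaps rather than routine verifications. For the reducibility direction with $r=2$ you assert that $\Z\bigl((\Delta_1\cup\Delta_2)+(\Delta_1\cap\Delta_2)\bigr)$ occurs in $\Z(\Delta_1)\times\Z(\Delta_2)$ ``by a Jacquet-module computation via the Geometric Lemma''. In the modular setting the union is not even well defined (when $a\equiv1$ and $b\equiv-1$ a segment class can be glued at either end, cf.\ case (4) of Proposition \ref{P61}), and, more seriously, the Geometric Lemma only computes the Jacquet module of the product; deciding which irreducible subquotients of the product account for a given term is exactly the kind of delicate analysis carried out in Propositions \ref{P61} and \ref{length2seg} --- and there only for a second segment of length $\leqslant2$, with length-$3$ phenomena and multiplicities $>1$ when $e=2$. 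Nothing in your sketch supplies this for segments of arbitrary lengths.

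For the converse, your step (1) --- that $\Z(\m)$ is a subrepresentation of $\Z(\Delta_r)\times\Z(\m')$ and a quotient of $\Z(\m')\times\Z(\Delta_r)$ for an extremal $\Delta_r$ --- is not among the recalled properties {\bf P1}--{\bf P6} and is itself a nontrivial part of the \cite{MSc} machinery, so invoking it here is close to circular. In step (2), the multiplicity of $\Z(\Delta_r)\otimes\Z(\m')$ in $\rp_{(n_r,n-n_r)}(\Z(\Delta_r)\times\Z(\m'))$ cannot be read off from the Geometric Lemma alone: the terms of that sum are products which are in general reducible, and to know how often the irreducible $\Z(\m')$ occurs in them you again need information of type {\bf P6}, or the Jacquet modules of $\Z(\m')$, which are not available at this stage. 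Finally, the splitting along ``commensurability classes'' does not follow from {\bf P1} plus Proposition \ref{Lanzmann} as stated: when $\sqrt{q}$ has finite odd order one has $\nu^{1/2}=\nu^{k}$ for some integer $k$, so segments whose endpoints differ by a strict half-integer need not have disjoint cuspidal lines; this is handled in \cite{MSc} by formulating segments directly in terms of cuspidal representations, but it is not automatic in your set-up. In short, the proposal is a reasonable strategy outline following the complex-case template, but the core modular difficulties --- precisely those that make \cite[Th\'eor\`eme 7.26]{MSc} a hard theorem, proved there with machinery well beyond what you invoke --- are left unresolved.
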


\subsection{Product of two characters}
\label{product}

Here are some useful properties of the representation $\Z([a,b])$ for a 
segment $[a,b]$.

\begin{prop}
\label{Rappel}
Let $[a,b]$ be a segment of length $n\>2$, and let $k\in\{1,\dots,n-1\}$.
\begin{enumerate}
\item
We have $\rp_{(k,n-k)}(\Z([a,b]))=\Z([a,a+k-1])\otimes\Z([a+k,b])$.
\item
We have $\bar{\rp}_{(k,n-k)}(\Z([a,b]))=\Z([a+n-k,b])\otimes\Z([a,a+n-k-1])$
where $\bar{\rp}_{(k,n-k)}$ de\-no\-tes the Jacquet functor associated 
to $\M_{(k,n-k)}$ and the parabolic subgroup opposite to $\P_{(k,n-k)}$. 
\item
Assume $e>1$.
Then $\Z([a,b-1])\times\nu^b$
has a unique irreducible subrepresen\-tation and 
$\Z([a+1,b])\times\nu^a$
has a unique irreducible quotient, both isomorphic to $\Z([a,b])$. 
\end{enumerate}
\end{prop}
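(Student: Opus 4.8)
The plan is to treat the three items in turn, using the classification by multisegments recalled in {\bf P1}--{\bf P5} together with the Geometric Lemma. For item (1), the representation $\Z([a,b])$ is the character $\nu_n^{a+(n-1)/2}$ by {\bf P1}, so its Jacquet module $\rp_{(k,n-k)}(\Z([a,b]))$ is computed directly: the restriction of a character of $\G_n$ to the Levi $\M_{(k,n-k)}\simeq\G_k\times\G_{n-k}$ is the external tensor product of the corresponding characters of $\G_k$ and $\G_{n-k}$, which by {\bf P1} are $\Z([a,a+k-1])$ and $\Z([a+k,b])$. Item (2) is the same computation with the opposite parabolic: one only has to check how the normalization factors ($\nu$-twists) rearrange, which produces the shift $[a+n-k,b]\otimes[a,a+n-k-1]$; this is purely a bookkeeping of the modulus character, identical to the complex case.

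For item (3), assume $e>1$. First I would invoke Lemma \ref{UIQ}: since $\Z([a,b-1])\in\GH_{n-1}$ and $\nu^b=\Z([b])\in\GH_1$, the product $\Z([a,b-1])\times\nu^b$ has a unique irreducible subrepresentation $\UIS$ and the product $\Z([a+1,b])\times\nu^a$ (after swapping, $\nu^a\times\Z([a+1,b])$) has a unique irreducible quotient $\UIQ$. So the only thing left is to identify both of these with $\Z([a,b])$. By {\bf P2}, $\Z([a,b])=\Z([a]+[a+1,b])$ occurs as a subquotient of $\nu^a\times\Z([a+1,b])$, and likewise as a subquotient of $\Z([a,b-1])\times\nu^b$; so it suffices to show $\Z([a,b])$ is actually the relevant quotient, resp.\ subrepresentation. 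For the quotient statement: the segments $[a]$ and $[a+1,b]$ are linked (take $k=a$, with $k\equiv(a+1)-1$ and $|[a+1,b]|\ge|[a]|$), so by Proposition \ref{Linked} the product $\nu^a\times\Z([a+1,b])$ is reducible, hence by Lemma \ref{UIQ} it has length $\ge 2$ with a unique irreducible quotient; one then checks that this quotient is $\Z([a,b])$ and not the other constituent $\Z([a]+[a+1,b])$ with $[a]$, $[a+1,b]$ unlinked—wait, here is the subtlety. There are only two multisegments of this cuspidal support and length data, namely $[a,b]$ itself and $[a]+[a+1,b]$; the latter gives the irreducible representation $\nu^a\times\Z([a+1,b])$ when the segments are \emph{not} linked, but here they are linked, so by {\bf P3} the constituents are exactly $\Z([a,b])$ and $\Z([a]+[a+1,b])$, each with multiplicity one (multiplicity one from {\bf P2} applied to each, plus a length count via the Geometric Lemma). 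It remains to decide which is the quotient. I would do this by computing the Jacquet module $\rp_{(1,n-1)}$ of $\nu^a\times\Z([a+1,b])$ via the Geometric Lemma and using Frobenius reciprocity: $\Z([a]+[a+1,b])$ embeds in $\nu^a\times\Z([a+1,b])$ (its "standard module" presentation puts it as a sub), so the unique irreducible quotient must be the other one, $\Z([a,b])$. Dually (or by applying {\bf P5} and contragredience), $\Z([a,b])$ is the unique irreducible subrepresentation of $\Z([a,b-1])\times\nu^b$.

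The main obstacle is precisely this last identification in item (3): distinguishing, among the two constituents of the length-two reducible product, which one sits as the quotient and which as the sub. The cleanest route is to note that the "Zelevinsky-type" multisegment $[a]+[a+1,b]$ (the one that is $\trianglerighteq$-maximal in the sense of {\bf P3}, i.e.\ corresponds to the more decomposed partition) always realizes $\Z$ of it as a \emph{sub} of the standard product ordered so that the singleton comes first—this is part of the general structure in \cite{MSc}, and in the complex case is Zelevinsky's result; here it follows from {\bf P2} together with the fact that the sub is characterized by its Jacquet module via Frobenius reciprocity. Once that placement is fixed, uniqueness from Lemma \ref{UIQ} forces $\Z([a,b])$ into the complementary slot, and the proof is complete.
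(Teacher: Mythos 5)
Your items (1) and (2) are fine: since $\Z([a,b])$ is the character $\nu_n^{a+(n-1)/2}$ by {\bf P1}, both Jacquet modules are this character restricted to the Levi, twisted by the square root of the modulus character, and it is exactly that normalization twist which produces the stated shifts (so your phrase ``the restriction \dots is the external tensor product of the corresponding characters'' is only correct once the twist is made explicit, as you do acknowledge for (2)). Note also that the paper gives no argument for this proposition; it simply cites Propositions 7.16 and 7.17 of \cite{MSc}, so a self-contained proof is welcome --- but your treatment of item (3) contains two genuine errors.

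First, the sub/quotient placement is backwards, and you silently identify the irreducible quotients of the two orderings. By Frobenius reciprocity and your own item (1), $\Hom(\Z([a,b]),\nu^a\times\Z([a+1,b]))=\Hom(\rp_{(1,n-1)}\Z([a,b]),\nu^a\otimes\Z([a+1,b]))\neq0$, so it is $\Z([a,b])$, not $\Z([a]+[a+1,b])$, that embeds in $\nu^a\times\Z([a+1,b])$; the Zelevinsky-type standard presentation realizes $\Z([a]+[a+1,b])$ as a subrepresentation of the \emph{other} order $\Z([a+1,b])\times\nu^a$ (compare Proposition \ref{P61}(2), where the submodule of $\Z([a,b])\times\1$ is $\Z([a,b]+[0])$, the quotient being the glued segment). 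Moreover $\UIQ(\nu^a\times\Z([a+1,b]))$ is not $\UIQ(\Z([a+1,b])\times\nu^a)$: by the swap identities stated after Lemma \ref{UIQ} it equals $\UIS(\Z([a+1,b])\times\nu^a)$. Run correctly in the order you chose, your elimination argument would therefore identify $\Z([a]+[a+1,b])$ as the quotient of $\nu^a\times\Z([a+1,b])$, and would say nothing about the quotient of $\Z([a+1,b])\times\nu^a$, which is what the proposition asserts; the conclusion you reach agrees with the statement only because two misplacements cancel. (The written equality ``$\Z([a,b])=\Z([a]+[a+1,b])$'' is also false: these are distinct constituents.) Second, your claim that the only possible constituents are $\Z([a,b])$ and $\Z([a]+[a+1,b])$ fails when $e$ divides $n$: then every shifted segment $[a+j,b+j]$ has the same cuspidal support, and $\Z([a+1,b])\times\nu^a$ in fact has length three, with $\Z([a+1,b+1])$ as a third constituent (Proposition \ref{P61}(4) after a twist), so ``the quotient is the other constituent'' is unavailable there. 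The repair is short and stays inside your toolkit: item (1) with $k=n-1$ and Frobenius reciprocity give $\Hom(\Z([a,b]),\Z([a,b-1])\times\nu^b)\neq0$, and Lemma \ref{UIQ} then yields $\UIS(\Z([a,b-1])\times\nu^b)=\Z([a,b])$; for the quotient statement either dualize this (using {\bf P5} and $(\pi\times\sigma)^\vee\simeq\pi^\vee\times\sigma^\vee$, the segment $[-b,-a]$ running over all segments as $[a,b]$ does), or use the second adjunction together with item (2), which gives $\bar{\rp}_{(n-1,1)}\Z([a,b])=\Z([a+1,b])\otimes\nu^a$ and hence a nonzero map $\Z([a+1,b])\times\nu^a\to\Z([a,b])$, with uniqueness again from Lemma \ref{UIQ}.
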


\begin{proof}
See \cite[Propositions 7.16 and 7.17]{MSc}.
\end{proof}

\begin{prop}
\label{P61}
Assume $e>1$.
Let $a\<b$ be integers and write $\pi(a,b)=\Z([a,b])\times1$. 
\begin{enumerate}
\item
If $a\nequiv1$ and $b\nequiv-1$, then $\pi(a,b)$ is irreducible. 
\item
If $a\equiv1$ and $b\nequiv-1$, then $\pi(a,b)$ has length $2$
and we have an exact sequence: 
\begin{equation*}
0\to\Z([a,b]+[0])\to\pi(a,b)\to\Z([a-1,b])\to0.
\end{equation*}
\item
If $a\nequiv1$ and $b\equiv-1$, then $\pi(a,b)$ has length $2$
and we have an exact sequence: 
\begin{equation*}
0\to\Z([a,b+1])\to\pi(a,b)\to\Z([a,b]+[0])\to0.
\end{equation*}
\item
If $a\equiv1$ and $b\equiv-1$, then $\pi(a,b)$ has length $3$ 
with irreducible subquotients $\Z([a-1,b])$, $\Z([a,b+1])$ and 
$\Z([a,b]+[0])$.
\end{enumerate}
\end{prop}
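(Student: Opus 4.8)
The plan is to analyze $\pi(a,b) = \Z([a,b])\times 1 = \nu_n^{a+(n-1)/2}\times \nu^0$ (where $n=b-a+1$) by combining the Geometric Lemma, the cuspidal-support results, and the classification properties {\bf P1}--{\bf P5}. First I would record, using Proposition \ref{Rappel}(1) and the Geometric Lemma, the semisimplified Jacquet module $[\rp_{(k,1,n-k)}]$ and in particular $[\rp_{(n,1)}(\pi(a,b))]$ and $[\rp_{(1,n)}(\pi(a,b))]$; this bounds the length and, crucially, shows that any irreducible subquotient has cuspidal support $[a]+[a+1]+\dots+[b]+[0]$. By {\bf P3}, every irreducible subquotient is of the form $\Z(\n)$ for a multisegment $\n$ of that cuspidal support with $\l(\n)\trianglerighteq \l([a,b]+[0])$. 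In characteristic zero or when $0\nequiv a-1$ and $0\nequiv b+1$ the only such multisegment (up to the domination constraint, using that $0$ can only merge with $[a,b]$ if it sits at an endpoint modulo $e$) is $[a,b]+[0]$ itself, which gives irreducibility in case (1) after also checking $\pi(a,b)$ is generated by (and embeds in) its $\Z([a,b]+[0])$-isotypic piece — but the cleanest route to (1) is Proposition \ref{Linked}: the segments $[a,b]$ and $[0]$ are linked precisely when $a\equiv 1$ or $b\equiv -1$ (one checks the two clauses of the definition of "linked" directly, noting $[0]$ has length $1\le n$), so $a\nequiv 1$ and $b\nequiv -1$ forces non-linkedness and hence irreducibility.

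For cases (2) and (3), I would argue by symmetry and contragredience: case (3) follows from case (2) applied to $[-b,-a]$ via {\bf P5} (since $a\nequiv 1$, $b\equiv -1$ translates to $-b\equiv 1$, $-a\nequiv -1$, and $\pi(a,b)^\vee = \Z([-b,-a])\times 1 = \pi(-b,-a)$), so it suffices to treat (2). Assume $a\equiv 1$, $b\nequiv -1$. The candidate multisegments of cuspidal support $[a]+\dots+[b]+[0]$ with $\l(\n)\trianglerighteq\l([a,b]+[0])$ are exactly $[a,b]+[0]$ and $[a-1,b]$ (the latter uses $a-1\equiv 0$; no other merge is allowed since $b+1\nequiv 0$), so $\pi(a,b)$ has length at most $2$ with these as its only possible constituents. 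To see the length is exactly $2$ and to pin down the exact sequence, I would: show $\Z([a-1,b])$ is a \emph{quotient} of $\pi(a,b)$ — since $\Z([a-1,b])$ is a quotient of $\Z([a,b])\times\nu^{a-1}=\Z([a,b])\times\nu^0=\pi(a,b)$ by Proposition \ref{Rappel}(3) (valid as $e>1$) — and show $\Z([a,b]+[0])$ is a \emph{subrepresentation}: $\Z([a,b]+[0])$ embeds in $\Z([a,b])\times\Z([0])=\pi(a,b)$ by {\bf P2} combined with the fact that $\Z([a,b]+[0])$ is the unique irreducible submodule here (which in turn follows because $[0]=\Z([a,b+1])$-type merges are excluded by $b\nequiv -1$, so $\Z([a,b])$ does not appear "below" — more carefully, one uses that $\Z([a,b]+[0])$ is not a quotient, by a derivative or Jacquet-module computation showing $\rp_{(1,n)}(\pi(a,b))$ has $\nu^0\otimes\Z([a,b])$ with multiplicity $1$ sitting in the socle). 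Then $\Z([a,b]+[0])\ne\Z([a-1,b])$ forces length $2$ and the displayed exact sequence.

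The main obstacle I anticipate is the \emph{directionality} in cases (2), (3), (4): knowing the Jordan--Hölder constituents is easy from the Geometric Lemma and {\bf P3}, but deciding which constituent is the sub and which is the quotient requires a genuine computation — most naturally checking in which layer of the Jacquet module $\rp_{(1,n)}$ or $\rp_{(n,1)}$ a given constituent's "top" piece appears, or invoking Proposition \ref{Rappel}(3) for the unique-quotient/unique-submodule statements about $\Z([a,b])$ twisted by a character at an endpoint. For case (4), where $a\equiv 1$ and $b\equiv -1$, the candidate multisegments are $[a,b]+[0]$, $[a-1,b]$, and $[a,b+1]$ (both endpoint merges now allowed, but the double merge $[a-1,b+1]$ needs $0$ in two places and is not available from a single $[0]$), giving length at most $3$; one then shows all three genuinely occur (e.g.\ $\Z([a-1,b])$ is a quotient via Proposition \ref{Rappel}(3), $\Z([a,b+1])$ is a submodule via the contragredient of that statement, and $\Z([a,b]+[0])$ must appear since the first two have distinct cuspidal-support-refinements consistent only with a third constituent by an Euler-characteristic / multiplicity count in $[\rp_{(n,1)}]$), so the length is exactly $3$. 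Here no exact sequence is claimed, only the list of subquotients, which makes case (4) logically the lightest once the length bound and non-vanishing of each candidate are established.
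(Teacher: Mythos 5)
Your overall route is the paper's: case (1) via Proposition \ref{Linked}; the combination of {\bf P2}/{\bf P3} (cuspidal support plus the domination constraint on $\l(\n)$) to restrict the possible constituents to $\Z([a,b]+[0])$ and the merged segments $\Z([a-1,b])$, $\Z([a,b+1])$; Proposition \ref{Rappel}(3) and its dual to realize those segments as quotient, resp.\ subrepresentation; and Jacquet-module computations in the background. Your two variations --- deducing case (3) from case (2) by contragredients via {\bf P5}, and dispensing with a separate submodule argument for $\Z([a,b]+[0])$ once the quotient and the length bound are in hand --- are correct and, if anything, streamline the paper's ``treated in a similar way''. (Note also that identifying \emph{which} segments can occur is itself a Jacquet-module statement rather than pure support counting --- when $e$ divides $b-a+2$ every length-$(b-a+2)$ segment has the right support as a multiset of classes --- but the paper is equally terse on this point, and the $\rp_{(1,\dots,1)}$ comparison settles it.)

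The one step that does not go through as written is the length bound in cases (2)--(4). From ``the candidate multisegments are exactly $[a,b]+[0]$ and $[a-1,b]$'' you conclude ``so $\pi(a,b)$ has length at most $2$''; but {\bf P2} controls only the multiplicity of $\Z([a,b]+[0])$, and knowing the list of possible isomorphism classes of constituents says nothing about the multiplicity of $\Z([a-1,b])$ (likewise for $\Z([a,b+1])$, and for all three candidates in case (4)). The paper bounds the length differently: by the geometric lemma, $\rp_{(n-1,1)}(\pi(a,b))$, with $n=b-a+2$, has the two subquotients $\Z([a,b])\otimes1$ and $\pi(a,b-1)\otimes\nu^{b}$, both irreducible in case (3) (mirror statement for case (2)), whence length at most $2$; in case (4) the second piece has length $2$ by case (2), whence length at most $3$. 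Your opening paragraph does say that $[\rp_{(n,1)}]$ and $[\rp_{(1,n)}]$ ``bound the length'', which is exactly this mechanism, but your case-by-case argument replaces it by the invalid inference above. Alternatively, your enumeration can be repaired directly: the multiplicity of $\Z([a-1,b])$ (resp.\ $\Z([a,b+1])$) in $\pi(a,b)$ is at most the multiplicity of $\nu^{a-1}\otimes\nu^{a}\otimes\dots\otimes\nu^{b}$ (resp.\ $\nu^{a}\otimes\dots\otimes\nu^{b+1}$) in $\rp_{(1,\dots,1)}(\pi(a,b))$, and the geometric lemma together with $e>1$ shows this multiplicity is $1$, since the inserted character $1$ must sit at the left (resp.\ right) end of the chain. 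Either patch restores your argument; without one, the claimed lengths, and hence the exact sequences, are not established.
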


\begin{proof}
Case 1 follows from Proposition \ref{Linked}.
Moreover, the representation 
$\Z([a,b]+[0])$ always occurs as a subquotient with multiplicity $1$ 
and the other irreducible subquotients of $\pi(a,b)$ are of the form 
$\Z(\n)$ with $\l(\n)\triangleright(n-1,1)$, where $n=b-a+2$.
Therefore we have $\l(\n)=(n)$, which 
implies that $\n$ is a segment.
Moreover, such an $\n$ must be of the form $[a,b+1]$ with $b\equiv-1$ 
or $[a-1,b]$ with $a\equiv1$.

Assume that $a\nequiv1$ and $b\equiv-1$. 
By the geometric lemma, the Jacquet module $\rp_{(n-1,1)}(\pi(a,b))$ 
is made of the subquotients $\Z([a,b])\otimes1$ and 
$\pi(a,b-1)\otimes\nu^{b}$, and both are irreducible. 
Thus the representation $\pi(a,b)$ has length $\<2$.
But Pro\-po\-sition \ref{Rappel} shows that $\Z([a,b+1])$ occurs as 
a subrepresentation of $\pi(a,b)$.
The result follows. 

The case where $a\equiv1$ and $b\nequiv-1$ is treated in a similar way.  
Thus it remains to study the case where $a\equiv1$ and $b\equiv-1$. 
In this case, $\pi(a,b-1)$ has length $2$, thus $\pi(a,b)$ has length $\<3$.
By Pro\-po\-sition \ref{Rappel} we see that the length is actually $3$ 
and we get the expected result.
\end{proof}

\begin{exem}
\label{poutargue2}
Assume that $n\>2$ and write:
\begin{equation}
\label{DEFPIN}
\MM_n = 
\Z\(\left[-\frac{n-3}{2},\frac{n-1}{2}\right]+\left[\frac{n+1}{2}\right]\).
\end{equation}
Assume $\ee>1$ and look at Example \ref{poutargue} for the definition of 
$\LL_n$. 
Then:
\begin{equation}
\label{DEFLaN}
\LL_{n}=
\left\{
\begin{array}{ll}
\MM_n 
& \text{if $e$ does not divide $n$}, \\
\1_n 
& \text{if $e$ divides $n$}.
\end{array}
\right.
\end{equation}
We also get:
\begin{equation*}
\LL_{n}^\vee=
\left\{
\begin{array}{ll}
\MM_n^\vee=\Z([-\frac{n-1}{2},\frac{n-3}{2}]+[-\frac{n+1}{2}])
& \text{if $e$ does not divide $n$}, \\
\1_n
& \text{if $e$ divides $n$}.
\end{array}
\right.
\end{equation*}
\end{exem}

If we want to go further, we need more properties of the representation 
$\Z(\m)$ for a multisegment $\m$.
Given $\chi_1,\chi_2\in\GH_1$, we write $\St(\chi_1,\chi_2)$ for the 
unique nondegenerate irreducible subquotient of  
$\chi_1\times\chi_2$
(see \cite[\S8]{MSc}).
If $\St_2$ is the Steinberg representation of $\G_2$ as in Paragraph 
\ref{defqc}, then: 
\begin{equation*}
\St(\chi_1,\chi_2) = 
\left\{
\begin{array}{ll}
\chi_1\times\chi_2 & \text{if $\chi_1\times\chi_2$ is irreducible}, \\
\St_2\cdot\chi_1\nu^{1/2} & \text{if $\chi_2=\chi_1\nu$},\\
\St_2\cdot\chi_2\nu^{1/2} & \text{if $\chi_1=\chi_2\nu$}.
\end{array}
\right.
\end{equation*}
Note that we have $\St(\chi_2,\chi_1)=\St(\chi_1,\chi_2)$. 
The following proposition follows from \cite[\S3.3.2]{MSu}.

\begin{prop}
\label{JacquetMu}
Let $\m$ be a multisegment of length $n$ and of the form $[a,b]+[c,d]$.  
Assu\-me that $b-a\>d-c$. 
Write $k=d-c+1$ and:
\begin{eqnarray*}
\upmu(\m) &=& (1,\dots,1,2,\dots,2) 
\text{ with $1$ occurring $n-2k$ times and $2$ occurring $k$ times},\\
\St(\m) &=& \nu^{a}\otimes\dots\otimes\nu^{a+n-2k-1}\otimes
\St(\nu^{a+n-2k},\nu^c) \otimes\dots\otimes\St(\nu^b,\nu^d).
\end{eqnarray*}
Then $\Z(\m)$ has the following property:

\medskip

\begin{tabular}{lp{14cm}}
{\bf P6} &
$\Z(\m)$ is the unique irreducible subquotient of 
$\Z([a,b])\times\Z([c,d])$ whose Jacquet module with respect to 
$\rp_{\upmu(\m)}$ contains $\St(\m)$ as a subquotient. \\
\end{tabular}
\end{prop}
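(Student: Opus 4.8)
The plan is to read off Proposition \ref{JacquetMu} as the two-segment specialization of the Jacquet-module characterization of the representations $\Z(\m)$ proved in \cite[\S3.3.2]{MSu}, while making the combinatorics explicit.

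First I would reduce the statement to a computation in the Grothendieck group. By {\bf P2} and {\bf P3}, every irreducible subquotient of $\Z([a,b])\times\Z([c,d])$ is of the form $\Z(\n)$ for a multisegment $\n$ with the same cuspidal support as $\m$ and with $\l(\n)\trianglerighteq\l(\m)=(n-k,k)$, and $\Z(\m)$ occurs exactly once. Since $\rp_{\upmu(\m)}$ is exact and additive on finite-length representations, it suffices to show that $\St(\m)$ is a subquotient of $[\rp_{\upmu(\m)}(\Z([a,b])\times\Z([c,d]))]$ while it is \emph{not} a subquotient of $[\rp_{\upmu(\m)}(\Z(\n))]$ for any admissible $\n\ne\m$. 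Indeed, from $[\rp_{\upmu(\m)}(\Z([a,b])\times\Z([c,d]))]=\sum_{\n}[\rp_{\upmu(\m)}(\Z(\n))]$ (each $\Z(\n)$ weighted by its multiplicity in $\Z([a,b])\times\Z([c,d])$) these two facts force $\St(\m)$ into $[\rp_{\upmu(\m)}(\Z(\m))]$, which is precisely the asserted uniqueness.

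For the existence part I would apply the Geometric Lemma to $\Z([a,b])\times\Z([c,d])$ with respect to $\b=\upmu(\m)$. Choosing the matrix $\B=(b_{i,j})$ which sends the exponents $a,\dots,a+n-2k-1$ of $[a,b]$ into the $n-2k$ blocks of size $1$ and pairs the remaining exponents $a+n-2k,\dots,b$ of $[a,b]$ with the exponents $c,\dots,d$ of $[c,d]$ inside the $k$ blocks of size $2$, one computes the relevant Jacquet modules of $\Z([a,b])$ and $\Z([c,d])$ by Proposition \ref{Rappel}(1) (they are products of the characters $\nu^{i}$) and reads off the tensor $\nu^{a}\otimes\cdots\otimes\nu^{a+n-2k-1}\otimes(\nu^{a+n-2k}\times\nu^{c})\otimes\cdots\otimes(\nu^{b}\times\nu^{d})$. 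Since $\St(\nu^{i},\nu^{j})$ is always a subquotient of $\nu^{i}\times\nu^{j}$, the tensor $\St(\m)$ is a subquotient of this term, hence of $[\rp_{\upmu(\m)}(\Z([a,b])\times\Z([c,d]))]$.

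The remaining — and only genuinely delicate — point is to rule out $\St(\m)$ in $[\rp_{\upmu(\m)}(\Z(\n))]$ for the other admissible $\n$. The combinatorics here depends heavily on the order $e$ of $q$ (equivalently on the relation $\equiv$ on exponents) and on the relative position of $[a,b]$ and $[c,d]$ — whether $a\equiv c$, whether $b\equiv d$, whether the two segments are linked — so the list of admissible $\n$ and the shapes of the associated Jacquet modules vary from case to case; this is exactly where the uniform argument of \cite[\S3.3.2]{MSu} is needed. It isolates $\Z(\m)$ as the only irreducible subquotient realizing \emph{all} the Steinberg-type factors $\St(\nu^{a+n-2k+t},\nu^{c+t})$ on the size-$2$ blocks of the Jacquet module simultaneously, any other $\n$ splitting some run of exponents differently so that at least one size-$2$ block contributes two unlinked characters rather than a Steinberg — exactly the phenomenon visible in the proof of Proposition \ref{P61}. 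It then only remains to observe that the composition $\upmu(\m)$ and the tensor $\St(\m)$ written in the statement are the specializations to $\m=[a,b]+[c,d]$ of the data used in \cite{MSu}, which is immediate.
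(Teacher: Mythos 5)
Your proposal is correct and in substance coincides with the paper's treatment: the paper offers no independent argument for this proposition but simply records it as the two-segment specialization of \cite[\S3.3.2]{MSu}, which is exactly where you place the decisive uniqueness step (your added reduction via {\bf P2}--{\bf P3} and the geometric-lemma verification that $\St(\m)$ does occur are correct but supplementary). So the approaches are essentially the same, both resting on the cited characterization of $\Z(\m)$ by its Jacquet module.
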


\begin{prop}\label{length2seg}
Let $a,b\in\ZZ$ with $a\<b$ and write $\pi(a,b)=\Z([a,b])\times\Z([0,1])$. 
Assume that $e>1$. 
\begin{enumerate}
\item
$\Z([a,b]+[0,1])$ occurs as a subquotient of $\pi(a,b)$ with multiplicity $1$. 
\item
If $b\equiv0$, then $\Z([a,b+1]+[0])$ occurs as a subquotient of $\pi(a,b)$ 
with multiplicity $1$.  
\item
If $a\equiv1$, then $\Z([a-1,b]+[1])$ occurs as a subquotient of $\pi(a,b)$ 
with multiplicity $1$.  
\item
If $b\equiv-1$, then $\Z([a,b+2])$ occurs as a subquotient of $\pi(a,b)$. 
\item
If $a\equiv2$, then $\Z([a-2,b])$ occurs as a subquotient  of $\pi(a,b)$.
\item
If $b\equiv0$ and $a\equiv1$, then $\Z([a-1,b+1])$ occurs as a subquotient 
of $\pi(a,b)$. 
\end{enumerate}
Any irreducible subquotient of $\pi(a,b)$ is one of the representations 
in Cases 1 to 6.
Moreover, if $e>2$, the multiplicities in Cases 4, 5 and 6 are equal to $1$. 
\end{prop}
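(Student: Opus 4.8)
The strategy is to compute $[\pi(a,b)]$ in the Grothendieck group, following the pattern of the proof of Proposition~\ref{P61} but with one more layer of combinatorics. By $\mathbf{P1}$ the factor $\Z([0,1])$ is just the character $\nu_2^{1/2}$ of $\G_2$, and $\mathbf{P2}$ gives Case~1: $\Z([a,b]+[0,1])$ occurs in $\pi(a,b)$ with multiplicity~$1$. By $\mathbf{P3}$, every other irreducible subquotient is of the form $\Z(\n)$ with cuspidal support $[a]+\dots+[b]+[0]+[1]$ and with $\l(\n)$ strictly dominating $\l([a,b]+[0,1])$, which for $b>a$ equals the partition $(b-a+1,2)$ (the values $b=a$ and $b=a+1$ requiring only cosmetic changes). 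The only partitions of $b-a+3$ strictly above $(b-a+1,2)$ are $(b-a+2,1)$ and $(b-a+3)$, so $\n$ is either a single segment of length $b-a+3$ or a segment of length $b-a+2$ together with one point. A short check of which multisegments of these shapes can carry the residues $\{a,\dots,b,0,1\}$ modulo~$e$ then leaves exactly the representations of Cases~2--6 together with a short list of \emph{phantom} candidates, typically $\Z([a-1,b]+[0])$, $\Z([a,b+1]+[1])$ and $\Z([a-1,b+1])$ outside the congruence range of Case~6; in particular, if none of $a\equiv1$, $a\equiv2$, $b\equiv-1$, $b\equiv0$ holds, only Case~1 survives, in agreement with Proposition~\ref{Linked}.

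To put the genuine constituents into $\pi(a,b)$ I would proceed in two ways. For Cases~1, 2 and~3 --- the multisegments carrying a detached point --- I would use $\mathbf{P6}$: each of $\Z([a,b]+[0,1])$, $\Z([a,b+1]+[0])$ (when $b\equiv0$) and $\Z([a-1,b]+[1])$ (when $a\equiv1$) is singled out among the subquotients of $\pi(a,b)$ by the occurrence of the corresponding $\St$-term in its Jacquet module along the associated composition; since the second factor $\Z([0,1])$ is a character, the Geometric Lemma reduces the computation of that Jacquet module to the Jacquet modules of $\Z([a,b])$, which are given by Proposition~\ref{Rappel}(1), and one checks that the relevant term appears exactly once, so that each of these three representations is a subquotient of $\pi(a,b)$ of multiplicity~$1$. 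For the single-segment constituents of Cases~4, 5 and~6 I would use Proposition~\ref{Rappel}(3) instead: for instance, when $b\equiv-1$ one has $\Z([0,1])=\Z([b+1,b+2])$, so $\Z([a,b+2])$, being the unique irreducible subrepresentation of $\Z([a,b+1])\times\nu^{b+2}$, embeds into $X:=\Z([a,b])\times\nu^{b+1}\times\nu^{b+2}$, into which $\pi(a,b)=\Z([a,b])\times\Z([b+1,b+2])$ also embeds; checking via the Jacquet module of the quotient $X/\pi(a,b)=\Z([a,b])\times\Z([b+1]+[b+2])$ that $\Z([a,b+2])$ does not occur there, one concludes $\Z([a,b+2])\hookrightarrow\pi(a,b)$. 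Cases~5 and~6 are symmetric, passing to contragredients when convenient.

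What remains --- and it is the substantial part --- is to prove that nothing else occurs and to fix the multiplicities. I would do this by computing enough of the Jacquet modules $\rp_\beta(\pi(a,b))$, for the compositions $\beta$ able to distinguish the candidates (essentially $(1,\dots,1)$, $(b-a+1,2)$ and $(b-a+2,1)$), by the Geometric Lemma fed by the Jacquet-module formulas of Proposition~\ref{Rappel}: each phantom $\Z(\n)$, were it present, would force in one of these Jacquet modules a term that the computation shows is absent --- for example $\Z([a-1,b+1])$ with $a\equiv2$ and $b\equiv-1$ is excluded because the $\St$-term it would contribute is already accounted for by $\Z([a,b+2])$ and $\Z([a-2,b])$ --- and the same count yields multiplicity~$1$ in Cases~4, 5 and~6 as soon as $e>2$. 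One may also organise this around the exact sequence $0\to\Z([0,1])\to\nu^0\times\nu^1\to\Z([0]+[1])\to0$, which reduces the task to understanding $[\Z([a,b])\times\nu^0\times\nu^1]$ (reachable from Proposition~\ref{P61}) and deciding which of its constituents fall in the subrepresentation $\pi(a,b)$ and which in the Steinberg-type quotient.

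The main obstacle, as this outline suggests, is the bookkeeping. In the modular setting the half-integers occurring in $[a,b]$, $[0]$ and $[1]$ may coincide modulo~$e$, so the Geometric Lemma throws up products $\chi_1\times\chi_2$ of characters of $\G_1$ that have to be resolved into $\chi_1\times\chi_2$ versus $\St(\chi_1,\chi_2)$ before the subquotients can be identified, and it is precisely here that the hypothesis $e>1$ (that is, $\nu\neq1$) is used, both to keep $\nu_2^{1/2}$ nondegenerate and to keep the relevant characters $\nu^k$ far enough apart for the counting to close. When $e=2$ the coincidences become too dense for this argument to force the multiplicities in Cases~4--6 down to~$1$, which is why that refinement is only asserted for $e>2$.
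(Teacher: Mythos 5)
Your skeleton for the hard half of the proposition coincides with the paper's: Case 1 from {\bf P2}, the restriction on shapes from {\bf P3}, then geometric-lemma computations of $\rp_{(n-2,2)}(\pi(a,b))$ and $\rp_{(1,\dots,1)}(\pi(a,b))$ combined with {\bf P6} to decide which $\Z(\n)$ can occur and with what multiplicity, the $e=2$ coincidences being exactly why multiplicity one in Cases 4--6 is only asserted for $e>2$. The genuine gaps are in how you get Cases 2--6 to actually occur in $\pi(a,b)$. The paper does not do existence through Jacquet modules at all: it quotes \cite[Lemme 7.34]{MSc}, which gives that $\Z([a,b+1])\times\1$ (resp. $\Z([a-1,b])\times\nu$) is a subquotient of $\pi(a,b)$ when $b\equiv0$ (resp. $a\equiv1$), and then Cases 2, 3 and 6 are read off from Proposition \ref{P61}, Cases 4 and 5 being treated similarly. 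Your substitute for Cases 2--3 is not valid as stated: Proposition \ref{JacquetMu} singles out $\Z(\m)$ only among the irreducible subquotients of $\Z(\Delta_1)\times\Z(\Delta_2)$ with $\m=\Delta_1+\Delta_2$, so the occurrence of $\St(\m)$ in $\rp_{\upmu(\m)}(\pi(a,b))$ does not by itself force $\Z(\m)$ to be a constituent of $\pi(a,b)$; a priori another candidate from the {\bf P3} list could contribute that term, and ruling this out requires precisely the kind of computation you have postponed. Your embedding route for Case 4 via Proposition \ref{Rappel}(3) can be made to work, but the decisive step (that $\Z([a,b+2])$ does not occur in the quotient) is asserted rather than checked; moreover the identification of that quotient as $\Z([a,b])\times\Z([b+1]+[b+2])$ fails when $e=2$, where the quotient of $\nu^{b+1}\times\nu^{b+2}$ by $\Z([b+1,b+2])$ has length $2$, and Case 6, which lengthens the segment at both ends, is not ``symmetric'' to Cases 4--5 and needs its own argument.

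One piece of reasoning is simply wrong: excluding the phantom $\Z([a-1,b+1])$ for $a\equiv2$, $b\equiv-1$ ``because its $\St$-term is already accounted for by $\Z([a,b+2])$ and $\Z([a-2,b])$''. Since $e>1$, these three segments have pairwise distinct $\St$-terms --- $\nu^{a-1}\otimes\cdots\otimes\nu^{b+1}$, $\nu^{a}\otimes\cdots\otimes\nu^{b+2}$ and $\nu^{a-2}\otimes\cdots\otimes\nu^{b}$ are different characters of the torus --- so no ``accounting'' can exclude anything. The correct mechanism, which is the one the paper's computation gives and which your general principle does gesture at, is that this character does not occur in $\rp_{(1,\dots,1)}(\pi(a,b))$ at all: every term there has the form $\nu^{a}\otimes\cdots\otimes\nu^{a+r-1}\otimes1\otimes\nu^{a+r}\otimes\cdots\otimes\nu^{a+s-1}\otimes\nu\otimes\cdots\otimes\nu^{b}$, and a consecutive string starting at $a-1$ must begin with the inserted $1$, forcing $a\equiv1$, while ending at $b+1$ forces $b\equiv0$; so under $a\equiv2$, $b\equiv-1$ the phantom's $\St$-term is absent and $\Z([a-1,b+1])$ cannot be a constituent.
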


\begin{proof}
Case 1 follows from {\bf P2}.
Write $n=b-a+3$.
The other irreducible subquotients of $\pi(a,b)$ are of the form 
$\Z(\n)$ with $\l(\n)\triangleright(n-2,2)$.
Thus we have $\l(\n)=(n-1,1)$ or $\l(\n)=(n)$.
If $b\equiv 0$ (resp. $a\equiv 1$) then $\Z([a,b+1])\times\1$ 
(resp. $\Z([a-1,b])\times\nu$) is a subquotient of $\pi(a,b)$ by 
\cite[Lemme 7.34]{MSc}. 
It follows from Proposition \ref{P61} that the representations 
in Cases 2, 3 and 6 occur in $\pi(a,b)$. 
Cases 4 and 5 are treated similarly. 
We now show that these are the only possible subquotients of $\pi(a,b)$ and 
they appear with the specified multiplicity.

Assume first that $\n=[c,d]+[h]$ with $d-c+1=n-1$.
Then:
\begin{eqnarray*}
\upmu(\n) &=& (1,\dots,1,2),\\
\St(\n) &=& \nu^c\otimes\nu^{c+1}\otimes\dots
\otimes\nu^{d-1}\otimes\St(\nu^d,\nu^h).
\end{eqnarray*}
By using the geometric lemma, the semi-simplification of 
$\rp_{(n-2,2)}(\pi(a,b))$ is equal to:
\begin{equation*}
\Z([a,b])\otimes\Z([0,1])+
(\Z([a,b-1])\times\1)\otimes(\nu^b\times\nu)+
\pi(a,b-2)\otimes\Z([b-1,b]).
\end{equation*}
If $\Z(\n)$ occurs as a subquotient of $\pi(a,b)$, 
then $\St(\n)$ occurs in 
$\rp_{\upmu(\n)}((\Z([a,b-1])\times\1)\otimes(\nu^b\times\nu))$
and $\St(\nu^d,\nu^h)$ occurs in $\nu^b\times\nu$ with multiplicity $1$, 
which implies that $\St(\nu^d,\nu^h)=\St(\nu^b,\nu)$ and that $\Z(\n)$ occurs 
in $\pi(a,b)$ with multiplicity $1$.
By the geometric lemma, we get: 
\begin{equation*}
\rp_{(1,\dots,1)}(\Z([a,b-1])\times\1) = 
\sum\limits_{k=0}^{n-3} \nu^{a}\otimes\dots\otimes\nu^{a+k-1}\otimes
1\otimes\nu^{a+k}\otimes\dots\otimes\nu^{b-1}.
\end{equation*}
Thus there is a $k\in\{0,\dots,n\}$ such that:
\begin{equation*}
\nu^c\otimes\nu^{c+1}\otimes\dots\otimes\nu^{d-1} = 
\nu^{a}\otimes\dots\otimes\nu^{a+k-1}\otimes
1\otimes\nu^{a+k}\otimes\dots\otimes\nu^{b-1}.
\end{equation*}
Since $e>1$, comparing the exponents in the left hand side and the right hand 
side shows that $k$ must be either $0$ or $n-3=b-a$.
If $k=0$, then:
\begin{equation*}
\nu^c\otimes\nu^{c+1}\otimes\dots\otimes\nu^{d-1} = 
1\otimes\nu^{a}\otimes\dots\otimes\nu^{b-1}.
\end{equation*}
Thus we have $c\equiv0$, $a\equiv1$, $d\equiv b$ and $h\equiv1$.
It follows that $\n=[a-1,b]+[1]$.
If $k=n-3$, then:
\begin{equation*}
\nu^c\otimes\nu^{c+1}\otimes\dots\otimes\nu^{d-1} = 
\nu^{a}\otimes\dots\otimes\nu^{b-1}\otimes1.
\end{equation*}
Thus we have $c\equiv a$, $b\equiv0$, $d\equiv1$ and $h\equiv0$.
It follows that $\n=[a,b+1]+[0]$.

Assume now that $\n=[c,d]$ is a segment.
Thus:
\begin{eqnarray*}
\upmu(\n) &=& (1,\dots,1),\\
\St(\n) &=& \nu^c\otimes\nu^{c+1}\otimes\dots\otimes\nu^{d}.
\end{eqnarray*}
By using the geometric lemma, we get:
\begin{equation*}
\label{Jacq}
\rp_{\upmu(\n)}(\pi(a,b)) = 
\sum\limits_{0\<r\<s\<n} \nu^a\otimes\dots\otimes\nu^{a+r-1}\otimes
1\otimes\nu^{a+r}\otimes\dots\otimes\nu^{a+s-1}\otimes\nu\otimes
\nu^{a+s}\otimes\dots\otimes\nu^{b}.
\end{equation*}
If $\Z(\n)$ occurs as a subquotient of $\pi(a,b)$, there are integers $r\<s$ 
in $\{0,\dots,n\}$ such that:
\begin{equation*}
\nu^c\otimes\dots\otimes\nu^{d} = 
\nu^a\otimes\dots\otimes\nu^{a+r-1}\otimes
1\otimes\nu^{a+r}\otimes\dots\otimes\nu^{a+s-1}\otimes\nu\otimes
\nu^{a+s}\otimes\dots\otimes\nu^{b}.
\end{equation*}
If $e>2$, comparing the exponents in the left hand side and the right hand 
side shows that the only possible values for $r,s$ are:
\begin{enumerate}
\item $r=s=0$ (thus $a\equiv2$);
\item $r=s=n$ (thus $b\equiv-1$);
\item $r=0$ and $s=n$ (thus $a\equiv1$ and $b\equiv0$).
\end{enumerate}
In all these cases, $\St(\n)$ occurs with multiplicity $1$. 

If $e=2$, there are more possible values for $r,s$ (the condition is that
$s-r$ is even) and $\St(\n)$ may occur with multiplicity 
greater than $1$. 
\end{proof}

\subsection{Derivatives}

By \cite[III.1]{Vigb}, there is a theory of derivatives for 
mod $\ell$ representations of $\G_n$, $n\>1$ just as in the complex case.
Given a smooth representation $\pi$ of $\G_n$, $n\>1$ and an integer 
$k\in\{0,\dots,n\}$, 
we will write $\pi^{(k)}$ for its $k^{{\rm th}}$ deriva\-tive, 
which is a smooth representation of $\G_{n-k}$
(where $\G_0$ stands for the trivial group in the case $k=n$.)

The $k^{{\rm th}}$ deriva\-tive functor is exact from the category 
of smooth $\ell$-modular representations of $\G_n$ to that of smooth 
$\ell$-modular representations of $\G_{n-k}$, for all 
$k\in\{0,\dots,n\}$.
It is compatible with twisting by a character, that is, we have 
$(\pi\cdot\chi)^{(k)}=\pi^{(k)}\cdot\chi$ for any representation 
$\pi$ of $\G_n$, any character $\chi\in\GH_1$ and any $k\in\{0,\dots,n\}$. 

Recall that $[\pi]$ denotes the semi-simplification of a finite length 
representation $\pi$. 

\begin{lemm}
\label{dercal}
\begin{enumerate}
\item
Given a cuspidal irreducible representation $\rho$ of $\G_n$, 
its $k^{{\rm th}}$ deriva\-tive is zero for all $k\in\{1,\dots,n-1\}$, 
and we have $\rho^{(n)}=1$ for $k=n$.
\item 
Given a segment $[a,b]$, the first derivative of $\Z([a,b])$ is $\Z([a,b-1])$, 
and its $k^{{\rm th}}$ deriva\-tive is zero for all $k\in\{2,\dots,n\}$.
\item 
Let $\pi,\s$ be finite length representations of $\G_n,\G_m$ respectively, 
with $m\>n\>1$.
Then~:
\begin{equation*}
[(\pi\times\s)^{(k)}] = [\pi\times\s^{(k)}] + [\pi^{(1)}\times\s^{(k-1)}] + \dots
+ [\pi^{(i)}\times\s^{(k-i)}]
\end{equation*}
for all $k\in\{0,\dots,n+m\}$, where $i={\rm min}(n,k)$.
\end{enumerate} 
\end{lemm}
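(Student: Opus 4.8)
The plan is to obtain all three assertions from the Bernstein--Zelevinski theory of derivatives for $\ell$-modular representations of $\G_n$ developed in \cite[III.1]{Vigb}. The ingredients I will use are: the functors $\Psi^\pm$ and $\Phi^\pm$ relating smooth representations of $\G_{n-1}$, of $\G_n$ and of the mirabolic subgroups; their exactness together with the identities $\Psi^-\Psi^+=\id$ and $\Phi^-\Psi^+=0$; the filtration of the restriction $\pi|_{\mathrm{P}_n}$ of a representation of $\G_n$ to the mirabolic subgroup $\mathrm{P}_n$, whose successive subquotients are $(\Phi^+)^{i-1}\Psi^+(\pi^{(i)})$ for $i\in\{1,\dots,n\}$; the fact that $\Psi^+$ and $\Phi^+$ send nonzero objects to nonzero objects; and the fact that the top nonzero derivative of an irreducible representation is again irreducible. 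Since part (2) relies on part (3), I will prove (3) first: I will derive the stated identity as the semi-simplified form of the Bernstein--Zelevinski product rule for derivatives (the $\ell$-modular analogue of \cite[4.5]{BZ}). Concretely, one filters $(\pi\times\s)|_{\mathrm{P}_{n+m}}$ using the description of $\G_{n+m}$ as a union of $\mathrm{P}_{n+m}$--$\P_{(n,m)}$ double cosets, applies $\Psi^-(\Phi^-)^{k-1}$ to this filtration, and identifies the graded pieces via the geometric lemma. Passing to the Grothendieck group collapses the filtration and gives $[(\pi\times\s)^{(k)}]=\sum_{i}[\pi^{(i)}\times\s^{(k-i)}]$, which is the asserted formula once one notes $\pi^{(i)}=0$ for $i>n$. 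The only properties of $\R$ that intervene are exactness of $\rp_\a$, $\ip_\a$, $\Psi^\pm$ and $\Phi^\pm$, all available by \cite[II.2, III.1]{Vigb}, so the complex argument transfers verbatim.

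For (1), I will first use that, over the algebraically closed field $\R$, an irreducible $\rho\in\GH_n$ is cuspidal if and only if $\rp_\a(\rho)=0$ for every proper composition $\a$ of $n$: indeed, by adjunction $\rho$ embeds in some $\ip_\a(\tau_1\otimes\dots\otimes\tau_r)$ with the $\tau_j$ irreducible precisely when $\rp_\a(\rho)\neq0$. Next, for $1\<k\<n-1$, the derivative $\rho^{(k)}$ is obtained from the Jacquet module $\rp_{(n-k,k)}(\rho)$ by applying the top-derivative functor to its $\G_k$-tensor factor; since $\rp_{(n-k,k)}(\rho)=0$, this forces $\rho^{(k)}=0$. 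Finally, in the derivative filtration of $\rho|_{\mathrm{P}_n}$ every subquotient except possibly the one attached to $\rho^{(n)}$ is now zero, while $\rho|_{\mathrm{P}_n}\neq0$; hence $\rho^{(n)}\neq0$. As $\rho^{(n)}$ is the top nonzero derivative of the irreducible representation $\rho$ and is a representation of the trivial group $\G_0$, it is irreducible, hence one-dimensional, so $\rho^{(n)}=1$.

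For (2), by property {\bf P1} the representation $\Z([a,b])$ is the character $\nu_n^{a+(n-1)/2}$ of $\G_n$, with $n=b-a+1$, so I only need the derivatives of an arbitrary character $\chi\circ\det$ of $\G_n$ (the case $n=1$, i.e. $b=a$, being trivial). Restricted to the mirabolic subgroup $\mathrm{P}_n$, the character $\chi\circ\det$ factors through the projection $\mathrm{P}_n\to\G_{n-1}$, and with the normalization of \cite[III.1]{Vigb} one has $\chi\circ\det|_{\mathrm{P}_n}=\Psi^+\big((\chi\circ\det_{\G_{n-1}})\cdot\nu^{-1/2}\big)$ (equivalently, $\1_n^{(1)}=\nu_{n-1}^{-1/2}$). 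Applying $\Psi^-$ and using $\Psi^-\Psi^+=\id$ gives $(\chi\circ\det)^{(1)}=(\chi\circ\det_{\G_{n-1}})\cdot\nu^{-1/2}$; applying $\Psi^-(\Phi^-)^{k-1}$ for $k\>2$ and using $\Phi^-\Psi^+=0$ gives $(\chi\circ\det)^{(k)}=0$. Specializing $\chi\circ\det=\nu_n^{a+(n-1)/2}$ yields $\Z([a,b])^{(1)}=\nu_{n-1}^{a+(n-2)/2}=\Z([a,b-1])$ and $\Z([a,b])^{(k)}=0$ for $k\>2$.

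I expect the only genuinely delicate points to be bookkeeping ones rather than conceptual: pinning down the precise relation between $\pi^{(k)}$ and $\rp_{(n-k,k)}(\pi)$ invoked in (1), and matching the normalization conventions of \cite[III.1]{Vigb} so that the twist appearing in (2) comes out as $\nu^{-1/2}$ rather than $\nu^{1/2}$; both belong to the standard dictionary of derivatives and require no new input in the modular setting, so the argument should go through without further subtlety.
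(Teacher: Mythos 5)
Your proposal is correct and is essentially an unpacking of the very results the paper invokes by citation: the paper proves this lemma by referring to \cite{Vigs} V.9.1 (a),(b) for (1)--(2) and to \cite{Vigb} III.1.10 for (3), and those references establish the statements through the same Bernstein--Zelevinski mirabolic formalism (the functors $\Psi^{\pm}$, $\Phi^{\pm}$, the filtration of $\pi|_{\P_n}$, and the product rule transferred to the $\ell$-modular setting) that you spell out, including the standard identification of $\pi^{(k)}$ with the Whittaker-type functor applied to the $\G_k$-factor of the Jacquet module. The one external input you use beyond formal exactness --- the one-dimensionality of $\rho^{(n)}$, i.e.\ uniqueness of Whittaker functionals, which is a genuine theorem rather than bookkeeping --- is indeed available mod $\ell$ (\cite{Vigb}, III.5.10), so your argument goes through.
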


\begin{proof}
Points (1) and (2) follows from V.9.1 (a) and (b) in \cite{Vigs}. 
For (3), see \cite[III.1.10]{Vigb}.  
\end{proof}

\section{On the $e=1$ case}
\label{Castor}

In this section, we assume that $e=1$ and $n\>2$.
Write $\K_n=\GL_n(\o)$ and let $\K_n(1)$ be the normal 
subgroup of $\K_n$ made of all matrices that are congruent to 
$1$ mod $\p$.
Both are compact open subgroups of $\G_n$, and the quotient 
$\K_n/\K_n(1)$ is canonically isomorphic to the finite group $\GL_n(q)$ 
of $n\times n$ invertible matrices with entries in the residue field of $\Oo$. 

Given a smooth representation $(\pi,\W)$ of $\G_n$, 
write $\overline{\W}$ for the space of $\K_n(1)$-fixed vectors of $\W$ 
and write $\overline{\pi}$ for the representation of $\GL_n(q)$ on 
$\overline{\W}$. 

This defines an exact functor
from the category of smooth $\R$-representations of $\G_n$ to 
that of $\R$-representations of $\GL_n(q)$.

We have defined two representations $\V_n$ and $\Pi_n$
in \eqref{DEFVN} and \eqref{DEFPIN}.
Note that 
$\V_n=\Cc^\infty_{c}(\X,\R)$ with $\X=\P_{(n-1,1)}\backslash\G_n$.
Its contains $\Pi_n$ as a subquotient with mul\-ti\-pli\-city one, 
$\1_n$ with some multiplicity and no other irreducible 
subquotient.
It is a self\-dual represen\-tation of $\G_n$.

Thanks to the Iwasawa decomposition $\G_n=\P_{(n-1,1)}\K_n$, 
the restriction of $\V_n$ to $\K_n$ is $\W_n=\Cc_{c}^\infty(\Y,\R)$ 
with $\Y=(\K_n\cap\P_{(n-1,1)})\backslash\K_n$.
Therefore we have:
\begin{equation*}
\overline{\V}_n = \Cc^\infty_{c}(\Y/\K_n(1),\R),
\end{equation*}
which identifies with the space of $\R$-valued functions on 
$\overline{\X}=\P_{(n-1,1)}(q)\backslash\GL_n(q)$, where we write
$\P_{(n-1,1)}(q)$ for the 
standard maximal parabolic subgroup of $\GL_n(q)$ corresponding to $(n-1,1)$.

\begin{lemm}
\label{Api}
For $n\>2$, there exists a unique irreducible representation $\pi_n$ of $\GL_n(q)$ 
having the following properties: 
\begin{enumerate}
\item 
If $\ell$ does not divide $n$, then $\overline{\V}_n$ is
semisimple of length $2$, 
with irreducible sub\-quotients $\bar{\1}_n$ and $\pi_n$.
\item 
If $\ell$ divides $n$, then $\overline{\V}_n$ is indecomposable
of length $3$, with irre\-ducible subquotients $\bar{\1}_n$ (with multiplicity 
$2$) and $\pi_n$.
\end{enumerate}
\end{lemm}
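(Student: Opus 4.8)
The plan is to reinterpret $\overline{\V}_n$ as a permutation module for the finite group $\GL_n(q)$ and to pin down its structure from its endomorphism algebra together with the behaviour of the corresponding characteristic-zero module under reduction modulo $\ell$. Recall from the paragraph above that $\overline{\V}_n$ is the space of $\R$-valued functions on $\overline{\X}=\P_{(n-1,1)}(q)\backslash\GL_n(q)$, which we identify with the set of hyperplanes of $\FF_q^n$, equivalently (by duality) with $\mathbb{P}^{n-1}(\FF_q)$; put $m=|\overline{\X}|=1+q+\dots+q^{n-1}$. I will use four elementary facts: $\GL_n(q)$ acts on $\overline{\X}$ not merely transitively but $2$-transitively (dualizing, it acts $2$-transitively on the lines of $(\FF_q^n)^{*}$, which is clear); a permutation module is self-dual; by transitivity $\overline{\V}_n^{\GL_n(q)}$ is one-dimensional, spanned by the constant function $\mathbf{1}$, and dually the coinvariants are one-dimensional, the quotient being the augmentation $\s\colon f\mapsto\sum_{x\in\overline{\X}}f(x)$, so that $\overline{\V}_n$ has $\R\mathbf{1}$ as its unique trivial subrepresentation and a unique trivial quotient; and, since $e=1$, $q=1$ in $\R$, hence $m=n$ in $\R$.

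\emph{The endomorphism algebra.} Because the $\GL_n(q)$-action on $\overline{\X}$ is $2$-transitive it has rank $2$, so $\mathcal{E}:=\End_{\R\GL_n(q)}(\overline{\V}_n)$ is two-dimensional over $\R$ — this count of orbits on $\overline{\X}\times\overline{\X}$ is characteristic-free. The algebra $\mathcal{E}$ contains the non-scalar operator $N\colon f\mapsto(\sum_{x}f(x))\,\mathbf{1}$, whence $\mathcal{E}=\R\,\id\oplus\R\,N=\R[N]$, and $N^{2}=mN$. If $\ell\nmid n$ then $m\neq 0$ in $\R$, so $m^{-1}N$ is an idempotent and $\overline{\V}_n=\R\mathbf{1}\oplus\Ker\s$ with $\dim_\R\Ker\s=m-1$ and $\End_{\R\GL_n(q)}(\Ker\s)\cong\R$. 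If $\ell\mid n$ then $m=0$ in $\R$, so $N^{2}=0$ and $\mathcal{E}\cong\R[T]/(T^{2})$ is a local ring; hence $\overline{\V}_n$ is indecomposable, and now $\R\mathbf{1}\subseteq\Ker\s$ since $\s(\mathbf{1})=m=0$.

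\emph{The composition factors, and conclusion.} Fixing an $\ell$-modular system $(\qlb,\zlb,\flb)$ — all representations below are realised over $\flb$, so nothing is lost — I pass to characteristic zero. Over $\qlb$ the permutation module $\qlb[\overline{\X}]$ again has endomorphism algebra of dimension $2$, hence, being semisimple, it is a direct sum of two non-isomorphic irreducibles: the trivial one and some $V$ of degree $m-1$ (concretely, $V$ is the unipotent representation of $\GL_n(q)$ labelled by the partition $(n-1,1)$). Writing $\overline{V}$ for the reduction of $V$ modulo $\ell$, exactness gives $[\overline{\V}_n]=[\bar{\1}_n]+[\overline{V}]$ in the Grothendieck group. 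The crux is that $V$ reduces \emph{irreducibly} modulo $\ell$ exactly when $\ell\nmid n$, acquiring otherwise a single further composition factor, the trivial one; that is, $[\overline{V}]=[\pi_n]$ if $\ell\nmid n$ and $[\overline{V}]=[\pi_n]+[\bar{\1}_n]$ if $\ell\mid n$, for a non-trivial irreducible $\R\GL_n(q)$-module $\pi_n$ occurring with multiplicity one. Granting this, the lemma follows. If $\ell\nmid n$, then by the previous step $[\Ker\s]=[\overline{V}]=[\pi_n]$, so $\Ker\s$ is irreducible and equals $\pi_n$ — necessarily distinct from $\bar{\1}_n$, since $\R\mathbf{1}$ is the only trivial submodule of $\overline{\V}_n$ — whence $\overline{\V}_n=\bar{\1}_n\oplus\pi_n$ is semisimple of length $2$. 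If $\ell\mid n$, then $\overline{\V}_n$ is indecomposable of length $3$, its composition factors being $\bar{\1}_n$ — twice, namely the submodule $\R\mathbf{1}$ and the quotient under $\s$ — and $\pi_n$ once, sitting in between. In both cases $\pi_n$ is characterised as the unique non-trivial composition factor of $\overline{\V}_n$, which also yields its uniqueness.

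\emph{The main obstacle.} The endomorphism-algebra computation and the deduction of the lemma from it are formal bookkeeping; the real content is the claim that $V$ reduces irreducibly modulo $\ell$ precisely when $\ell\nmid n$ (acquiring otherwise only a trivial extra factor) — equivalently, the $\R\GL_n(q)$-module structure of $\R[\mathbb{P}^{n-1}(\FF_q)]$ when $\ell\mid q-1$. This is where one must appeal to the $\ell$-modular representation theory of $\GL_n(q)$ in the non-defining characteristic case (the theory of $q$-Schur algebras of Dipper and James, specialised at $q\equiv 1$), or to the literature on the permutation module of $\GL_n(q)$ on projective space. If one preferred a self-contained route, one could try instead a James-type analysis of the canonical $\GL_n(q)$-invariant bilinear form on $\Ker\s$, or an induction on $n$ obtained by restricting $\overline{\V}_n$ to $\GL_{n-1}(q)$ and decomposing the hyperplanes of $\FF_q^n$ according to their intersection with a fixed hyperplane; but the argument through reduction modulo $\ell$ is the most economical.
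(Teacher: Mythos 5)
Your proposal is correct in substance, and its structural half takes a genuinely different route from the paper's. Where the paper quotes James \cite{Jamesb} for the module theory of $\M_{(n-1,1)}$ and $\SS_{(n-1,1)}$ --- the unique irreducible quotient $\pi_n$ of the kernel of $\psi$, the list of composition factors of $\overline{\V}_n$, and the multiplicity ($1$ or $2$) of $\bar{\1}_n$ from \cite[20.7]{Jamesb} --- and then deduces semisimplicity (for $\ell\nmid n$) and indecomposability (for $\ell\mid n$) from selfduality of $\overline{\V}_n$ together with James's indecomposability of $\SS_n$, you instead compute the rank-two endomorphism (Hecke) algebra $\End_{\R\GL_n(q)}(\overline{\V}_n)=\R[N]$ with $N^2=mN$ and $m\equiv n$ in $\R$: the idempotent $m^{-1}N$ gives the splitting $\overline{\V}_n=\R\mathbf{1}\oplus\Ker\s$ when $\ell\nmid n$, and the local ring $\R[T]/(T^2)$ gives indecomposability when $\ell\mid n$. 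This is arguably cleaner than the paper's selfduality argument and needs no external input at that stage. What your write-up does not prove is its declared crux: that the nontrivial characteristic-zero constituent $V$ reduces irreducibly mod $\ell$ when $\ell\nmid n$ and otherwise acquires exactly one extra trivial factor. You are right that this is where the real content sits, and your pointer to the literature is the correct one; it is precisely the paper's citation of \cite[\S 11 and 20.7]{Jamesb} (the structure of the permutation module on hyperplanes, i.e. $\M_{(n-1,1)}$, here with $q\equiv1$ mod $\ell$) that supplies it, so to make the proof complete you should cite those results, or reprove them, e.g. via the invariant bilinear form on $\Ker\s$ as you suggest. Granting that citation, your remaining bookkeeping --- Brauer--Nesbitt additivity of reduction, the identification $[\Ker\s]=[\overline{V}]$, and the characterization of $\pi_n$ as the unique nontrivial composition factor --- is sound, and correctly note that the finer Loewy-layer claim in the $\ell\mid n$ case is not needed for the statement.
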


\begin{proof}
Note that $\bar{\1}_n$ occurs as a subrepresentation of $\overline{\V}_n$ 
(the space of $\R$-valued constant func\-tions on $\overline{\X}$). 
Write $\psi$ for the $\GL_n(q)$-invariant linear form on $\overline{\V}_n$ 
that associates to a function the sum of its values on $\overline{\X}$.  
The set $\overline{\X}$ has cardinality: 
\begin{equation*}
(\GL_n(q):\P_{(n-1,1)}(q)) = \frac{q^n-1}{q-1} = 1+q+\dots+q^{n-1} 
\end{equation*}
which is $0$ in $\R$ if and only if $\ell$ divides $n$.
Thus the constant func\-tions belong to the kernel of $\psi$ if and only if 
$\ell$ divides $n$.
According to \cite[11]{Jamesb}, we have the following properties: 
\begin{enumerate}
\item
The kernel $\SS_n$ of $\psi$ 
(denoted $\SS_{(n-1,1)}$ in \cite{Jamesb}, whereas $\overline{\V}_n$ is 
denoted $\M_{(n-1,1)}$) has a unique irreducible quotient 
$\pi_n$.
\item
The semi-simplification of $\overline{\V}_n$ contains $\pi_n$ with multiplicity $1$ 
and $\bar{\1}_n$ with some multiplicity $\>1$, and no other irreducible 
subquotient. 
\end{enumerate}
By \cite[20.7]{Jamesb}, the multiplicity of $\bar{\1}_n$ in $\overline{\V}_n$ 
is $1$ if $\ell$ does not divide $n$, and $2$ otherwise. 
It remains to prove that $\overline{\V}_n$ has the expected structure. 

We first assume that $\ell$ does not divide $n$.
Since $\bar{\1}_n$ occurs as a subrepresentation of $\overline{\V}_n$, 
$\pi_n$ must be a quotient of $\overline{\V}_n$.
Since $\overline{\V}_n$ is selfdual, it follows that $\pi_n$ is selfdual, thus 
it also occurs as a subrepresentation of $\overline{\V}_n$.
We thus have two nonzero maps $\pi_n\to\overline{\V}_n$ and 
$\overline{\V}_n\to\pi_n$, whose
composite is nonzero (or else it would contradict the fact that $\pi_n$ occurs 
with multiplicity $1$).
Therefore $\overline{\V}_n$ is semisimple.

Assume now that $\ell$ divides $n$.
By \cite{Jamesb}, the representation $\SS_n$ is indecomposable (it has length
$2$ and a unique irreducible quotient). 
Since $\overline{\V}_n$ is selfdual, it implies that $\overline{\V}_n$ is indecomposable.  
\end{proof}

\begin{prop}
\label{ssV1}
\begin{enumerate}
\item
The representation $\overline{\Pi}_n$ is irreducible and isomorphic to $\pi_n$.
\item 
If $\ell$ does not divide $n$, the representation $\V_n$ is 
semisimple of length $2$. 
\item 
If $\ell$ divides $n$, the representation $\V_n$ is indecomposable of
length $3$,
with irre\-ducible subquotients $\1_n$ (with multiplicity $2$) 
and $\Pi_n$.
\end{enumerate}
\end{prop}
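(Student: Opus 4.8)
The plan is to push everything through the exact functor $\pi\mapsto\overline\pi$ of $\K_n(1)$-invariants, combined with the description of $\overline\V_n$ as a $\GL_n(q)$-module given by Lemma~\ref{Api} and the self-duality of $\V_n$; the only quantity that needs to be pinned down is the multiplicity $m\>1$ with which the trivial character $\1_n$ occurs as a subquotient of $\V_n$. Write $c=1$ if $\ell$ does not divide $n$ and $c=2$ if it does, so that $[\overline\V_n]=[\pi_n]+c\,[\bar\1_n]$ by Lemma~\ref{Api}. Since $\pi\mapsto\overline\pi$ is exact and $\overline{\1_n}$ is the trivial representation $\bar\1_n$ of $\GL_n(q)$, the subquotient structure of $\V_n$ recalled above gives
\begin{equation*}
[\overline\V_n]=[\overline\Pi_n]+m\,[\bar\1_n]=[\pi_n]+c\,[\bar\1_n],
\end{equation*}
so that $[\overline\Pi_n]=[\pi_n]+(c-m)[\bar\1_n]$; in particular $m\<c$, and as soon as we establish $m=c$ we obtain $[\overline\Pi_n]=[\pi_n]$ and hence $\overline\Pi_n\simeq\pi_n$ (it is nonzero, having a nonzero subquotient), which is assertion (1).

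It therefore remains to compute $m$. I would use the $\G_n$-embedding $\iota\colon\1_n\hookrightarrow\V_n$ realising $\1_n$ as the constant functions on $\X$, and --- since $\V_n$ is self-dual and $\1_n$ is a subrepresentation, hence also a quotient --- a surjection $p\colon\V_n\twoheadrightarrow\1_n$. The composite $p\circ\iota$ is a scalar, and the crux of the matter is that it vanishes if and only if $\ell$ divides $n$. Applying the functor, the induced map $\overline p\colon\overline\V_n\to\bar\1_n$ is a nonzero $\GL_n(q)$-invariant linear form on $\Cc^\infty_c(\overline\X,\R)$, and since $\GL_n(q)$ acts transitively on $\overline\X$ the space of such forms is one-dimensional, spanned by the linear form $\psi$ (sum of the values of a function) used in the proof of Lemma~\ref{Api}; moreover $\overline\iota$ is the inclusion of the constant function. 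Hence the scalar $p\circ\iota$ is a nonzero multiple of the value of $\psi$ on the constant function, namely $|\overline\X|=1+q+\dots+q^{n-1}$, which equals $n$ in $\R$ because $e=1$, and this is zero precisely when $\ell\mid n$.

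With this in hand the three assertions follow by a short case analysis. If $\ell\nmid n$, then $p\circ\iota\neq0$, so $p$ retracts $\iota$ and $\V_n\simeq\1_n\oplus\Ker p$; as $1\<m\<c=1$ we have $m=1$, hence $[\Ker p]=[\Pi_n]$, so $\Ker p\simeq\Pi_n$ and $\V_n$ is semisimple of length $2$ --- this is (2), and combined with $m=c$ it also proves (1). If $\ell\mid n$, then $p\circ\iota=0$, so $\iota(\1_n)\subseteq\Ker p$ while $\V_n/\Ker p\simeq\1_n$; thus $[\V_n]$ contains $[\1_n]$ at least twice, that is, $m\>2$, and with $m\<c=2$ we get $m=2$, so $[\V_n]=[\Pi_n]+2\,[\1_n]$ has length $3$ (and $m=c$ gives (1)). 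Indecomposability then follows from that of $\overline\V_n$ (Lemma~\ref{Api}): a splitting $\V_n=A\oplus B$ would reduce to $\overline\V_n=\overline A\oplus\overline B$, forcing one summand, say $\overline B$, to vanish; but every irreducible subquotient of $B$ equals $\1_n$ or $\Pi_n$, whose reductions $\bar\1_n$ and $\pi_n$ are nonzero and would then be subquotients of $\overline B=0$, so $B=0$. The only genuine difficulty is the computation of $m$, that is, verifying that the self-duality obstruction $p\circ\iota$ degenerates exactly when $\ell\mid n$; the point to check is that the identification $\overline\V_n\simeq\Cc^\infty_c(\overline\X,\R)$ carries the constant function on $\X$ to the constant function on $\overline\X$ and $\overline p$ to a multiple of $\psi$, so the reduction argument is consistent with what is already known about $\overline\V_n$.
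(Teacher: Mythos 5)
Your proposal is correct, and its backbone is the same as the paper's: everything is pushed through the exact functor $\pi\mapsto\overline\pi$, Lemma \ref{Api} supplies the structure of $\overline{\V}_n$, and the decisive numerical fact is that the (essentially unique) $\GL_n(q)$-invariant form $\psi$ on $\overline{\V}_n$ takes the value $|\overline{\X}|=1+q+\dots+q^{n-1}$, which equals $n$ in $\R$, on the constant function. The mechanics, however, are organised differently. The paper treats the case $\ell\mid n$ by contradiction: assuming length $2$ it gets $\V_n=\1_n\oplus\Pi_n$ by the self-duality trick of Lemma \ref{Api}, then uses that $\K_n(1)$ is pro-$p$ (so invariants and coinvariants coincide) to transport an invariant form that is nonzero on constants down to $\overline{\V}_n$, contradicting $\psi(\mathbf{1})=0$; semisimplicity for $\ell$ not dividing $n$ is obtained by repeating the two-maps argument of Lemma \ref{Api}. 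You instead compute once and for all the scalar $p\circ\iota$ attached to the inclusion of constants and a self-duality surjection onto $\1_n$, and read off the multiplicity $m$ of $\1_n$ in both cases: nonvanishing gives the splitting and semisimplicity when $\ell$ does not divide $n$, vanishing gives $m\geq 2$ when $\ell\mid n$, while the bookkeeping $[\overline{\V}_n]=[\overline{\Pi}_n]+m[\bar{\1}_n]=[\pi_n]+c[\bar{\1}_n]$ caps $m$ from above and delivers $\overline{\Pi}_n\simeq\pi_n$ for free. A side benefit is that you never need the level-$0$ input the paper quotes from \cite{Vigb} (II.5.8, II.5.12) to ensure the functor kills no subquotient: nonvanishing of $\overline{\Pi}_n$ falls out of your multiplicity count, and your indecomposability argument only uses $\bar{\1}_n\neq0$ and $\overline{\Pi}_n\neq0$; likewise the pro-$p$ invariants/coinvariants step is replaced by simply applying the exact functor to the morphism $p$. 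The two points you flagged (constants on $\X$ reduce to constants on $\overline{\X}$, and transitivity of $\GL_n(q)$ on $\overline{\X}$ makes $\psi$ the unique invariant form up to scalar) are indeed the only things to check, and both are straightforward.
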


\begin{proof}
By \cite{Vigb}, II.5.8 and II.5.12, all irreducible subquotients of $\V_n$ 
have level $0$, thus they are not killed by the functor
$\pi\mapsto\overline{\pi}$.  

We first assume that $\ell$ does not divide $n$.
By Lemma \ref{Api}, the representation $\V_n$ has length $2$, with irreducible 
sub\-quotients $\Pi_n$ and $\1_n$, thus $\overline{\Pi}_n$ must be irreducible
and isomorphic to $\pi_n$.
The same argument as in the proof of Lemma \ref{Api} shows that $\V_n$ is 
semisimple. 

Assume now that $\ell$ divides $n$.
By Lemma \ref{Api} the representation $\V_n$ has length $\<3$.
Assume it has length $2$. 
Then the argument of the proof of Lemma \ref{Api} 
implies that $\V_n=\1_n\oplus\Pi_n$.
Thus the one-dimensional space $\Hom_{\G_n}(\V_n,\1_n)$ is generated by a 
linear form $\l$ which is nonzero on the subspace of constant 
functions.  
Since $\K_n(1)$ is a pro-$p$-group,
$\K_n(1)$-invariant and $\K_n(1)$-co\-invariant vectors of $\V_n$ are 
canonically identified.
The $\K_n$-invariant linear form 
$\l$ thus induces a $\GL_n(q)$-invariant linear form on $\overline{\V}_n$, 
which is equal to $\psi$ upto a nonzero scalar. 
But $\psi$ is zero on constant functions, which contradicts the fact that $\l$ 
is nonzero. 
This gives us a contradiction, and thus $\V_n$ has length $3$. 
Now since $\overline{\V}_n$ is indecomposable, it follows that $\V_n$ is 
indecomposable.
We also get that $\overline{\Pi}_n$ must be irreducible
and isomorphic to $\pi_n$.
\end{proof}

\begin{defi}
\label{poutargue3}
Assume $e=1$ and let $n\>2$.
In parallel with Example \ref{poutargue2}, we define:
\begin{equation*}
\LL_n=
\left\{
\begin{array}{ll}
\Pi_n
& \text{if $\ell$ does not divide $n$}, \\
\1_n & \text{if $\ell$ divides $n$}.
\end{array}
\right.
\end{equation*}
\end{defi}

In conclusion, if we summarize Example \ref{poutargue2} and
Definition \ref{poutargue3}, we get the following definition 
of $\LL_n$. 

\begin{defi}
\label{poutargue4}
Assume $e$ is arbitrary, 
and recall that $\qc$ is the quantum characteristic (see Paragraph \ref{defqc}). 
For $n\>2$, we define:
\begin{equation}
\label{DEFLAN}
\LL_n=
\left\{
\begin{array}{ll}
\MM_n=\Z([-\frac{n-3}{2},\frac{n-1}{2}]+[\frac{n+1}{2}])
& \text{if $\qc$ does not divide $n$}, \\
\1_n & \text{if $\qc$ divides $n$}.
\end{array}
\right.
\end{equation}
\end{defi}

Thanks to Example \ref{poutargue2}, note that we also have:
\begin{equation}
\label{CONTRAL}
\LL_n^\vee=
\left\{
\begin{array}{ll}
\MM_n^\vee=\Z([-\frac{n-1}{2},\frac{n-3}{2}]+[-\frac{n+1}{2}])
& \text{if $\qc$ does not divide $n$}, \\
\1_n & \text{if $\qc$ divides $n$}.
\end{array}
\right.
\end{equation}

If we look at Proposition \ref{ssV1}, we also have the following property
(for arbitrary $e\>1$). 

\begin{rema}
For $n\>2$, if $\qc$ does not divide $n$,
then $\LL_n$ is an irreducible quotient of $\V_n$.
\end{rema}

\section{Computing the derivatives of $\LL_n$ and $\MM_n$}
\label{CompDer}

In this section, we assume that $e$ is arbitrary. 
Remind 
(see \eqref{DEFVN}, \eqref{DEFPIN} and \eqref{DEFLAN})
that we have defined representations $\V_n$, $\MM_n$ and $\LL_n$ 
for all $n\>2$.
By Propositions \ref{P61} and \ref{ssV1}, we have: 
\begin{equation}
\label{DECV}
[\VV_n] = 
\left\{
\begin{array}{ll}
\Pi_n + \nu_n
& \text{if $\qc$ does not divide $n$}, \\
\Pi_n + \nu_n + \1_n & \text{if $\qc$ divides $n$},
\end{array}
\right.
\end{equation}
in the Grothendieck group of finite length representations of $\G_n$. 
Let us compute the derivatives of $\MM_n$.

\begin{lemm}
\label{derPi}
Suppose that $n\>2$.
\begin{enumerate}
\item
If $f=n=2$, the derivative $\MM_2^{(1)}$ is zero. 
\item 
Otherwise we have:
\begin{equation*}
\MM_n^{(1)} =
\left\{
\begin{array}{ll}
\1_{n-2}\times\nu^{(n+1)/2}
& \text{if $\qc$ does not divide $n$}, \\
\LL_{n-1}^\vee\cdot\nu^{1/2}
& \text{if $\qc$ divides $n$}.
\end{array}
\right. 
\end{equation*}
\item 
We have $\MM_n^{(2)}=\1_{n-2}$
and $\MM_n^{(k)}$ is zero for all $k\>3$.
\end{enumerate}
\end{lemm}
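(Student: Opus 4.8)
The plan is to work in the Grothendieck group of finite length representations, using the decomposition \eqref{DECV}, the Leibniz rule for derivatives, and the known derivatives of characters, and then to promote each resulting virtual identity to an isomorphism of representations. By {\bf P1} we have $\nu_{n-1}^{1/2}=\Z(\Delta)$ with $\Delta=[-\frac{n-3}{2},\frac{n-1}{2}]$ and $\nu^{(n+1)/2}=\Z([\frac{n+1}{2}])$, while $\nu_n=\Z([-\frac{n-3}{2},\frac{n+1}{2}])$ and $\1_n=\Z([-\frac{n-1}{2},\frac{n-1}{2}])$; thus $\VV_n=\Z(\Delta)\times\Z([\frac{n+1}{2}])$ and $\MM_n=\Z(\Delta+[\frac{n+1}{2}])$ by \eqref{DEFVN} and \eqref{DEFPIN}. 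Lemma~\ref{dercal} then gives $\Z(\Delta)^{(1)}=\1_{n-2}$, $\Z(\Delta)^{(k)}=0$ for $k\>2$, $\nu_n^{(1)}=\nu_{n-1}^{1/2}$, $\1_n^{(1)}=\nu_{n-1}^{-1/2}$, $\nu_n^{(k)}=\1_n^{(k)}=0$ for $k\>2$, and $\Z([\frac{n+1}{2}])^{(1)}=1$ (with the convention $\1_0=1$ when $n=2$). Feeding this into the Grothendieck-group form of the Leibniz rule of Lemma~\ref{dercal}(3) --- legitimate because the derivative functors descend to the Grothendieck group and $[\Z(\Delta)\times\Z([\frac{n+1}{2}])]=[\Z([\frac{n+1}{2}])\times\Z(\Delta)]$ --- I would obtain
\begin{equation*}
[\VV_n^{(1)}]=[\1_{n-2}\times\nu^{(n+1)/2}]+[\nu_{n-1}^{1/2}],\qquad
[\VV_n^{(2)}]=[\1_{n-2}],\qquad
[\VV_n^{(k)}]=0\ \ (k\>3).
\end{equation*}

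Since the derivative functors are exact, \eqref{DECV} yields $[\MM_n^{(k)}]=[\VV_n^{(k)}]-[\nu_n^{(k)}]-\epsilon\,[\1_n^{(k)}]$, with $\epsilon=1$ if $\qc$ divides $n$ and $\epsilon=0$ otherwise. For $k=2$ and $k\>3$ the correction terms vanish, so $[\MM_n^{(2)}]=[\1_{n-2}]$ and $[\MM_n^{(k)}]=0$; as the right-hand side is a single irreducible class (or zero) and these derivatives have finite length, this gives part (3). For $k=1$ the summand $[\nu_{n-1}^{1/2}]$ cancels $[\nu_n^{(1)}]$, leaving $[\MM_n^{(1)}]=[\1_{n-2}\times\nu^{(n+1)/2}]-\epsilon\,[\nu_{n-1}^{-1/2}]$. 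If $\ee>1$ and $\qc\nmid n$, the segments $[-\frac{n-3}{2},\frac{n-3}{2}]$ and $[\frac{n+1}{2}]$ are not linked --- a linkage would force $\frac{n+1}{2}\equiv-\frac{n-1}{2}$, that is, $\qc\mid n$ --- so $\1_{n-2}\times\nu^{(n+1)/2}$ is irreducible by Proposition~\ref{Linked} and hence $\MM_n^{(1)}\cong\1_{n-2}\times\nu^{(n+1)/2}$, proving part (2) in this subcase. For $n=\qc=2$ we have $\epsilon=1$ and $[\MM_2^{(1)}]=[\nu^{3/2}]-[\nu^{-1/2}]$, which is $0$ because $q^2=1$ in $\R$ forces $\nu^{3/2}=\nu^{-1/2}$; hence $\MM_2^{(1)}=0$, which is part (1). (Alternatively, $\MM_2=\St_2\cdot\nu$ is cuspidal by Lemma~\ref{einstein}, so $\MM_2^{(1)}=0$ by Lemma~\ref{dercal}(1).)

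For the last subcase of part (2), $\ee>1$ and $\qc\mid n$ --- hence $n\>3$, since $\qc\>2$ --- I would compute $[\1_{n-2}\times\nu^{(n+1)/2}]$ via Proposition~\ref{P61}: twisting by $\nu^{(n-1)/2}$ and using $\nu^n=1$ turns $\1_{n-2}\times\nu^{(n+1)/2}$ into $\Z([1,n-2])\times1$, and since $1\equiv1$ while $n-2\nequiv-1$ (because $\qc\nmid n-1$), Proposition~\ref{P61}(2) gives a short exact sequence; untwisting it yields $[\1_{n-2}\times\nu^{(n+1)/2}]=[\nu_{n-1}^{-1/2}]+[\Z([-\frac{n-3}{2},\frac{n-3}{2}]+[-\frac{n-1}{2}])]$, whence $[\MM_n^{(1)}]=[\Z([-\frac{n-3}{2},\frac{n-3}{2}]+[-\frac{n-1}{2}])]$, a single irreducible. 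By Definition~\ref{poutargue4} (note $\qc\nmid n-1$, so $\LL_{n-1}=\MM_{n-1}$) together with {\bf P4}, {\bf P5} and \eqref{CONTRAL}, this irreducible is precisely $\LL_{n-1}^\vee\cdot\nu^{1/2}$, which finishes part (2) when $\ee>1$.

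The case that truly requires more than Grothendieck-group bookkeeping is the first derivative when $\ee=1$: there $\1_{n-2}\times\nu^{(n+1)/2}$ is reducible (its two defining segments become linked, the relevant endpoints differing by $1\in\ee\ZZ$), so the identity of classes above no longer determines $\MM_n^{(1)}$ up to isomorphism. To handle this I would invoke the precise structure of $\VV_n$ from Proposition~\ref{ssV1}: when $\qc\nmid n$ one has $\VV_n=\MM_n\oplus\1_n$, hence $\VV_n^{(1)}=\MM_n^{(1)}\oplus\nu_{n-1}^{-1/2}$, and confronting this with the two-step Bernstein-Zelevinski filtration of $\VV_n^{(1)}$ (graded pieces $\1_{n-2}\times\nu^{(n+1)/2}$ and $\nu_{n-1}^{1/2}=\nu_{n-1}^{-1/2}$) together with a socle analysis should force $\MM_n^{(1)}\cong\1_{n-2}\times\nu^{(n+1)/2}$; when $\qc\mid n$ one argues analogously from the uniserial filtration of $\VV_n$ with socle and cosocle $\1_n$ and middle layer $\MM_n$ given by Proposition~\ref{ssV1}(3). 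I expect this step --- promoting a virtual identity to an honest isomorphism in the $\ee=1$ first-derivative case --- to be the main obstacle.
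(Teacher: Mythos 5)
Your proposal follows exactly the paper's route: Leibniz's rule (Lemma \ref{dercal}(3)) to compute $[\V_n^{(k)}]$, exactness of the derivative functors together with \eqref{DECV}, and then Propositions \ref{P61} and \ref{ssV1} to identify the resulting classes. For $e>1$ your case analysis is complete and correct: the non-linkage argument when $\qc\nmid n$, the cancellation of $[\nu_{n-1}^{-1/2}]$ via Proposition \ref{P61}(2) when $\qc\mid n$ (and the identification of the remaining irreducible class with $\LL_{n-1}^\vee\cdot\nu^{1/2}$ through {\bf P4}, {\bf P5} and \eqref{CONTRAL}), and both of your arguments for $n=\qc=2$ are valid. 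This is precisely what the paper leaves implicit in its one-line appeal to those propositions, so on this range your write-up is, if anything, more detailed than the published proof.

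The one place where your text stops short of a proof is the subcase $e=1$, $\ell\nmid n$ of part (2), which you yourself flag as ``the main obstacle'': there $\1_{n-2}\times\nu^{(n+1)/2}$ is a twist of $\V_{n-1}$, hence reducible (indecomposable of length $3$ when $\ell\mid n-1$), so equality of Grothendieck classes does not determine $\MM_n^{(1)}$, and the ingredients you list --- the splitting $\V_n^{(1)}=\nu_{n-1}^{-1/2}\oplus\MM_n^{(1)}$ from Proposition \ref{ssV1}(2) confronted with the two-step Leibniz filtration --- do not by themselves exclude, say, a nonsplit self-extension of $\nu_{n-1}^{-1/2}$ hiding inside $\MM_n^{(1)}$; the socle analysis you promise is genuinely needed and is not carried out. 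To be fair, the paper's own proof is no more explicit at this point (it simply invokes Propositions \ref{P61} and \ref{ssV1}), so your proposal matches the published argument in approach; but as a standalone proof this subcase remains open in your text. Note also that for $e=1$, $\ell\mid n$ no extra work of this kind is needed, contrary to what you anticipate: there $\ell\nmid n-1$, so the asserted value $\LL_{n-1}^\vee\cdot\nu^{1/2}$ is irreducible, and the class computation (with Proposition \ref{P61}, which assumes $e>1$, replaced by the identification of $\1_{n-2}\times\nu^{(n+1)/2}$ with a twist of $\V_{n-1}$ and \eqref{DECV} applied to $\V_{n-1}$) already pins down $\MM_n^{(1)}$ up to isomorphism, exactly as in your $e>1$, $\qc\mid n$ computation.
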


\begin{proof}
By Leibniz's rule (see Lemma \ref{dercal}(3)), we have: 
\begin{equation*}
[\V_n^{(1)}] = [\1_{n-2}\times\nu^{(n+1)/2}] + \nu_{n-1}^{1/2}
\end{equation*}
in the Grothendieck group of finite length representations of {$\G_{n-1}$}. 
Since the $k^{{\rm th}}$ derivative of a char\-acter is zero for $k\>2$, we 
have $\V_n^{(2)}=\1_{n-2}$ and $\V_n^{(k)}$ is zero for all $k\>3$.
The $k^{{\rm th}}$ deri\-va\-ti\-ve functors being exact,
the expected result follows from \eqref{DECV} together with Propositions 
\ref{P61} and \ref{ssV1}.
\end{proof}

\begin{coro}
\label{derLa}
Suppose that $n\>2$.
\begin{enumerate}
\item
We have:
\begin{equation*}
\LL_n^{(1)} =
\left\{
\begin{array}{ll}
\1_{n-2}\times\nu^{(n+1)/2}
& \text{if $\qc$ does not divide $n$}, \\
\nu_{n-1}^{-1/2} & \text{if $\qc$ divides $n$}.
\end{array}
\right. 
\end{equation*}
\item 
The second derivative $\LL_n^{(2)}$ is equal to $\1_{n-2}$ if $\qc$ does not 
divide $n$, and is zero otherwise. 
\item
The $k^{{\rm th}}$ derivative $\LL_n^{(k)}$ is zero for all $k\>3$.
\end{enumerate}
\end{coro}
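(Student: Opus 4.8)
The plan is to derive Corollary~\ref{derLa} as an immediate consequence of Lemma~\ref{derPi}, the explicit description of $\LL_n$ recorded in Definition~\ref{poutargue4}, and the elementary computation of derivatives of characters provided by Lemma~\ref{dercal}(2). Since $\LL_n=\MM_n$ when $\qc$ does not divide $n$ and $\LL_n=\1_n$ when $\qc$ divides $n$, the proof divides into these two cases.

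First I would treat the case where $\qc$ does not divide $n$. Here $\LL_n=\MM_n$, and moreover $\qc\neq n$, so in particular the exceptional situation $\qc=n=2$ of Lemma~\ref{derPi}(1) cannot occur. Hence Lemma~\ref{derPi}(2)--(3), in its ``$\qc$ does not divide $n$'' branch, applies directly and gives $\LL_n^{(1)}=\1_{n-2}\times\nu^{(n+1)/2}$, $\LL_n^{(2)}=\1_{n-2}$ and $\LL_n^{(k)}=0$ for $k\>3$, which is exactly (1)--(3) in this case.

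Next I would treat the case where $\qc$ divides $n$. Here $\LL_n=\1_n$, and by property \textbf{P1} the trivial character $\1_n$ is the representation $\Z([-\frac{n-1}{2},\frac{n-1}{2}])$ attached to a single segment of length $n$ (the exponent $a+(n-1)/2$ of \textbf{P1} vanishing for $a=-\frac{n-1}{2}$). Lemma~\ref{dercal}(2) then yields at once $\LL_n^{(1)}=\Z([-\frac{n-1}{2},\frac{n-3}{2}])$ together with $\LL_n^{(k)}=0$ for all $k\>2$; in particular $\LL_n^{(2)}=0$, which is the ``otherwise'' clause of~(2), and~(3) then holds a fortiori. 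It remains only to identify $\Z([-\frac{n-1}{2},\frac{n-3}{2}])$ as a character: it is attached to a segment of length $n-1$, so by \textbf{P1} it equals $\nu_{n-1}^{a+(n-2)/2}$ with $a=-\frac{n-1}{2}$, that is $\nu_{n-1}^{-1/2}$, as claimed in~(1).

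Since the argument is nothing more than this case distinction followed by two invocations of already-proved facts, no real obstacle arises. The one point worth a line of verification is that the exceptional pair $\qc=n=2$ appearing in Lemma~\ref{derPi}(1) never interferes --- and indeed it does not, because that pair falls under ``$\qc$ divides $n$'', where $\LL_n=\1_n$ is handled directly rather than through $\MM_n$.
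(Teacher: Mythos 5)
Your proposal is correct and is exactly the argument the paper intends (the corollary is left without proof precisely because it follows from Lemma~\ref{derPi}, the case distinction in Definition~\ref{poutargue4}, and the derivative of a character via {\bf P1} and Lemma~\ref{dercal}(2)). Your check that the exceptional case $\qc=n=2$ falls under the ``$\qc$ divides $n$'' branch, where $\LL_2=\1_2$ is handled directly, is the right point to verify and is handled correctly.
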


\begin{rema}
\label{derPicor}
Since $\MM_n^{\vee}=\MM_n^{}\cdot\nu^{-1}$ by Properties {\bf P4} 
and {\bf P5}, we get the derivatives of 
$\MM_n^\vee$ and $\LL_n^\vee$ from Lemma \ref{derPi} and Corollary 
\ref{derLa}.
\end{rema}

\begin{exem}
\label{derst}
\begin{enumerate}
\item 
We have $\St_2=\Z([-1/2]+[1/2])=\MM_2\cdot\nu^{-1}$.
If $\qc=2$, the representation $\St_2$ is cuspidal thus its first derivative is zero.  
Otherwise, we have $(\St_2)^{(1)}=\nu^{1/2}$.
\item 
Let $\St_3$ denote the nondegenerate irreducible subquotient of 
$\nu^{-1}\times\1\times \nu$, that is:
\begin{equation*}
\St_3 = \Z([-1]+[0]+[1])
\end{equation*}
(see \cite[\S8]{MSc}).
If $\qc=3$, then $\St_3$ is cuspidal (\cite[\S6]{MSc})
thus its first and second derivatives are 
zero.  
If $\qc\neq3$, then:
\begin{equation*}
[\nu^{-1} \times \1 \times \nu] = \1_3 + \LL_3\cdot\nu^{-1} + 
(\LL_3)^{\vee}\cdot\nu + \St_3 
\end{equation*}
in the Grothendieck group of finite length representations of $\G_3$. 
We thus get $(\St_3)^{(1)} = \St_2\cdot\nu^{1/2}$ and 
$(\St_3)^{(2)} = \nu$. 
\end{enumerate}
\end{exem}

\section{A modular version of Badulescu-Lapid-M\'\i nguez's 
juxtaposition criterion} \label{blmanalog}

In Paragraph \ref{repmodl} we have defined $\Z(\Delta)$ for $\Delta$ a 
segment.
In \cite{MSc} an irreducible representation $\L(\Delta)$ is also introduced.
We will need it only for segments of length $\<2$.

\begin{defi}
Let $a$ be a half-integer.
Then $\L([a])=\Z([a])=\nu^{a}$ and:
\begin{equation*}
\L([a,a+1]) = 
\left\{
\begin{array}{ll}
\UIQ(\nu^a\times\nu^{a+1})&\text{if $e>1$},\\
\LL_{2}\cdot\nu^{a+1/2} & \text{if $e=1$}.
\end{array}
\right.
\end{equation*}
\end{defi}

\begin{rema}
Note that we have $\rp_{(1,1)}(\L([a,a+1]))=\nu^{a+1}\otimes\nu^{a}$ for all 
$a\in\frac{1}{2}\ZZ$.
\end{rema}

If we write $\St_2$ for the Steinberg representation of $\G_2$ as in Paragraph
\ref{defqc}, then we have: 
\begin{equation*}
\L([a,a+1]) =
\left\{
\begin{array}{ll}
\St_2\cdot\nu^{a+1/2}&\text{if $\qc\neq2$},\\
\nu^{a-1/2}&\text{if $\qc=2$}.
\end{array}
\right.
\end{equation*}

Note that $\LL_2=\St_2\cdot\nu$ if $\qc\neq2$.

\begin{lemm}[\cite{MSc}, Théorème 7.26]
\label{LinkedL}
Given two segments $\Delta$ and $\Delta'$ of length $\<2$, 
the representation $\L(\Delta)\times\L(\Delta')$ is irreducible 
if and only if $\Delta$ and $\Delta'$ are not linked.
\end{lemm}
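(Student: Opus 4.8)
The plan is to deduce this from Proposition \ref{Linked}, which is the analogous statement with $\Z$ in place of $\L$, by reducing to the cases actually at hand: since $\Delta$ and $\Delta'$ have length $\<2$, each of $\L(\Delta)$, $\L(\Delta')$ is either a character $\nu^a$ (when the segment has length $1$) or the representation $\St_2\cdot\nu^{a+1/2}$, unless $\qc=2$ in which case $\L([a,a+1])=\nu^{a-1/2}$ is again a character. So there are essentially three configurations to treat: (i) both segments of length $1$; (ii) one of length $1$ and one of length $2$; (iii) both of length $2$. In case (i) we have $\L(\Delta)=\Z(\Delta)$, $\L(\Delta')=\Z(\Delta')$, so there is nothing to prove. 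When $\qc=2$, case (ii) reduces to case (i) and case (iii) reduces to case (ii), so it suffices to handle $\qc\neq2$ in cases (ii) and (iii), where $\St_2\cdot\nu^{a+1/2}$ genuinely appears.

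For case (ii), write the length-$2$ segment as $\Delta=[a,a+1]$, so $\L(\Delta)=\St_2\cdot\nu^{a+1/2}$, and the length-$1$ segment as $\Delta'=[c]$, so $\L(\Delta')=\Z(\Delta')=\nu^c$. First I would note that $\St_2\cdot\nu^{a+1/2}$ is itself a subquotient (the nondegenerate one) of $\nu^a\times\nu^{a+1}=\Z([a])\times\Z([a+1])$, and in fact $\St_2\cdot\nu^{a+1/2}=\Z([a,a]+[a+1,a+1])$ when $a\equiv a+2$ fails but $a+1\equiv a+1$... more precisely it is the Langlands-type quotient $\L([a,a+1])$; I would use Proposition \ref{Linked} applied to the triple $\nu^a,\nu^{a+1},\nu^c$ to decide when $\nu^a\times\nu^{a+1}\times\nu^c$ is irreducible, namely exactly when $[c]$ is unlinked from both $[a]$ and $[a+1]$, which one checks is equivalent to $[c]$ being unlinked from $[a,a+1]$. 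When this product is irreducible, so is its subquotient-by-restriction $\L([a,a+1])\times\nu^c$ (using that $\L([a,a+1])\hookrightarrow \nu^a\times\nu^{a+1}$ or a quotient thereof, combined with Lemma \ref{PERM} to commute factors). Conversely, when $[c]$ is linked to $[a,a+1]$, I would exhibit reducibility by writing down the composition factors explicitly using Proposition \ref{P61} (after a twist reducing to $1=\nu^0$): the point is that $\nu^c$ linked to $[a,a+1]$ forces $c\equiv a-1$ or $c\equiv a+2$, and in either case $\L([a,a+1])\times\nu^c$ contains the character $\Z([a-1,a+1])$ or $\Z([a,a+2])$ as a proper subquotient, hence is reducible; the derivative computations (Lemma \ref{dercal}(2),(3), Lemma \ref{derPi}) give a clean way to see the length is exactly $2$ and not larger.

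For case (iii), with $\Delta=[a,a+1]$ and $\Delta'=[c,c+1]$, both $\L$'s are twisted Steinbergs. Here "linked" in the sense of the definition preceding Proposition \ref{Linked} means $c\equiv a+2$ or $c\equiv a-2$ (equal lengths, so the two conditions in the definition of linked coincide), i.e. the segments are adjacent. I would again compute $\rp_{(1,1,1,1)}$ of $\L([a,a+1])\times\L([c,c+1])$ via the geometric lemma, starting from $\rp_{(1,1)}\L([a,a+1])=\nu^{a+1}\otimes\nu^a$ (the Remark after the definition of $\L$), and compare multiplicities of irreducible subquotients: if $\{[a,a+1],[c,c+1]\}$ are unlinked I would show the product embeds into, and is a quotient of, a rearrangement of $\nu^a\times\nu^{a+1}\times\nu^c\times\nu^{c+1}$ and apply the irreducibility criterion \cite{MSc}, Lemme 2.5 (the "three lemmas" lemma in the excerpt) by checking the relevant Jacquet-module multiplicity is $1$; if they are linked ($c\equiv a+2$, say, the other case being symmetric by contragredience via {\bf P5}) I would produce a proper subquotient, namely $\L([a,a+2])$ or $\Z([a,a+3])$ depending on whether $a\equiv\cdots$, again reading it off from Proposition \ref{length2seg} or Proposition \ref{P61} together with the derivative formulas.

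The main obstacle I anticipate is the reducibility (``only if'') direction when $\qc$ is small, especially $\qc=2$ or $\qc=3$: there $\St_2\cdot\chi$ may be cuspidal (Lemma \ref{einstein}) or $\St_3$ may be cuspidal, the combinatorics of $\equiv$ collapses, and one must be careful that the "proper subquotient" one wants to exhibit really is distinct from $\L(\Delta)\otimes\L(\Delta')$-type pieces and really does occur with positive multiplicity. I would handle $\qc=2$ separately at the outset (as noted, it reduces to the all-characters case) and treat $\qc=3$, and more generally small $e$, by direct appeal to Propositions \ref{P61} and \ref{length2seg}, whose statements already isolate exactly the extra subquotients that appear. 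The clean conceptual point making everything work is that $\L(\Delta)$ and $\Z(\Delta)$ have the same cuspidal support, so Proposition \ref{Lanzmann} handles all cases where the supports of $\Delta,\Delta'$ are "far apart", and only the finitely many near-coincident configurations — precisely the linked ones and their immediate neighbours — require the explicit subquotient analysis above.
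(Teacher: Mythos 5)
The paper does not prove this lemma at all: exactly as for Proposition \ref{Linked}, it is imported from \cite{MSc}, Th\'eor\`eme 7.26, so your attempt to re-derive it from the other quoted results is necessarily a different route. Judged on its own terms, however, it has genuine gaps. First, in your case (ii) the whole ``unlinked $\Rightarrow$ irreducible'' mechanism is vacuous: the segments $[a]$ and $[a+1]$ are always linked, so the auxiliary principal series $\nu^a\times\nu^{a+1}\times\nu^c$ is \emph{never} irreducible, and the claimed equivalence between ``$[c]$ unlinked from both $[a]$ and $[a+1]$'' and ``$[c]$ unlinked from $[a,a+1]$'' is false (take $c\equiv a$ or $c\equiv a+1$). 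Since Proposition \ref{Lanzmann} only disposes of the case where $\nu^c$ lies on a different cuspidal line, every unlinked configuration with $c$ on the same line as $a$ (in particular $c\equiv a$, $c\equiv a+1$) is left unproved; for these one really needs the sub-plus-quotient and multiplicity-one argument of \cite{MSc}, Lemme 2.5, which is exactly how the paper proves the companion statement $\Z(\Delta)\times\L([0,1])$ in Proposition \ref{blm}.

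Second, in the linked case of (ii) the proper subquotient you exhibit is not there: for $c\equiv a+2$ and $e>2$ the character $\Z([a,a+2])$ cannot occur in $\St_2\cdot\nu^{a+1/2}\times\nu^{a+2}$, because $\rp_{(1,1)}(\L([a,a+1]))=\nu^{a+1}\otimes\nu^{a}$ forces every term of $\rp_{(1,1,1)}$ of the product to carry $\nu^{a+1}$ before $\nu^{a}$, whereas $\rp_{(1,1,1)}(\Z([a,a+2]))=\nu^{a}\otimes\nu^{a+1}\otimes\nu^{a+2}$; compare the paper's own computation $[\St_2\cdot\nu^{-1/2}\times\nu]=\LL_3^\vee\cdot\nu+\St_3$ in the proof of Lemma \ref{LGL3}. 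Reducibility is true, but it should come from Zelevinsky's juxtaposition argument (as invoked in the proof of Proposition \ref{blm}), not from this factor. Third, in case (iii) the combinatorics of linkage is misidentified: two length-two segments $[a,a+1]$ and $[c,c+1]$ are linked not only when $c\equiv a\pm2$ but also when $c\equiv a\pm1$ (take $k=c$ or $k=c+1$ in the definition), so your case division would send the overlapping, genuinely reducible configurations into the ``unlinked'' branch and attempt to prove irreducibility there, which must fail. Given that the paper relies on the citation, the honest options are either to keep the reference to \cite{MSc} or to repair each of these three points with a real argument along the lines of the proof of Proposition \ref{blm}.
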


Following \cite[Définition 2.1]{BLM}, say that two segments $[a,b]$ and $[c,d]$ are 
\textit{juxtaposed} if we have $c\equiv b+1$ or $a\equiv d+1$.

\begin{prop}\label{blm}
Let $\Delta,\Delta'$ be two segments.
Assume $\Delta'$ has length $2$.
Then $\Z(\Delta)\times\L(\Delta')$ is reducible 
if and only if $\Delta$ and $\Delta'$ are juxtaposed. 
\end{prop}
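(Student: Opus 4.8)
The plan is to reduce the statement about $\Z(\Delta)\times\L(\Delta')$ to the two irreducibility criteria already available in the excerpt, namely Proposition \ref{Linked} (for products of $\Z$'s) and Lemma \ref{LinkedL} (for products of $\L$'s), treating the cases $\qc=2$ and $\qc\neq2$ separately since $\L(\Delta')$ takes a different shape in these two regimes. Write $\Delta'=[c,c+1]$. When $\qc=2$ we have $\L([c,c+1])=\nu^{c-1/2}=\Z([c-1/2])$, so $\Z(\Delta)\times\L(\Delta')$ is just a product of two $\Z$'s of a segment and a point; by Proposition \ref{Linked} it is reducible exactly when $\Delta=[a,b]$ and $[c-1/2]$ are linked, which (since the point has length $1\le b-a+1$) happens iff $c-\tfrac12\equiv b+1$ or $c-\tfrac12\equiv a-1$. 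One then checks this is exactly the condition that $[a,b]$ and $[c,c+1]$ are juxtaposed in the sense of the definition preceding the proposition, using that $\qc=2$ forces $c\equiv c+1\pm$ the appropriate shift; this is a short congruence bookkeeping step.

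For the main case $\qc\neq2$ we have $\L([c,c+1])=\St_2\cdot\nu^{c+1/2}=\LL_2\cdot\nu^{c-1/2}$, an irreducible representation of $\G_2$ with cuspidal support $[\nu^{c}]+[\nu^{c+1}]$ and with $\rp_{(1,1)}(\L([c,c+1]))=\nu^{c+1}\otimes\nu^{c}$. The strategy here is to proceed in two directions. First, for the "only if" direction: if $\Delta=[a,b]$ and $\Delta'=[c,c+1]$ are \emph{not} juxtaposed, i.e. $c\nequiv b+1$ and $a\nequiv d+1$ where $d=c+1$, I would show $\Z(\Delta)\times\L(\Delta')$ is irreducible. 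If moreover the cuspidal supports do not meet at all (no $\nu^k$, $a\le k\le b$, is $\equiv c$ or $\equiv c+1$), this is immediate from Proposition \ref{Lanzmann}. Otherwise the supports overlap but the segments are not juxtaposed; here I would use Lemma \ref{MSc}'s irreducibility criterion (``Lemme 2.5'', the three-hypotheses lemma in the excerpt): one checks that $\Z(\Delta)\times\L(\Delta')$ is simultaneously a sub of $\s\times\tau$ and a quotient of $\tau\times\s$ for the natural ordering, and that the multiplicity of $\s\otimes\tau$ in the relevant Jacquet module is $1$ — the latter via the Geometric Lemma together with {\bf P3} and the known Jacquet modules of $\Z(\Delta)$ (Proposition \ref{Rappel}) and of $\L(\Delta')$. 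Second, for the "if" direction: if $\Delta,\Delta'$ are juxtaposed, say $c\equiv b+1$, then $\Z(\Delta)\times\L(\Delta')$ and $\Z(\Delta)\times\Z(\Delta')$ have related composition series, and I would exhibit a genuine length $\ge 2$, e.g. by computing $\rp_{\upmu}$ for a suitable composition $\upmu$ using the Geometric Lemma and showing that two non-isomorphic irreducible $\Z(\n)$'s (predicted by {\bf P6} applied to $\Z([a,b])\times\Z([c,c+1])$, compatible with Proposition \ref{length2seg}) both appear, forcing reducibility; alternatively exhibit an explicit nonzero non-invertible intertwining operator built from the $\Z(\Delta)\times\Z(\Delta')$ picture.

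The main obstacle I expect is the ``only if'' direction in the overlapping-but-not-juxtaposed subcase of $\qc\neq2$: verifying hypothesis (2) of the ``Lemme 2.5'' criterion requires a careful Geometric-Lemma computation of $\rp_{(a,b)}(\s\times\tau)$ and a multiplicity-one check that is sensitive to the modular combinatorics (recall a product of characters need not have length $\le 2$ when $\qc>0$, per Propositions \ref{P61} and \ref{length2seg}), so one must rule out coincidental extra copies of $\s\otimes\tau$ coming from the second derivative / length-$2$ segment phenomena. A secondary subtlety is keeping the congruence conditions ``$\equiv$'' (mod $\ee$ or equality) consistent throughout, especially in the $\qc=2$ reduction where $\ee=1$ collapses many distinctions; I would handle this by fixing once and for all that $\qc\ne 2$ means $\St_2$ is not cuspidal and translating ``juxtaposed'' and ``linked'' into the same normal form before comparing.
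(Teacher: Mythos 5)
Your outline gets the easy reductions right (twisting so that $\Delta'=[0,1]$, Proposition \ref{Lanzmann} when the cuspidal supports are disjoint, Zelevinski-style reducibility in the juxtaposed case), but the heart of the statement --- after twisting, the case $e>2$ with $e\nmid b+1$ and $e\nmid a-2$, i.e.\ Proposition \ref{BLM01} --- is not closed by what you propose. Applying the criterion of \cite[Lemme 2.5]{MSc} ``for the natural ordering'', that is with $\s=\Z(\Delta)$ and $\tau=\L(\Delta')$, cannot work: hypothesis (1) of that criterion asks that $\pi=\Z(\Delta)\times\L(\Delta')$ be a \emph{quotient} of $\tau\times\s=\L(\Delta')\times\Z(\Delta)$, and this is not something a geometric-lemma computation can verify --- it is essentially the commutation one is trying to establish, so the choice is circular, and in the modular setting there is no meromorphic intertwining operator to fall back on. The paper's proof instead argues by induction on the length of $\Delta$ and, in each congruence class of $(a,b)$ mod $e$, chooses a \emph{different} pair $(\s,\tau)$ built from shorter pieces: $\s=\nu^a\times\L([0,1])$, $\tau=\Z([a+1,b])$ when $a\not\equiv\pm1$; $\s=\nu^{-1}\times\nu$, $\tau=1\times\Z([a+1,b])$ when $a\equiv-1$; $\s=\Z([1,b-1])\times\L([0,1])$, $\tau=\nu^b$ when $a\equiv1$ and $b\not\equiv0,2$; $\s=\Z([1,2])$, $\tau=\Z([3,b])\times\L([0,1])$ when $a\equiv1$, $b\equiv0,2$ and $e>3$ --- the sub- and quotient-realizations coming from Proposition \ref{Rappel}, the inductive hypothesis and Lemma \ref{PERM}, and only then is the multiplicity-one condition checked by the geometric lemma. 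Even this scheme breaks down when $e=3$, $a\equiv1$, $b\equiv0$: there the paper must determine the full composition series of $\Z([1,3])\times\L([0,1])$ by hand (Jacquet modules together with property {\bf P6}) and then run a separate induction on $k$ for $\Z([1,3k])$. Your proposal foresees a ``multiplicity-one subtlety'' but neither the induction on the length of $\Delta$, nor the case-dependent factorizations, nor the exceptional $e=3$ analysis; without these the hard direction does not go through.

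Two further points. In your $\qc=2$ reduction you identify $\L([c,c+1])$ with $\Z([c-1/2])$, a character of $\G_1$: this is a degree mismatch, since $\L([c,c+1])$ is a representation of $\G_2$ (the displayed character $\nu^{c-1/2}$ is a character of $\G_2$, namely $\Z([c-1,c])$), so your congruence bookkeeping would compare the half-integer point $c-1/2$ with the integral segment $[a,b]$ and never detect any linkage; the paper in fact disposes of $e=2$ at the outset by treating $\L([0,1])$ as cuspidal (Lemma \ref{einstein}) and invoking Proposition \ref{Lanzmann}, and it folds the entire case $e=1$ into the juxtaposed/reducible direction via Zelevinski's argument from \cite[\S2]{Z}, a case your plan should address explicitly. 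Finally, in the juxtaposed direction, exhibiting two factors ``predicted by {\bf P6} for $\Z(\Delta)\times\Z(\Delta')$'' still requires an argument that both factors survive in the smaller representation $\Z(\Delta)\times\L(\Delta')$ rather than only in the full product of characters; the paper avoids this by quoting Zelevinski's explicit construction of a proper nonzero subrepresentation.
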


\begin{proof}
By twisting by a character, 
we may and will assume that $\Delta'=[0,1]$.
If $e=2$, then $\L([0,1])$ is cuspidal (see Lemma \ref{einstein}), 
thus the result is true by Proposition \ref{Lanzmann}. 
If $\Delta$ and $[0,1]$ are juxtaposed, Zelevinski's argument 
(see \cite[\S2]{Z})
proves that $\pi$ is reducible. 
In particular, if $e=1$, then $\Delta$ and $[0,1]$ are always juxtaposed. 
We are thus reduced to prove the following. 

\begin{prop}
\label{BLM01}
Assume that $e>2$ and let $a\<b$ be integers. 
Then $\Z([a,b])\times\L([0,1])$ is reducible 
if and only if $e$ divides $b+1$ or $a-2$.
\end{prop}

We write $\pi=\Z([a,b])\times\L([0,1])$. 
When $e$ divides $b+1$ or $a-2$, Zelevinski's argument proves that 
$\pi$ is reducible. 

We now assume $e$ does not divide $b+1$ nor $a-2$, and write $\equiv$ 
for the relation of congruence mod $e$ in $\ZZ$.
We thus have $a\nequiv2$ and $b\nequiv-1$.
The proof is by induction on $n=b-a+1$.

If $n=1$ then $\pi=\nu^a\times\L([0,1])$ and the result follows from 
Lemma \ref{LinkedL} since the segments $[a]$ and $[0,1]$ are not linked. 

Assume now $n\>2$. 
Our goal is to find irreducible representations $\s,\tau$, of degree $u,v$ 
res\-pectively, such that $\pi$ 
occurs as a subrepresentation of $\s\times\tau$ and as a quotient of 
$\tau\times\s$, and such that $\s\otimes\tau$ occurs with multiplicity 
$1$ in $\rp_{(u,v)}(\s\times\tau)$.
We will distinguish the following cases: 
\begin{enumerate}
\item\label{Es1}
$a\nequiv-1,1$ 
\item\label{Es2}
$a\equiv-1$
\item\label{Es3} 
$a\equiv1$ and $b\nequiv0,2$
\item\label{Es4} 
$a\equiv1$ and $b\equiv0,2$ and $e>3$
\item\label{Es5} 
$a\equiv1$ and $b\equiv0$ and $e=3$
\end{enumerate}

In Case \ref{Es1}, since $a\nequiv1$ and thanks to the inductive hypothesis, 
$\pi$ embeds in:
\begin{equation}
\label{I1}
\nu^a\times\Z([a+1,b])\times\L([0,1])\simeq 
\nu^a\times\L([0,1])\times\Z([a+1,b])
\end{equation}
and $\nu^a\times\L([0,1])$ is irreducible because $a\nequiv-1$. 
Since $\Z([a,b])$ is a quotient of $\Z([a+1,b])\times\nu^a$, 
we can choose $\s=\nu^a\times\L([0,1])$ and $\tau=\Z([a+1,b])$.
We compute the multiplicity 
of $\s\otimes\tau$ in $\rp_{(3,n-1)}(\s\times\tau)$
by applying the geometric lemma.
For this multiplicity to be $1$, it is enough to prove that $\s$ does not 
occur as a subquotient of the following representations: 
\begin{enumerate}
\item[(\ref{Es1}.1)]
$\nu^a\times1\times\nu^{a+1}$;
\item[(\ref{Es1}.2)] 
$\L([0,1])\times\nu^{a+1}$;
\item[(\ref{Es1}.3)] 
$\nu^a\times\Z([a+1,a+2])$;
\item[(\ref{Es1}.4)]
$\nu\times\Z([a+1,a+2])$;
\item[(\ref{Es1}.5)]
$\Z([a+1,a+3])$.
\end{enumerate}
This follows from \cite[Théorème 8.16]{MSc}.

In Case \ref{Es2}, Equation \eqref{I1} in addition with 
the fact that $\L([0,1])$ embeds in $\nu\times1$ 
implies that $\pi$ is a sub\-representation of:
\begin{equation*}
\nu^{-1}\times\nu\times1\times\Z([a+1,b]).
\end{equation*}
But $\pi$ is also a quotient of:
\begin{equation*}
\Z([a,b])\times1\times\nu\simeq1\times\Z([a,b])\times\nu
\end{equation*}
which itself is a quotient of the representation 
$1\times\Z([a+1,b])\times\nu^{-1}\times\nu$.
We thus can choose $\s=\nu^{-1}\times\nu$ and $\tau=1\times\Z([a+1,b])$.
Again, by the geometric lemma, 
it is enough to prove that $\s$ does not occur as a sub\-quo\-tient of
$\nu^{-1}\times1$, $\nu\times1$, $\Z([0,1])$ or $1\times1$,
which follows easily. 

In Case \ref{Es3}, we embed $\Z([a,b])$ into $\Z([a,b-1])\times\nu^b$ 
and show by a similar argument that we can choose 
$\s=\Z([1,b-1])\times\L([0,1])$ and $\tau=\nu^b$.
By using the geometric lemma, 
it is enough to prove that $\nu^b$ is different from $1$ and $\nu^{b-1}$,
which is immediate. 

In  Case \ref{Es4}, we prove the following more general lemma.

\begin{lemm}
Assume $e>3$.
Then $\Z([1,b])\times\L([0,1])$ is irreducible 
for any $b\>1$, $b\nequiv-1$.
\end{lemm}

\begin{proof}
We first treat the case where $b=2$ 
(the case where $b=1$ has already been done).
We embed $\pi=\Z([1,2])\times\L([0,1])$ in:
\begin{equation*}
\Z([1,2])\times\nu\times1\simeq
\nu\times\Z([1,2])\times1\hookrightarrow
\nu\times\nu\times \nu^2\times1
\end{equation*}
and we choose $\s=\nu\times\nu$ and $\tau=\nu^2\times1$. 

Now assume $b\>3$.
We embed $\Z([1,b])$ in 
$\Z([1,2])\times\Z([3,b])$ and then choose
$\s=\Z([1,2])$ and $\tau=\Z([3,b])\times\L([0,1])$.
By the geometric lemma, it is enough to prove $\s$ does not occur 
in: 
\begin{enumerate}
\item[(\ref{Es4}.1)]
$\nu\times\nu^3$;
\item[(\ref{Es4}.2)]
$\nu\times\nu$;
\item[(\ref{Es4}.3)]
$\L([0,1])$;
\item[(\ref{Es4}.4)]
$\nu^3\times\nu$;
\item[(\ref{Es4}.5)]
$\Z([3,4])$.
\end{enumerate}
This is immediate. 
\end{proof}

In Case \ref{Es5},
$n$ is of the form $3k$ for some $k\>1$, and we write 
$\Om_k=\Z([1,3k])$.

\begin{lemm}
The representation $\Om_1\times\L([0,1])$ is irreducible. 
\end{lemm}

\begin{proof}
Let $\xi$ be an irreducible subquotient of $\pi=\Om_1\times\L([0,1])$. 
It is thus a subquotient of the representation $\Z([1,3])\times\nu\times\1$.
By using Properties {\bf P2} and {\bf P3}, 
we deduce that $\xi$ is of the form $\Z(\m)$ where 
$\m$ is a multisegment in the following list: 
\begin{enumerate}
\item[(\ref{Es5}.1)]
$\m=[0,4]$;
\item[(\ref{Es5}.2)] 
$\m=[0,3]+[1]$;
\item[(\ref{Es5}.3)] 
$\m=[1,4]+[0]$;
\item[(\ref{Es5}.4)] 
$\m=[0,2]+[3,4]$;
\item[(\ref{Es5}.5)] 
$\m=[2,4]+[0,1]$;
\item[(\ref{Es5}.6)] 
$\m=[1,3]+[0,1]$;
\item[(\ref{Es5}.7)]
$\m=[1,3]+[0]+[1]$.
\end{enumerate}
We will prove that Case \ref{Es5}.7 is the only possible case, which implies that 
$\Om_1\times\L([0,1])$ is irredu\-ci\-ble and equal to $\Z([1,3]+[0]+[1])$.
By the geometric lemma, we get:
\begin{equation*}
\sy{\rp_{(3,2)}(\pi)} = 
\Z([1,3])\otimes\L([0,1])+(\Z([1,2])\times\nu)\otimes(\1\times\1)+
(\nu\times\L([0,1]))\otimes\Z([2,3])
\end{equation*}
and each of these three subquotients is irreducible. 
Since $\rp_{(3,2)}(\Z([0,4]))=\Z([0,2])\otimes\Z([3,4])$, 
we see that 
$\Z([0,4])$ cannot occur as a subquotient of $\pi$.

Now the semi-simplification of $\rp_{(1,2,2)}(\pi)$ is equal to:
\begin{equation*}
\begin{split}   
\nu\otimes\Z([2,3])\otimes\L([0,1])&+
\nu\otimes\L([0,1])\otimes\Z([2,3])+
\nu\otimes(\nu\times\1)\otimes\Z([2,3])\\
&+
\nu\otimes\Z([1,2])\otimes(\1\times\1)+
\nu\otimes(\nu^2\times\nu)\otimes(\1\times\1).
\end{split}
\end{equation*}
By using Proposition \ref{JacquetMu}, we see that Cases \ref{Es5}.4, 
\ref{Es5}.5 and \ref{Es5}.6 cannot occur.

Now the semi-simplification of $\rp_{(1,1,1,2)}(\pi)$ is equal to:
\begin{equation*}
\begin{split}   
\nu\otimes\1\otimes\nu\otimes\Z([2,3])&+
\nu\otimes\nu^2\otimes\1\otimes\L([0,1])+
\nu\otimes\nu^2\otimes\nu\otimes(\1\otimes\1)\\
&+
2\cdot\big(\nu\otimes\nu\otimes\1\otimes\Z([2,3])\big)+
2\cdot\big(\nu\otimes\nu\otimes\nu^2\otimes(\1\times\1)\big).
\end{split}
\end{equation*}
By using Proposition \ref{JacquetMu}, we see that Case \ref{Es5}.2  cannot occur. 

It remains to treat Case \ref{Es5}.3. 
The semi-simplification of $\rp_{(1,1,3)}(\Z([1,4])\times 1)$ is equal to: 
\begin{equation*}
\begin{split}   
\nu\otimes\nu^2\otimes (\Z([0,1])\times 1)+
\1\otimes\nu\otimes \Z([2,4])+
\nu\otimes\1\otimes \Z([2,4]).
\end{split}
\end{equation*}
By Proposition \ref{P61}(2) and the geometric lemma, we get:
\begin{equation*} [\rp_{(1,1,3)}(\Z([1,4])+ [0])]= \nu\otimes
\nu^2\otimes (\Z([0,1])\times 1)+\nu\otimes\1\otimes \Z([2,4]).
\end{equation*}
On the other hand, the semisimplification of $\rp_{(1,1,3)}(\pi)$ is equal to:
\begin{equation*}
\nu\otimes \1 \otimes \Z([1,3]) + 2\cdot (\nu\otimes \nu \otimes 
(\Z([2,3])\times 1)) + \nu\otimes \nu^2 \otimes (1\times \L([0,1]))
\end{equation*}
and each of the individual subquotients is irreducible. 
Therefore, Case \ref{Es5}.3 cannot occur. 
\end{proof}

The proof is now by induction on $k$.
We embed $\Om_{k+1}$ into $\Om_{1}\times\Om_{k}$
and choose $\s=\Om_1$ and $\tau=\L([0,1])\times\Om_{k}$.
By using the geometric lemma, we have to prove that, for all 
$0\<i\<2$, the factor $\s\otimes\tau$ does not occur as a subquotient 
of any of these three representations: 
\begin{enumerate}
\item[(\ref{Es5}.A)]
$\Z([1,i])\times\L([0,1])\times\Z([1,1-i])\otimes\Z([i+1,3])\times\Z([2-i,3k])$;
\item[(\ref{Es5}.B)]
$\Z([1,i])\times\nu\times\Z([1,2-i])\otimes\Z([i+1,3])\times1\times\Z([3-i,3k])$;
\item[(\ref{Es5}.C)]
$\Z([1,i])\times\Z([1,3-i])\otimes\Z([i+1,3])\times\L([0,1])\times\Z([4-i,3k])$.
\end{enumerate}
This follows by using Property {\bf P3}.
(Notice that the term (\ref{Es5}.A) does not appear if $i=2$).

This ends the proof of Proposition \ref{BLM01}.
\end{proof}

\section{Distinguished representations}\label{distprelims}

For $n\>2$, we write $\H_{n}$ for the subgroup of $\G_n$ made of all 
matrices of the form:
\begin{equation*}
\begin{pmatrix}
g&0\\
0&1
\end{pmatrix},
\quad
g\in\G_{n-1}.
\end{equation*}

\begin{defi}
Assume that $n\>2$.
A smooth $\R$-representation $(\pi,\V)$ of $\G_n$ is said to be 
$\H_n$-distinguished if $\V$ possesses a nonzero $\H_n$-invariant 
linear form. 
\end{defi}

If the space $\Hom_{\H_n}(\V,\R)$ has finite dimension over $\R$, we denote 
this dimension by $d(\pi)$. 

\subsection{Cuspidal representations} 
\label{sectioncusp}

Just as in the complex case (see \cite{P}), we have the following result. 

\begin{theo}
\label{THEOCUSP}
Let $n\>2$ and let $\rho\in\GH_{n}$ be a cuspidal representation.
Then $\rho$ is distinguished if and only if $n=2$.
When it is the case, we have $d(\rho)=1$.
\end{theo}

\begin{proof}
Write $\P_{n}$ for the mirabolic subgroup of $\G_{n}$, that is the subgroup 
made of all matrices with last row $(0,\dots,0,1)$.
By \cite[{III},Theorem 1.1]{Vigb}, the restriction of $\rho$ to $\P_{n}$ is 
isomorphic, just as in the complex case, to
the compact $\R$-induction:
\begin{equation*}
\ind^{\P_{n}}_{\U_{n}}(\psi_{n})
\end{equation*}
of a generic character $\psi_{n}$ of the standard maximal unipotent subgroup
$\U_{n}$ of $\G_{n}$. 
As $\P_{n}=\H_{n}\U_{n}$, the restriction of $\rho$ to $\H_{n}$ is 
iso\-morphic to the compact $\R$-induction 
$\ind^{\H_{n}}_{\H_{n}\cap\U_{n}}(\psi_{n})$, which carries a nonzero 
$\H_{n}$-fixed $\R$-linear form if and only if $\psi_{n}$ is trivial on 
$\H_{n}\cap\U_{n}$.
This happens if and only if $n=2$, in which case we 
have $d(\rho)=\dim\Hom_{\H_2\cap\U_2}(\psi_{2},1)=1$.
\end{proof}

\subsection{Distinction and contragredient}

We have the very useful following result.
Assume $n\>2$.

\begin{prop}
\label{GK}
Assume that the characteristic of $\R$ is not $2$, 
and let $\pi\in\GH_n$.
Then $\pi$ is $\H_n$-distinguished if and only $\pi^\vee$ is. 
\end{prop}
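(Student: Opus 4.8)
The statement is that, over a field $\R$ of characteristic $\neq 2$, an irreducible $\pi\in\GH_n$ is $\H_n$-distinguished if and only if $\pi^\vee$ is. The plan is to use a Gelfand-involution argument, exactly as in the complex case treated in \cite{P,Flicker}. The key input is an anti-automorphism $\theta$ of $\G_n$ that preserves $\H_n$ and sends every irreducible representation to its contragredient; the standard choice is $\theta(g)={}^{t}g^{-1}$ composed with conjugation by a suitable permutation matrix, or more simply $\theta(g)=w\,{}^{t}g^{-1}w^{-1}$ where $w$ is the antidiagonal matrix, adjusted so that the block form $\mathrm{diag}(h,1)$ defining $\H_n$ is stable. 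First I would exhibit such a $\theta$ explicitly and check two things: that $\theta(\H_n)=\H_n$, and that for any $\pi\in\GH_n$ one has $\pi\circ\theta\simeq\pi^\vee$ (this last is the classical theorem of Gelfand--Kazhdan, valid over $\R$ by \cite{Vigb}).

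Granting this, the argument is a short diagram chase. Suppose $\lambda\colon\V\to\R$ is a nonzero $\H_n$-invariant linear form on $(\pi,\V)$. Realise $\pi^\vee$ on the smooth dual $\tilde\V$, and use the isomorphism $\pi\circ\theta\simeq\pi^\vee$ to transport $\lambda$: precisely, an $\H_n$-invariant form on $\pi$ gives, via $\theta$, an $\H_n$-invariant form on $\pi\circ\theta$, hence on $\pi^\vee$, because $\theta$ preserves $\H_n$. Running the same argument with $\pi$ replaced by $\pi^\vee$ (and using $(\pi^\vee)^\vee\simeq\pi$, which holds for irreducible smooth representations) gives the converse implication. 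Since the construction is linear and injective in $\lambda$, one in fact gets $d(\pi)=d(\pi^\vee)$ whenever these are finite, though the statement only asserts the qualitative equivalence.

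The one place where the hypothesis $\mathrm{char}(\R)\neq 2$ enters is in the Gelfand--Kazhdan step: the proof that the transpose-inverse anti-involution acts as contragredient on irreducibles of $\GL_n(\F)$ goes through the theory of distributions on $\G_n$ invariant under $g\mapsto{}^{t}g$, and the relevant self-duality/symmetry arguments (Bernstein's localisation, the uniqueness of Whittaker models) behave well only away from characteristic $2$; this is already the reason the hypothesis appears in \cite{Vigb}. So the main obstacle is not the formal diagram chase but citing the correct modular form of the Gelfand--Kazhdan theorem and the $\theta$-stability of $\H_n$; once those are in hand the proof is two lines. I would therefore spend the bulk of the write-up pinning down $\theta$ and the reference for $\pi\circ\theta\simeq\pi^\vee$, and dispatch the equivalence of distinction in a single sentence.
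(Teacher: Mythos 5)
Your proposal is correct and follows essentially the same route as the paper: the paper's proof is exactly the Gelfand--Kazhdan argument (the contragredient of an irreducible $\pi$ is $\pi\circ\sigma$ with $\sigma(g)={}^{t}g^{-1}$), which it cites as still valid in characteristic $\neq2$ via \cite[Remarque 2.7]{MSc}, and then the transport of $\H_n$-invariant forms is immediate since $\sigma$ visibly preserves $\H_n$ (so no conjugation by a permutation matrix is even needed).
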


\begin{proof}
In the complex case, this result follows from the well-known fact
(due to Gelfand and Kazhdan)
that the contragradient of an irreducible representation $\pi\in\GH_n$ is 
isomorphic to $\pi\circ\s$ where $\s$ is the involution:
\begin{equation*}
g\mapsto\text{transpose of }g^{-1}
\end{equation*}
 of $\G_n$.
For a field $\R$ of characteristic not $2$, 
this argument still holds 
(see \cite[Remarque 2.7]{MSc}).
\end{proof}

\begin{rema}
Note that the condition $\ee>1$ implies that the characteristic of $\R$ is 
not $2$. 
\end{rema}

In the case where $\R$ has characteristic $2$, 
it is likely that the Gelfand-Kazhdan property still holds
(thus so Proposition \ref{GK} does) but we won't need it 
in this case. 

\begin{prop}
\label{GKext}
Assume that the characteristic of $\R$ is not $2$ and $n=n_1+n_2$ where $n_1, n_2$ are positive integers. 
Let $\pi_i\in\GH_{n_i}$ for $i=1,2$. Then $\pi_1 \times \pi_2$ is $\H_n$-distinguished if and only if 
$\pi_2^{\vee}\times \pi_1^{\vee}$ is.  
\end{prop}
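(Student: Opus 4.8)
The plan is to deduce the statement from the Gelfand--Kazhdan involution, in the spirit of Proposition~\ref{GK}, keeping track of how this involution interacts with parabolic induction and with the subgroup $\H_n$. For each $m\>1$, write $\s_m$ for the involution $g\mapsto{}^{t}g^{-1}$ of $\G_m$. Since the characteristic of $\R$ is not $2$, the Gelfand--Kazhdan property (see \cite[Remarque 2.7]{MSc}) gives $\rho^\vee\simeq\rho\circ\s_m$ for every $\rho\in\GH_m$. The first step is the remark that $\s_n$ stabilises $\H_n$: indeed $\s_n$ carries $\begin{pmatrix}g&0\\0&1\end{pmatrix}$ to $\begin{pmatrix}{}^{t}g^{-1}&0\\0&1\end{pmatrix}$, and under the identification $\H_n\simeq\G_{n-1}$ the induced map is the automorphism $\s_{n-1}$. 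As the trivial character is fixed by $\s_n$, it follows that for any smooth representation $\tau$ of $\G_n$ one has $\Hom_{\H_n}(\tau,\R)\simeq\Hom_{\H_n}(\tau\circ\s_n,\R)$; in particular $\tau$ is $\H_n$-distinguished if and only if $\tau\circ\s_n$ is, with no hypothesis of irreducibility or finite length on $\tau$.

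The second step is to identify $(\pi_1\times\pi_2)\circ\s_n$. The involution $\s_n$ stabilises the block-diagonal Levi $\M_{(n_1,n_2)}\simeq\G_{n_1}\times\G_{n_2}$, where it acts by $(g_1,g_2)\mapsto(\s_{n_1}(g_1),\s_{n_2}(g_2))$, and it carries the standard parabolic $\P_{(n_1,n_2)}$ to the opposite (block lower triangular) parabolic. Hence $(\pi_1\times\pi_2)\circ\s_n$ is the normalized induction of $(\pi_1\circ\s_{n_1})\otimes(\pi_2\circ\s_{n_2})$ along the opposite parabolic; conjugating that parabolic back into standard position by the Weyl element interchanging the two blocks (which sends $(g_1,g_2)$ to $(g_2,g_1)$) identifies it with $(\pi_2\circ\s_{n_2})\times(\pi_1\circ\s_{n_1})$. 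Applying the Gelfand--Kazhdan property to the irreducible representations $\pi_1\in\GH_{n_1}$ and $\pi_2\in\GH_{n_2}$, this is isomorphic to $\pi_2^\vee\times\pi_1^\vee$.

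Combining the two steps, $\pi_1\times\pi_2$ is $\H_n$-distinguished if and only if $(\pi_1\times\pi_2)\circ\s_n\simeq\pi_2^\vee\times\pi_1^\vee$ is, which is exactly the claim. The individual computations are routine; the one point that requires a little care --- and the step I expect to be the main obstacle to write cleanly --- is the compatibility of $\s_n$ with \emph{normalized} parabolic induction, namely that the modulus characters match after twisting by $\s_n$ (they do, precisely because $\s_n$ interchanges $\P_{(n_1,n_2)}$ with its opposite), together with the bookkeeping of the inner conjugation by the Weyl element. Note that the restriction on the characteristic of $\R$ is used only through the Gelfand--Kazhdan property.
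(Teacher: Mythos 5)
Your proof is correct and follows essentially the same route as the paper: the Gelfand--Kazhdan involution (valid since the characteristic of $\R$ is not $2$), its compatibility with parabolic induction sending $\pi_1\times\pi_2$ to $\pi_2^\vee\times\pi_1^\vee$, and the fact that the relevant automorphism preserves the subgroup $\H_n$ (in the paper, up to conjugacy; in your version, exactly). The only difference is organizational --- you use the plain transpose-inverse and handle the opposite parabolic by a Weyl-element conjugation at the level of the induced representation, whereas the paper builds that conjugation into the automorphism itself --- which is immaterial.
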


\begin{proof}
In the complex case, the proof follows from the above-mentioned theorem of Gelfand-Kazhdan,
the existence of a group 
automorphism of $\G_n$ which maps $\P_{(n_1,n_2)}$ to $\P_{(n_2,n_1)}$, 
$\pi_1\times \pi_2$ to $\pi_2^{\vee}\times \pi_1^{\vee}$ and $\G_{n-1}$ 
to a conjugate of $\G_{n-1}$. 
All these are valid for $\R$ 
(see \cite[III.1.16]{Vigb} and \cite[1.9 ]{Z}) 
if it has characteristic not $2$. 
Hence:
\begin{equation*}
\Hom_{\G_{n-1}}(\pi_1\times \pi_2, \R) \simeq 
\Hom_{\G_{n-1}}(\pi_2^{\vee}\times \pi_1^{\vee},\R)
\end{equation*}
and our claim follows.
\end{proof}

\subsection{The Bernstein-Zelevinski filtration}
\label{BZF}

For $i\in\{0,1,\dots,n\}$, we write $\R_{i,n}$ for the subgroup of matrices 
of $\G_n$ of the form: 
\begin{equation*} 
\begin{pmatrix} g & * \\ 0 & h \end{pmatrix}
\end{equation*}
such that $g\in\G_{i}$ and $h$ is an upper triangular and unipotent matrix
of $\G_{n-i}$. 
In particular, $\R_{0,n}$ is the standard maximal unipotent subgroup 
$\U_{n}$ of $\G_{n}$ and $\R_{n-1,n}$ is the mirabolic subgroup $\P_n$ of 
$\G_n$.
Fix a nontrivial smooth character $\psi:\F\to\mult\R$ and, 
for $i\in\{0,1,\dots,n-1\}$, 
write $\psi_i$ for the generic character of $\U_{i}$ defined by:
\begin{equation*}
\psi_i(h) = \psi(h_{1,2}+\dots+h_{i-1,i})
\end{equation*}
for all $h\in\U_i$.
From \cite[III.1.3]{Vigb}, we have the following result. 

\begin{theo}
\label{BZfilt}
Let $\V$ be a representation of $\G_n$.
There are $\P_n$-stable subspaces $\V_0,\dots,\V_n$ of $\V$ such that 
$\{0\}=\V_0\subseteq\V_1\subseteq\dots\subseteq\V_n=\V$ and:
\[\V_{i+1}/\V_{i}\simeq \ind_{\R_{i,n}}^{\P_n}(\V^{(n-i)}\nu_i^{1/2}\otimes\psi_{n-i})\]
for all $i\in \{0,...,n-1\}$.
\end{theo}

As in the complex case (see page 54 of 
\cite{Flicker} and \cite[Proposition 1]{P}), 
we get the following result by using the Bernstein-Zelevin\-ski filtration.

\begin{lemm}
\label{BZapp}
Let $\pi$ be a smooth representation of $\G_n$ with $n\>3$, 
and assume that:
\begin{enumerate}
\item
$\pi^{(1)}$ does not have any quotient isomorphic to $\nu_{n-1}^{-1/2}$;
\item
$\pi^{(2)}$ does not have any quotient isomorphic to $\1_{n-2}$.
\end{enumerate}
Then $\pi$ is not distinguished. 
\end{lemm}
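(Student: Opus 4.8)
The plan is to use the Bernstein-Zelevinski filtration from Theorem \ref{BZfilt} to reduce $\H_n$-distinction of $\pi$ to a statement about the derivatives $\pi^{(n-i)}$ for $i\in\{0,\dots,n-1\}$. Recall that $\P_n=\H_n\U_n$ and that $\H_n\cap\R_{i,n}$ is, for each $i$, a subgroup of $\R_{i,n}$; the key point is to compute $\Hom_{\H_n}(\V_{i+1}/\V_i,\R)$ for each graded piece. First I would restrict the filtration $\{0\}=\V_0\subseteq\dots\subseteq\V_n=\V$ to $\H_n$ (it is $\P_n$-stable, hence $\H_n$-stable) and observe that, by exactness of $\Hom_{\H_n}(-,\R)$ on the left, if $\Hom_{\H_n}(\V_{i+1}/\V_i,\R)=0$ for every $i\in\{0,\dots,n-1\}$ then $\Hom_{\H_n}(\V,\R)=0$, i.e.\ $\pi$ is not distinguished. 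So it suffices to show each graded piece carries no nonzero $\H_n$-invariant form under hypotheses (1) and (2).

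Next I would analyze the graded pieces $\V_{i+1}/\V_i\simeq\ind_{\R_{i,n}}^{\P_n}(\pi^{(n-i)}\nu_i^{1/2}\otimes\psi_{n-i})$ via Frobenius reciprocity for compact induction together with a Mackey-type decomposition for the restriction from $\P_n$ to $\H_n$: since $\P_n=\H_n\R_{i,n}$ (because $\U_n\subseteq\R_{i,n}$ for $i\le n-1$, wait—one must check $\P_n=\H_n\R_{i,n}$; in fact $\R_{i,n}\supseteq\U_n$ is false for $i<n-1$, so instead one uses the standard double-coset computation for $\H_n\backslash\P_n/\R_{i,n}$ exactly as in \cite{Flicker,P}), the restriction of $\ind_{\R_{i,n}}^{\P_n}(-)$ to $\H_n$ decomposes and an $\H_n$-invariant form on the graded piece forces a nonzero invariant functional on the inducing data along the relevant double coset. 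Carrying this through, the pieces with $n-i\ge 3$ contribute nothing because, as in the complex case, the character $\psi_{n-i}$ restricted to $\H_n\cap(\text{unipotent part})$ is nontrivial for $n-i\ge 3$ (the character $\psi_{n-i}$ involves the entry $h_{n-i-1,n-i}$ which lies in the unipotent radical intersected with $\H_n$ when $n-i\ge 3$), so the coinvariants vanish; the piece $i=n$ does not occur since the filtration stops at $\V_n=\V$ with top quotient indexed by $i=n-1$, i.e.\ $n-i=1$. The surviving pieces are exactly $n-i=1$ (giving a condition on $\pi^{(1)}$) and $n-i=2$ (giving a condition on $\pi^{(2)}$): for $n-i=1$ the graded piece is $\ind_{\R_{n-1,n}}^{\P_n}(\pi^{(1)}\nu_{n-1}^{1/2}\otimes 1)=\ind_{\P_{n-1}}^{\P_n}(\pi^{(1)}\nu_{n-1}^{1/2})$ up to identifications, and it carries an $\H_n$-invariant form iff $\pi^{(1)}$ has $\nu_{n-1}^{-1/2}$ as a quotient after accounting for the $\nu_{n-1}^{1/2}$ twist and the modulus characters; similarly $n-i=2$ gives the condition that $\pi^{(2)}$ has $\1_{n-2}$ as a quotient. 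Hypotheses (1) and (2) are precisely the negations of these, so all graded pieces are non-distinguished and $\pi$ is not distinguished.

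The main obstacle I expect is the careful bookkeeping of the normalizing factors $\nu_i^{1/2}$ and the modulus characters arising in the Mackey/Frobenius computation for the two surviving pieces ($n-i=1,2$), to confirm that the vanishing of $\Hom_{\H_n}$ is equivalent to exactly the stated conditions ``$\pi^{(1)}$ has no quotient $\nu_{n-1}^{-1/2}$'' and ``$\pi^{(2)}$ has no quotient $\1_{n-2}$'' — and not some twisted variants; this is where one must mirror the complex computation of \cite[p.~54]{Flicker} and \cite[Prop.~1]{P} verbatim, checking that every step is valid over $\R$ (exactness of Jacquet/induction functors, the theory of derivatives from \cite[III.1]{Vigb}, and the $\R$-valued Haar measure), which the excerpt has already arranged for. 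The double-coset analysis of $\H_n\backslash\P_n/\R_{i,n}$ for the intermediate pieces, showing the nontriviality of $\psi_{n-i}$ on the relevant unipotent intersection, is routine once set up but must be done with the correct indexing.
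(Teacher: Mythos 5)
Your plan is essentially the paper's own argument: the paper gives no details for Lemma \ref{BZapp}, simply invoking the Bernstein--Zelevinski filtration of Theorem \ref{BZfilt} and the complex-case computation of \cite{Flicker} (p.~54) and \cite{P} (Proposition 1), which is exactly what you reconstruct --- the graded pieces of derivative order $\geq 3$ die because $\psi_{n-i}$ is nontrivial on $\H_n\cap\R_{i,n}$, and the pieces of order $1$ and $2$ yield precisely conditions (1) and (2) after the $\nu_i^{1/2}$/modulus bookkeeping. Two small slips to correct in a write-up, neither affecting the conclusion: in fact $\U_n\subseteq\R_{i,n}$ for every $i$ (an upper triangular unipotent matrix has the required block shape), so $\P_n=\H_n\U_n=\H_n\R_{i,n}$ and there is a single $\H_n$-orbit, which makes the Mackey step degenerate rather than a genuine double-coset analysis; and the entry $h_{n-i-1,n-i}$ you cite lies in the last column and hence \emph{outside} $\H_n$ --- the intersection of $\H_n$ with the $\U_{n-i}$-block consists of the $h$ whose last column is trivial, on which $\psi_{n-i}$ restricts to $\psi(h_{1,2}+\dots+h_{n-i-2,n-i-1})$, nontrivial precisely when $n-i\geq 3$.
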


\subsection{The Three Orbits Lemma}

As in the complex case \cite{V}, we have the following very useful lemma.  

\begin{lemm}
\label{TOL}
Let $n\>2$ and $k\in\{1,\dots,n-1\}$ be integers, 
and let $\rho\in\GH_k$ and $\tau\in\GH_{n-k}$.
Assumme $\rho\times\tau$ is $\H_{n}$-distinguished.
Then at least one of the following conditions is satisfied: 
\begin{enumerate}
\item[(A)] 
$\rho=\nu_{k}^{(n-2-k)/2}$ and $\tau\cdot\nu^{k/2}$ is $\H_{n-k}$-distinguished. 
\item[(B)]
$\rho\cdot\nu^{-(n-k)/2}$ is $\H_{k}$-distinguished and $\tau=\nu_{n-k}^{-(k-2)/2}$. 
\item[(C)]
$\rho^{(1)}\cdot\nu^{-(n-1-k)/2}$ and $\tau^{*(1)}\cdot\nu^{-(k-1)/2}$
have a trivial quotient. 
\end{enumerate}
Conversely, if $\rho\in\GH_k$ and $\tau\in\GH_{n-k}$ satisfy 
$(\A)$ or $(\B)$, then $\rho\times\tau$ is $\H_{n}$-distinguished.  
\end{lemm}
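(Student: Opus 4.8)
The plan is to restrict $\rho\times\tau$ to $\H_n$ and decompose it along the $\H_n$-orbits on $\P_{(k,n-k)}\backslash\G_n$, reading off the three conditions one orbit at a time, exactly as in the complex case of \cite{V} (following \cite{P,Flicker}), while checking that every step survives base change to $\R$, which it does because $p$ is invertible in $\R$. First I would identify $\P_{(k,n-k)}\backslash\G_n$ with the Grassmannian of $k$-dimensional subspaces $W$ of $\F^n$, and record that $\H_n=\GL_{n-1}(\F)$ --- which fixes the line $\F e_n$ and acts on the hyperplane $V'=\langle e_1,\dots,e_{n-1}\rangle$ --- has exactly three orbits on it: the closed orbit $\mathcal{O}_1$ of the $W$ contained in $V'$; the closed orbit $\mathcal{O}_2$ of the $W$ containing $e_n$; and the open orbit $\mathcal{O}_3$ made of the remaining $W$, namely those with $\dim(W\cap V')=k-1$ and $e_n\notin W$. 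This orbit picture is purely geometric and identical to the one used in \cite{V}.

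Next I would invoke the Mackey/geometric filtration of $(\rho\times\tau)|_{\H_n}$, viewed as the space of global smooth sections over the compact space $\P_{(k,n-k)}\backslash\G_n$ of the $\G_n$-equivariant sheaf $\mathcal{L}$ attached to $\rho\otimes\tau$. Since $\mathcal{O}_3$ is open with closed complement $\mathcal{O}_1\sqcup\mathcal{O}_2$, restriction of sections gives a short exact sequence of smooth $\R$-representations of $\H_n$
\begin{equation*}
0\longrightarrow \mathcal{C}^{\infty}_{c}(\mathcal{O}_3,\mathcal{L})\longrightarrow (\rho\times\tau)|_{\H_n}\longrightarrow \mathcal{C}^{\infty}(\mathcal{O}_1,\mathcal{L})\oplus\mathcal{C}^{\infty}(\mathcal{O}_2,\mathcal{L})\longrightarrow 0,
\end{equation*}
all the section functors being exact since an $\R$-valued Haar measure is available and $p$ is invertible in $\R$ (see \cite{Vigb}, II). Applying the left-exact functor $\Hom_{\H_n}(\,\cdot\,,\R)$: if $\rho\times\tau$ is $\H_n$-distinguished, then at least one of the three pieces $\mathcal{C}^{\infty}(\mathcal{O}_1,\mathcal{L})$, $\mathcal{C}^{\infty}(\mathcal{O}_2,\mathcal{L})$, $\mathcal{C}^{\infty}_{c}(\mathcal{O}_3,\mathcal{L})$ carries a nonzero $\H_n$-invariant linear form; conversely, since $\mathcal{C}^{\infty}(\mathcal{O}_1,\mathcal{L})$ and $\mathcal{C}^{\infty}(\mathcal{O}_2,\mathcal{L})$ are \emph{quotients} of $(\rho\times\tau)|_{\H_n}$, a nonzero $\H_n$-invariant form on either of them pulls back to $\rho\times\tau$. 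So once I identify the contributions of the two closed orbits with $(\A)$ and $(\B)$, both directions of the lemma follow.

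It then remains to compute, orbit by orbit, when the corresponding piece is $\H_n$-distinguished. For each orbit I would pick a representative $x$, use Frobenius reciprocity (for compact induction, with the appropriate modulus twist in the $\mathcal{O}_3$ case) to turn the $\Hom$ of the piece into a $\Hom$-space over the stabilizer $\H_n\cap x^{-1}\P_{(k,n-k)}x$, describe the image of that stabilizer in $\G_k\times\G_{n-k}$, and carry along the modulus characters, which are $\ZZ$-powers of $\nu$ and hence well defined in $\R$ once $\sqrt q$ is fixed. For $\mathcal{O}_1$ the stabilizer maps \emph{onto} $\G_k$ and \emph{into} the mirabolic subgroup $\P_{n-k}$ of $\G_{n-k}$; the $\G_k$-factor forces $\Hom_{\G_k}(\rho,\chi)\neq 0$ for a definite character $\chi$, so $\rho=\chi$, and tracking exponents as in \cite{V} gives $\rho=\nu_k^{(n-2-k)/2}$, while the $\G_{n-k}$-factor leaves precisely the condition that $\tau\cdot\nu^{k/2}$ be $\H_{n-k}$-distinguished; this is $(\A)$. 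By symmetry $\mathcal{O}_2$ yields $(\B)$. For the open orbit $\mathcal{O}_3$ the stabilizer meets both $\G_k$ and $\G_{n-k}$ in mirabolic-type subgroups, and the Bernstein-Zelevinski derivative machinery, available over $\R$ by \cite{Vigb}, III.1, converts the $\Hom$-space into the requirement that $\rho^{(1)}\cdot\nu^{-(n-1-k)/2}$ and $\tau^{*(1)}\cdot\nu^{-(k-1)/2}$ share a trivial quotient, which is $(\C)$.

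The geometric heart of the argument is the same as in \cite{V}, so the main obstacle is bookkeeping: making sure that the two ingredients the complex proof uses silently --- exactness of the orbit-wise section functors and Frobenius reciprocity with the right modulus characters --- genuinely hold in the $\ell$-modular setting (they do, thanks to $p$ being invertible in $\R$), and that the powers of $\nu$ coming out of the $\delta_{\P}^{1/2}$-normalization combined with the stabilizer moduli match $(\A)$, $(\B)$, $(\C)$ exactly. One must also handle the degenerate ranges $k=1$ and $k=n-1$ separately, where $\H_k$ or $\H_{n-k}$ is the trivial group and the associated distinction condition is to be read as automatically satisfied.
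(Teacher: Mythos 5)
Your proposal is correct and is essentially the paper's own proof: the paper simply says the argument is ``just as in the complex case'' and cites Section 5 of \cite{V}, which is precisely the three-orbit Mackey decomposition of $(\rho\times\tau)|_{\H_n}$ over the Grassmannian, the closed-orbit Frobenius reciprocity computations giving (A) and (B), and the Bernstein--Zelevinski derivative analysis of the open orbit giving (C), with the only $\ell$-modular issues being the ones you flag (exactness of the orbit filtration, $\R$-valued measures, moduli as powers of $\nu$), all covered by \cite{Vigb}. Your write-up just makes explicit what the paper leaves to the citation.
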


\begin{proof}
The proof in just as in the complex case (see \cite[Section 5]{V}).
\end{proof}

\begin{rema}\label{TOLext}
Notice that if 
$\tau$ is smooth (not necessarily irreducible),
we still have conditions similar to Lemma \ref{TOL}. 
We will have the occasion to use this in the 
case where:
\begin{equation*}
\rho=\nu_{n-3}^{1/2},
\quad
\tau=\St_2\cdot \nu^{-(n-2)/2}\times \chi,
\quad\chi\in \GH_1.
\end{equation*}
In this case, 
$\rho\times \tau$ is $\H_n$-distinguished
if and only if $\tau\cdot \nu^{(n-3)/2}$ is $\H_3$-distinguished. 
\end{rema}

\begin{coro}
\label{reduccusp}
Let $n\>3$, and let $\pi\in\GH_n$ be $\H_{n}$-distinguished.  
Then one of the following properties holds: 
\begin{enumerate}
\item
$\pi=\1_{n-2}\times\tau$ for some irreducible cuspidal 
representation $\tau\in\GH_2$.
\item
The cuspidal support of $\pi$ is made of characters of $\G_1$. 
\end{enumerate}
\end{coro}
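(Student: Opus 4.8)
The plan is to induct on $n$. Let $\pi\in\GH_n$ be $\H_n$-distinguished with $n\>3$; by Theorem~\ref{THEOCUSP} it is not cuspidal. Using the classification of irreducible representations of the $\G_n$ by multisegments from \cite{MSc}, together with the fact (Proposition~\ref{Linked}) that segments on distinct cuspidal lines are never linked, I would first write $\pi$ as an irreducible product $\pi\simeq\xi\times\eta$ where $\xi\in\GH_{n-v}$ has cuspidal support consisting of characters of $\G_1$, while $\eta\in\GH_v$ has cuspidal support consisting of cuspidal representations of degree $\>2$ (with $v\>0$). If $v=0$ this is exactly alternative~(2), so assume $v\>2$. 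The two properties of $\eta$ that drive the argument are: (i) $\eta$ is not a character of $\G_v$, since its cuspidal support contains a cuspidal of degree $\>2$; and (ii) $\eta^{(1)}=0$ and $\eta^{\vee(1)}=0$, since $\eta$ and $\eta^\vee$ are subquotients of products of cuspidals of degree $\>2$, whose first derivatives vanish by Lemma~\ref{dercal}(1), so that the first derivative of the whole product vanishes by the Leibniz rule of Lemma~\ref{dercal}(3). I would also record that $n-v\>1$: if instead $v=n$, then $\pi=\eta$ is non-cuspidal, hence a quotient of some $\sigma\times\tau$ with $\sigma,\tau$ irreducible of positive degree, so that $\sigma\times\tau$ is $\H_n$-distinguished; but the cuspidal supports of $\sigma$ and $\tau$ consist of cuspidals of degree $\>2$, so neither is a character and $\sigma^{(1)}=0$, which makes each of conditions $(\A)$, $(\B)$, $(\C)$ of Lemma~\ref{TOL} impossible.

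Granting $n-v\>1$, I would apply Lemma~\ref{TOL} to the distinguished product $\pi\simeq\xi\times\eta$ with $k=n-v$. By (ii), condition $(\C)$ fails, since $\eta^{\vee(1)}=0$ has no trivial quotient; by (i), condition $(\B)$ fails, since it would force $\eta$ to be a character of $\G_v$. Hence $(\A)$ holds, so $\xi=\nu_{n-v}^{(v-2)/2}$ and $\eta\cdot\nu^{(n-v)/2}$ is $\H_v$-distinguished, still with cuspidal support made of cuspidals of degree $\>2$. If $\eta\cdot\nu^{(n-v)/2}$ is cuspidal, Theorem~\ref{THEOCUSP} forces $v=2$; then $\xi=\1_{n-2}$ and $\pi=\1_{n-2}\times\eta$ with $\eta$ cuspidal in $\GH_2$, which is alternative~(1). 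If $\eta\cdot\nu^{(n-v)/2}$ is not cuspidal, then $v\>3$ (a non-cuspidal irreducible representation of $\G_2$ has cuspidal support made of two characters of $\G_1$), and the induction hypothesis, applied to $\eta\cdot\nu^{(n-v)/2}\in\GH_v$ with $3\<v<n$, gives that it satisfies alternative~(1) or~(2); but alternative~(1) would place a character of $\G_1$ in its cuspidal support, coming from the factor $\1_{v-2}$ (as $v-2\>1$), and alternative~(2) would make that support consist only of characters of $\G_1$, both being impossible. Hence this case does not occur, and the induction closes.

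The step I expect to be the main obstacle is the very first one: producing the decomposition $\pi\simeq\xi\times\eta$, that is, knowing that an irreducible representation splits as a product along the partition of its cuspidal support into the characters of $\G_1$ and the cuspidals of degree $\>2$, and that this product is irreducible. This is part of the structure theory of \cite{MSc}, following from the multisegment classification together with Proposition~\ref{Linked}, but it is the input without which the bookkeeping above never gets started. One should also check that the induction genuinely closes at the base case $n=3$: there, $v\>2$ and $n-v\>1$ force $v=2$, so the non-cuspidal sub-case (which needs $v\>3$) is vacuous and no induction hypothesis is invoked.
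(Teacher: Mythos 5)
Your argument is correct, but it takes a genuinely different route from the paper's. The paper never factors $\pi$ itself: it writes $\pi$ as a quotient of the product $\tau_1\tdt\tau_r$ of the cuspidal representations in its support, invokes \cite{MSt} together with Lemma \ref{PERM} to move a cuspidal factor $\tau$ of maximal degree $k$ into the last position, chooses an irreducible subquotient $\rho$ of $\tau_1\tdt\tau_{r-1}$ with $\pi$ a quotient of $\rho\times\tau$, and applies Lemma \ref{TOL} exactly once: conditions (B) and (C) fail because $\tau$ is cuspidal (hence not a character, and $(\tau^\vee)^{(1)}=0$), so (A) holds, and Theorem \ref{THEOCUSP} forces $k\<2$, with $\rho=\1_{n-2}$ when $k=2$; no induction on $n$ is needed. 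Your version trades this for a preliminary factorization $\pi\simeq\xi\times\eta$ along the partition of the cuspidal support into characters and cuspidals of degree $\>2$, one application of Lemma \ref{TOL} to that product, and an induction on $n$ to exclude the possibility that $\eta$ is non-cuspidal --- a configuration the paper never meets, since its $\tau$ is cuspidal by construction. The derivative and ``not a character'' arguments you use to kill (B) and (C), and the treatment of the case $v=n$, are sound.

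About the step you flag: the factorization is true, but Proposition \ref{Linked}, as recalled in the paper, concerns only segments of characters, so it is not the right citation. Within the paper's own toolkit you can obtain it as follows: $\pi$ embeds into a product of the cuspidals of its support in some order; whenever two adjacent factors are of different types (a character of $\G_1$ and a cuspidal of degree $\>2$), their product is irreducible by Proposition \ref{Lanzmann}, hence the two factors may be swapped, so $\pi$ embeds into $\A\times\B$ with $\A$ a product of the characters and $\B$ a product of the higher-degree cuspidals; filtering $\A$ and $\B$ and using exactness of parabolic induction, $\pi$ embeds into $\xi\times\eta$ for irreducible subquotients $\xi$ of $\A$ and $\eta$ of $\B$, and this product is irreducible by Proposition \ref{Lanzmann} again, whence $\pi\simeq\xi\times\eta$. (Alternatively one may quote the general multisegment classification of \cite{MSc}.) So your proof does close, but it needs this extra structural input and an induction, whereas the paper's argument is a direct five-line application of Lemma \ref{TOL} and Theorem \ref{THEOCUSP}.
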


\begin{proof}
There are irreducible cuspidal representations $\tau_1,\dots,\tau_r$ such that 
$\pi$ is a quotient of $\tau_1\tdt\tau_r$.  
Since $n\>3$, Theorem \ref{THEOCUSP} implies that $\pi$ is not cuspidal, 
which implies that $r\>2$.
Let $k$ denote the largest integer among the $\deg(\tau_i)$'s and let $\tau_i$
have degree $k$ with $i$ maximal for this property. 
Then by \cite{MSt} and Lemma \ref{PERM}, one may assume that $i=r$. 
Now write $\tau=\tau_r$ and let $\rho$ be an irreducible subquotient of 
$\tau_1\tdt\tau_{r-1}$ such 
that $\pi$ is a quotient of $\rho\times\tau$. 
Since $\pi$ is distinguished, so $\rho\times\tau$ is.
Apply Lemma \ref{TOL} to this product. 
According to Theorem \ref{THEOCUSP}, we obtain that $k$ must be $\<2$.
Moreover, if $k=2$, then $\rho=\1_{n-2}$. 
\end{proof}

\subsection{Distinction of the twists of $\LL_n$ and $\MM_n$}

We first determine which twists of $\LL_n$ are distinguished. 

\begin{lemm}
\label{Ldistchar}
Let $n\>2$ and $\chi\in\GH_1$.
Then $\LL_n\cdot\chi$ is distinguished if and only if $\chi=1$.
\end{lemm}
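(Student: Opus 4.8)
The strategy is to handle the two directions separately, and to split the argument according to whether or not $\qc$ divides $n$, since the shape of $\LL_n$ (and of $\LL_n\cdot\chi$) depends on this. When $\qc$ divides $n$ we have $\LL_n=\1_n$, so $\LL_n\cdot\chi=\chi\circ\det$ is a character; a character $\chi\circ\det$ of $\G_n$ is $\H_n$-distinguished precisely when its restriction to $\H_n\cong\G_{n-1}$ is trivial, i.e. when $\chi=1$, and this settles that case immediately. So the bulk of the work is the case $\qc\nmid n$, where $\LL_n=\MM_n$ is the nontrivial irreducible quotient of $\V_n$.

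For the ``if'' direction ($\chi=1$) I would exhibit a nonzero $\H_n$-invariant form on $\LL_n$. The cleanest route uses the Bernstein--Zelevinski filtration of Theorem \ref{BZfilt} applied to $\V=\LL_n$: the bottom layer $\V_1=\ind_{\U_n}^{\P_n}(\psi_n)$ always carries an $\H_n$-invariant form (as in the cuspidal case, since $\P_n=\H_n\U_n$ and the functional is integration over $\H_n\cap\U_n\backslash\H_n$), provided this layer is nonzero, which it is because $\LL_n$ is generic (it is the Langlands-type quotient of a standard module whose Zelevinski data give a nonzero top derivative). Alternatively, and perhaps more in the spirit of the paper, one realizes $\LL_n^\vee$ inside the distinguished principal series of length $2$ described in the introduction (with nondistinguished character quotient) and transports distinction via Proposition \ref{GK} (legitimate since $\ee>1$ forces $\mathrm{char}\,\R\neq 2$, and the $\qc=1$ subtlety is handled by Definition \ref{poutargue4}); one then reads off distinction of $\LL_n$ from the exact sequence together with the fact that the quotient character is nondistinguished.

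For the ``only if'' direction, suppose $\LL_n\cdot\chi$ is distinguished with $\chi\neq1$; I aim for a contradiction via Lemma \ref{BZapp}. This requires computing the first two derivatives of $\LL_n\cdot\chi$, which is immediate from Corollary \ref{derLa} and the compatibility $(\pi\cdot\chi)^{(k)}=\pi^{(k)}\cdot\chi$: when $\qc\nmid n$ one gets $(\LL_n\cdot\chi)^{(1)}=(\1_{n-2}\times\nu^{(n+1)/2})\cdot\chi$ and $(\LL_n\cdot\chi)^{(2)}=\chi\circ\det$ on $\G_{n-2}$. Hypothesis (2) of Lemma \ref{BZapp} then fails only if $\chi\circ\det\cong\1_{n-2}$, i.e. $\chi=1$ (for $n\geq3$; the case $n=2$ is covered by Theorem \ref{dimfor2} directly, or by a separate one-line check). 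Hypothesis (1) asks whether $(\1_{n-2}\times\nu^{(n+1)/2})\cdot\chi$ surjects onto $\nu_{n-1}^{-1/2}$; since $\nu_{n-1}^{-1/2}$ is a character while $\1_{n-2}\cdot\chi\times\nu^{(n+1)/2}\cdot\chi$ has a character quotient only if it is essentially a segment-type induced representation reducing to a character, one checks via Proposition \ref{P61} (reducibility/quotient structure of $\Z([a,b])\times\text{character}$) that the only character quotients have cuspidal support forcing $\chi$ into a short list, and then cross-referencing with the $\qc\mid n$ subcase and the case $n=2$ eliminates everything but $\chi=1$.

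**Main obstacle.** The delicate point is verifying hypothesis (1) of Lemma \ref{BZapp}: I must rule out, for every nontrivial $\chi$, that $(\1_{n-2}\times\nu^{(n+1)/2})\cdot\chi=\1_{n-2}\cdot\chi\times\nu^{(n+1)/2}\cdot\chi$ has $\nu_{n-1}^{-1/2}$ as a quotient. This is a statement about which characters of $\G_{n-1}$ arise as quotients of a product of a segment-character with a single character, and it is exactly the kind of combinatorics governed by Propositions \ref{P61} and \ref{length2seg} — I expect to invoke Proposition \ref{P61} with $[a,b]$ corresponding to $\1_{n-2}\cdot\chi$ (after an appropriate twist normalizing things to $\Z([a,b])\times1$) and read off from its exact sequences that the unique character subquotient, when it exists, is $\Z([a-1,b])$ or $\Z([a,b+1])$, whose parameter cannot match that of $\nu_{n-1}^{-1/2}$ unless $\chi$ is trivial. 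Getting the congruence bookkeeping right across the subcases $\qc\mid n$ versus $\qc\nmid n$, and separately handling the small case $n=2$ using Theorem \ref{dimfor2}, is the part that needs care rather than insight.
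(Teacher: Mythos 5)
Your case split ($\qc$ divides $n$ versus not) and your treatment of the case $\qc\mid n$ coincide with the paper's, but in the case $\qc\nmid n$ there are genuine gaps. For the direction $\chi=1$, your ``cleanest route'' rests on the claim that $\LL_n$ is generic, which is false for $n\>3$: by Corollary \ref{derLa} all derivatives $\LL_n^{(k)}$ vanish for $k\>3$, so $\LL_n^{(n)}=0$ and the bottom layer $\ind_{\U_n}^{\P_n}(\LL_n^{(n)}\otimes\psi_n)$ of the Bernstein--Zelevinski filtration is zero; no invariant form can be produced there. Your fallback route is exactly the paper's argument for $e>1$: $\V_n^\vee$ is distinguished by Lemma \ref{TOL}(A), its quotient $\nu_n^{-1}$ is a nontrivial, non-distinguished character, hence every invariant form is nonzero on the subrepresentation $\LL_n^\vee$, and Proposition \ref{GK} transfers distinction to $\LL_n$. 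As you describe it, however, this collapses when $e=1$ and $\ell$ does not divide $n$: there $\nu$ is trivial, the quotient of $\V_n^\vee$ is the trivial (hence distinguished) character, so the vanishing argument gives nothing, and Proposition \ref{GK} is not available if $\R$ has characteristic $2$. The paper treats $e=1$ by a different mechanism, namely the decomposition $\V_n=\1_n\oplus\LL_n$ of Proposition \ref{ssV1} together with $d(\V_n)\>2$ coming from conditions (A) and (B) of Lemma \ref{TOL}; your remark that ``the $\qc=1$ subtlety is handled by Definition \ref{poutargue4}'' does not supply this step.

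For the ``only if'' direction your route genuinely differs from the paper's: the paper applies Lemma \ref{TOL} with $k=n-1$ to $\V_n\cdot\chi$, which eliminates every $\chi\notin\{1,\nu^{-1}\}$ at once, and only the single twist $\chi=\nu^{-1}$ needs Lemma \ref{BZapp} with Corollary \ref{derLa}; you instead invoke Lemma \ref{BZapp} for every $\chi\neq1$. This can be made to work, but the decisive verification of hypothesis (1) is only sketched, and your formulation of it is off: one must show that $\1_{n-2}\chi\times\nu^{(n+1)/2}\chi$ has no quotient isomorphic to $\nu_{n-1}^{-1/2}$. If the cuspidal supports do not match there is nothing to prove; otherwise $\chi$ is a power of $\nu$ and the representation is a twist of $\Z([a,b])\times\nu^{b+2}$ with $b-a+1=n-2$, which by Lemma \ref{L1} is \emph{irreducible} unless $e$ divides $n$ (excluded here, since $\qc\nmid n$) or $e=1$ (where a direct support comparison settles it), hence is not a character; the character constituents $\Z([a-1,b])$, $\Z([a,b+1])$ of Proposition \ref{P61} that you appeal to simply do not arise in this configuration. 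Finally, be careful with $n=2$: Theorem \ref{dimfor2} cannot yield the ``only if'' there, since it asserts that every infinite-dimensional irreducible representation of $\G_2$ is distinguished, so $\LL_2\cdot\chi=\St_2\cdot\nu\chi$ is distinguished for all $\chi$ when $\qc\neq2$. Your argument (like the paper's, whose Lemma \ref{TOL} step carries no information at $n=2$) is only valid for $n\>3$, which is the range in which the lemma is actually used.
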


\begin{proof}
If $\qc$ divides $n$, then $\LL_n$ is the trivial character 
and the result is immediate. 
If $\qc$ does not divide $n$, then we have the exact sequence:
\begin{equation*}
\label{EXSEQLL}
0 \to \nu_n\cdot\chi \to \V_n\cdot\chi = 
(\nu_{n-1}^{1/2}\cdot\chi)\times\nu^{(n+1)/2}\chi 
\to \LL_n\cdot\chi \to 0
\end{equation*}
By Lemma \ref{TOL} with $k=n-1$, the representation $\V_n\cdot\chi$, 
and hence $\LL_n\cdot\chi$, is non-distinguished for 
$\chi\notin\{1,\nu^{-1}\}$. 
If $\chi=\nu^{-1}$ is non-trivial (which forces $e>1$), 
then Lemma \ref{BZapp} together with 
Corollary \ref{derLa} imply that $\LL_n\cdot\nu^{-1}$ is not distinguished. 
Now assume that $\chi=1$. 

If $e>1$, the contragredient $\V_n^{\vee}$ is distinguished by Lemma \ref{TOL} 
(A) but $\nu_n^{-1}$ is not. 
Thus $\V_n^{\vee}$ carries a nonzero invariant linear form vanishing on 
$\nu_n^{-1}$. 
It thus gives a nonzero invariant linear form on the subrepresentation
$\LL_n^{\vee}$. By Proposition \ref{GK}, the representation 
$\LL_n$ is distinguished.

If $e=1$, then $\V_n=\1_n\oplus\LL_n$ by Pro\-posi\-tion \ref{ssV1}. 
By Lemma \ref{TOL}, we have $d(\V_n)\>2$ since Conditions (A) and (B) 
are fulfilled.
Thus $\LL_n$ is distinguished with $d(\LL_n)=d(\V_n)-1$. 
\end{proof}

\begin{coro}
\label{Converse}
Assume that $e>1$.
All the irreducible representations of $\G_n$, $n\>3$ 
in the list given by Theorem
\ref{MAINTHEOREM} are distinguished.
\end{coro}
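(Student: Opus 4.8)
The plan is to go through the four families of representations listed in Theorem \ref{MAINTHEOREM} one by one and, in each case, exhibit a nonzero $\H_n$-invariant linear form, either by direct inspection or through the converse assertion of Lemma \ref{TOL} (the one stating that condition $(\A)$ or $(\B)$ is sufficient for distinction). Throughout I would use that $e>1$ forces $\mathrm{char}(\R)\neq2$, so that Proposition \ref{GK} is available.

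Items (1) and (4) are immediate. The trivial representation $\1_n$ acts trivially on a one-dimensional space, so every nonzero linear form on it is $\H_n$-invariant. For $\LL_n$, Lemma \ref{Ldistchar} with $\chi=1$ already gives distinction of $\LL_n$, and Proposition \ref{GK} then gives distinction of $\LL_n^\vee$.

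For item (2), consider first a representation $\nu_{n-1}^{-1/2}\times\chi$ with $\chi\in\GH_1$. I would apply the converse of Lemma \ref{TOL} with $k=n-1$, $\rho=\nu_{n-1}^{-1/2}$ and $\tau=\chi$: condition $(\A)$ holds because $\nu_k^{(n-2-k)/2}=\nu_{n-1}^{-1/2}=\rho$, and because the requirement that $\tau\cdot\nu^{k/2}$ be $\H_1$-distinguished holds automatically. For a representation $\nu_{n-1}^{1/2}\times\chi$ that is irreducible, Lemma \ref{PERM} identifies it with $\chi\times\nu_{n-1}^{1/2}$, and I would apply the converse of Lemma \ref{TOL} with $k=1$, $\rho=\chi$ and $\tau=\nu_{n-1}^{1/2}$ through condition $(\B)$: here $\nu_{n-k}^{-(k-2)/2}=\nu_{n-1}^{1/2}=\tau$, and the condition on $\rho\cdot\nu^{-(n-1)/2}$ is again automatic.

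For item (3), let $\tau\in\GH_2$ be infinite-dimensional. The twist $\tau\cdot\nu^{(n-2)/2}$ is again infinite-dimensional and irreducible, hence not a nontrivial character, hence $\H_2$-distinguished by Theorem \ref{dimfor2}(1); then the converse of Lemma \ref{TOL} with $k=n-2$, $\rho=\1_{n-2}$ and the given $\tau$ applies through condition $(\A)$, since $\nu_{n-2}^{(n-2-(n-2))/2}=\1_{n-2}=\rho$. This shows $\1_{n-2}\times\tau$ is distinguished and finishes the argument. The whole proof is essentially bookkeeping with the half-integer shifts inside the formulas of Lemma \ref{TOL}; the only step I expect to require a little care is the $+1/2$ case in item (2), where neither $(\A)$ nor $(\B)$ applies to $\nu_{n-1}^{1/2}\times\chi$ as written, so one must first permute the two inducing factors — which is legitimate precisely because that representation is assumed irreducible. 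Beyond that I do not anticipate any real obstacle.
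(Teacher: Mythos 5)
Your proof is correct and follows essentially the same route as the paper: the converse direction of Lemma \ref{TOL} (applied with $k=n-1$ and $k=n-2$) handles items (2)--(3), and Lemma \ref{Ldistchar} together with Proposition \ref{GK} handles item (4). The only divergence is the $\nu_{n-1}^{1/2}\times\chi$ case, which the paper settles by passing to the contragredient $\nu_{n-1}^{-1/2}\times\chi^{-1}$ via Proposition \ref{GK}, whereas you commute the factors by Lemma \ref{PERM} and invoke condition (B) of Lemma \ref{TOL} with $k=1$; both arguments are valid, yours simply trading the Gelfand--Kazhdan step for the (hypothesis-given) irreducibility of the induced representation.
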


\begin{proof}
When applied with $k=n-1$ and $k=n-2$ respectively,
Lemma \ref{TOL} gives the result for 
$\nu_{n-1}^{-1/2}\times\chi$ and $\1_{n-2}\times\tau$.
By passing to the contragredient (Proposition \ref{GK}), 
we get the result for the representation $\nu_{n-1}^{1/2}\times\chi$
when $e>1$.

By Lemma \ref{Ldistchar}, $\LL_n$ is distinguished.
By passing to the contragredient,
we get the result for $\LL_n^\vee$ when $e>1$.
(Note that $\LL_n$ is selfdual when $e=1$.)
This finishes the proof. 
\end{proof}

We now determine which twists of $\MM_n$ are distinguished. 
This is done in Lemma \ref{Ldistchar} when $\qc$ does not divide $n$. 
We now treat the case where $\qc$ divides $n$.

\begin{lemm}
\label{Sndist}
Assume that $e$ is not $1$ and divides $n$. 
For $\chi\in \GH_1$, 
the representations $\MM_n\cdot\chi$ and $\MM_n^{\vee}\cdot\chi$ are not 
distinguished.
\end{lemm}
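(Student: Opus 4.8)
The plan is to recall that, since $e$ divides $n$, we have $\MM_n=\Z([-\frac{n-3}{2},\frac{n-1}{2}]+[\frac{n+1}{2}])$ is \emph{not} the trivial character (it is $\LL_n$ only when $e$ does not divide $n$), so a genuine argument is needed here. First I would use Proposition \ref{GK}: since $e>1$ forces the characteristic of $\R$ to be different from $2$, the representation $\MM_n\cdot\chi$ is distinguished if and only if $\MM_n^{\vee}\cdot\chi^{-1}$ is; combined with Remark \ref{derPicor} (i.e.\ $\MM_n^\vee=\MM_n\cdot\nu^{-1}$), it suffices to prove that $\MM_n\cdot\chi$ is not distinguished for every $\chi\in\GH_1$.

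The main tool will be Lemma \ref{BZapp}: to conclude non-distinction of $\pi=\MM_n\cdot\chi$ it is enough to check that $\pi^{(1)}$ has no quotient isomorphic to $\nu_{n-1}^{-1/2}$ and that $\pi^{(2)}$ has no quotient isomorphic to $\1_{n-2}$. By compatibility of derivatives with twisting (Section \ref{CompDer}) and Lemma \ref{derPi}, since $\qc$ divides $n$ (here $\qc=e$ because $e>1$) we have $\MM_n^{(1)}=\LL_{n-1}^\vee\cdot\nu^{1/2}$ and $\MM_n^{(2)}=\1_{n-2}$, whence $\pi^{(1)}=\LL_{n-1}^\vee\cdot\nu^{1/2}\chi$ and $\pi^{(2)}=\1_{n-2}\cdot\chi$. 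For the second condition, $\1_{n-2}\cdot\chi=\nu_{n-2}^{0}\cdot\chi$ has $\1_{n-2}$ as a quotient only when $\chi=1$; and for $\chi=1$ one checks directly that the first condition already fails to be violated, namely that $\MM_n^{(1)}=\LL_{n-1}^\vee\cdot\nu^{1/2}$ has no quotient $\nu_{n-1}^{-1/2}$. This last point is where the real work lies: I would split according to whether $e$ divides $n-1$. Since $e$ divides $n$ and $e>1$, $e$ does not divide $n-1$, so by Definition \ref{poutargue4} the representation $\LL_{n-1}^\vee=\MM_{n-1}^\vee$ is the irreducible representation $\Z([-\frac{n-1}{2},\frac{n-3}{2}]+[-\frac{n+1}{2}])$, hence $\LL_{n-1}^\vee\cdot\nu^{1/2}$ is irreducible and not a character (as $n-1\ge 2$ and it has two segments in its multisegment); therefore it cannot have the character $\nu_{n-1}^{-1/2}$ as a quotient. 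Thus condition (1) of Lemma \ref{BZapp} holds for $\chi=1$.

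It remains to handle $\chi\neq 1$: then $\pi^{(2)}=\1_{n-2}\cdot\chi$ is a nontrivial character of $\G_{n-2}$ (using $n\ge 4$ here, since $e\mid n$ and $e>1$ force $n\ge 4$; the case $n=2$ does not occur and I would note $e\nmid n$ is assumed $n\ge 3$ elsewhere, but in fact $n\ge e\ge 2$ and $e\mid n$ with the hypotheses of the section $n\ge 3$ give $n\ge 4$), so it has no quotient isomorphic to $\1_{n-2}$, and condition (2) of Lemma \ref{BZapp} holds. In both cases Lemma \ref{BZapp} applies and $\pi=\MM_n\cdot\chi$ is not distinguished. Finally, applying Proposition \ref{GK} once more (or noting $\MM_n^\vee\cdot\chi=(\MM_n\cdot\nu^{-1})\cdot\chi=\MM_n\cdot(\nu^{-1}\chi)$ and re-running the argument) gives that $\MM_n^{\vee}\cdot\chi$ is not distinguished either, completing the proof. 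The main obstacle is the verification that $\MM_{n-1}^\vee\cdot\nu^{1/2}$ has no character quotient; everything else is bookkeeping with the derivative formulas of Section \ref{CompDer} and the filtration criterion of Lemma \ref{BZapp}.
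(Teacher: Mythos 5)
Your overall strategy coincides with the paper's (reduce via Proposition \ref{GK} and Remark \ref{derPicor} to twists of $\MM_n$, compute the derivatives with Lemma \ref{derPi}, and invoke the Bernstein--Zelevinski criterion of Lemma \ref{BZapp}), but as written there is a genuine gap at the most delicate case, namely $\chi=1$, i.e.\ $\MM_n$ itself. Lemma \ref{BZapp} requires \emph{both} of its hypotheses simultaneously: $\pi^{(1)}$ must have no quotient isomorphic to $\nu_{n-1}^{-1/2}$ \emph{and} $\pi^{(2)}$ must have no quotient isomorphic to $\1_{n-2}$. For $\pi=\MM_n$ the second hypothesis fails outright, since $\MM_n^{(2)}=\1_{n-2}$; checking only the first hypothesis, as you do, proves nothing, and your conclusion ``in both cases Lemma \ref{BZapp} applies'' reads the lemma as if one of the two conditions sufficed. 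That reading would make the criterion false (the trivial character $\1_n$ has vanishing second derivative, yet is distinguished), and the case $\chi=1$ is genuinely sensitive: when $e=1$ and $\ell\mid n$ the analogous representation $\MM_n$ \emph{is} distinguished (Lemma \ref{Pi_ndist}), precisely because Lemma \ref{BZapp} is silent there. The paper closes this case by one further application of Proposition \ref{GK}: $\MM_n$ is distinguished if and only if its contragredient $\MM_n^{\vee}=\MM_n\cdot\nu^{-1}$ is, and the latter is the twist by $\nu^{-1}\neq 1$, for which both hypotheses of Lemma \ref{BZapp} do hold, since $(\MM_n\cdot\nu^{-1})^{(2)}=\nu_{n-2}^{-1}\neq\1_{n-2}$ (as $e>1$) and the first derivative is a twist of $\LL_{n-1}^{\vee}$, which is not a character because $e$ does not divide $n-1$. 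Your argument already covers every $\chi\neq1$, so this single extra step repairs the proof and brings it into line with the paper's.

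A minor point: your claim that $e\mid n$ and $e>1$ force $n\geq4$ is false ($n=e=3$ is allowed, and is exactly the case invoked later for $\MM_3$). What your argument actually needs is only $n\geq3$, both for Lemma \ref{BZapp} and for the assertion that $\1_{n-2}\cdot\chi$ is a nontrivial character of $\G_{n-2}$ when $\chi\neq1$; and $n\geq3$ is indeed the implicit hypothesis of the statement, since for $n=2$, $e=2$ one has $\MM_2=\St_2\cdot\nu$, which is cuspidal and hence distinguished by Proposition \ref{distcusp}.
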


\begin{proof}
By Proposition \ref{GK}, it is enough to prove it for $\MM_n^{\vee}\cdot\chi$.
By Lemma \ref{BZapp}, for $\MM_n^{\vee}\cdot\chi$ to be distinguished, 
it is necessary that at least one of the derivatives 
$(\MM_n^{\vee}\cdot\chi)^{(i)}$ for $i=1,2$ has a character as a quotient.  
We have:
\begin{equation*}
(\MM_n^{\vee}\cdot\chi)^{(1)}=\LL_{n-1}^{\vee}\cdot \chi\nu^{-1/2},
\quad
(\MM_n^{\vee}\cdot\chi)^{(2)}= \nu_{n-2}^{-1}\cdot\chi.
\end{equation*}
By Lemma \ref{BZapp}, we conclude that $\MM_n^{\vee}\cdot\chi$ is not 
distinguished when $\chi\neq \nu$. 
It remains to consider $\MM_n^{\vee}\cdot\nu$, 
or rather its contragredient $\MM_n\cdot\nu^{-1}$.  
Its second derivative is $\nu_{n-2}^{-1}$. 
By Lemma \ref{BZapp}, our claim follows. 
\end{proof}

\begin{lemm}
\label{Pi_ndist}
Assume that $e=1$ and $\ell$ divides $n$.
For $\chi\in \GH_1$, 
the representations $\MM_n\cdot\chi$ is distinguished if and only if $\chi=1$.
\end{lemm}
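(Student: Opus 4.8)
Throughout write $\MM_n=\Pi_n$ and note that $\qc=\ell$ here; we assume $n\>3$ (the only possibility with $\ell\mid n$ and $n=2$ being $\ell=n=2$, which is not needed in the sequel). For the ``only if'' direction we suppose $\chi\neq 1$ and invoke Lemma~\ref{BZapp}. By Lemma~\ref{derPi} (in the case $\qc\mid n$) and the compatibility of derivatives with twisting, $(\MM_n\cdot\chi)^{(1)}=\LL_{n-1}^\vee\cdot\nu^{1/2}\chi$ and $(\MM_n\cdot\chi)^{(2)}=\chi\circ\det$ as a representation of $\G_{n-2}$. Since $\ell\mid n$ and $n\>3$ we have $\ell\nmid n-1$, so $\LL_{n-1}=\MM_{n-1}$ is the irreducible representation of $\G_{n-1}$ attached to a multisegment with two segments, and in particular is not a character; then neither is the irreducible representation $\LL_{n-1}^\vee\cdot\nu^{1/2}\chi$, which therefore has no character --- a fortiori no copy of $\nu_{n-1}^{-1/2}$ --- as a quotient, while $\chi\circ\det$ has $\1_{n-2}$ as a quotient only when $\chi=1$. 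So for $\chi\neq 1$ both hypotheses of Lemma~\ref{BZapp} are met and $\MM_n\cdot\chi$ is not distinguished.

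For the ``if'' direction it remains to show $\MM_n$ itself is distinguished; the plan is to prove $d(\V_n)\>3$ and then compare with the composition series of $\V_n$. With $k=n-1$, the pair $(\nu_{n-1}^{1/2},\nu^{(n+1)/2})$ satisfies \emph{both} conditions $(\A)$ and $(\B)$ of Lemma~\ref{TOL}: since $q=1$ in $\R$ one has $\nu_{n-1}^{1/2}=\nu_{n-1}^{-1/2}$ and $\nu^{(n+1)/2}=\nu^{-(n-3)/2}$, while $\nu^{n}$ and $\1_{n-1}$ are trivially $\H_1$- and $\H_{n-1}$-distinguished. Hence the converse part of Lemma~\ref{TOL} supplies two nonzero $\H_n$-invariant forms $\lambda_\A,\lambda_\B$ on $\V_n$, attached to the two distinct closed $\H_n$-orbits on $\X=\P_{(n-1,1)}\backslash\G_n$; as in the proof of that lemma, $\lambda_\A$ and $\lambda_\B$ are linearly independent and vanish on the subspace of $\V_n=\Cc^\infty_c(\X,\R)$ consisting of functions supported on the open $\H_n$-orbit $\Om$. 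Moreover, because $e=1$ the modulus character of $\P_{(n-1,1)}$ is trivial in $\R$, so $\X$ carries a $\G_n$-invariant $\R$-valued measure $\mu$, and the $\G_n$-invariant functional $\lambda_0\colon f\mapsto\int_\X f\,d\mu$ does not vanish on functions supported on $\Om$, since $\mu$ is nonzero but assigns measure $0$ to $\X\setminus\Om$ (a point together with a lower-dimensional projective subspace). Therefore $\lambda_\A,\lambda_\B,\lambda_0$ are linearly independent, so $d(\V_n)\>3$. By Proposition~\ref{ssV1}(3) the representation $\V_n$ has $\1_n$ as a composition factor with multiplicity $2$ and $\MM_n$ with multiplicity $1$; since $\Hom_{\H_n}(-,\R)$ is left exact, $d$ is sub-additive along short exact sequences, whence $3\<d(\V_n)\<2\,d(\1_n)+d(\MM_n)=2+d(\MM_n)$ and $d(\MM_n)\>1$. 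Thus $\MM_n=\MM_n\cdot 1$ is distinguished, which with the first part proves the lemma.

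We expect the crux to be the inequality $d(\V_n)\>3$: one has to check that each of the three $\H_n$-orbits on $\P_{(n-1,1)}\backslash\G_n$ --- the two closed ones through Lemma~\ref{TOL}, the open one through $\lambda_0$ --- actually contributes an invariant linear form, and this works precisely because for $e=1$ all the relevant modulus characters become trivial in $\R$; it is the higher-rank analogue of the $n=2$ computation behind Theorem~\ref{gl2mult}(3). The ``only if'' direction is, by contrast, routine once the derivatives of $\MM_n$ from Lemma~\ref{derPi} are available.
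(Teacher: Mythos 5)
Your strategy is essentially the paper's: for $\chi\neq1$ you use the derivatives from Lemma \ref{derPi} and Lemma \ref{BZapp} (the paper does the same by invoking the first part of the proof of Lemma \ref{Sndist}, using that $\MM_n$ is selfdual when $e=1$), and for $\chi=1$ you produce three $\H_n$-invariant forms on $\V_n=\Cc^\infty_c(\X,\R)$ attached to the two closed orbits and to the $\G_n$-invariant functional, and conclude by linear independence. Your finish is even a bit cleaner than the paper's: instead of analysing $\upmu_\A-\upmu_\B$ on the kernel of the $\G_n$-invariant form, you use left exactness of $\Hom_{\H_n}(-,\R)$ and $[\V_n]=2\cdot\1_n+\MM_n$ (Proposition \ref{ssV1}) to get $3\leqslant d(\V_n)\leqslant 2+d(\MM_n)$, which is valid. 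Your restriction to $n\geqslant3$ is also the right reading of the statement (for $n=2$, $\ell=2$ the representation $\MM_2=\St_2$ is cuspidal, so all its twists are distinguished, and Lemma \ref{BZapp} anyway requires $n\geqslant3$).

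The one step you assert rather than prove is that $\lambda_0$ does not vanish on functions supported on the open orbit, and your justification --- the complement is ``a point together with a lower-dimensional projective subspace'', hence of measure zero --- is real/complex measure intuition that does not transfer to $\R$-valued invariant functionals: closed non-open subsets have no measure in this setting, and naive mass counting is exactly what breaks down here, since the total ``volume'' of $\X$ is $1+q+\dots+q^{n-1}=n=0$ in $\R$ when $\ell$ divides $n$. This is precisely the point to which the paper devotes its ``claim'': it chooses a compact open subset of the preimage of the open orbit with nonzero Haar measure (possible because $\G_n$ is locally pro-$p$ and $\ell\neq p$) and deduces via \cite[\S2.8]{Vigb} that $\upmu_\X$ is nonzero on the corresponding characteristic function. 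Alternatively, a one-line repair is available: if $\lambda_0$ killed $\Cc^\infty_c(\Om,\R)$, then by $\G_n$-invariance it would kill $\Cc^\infty_c(g\Om,\R)$ for every $g\in\G_n$, and since $\X$ is totally disconnected and $\G_n$ acts transitively, every element of $\Cc^\infty_c(\X,\R)$ is a finite sum of functions each supported in a single translate $g\Om$, forcing $\lambda_0=0$, a contradiction. With either argument inserted, your proof is complete; note also that your appeal to the converse of Lemma \ref{TOL} for the two closed-orbit forms is just a repackaging of the paper's direct construction of $\upmu_\A$ and $\upmu_\B$, and the orbit-wise reading you need is the one the paper itself uses in the proof of Lemma \ref{Ldistchar} and in Remark \ref{Pi_nmult2}.
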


\begin{proof}
When $e=1$, 
the representation $\MM_n$ is selfdual thus the first part of the proof 
of Lemma \ref{Sndist} still holds.
Thus $\MM_n\cdot\chi$ is not distinguished for any $\chi\neq1$. 
However, the second derivative of $\MM_n$ is $\1_{n-2}$, 
thus Lemma \ref{BZapp} is not sufficient to determine whether or not $\MM_n$ 
is distinguished.

Let $\H_n$ act on $\X=\P_{(n-1,1)}\backslash\G_n$. 
There are two closed 
orbit $\A$ and $\B$ in $\X$, where $\A$ is redu\-ced to a point and $\B$ is 
isomorphic to $\P_{(n-2,1)}\backslash\G_{n-1}$ (see \cite[5]{V}).
Since $q$ is congruent to $1$ mod the characteristic of $\R$, 
the modulus $\R$-character of $\P_{(n-1,1)}$ is trivial.
By \cite[Proposition I.2.8]{Vigb}, 
there is a non-zero $\G_n$-invariant linear form $\upmu_\X$ on 
$\V_n$. 
Similarly, there is a non-zero $\H_n$-in\-va\-riant linear form on 
$\Cc^\ii_c(\B,\R)$.
Composition by the restriction from $\X$ to $\B$ gives us a 
non-zero $\H_n$-invariant linear form $\upmu_\B$ on $\V_n$.
Finally, for $f\in \Cc^\ii_c(\X,\R)$, we write $\upmu_{\A}(f)$ for the value 
of $f$ at $\A$. 
We thus get three $\H_n$-invariant linear forms on $\V_n$. 

The form $\upmu_\X$ is actual\-ly $\G_n$-equivariant; its image is $\1_n$, 
and its kernel $\W_n$ has length $2$, with socle $\1_n$ (the space of 
$\R$-valued constant functions on $\X$) and irreducible quotient $\MM_n$.
We claim that these three linear forms are linearly independent. 
Granting the claim, there is no nontrivial linear 
combination of $\upmu_\A,\upmu_\B$ that va\-nish\-es on $\W_n$.
Moreover, if $f_0$ denotes the constant function taking value $1$, 
and if $\upmu_\B$ is chosen so that $\upmu_\B(f_0)=1$, then:
\begin{equation*}
(\upmu_\A-\upmu_\B)(f_0)=0.
\end{equation*}
Therefore, $\upmu_\A-\upmu_\B$ is a nonzero $\H_n$-invariant linear 
forms on $\W_n$ that vanishes on the space of constant functions; 
it thus induces a nonzero $\H_n$-invariant linear 
forms on $\MM_n$.
Finally $\MM_n$ is $\H_n$-distinguished when $e=1$ and 
$\ell$ divides $n$.

It remains to prove the claim.
Let $\U$ denote the unique open $\H_n$-orbit in $\X$, 
so that $\X$ is the disjoint union of $\A$, $\B$ and $\U$, 
and let $\widehat{\U}$ be its preimage in $\G_n$.
Let $\upmu$ be a Haar measure on $\G_n$. 
Since $\G_n$ is locally pro-$p$,
there is a compact open subset $\Omega\subseteq\widehat{\U}$ 
with nonzero measure.
Write $\phi$ for the characteristic function of the image of $\Omega$ 
in $\X$.
By \cite[\S2.8]{Vigb}, there exists a $\a\in\mult\R$ such that:
\begin{equation*}
\upmu_\X(\phi) = \a\cdot\upmu(\1_\Omega) \neq0.
\end{equation*}
On the other hand, we have $\upmu_\A(\phi)=\upmu_\B(\phi)=0$ 
and hence the linear forms $\upmu_\X, \upmu_\A$ and $\upmu_\B$ are 
linearly independent.
\end{proof} 

\begin{rema}
\label{Pi_nmult2}
Suppose $e=1$ and $\ell$ does not divide $n$. 
It follows from the proof of Lemma \ref{Pi_ndist} that $d(\V_n)$ is at least 
3. 
On the other hand, the conditions of Lemma \ref{TOL} implies that there is at 
most one $\H_n$-invariant linear form upto scalars on each of three orbits 
$\A,\B$ and $\U.$ Thus, $d(\V_n)=3$. 
Since $\V_n=\1_n \oplus \Pi_n$, it follows that $d(\Pi_n)=2$.
\end{rema} 

\subsection{First reduction of the problem}
\label{firstaid}

Thanks to Corollary \ref{reduccusp}, we are already reduced to studying those 
$\H_n$-distinguished irredu\-cible representations of $\G_n$, with $n\>3$, 
whose cuspidal support is made of characters. 

\begin{lemm}
\label{Derivative}
Let $\rho\in\GH_k$ be such that $\rho^{(1)}\cdot\nu^{-(n-1-k)/2}$ has a trivial 
quotient.  
Then $\rho$ is one of the following representations:
\begin{enumerate}
\item
$\nu_{k-1}^{(n-k-1)/2}\times\mu$ with $\mu\in\GH_1-\{\nu^{(n-2k-1)/2},\nu^{(n-1)/2}\}$;
\item
$\nu_{k}^{(n-k)/2}$;
\item
$\LL_{k}^*\cdot\nu^{(n-k)/2}$.
\end{enumerate}
\end{lemm}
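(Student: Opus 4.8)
The plan is to prove the statement by induction on $k\ge 2$; for $k=1$ the representation $\rho^{(1)}$ lives on the trivial group and the hypothesis is empty. Write $\chi_0=\nu_{k-1}^{(n-1-k)/2}\in\GH_{k-1}$ and let $[a,b]$ be the segment with $\Z([a,b])=\chi_0$, so that $a=(n-2k+1)/2$ and $b=(n-3)/2$; the hypothesis says $\chi_0$ occurs as a quotient of $\rho^{(1)}$. The first step reduces matters to the case where $\rho$ is a quotient of a product $\chi'\times\rho'$ with $\chi'\in\GH_1$ and $\rho'\in\GH_{k-1}$. Since $\chi_0$ is a quotient of $\rho^{(1)}$ we have $\rho^{(1)}\neq 0$; embedding $\rho$ in a product of cuspidal representations and combining Lemma~\ref{dercal}(1) with the Leibniz rule (Lemma~\ref{dercal}(3)) shows that $\rho$ is non-cuspidal and that a character occurs in its cuspidal support. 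By \cite{MSt}, Lemma~\ref{PERM} and exactness of parabolic induction, $\rho$ is then a quotient of $\chi'\times\rho'$ for such a $\chi'$ and some $\rho'\in\GH_{k-1}$.

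Since the derivative functor is exact, $\chi_0$ is a quotient of $(\chi'\times\rho')^{(1)}$, whose semisimplification is $[\rho']+[\chi'\times(\rho')^{(1)}]$ by Lemma~\ref{dercal}(3) (using that $(\chi')^{(1)}$ is the trivial representation of the trivial group). Hence the irreducible $\chi_0$ is a subquotient of $\rho'$ or of $\chi'\times(\rho')^{(1)}$. In the first case $\rho'=\chi_0$, and $\rho$ is an irreducible quotient of $\chi'\times\chi_0$. In the second case, computing $\rp_{(1,k-2)}(\chi_0)$ and comparing it, by the geometric lemma and Proposition~\ref{JacquetMu}, with $\rp_{(1,k-2)}(\chi'\times(\rho')^{(1)})$, pins down $\chi'$ and forces $(\rho')^{(1)}$ to have a character of $\G_{k-2}$ as a quotient; the induction hypothesis then applies to $\rho'$ and places it in the corresponding three-term list.

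It remains to identify $\rho$ among the irreducible quotients of $\chi'\times\rho'$ (a unique one when $e>1$, by Lemma~\ref{UIQ}) and to match it with items (1)--(3). When $\rho'=\chi_0$: if $\chi'\times\chi_0$ is irreducible one gets case~(1), the excluded values $\mu\in\{\nu^{(n-1)/2},\nu^{(n-2k-1)/2}\}$ being exactly those $\chi'$ for which $\chi_0\times\chi'$ is reducible (Propositions~\ref{Lanzmann} and \ref{Linked}); if $\chi'=\nu^{b+1}$ one gets, by Proposition~\ref{Rappel}, the representation $\Z([a,b+1])=\nu_k^{(n-k)/2}$ of case~(2); and if $\chi'=\nu^{a-1}$ one gets $\Z([a,b]+[a-1])=\LL_k^{\vee}\cdot\nu^{(n-k)/2}$ of case~(3). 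When $\rho'$ instead comes from the inductive list, a parallel but longer computation --- realising $\rho'$ as a quotient of a product of characters, or of a twist of $\LL_{k-1}^{\vee}$ with a character, and applying Propositions~\ref{P61} and \ref{length2seg} --- again produces only representations in items (1)--(3); here one uses the derivative formulas of Section~\ref{CompDer} (Corollary~\ref{derLa}, Lemma~\ref{derPi}) to check which of the irreducible subquotients that arise actually have $\chi_0$ as a quotient of their first derivative, discarding the rest (chiefly the relevant twists of $\MM_k$). The main difficulty is exactly this last bookkeeping: keeping apart the cases $\qc\mid k$ and $\qc\nmid k$ (in which cases~(2) and (3) may coincide, or $\LL_k$ degenerates to a character), treating $e=1$, where $\chi'\times\rho'$ need not have a unique irreducible quotient and one must argue with each quotient in turn (using Proposition~\ref{ssV1}), and systematically eliminating the spurious irreducible subquotients occurring in the reducible products that fail the hypothesis.
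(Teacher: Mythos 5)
Your plan bypasses the one idea that makes this lemma short, and the steps you substitute for it are not actually carried out. The paper's proof rests on the observation (following \cite[Lemma 6.2]{V}) that the hypothesis ``$\rho^{(1)}\cdot\nu^{-(n-1-k)/2}$ has a trivial quotient'' is \emph{equivalent} to $\rho$ embedding into $\V(\mu)=\nu_{k-1}^{(n-1-k)/2}\times\mu$ for some $\mu\in\GH_1$; once that is in hand, one only has to list the irreducible subrepresentations of $\V(\mu)$: Proposition \ref{Lanzmann} gives Case 1 for generic $\mu$, Lemma \ref{UIQ} together with Proposition \ref{P61} and \eqref{CONTRAL} give Cases 2 and 3 for $\mu=\nu^{(n-1)/2}$ and $\mu=\nu^{(n-2k-1)/2}$ when $e>1$, and Proposition \ref{ssV1} settles $e=1$. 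You never establish or use this equivalence; instead you only extract the much weaker fact that $\rho$ is a quotient of $\chi'\times\rho'$ for \emph{some} character $\chi'$ in its cuspidal support. That loses exactly the information the lemma needs: which character sits next to $\nu_{k-1}^{(n-1-k)/2}$, and the fact that $\rho$ occurs as a subrepresentation of a specific length-two induced representation rather than as some quotient of an essentially arbitrary one.

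The inductive step you propose in its place does not close. From the Leibniz rule you only know that the irreducible $\chi_0=\nu_{k-1}^{(n-1-k)/2}$ is a \emph{subquotient} of $\rho'$ or of $\chi'\times(\rho')^{(1)}$; in the second branch, the claim that a Jacquet-module comparison ``pins down $\chi'$ and forces $(\rho')^{(1)}$ to have a character as a quotient'' is unjustified: the geometric lemma and semisimplification arguments yield subquotient information only, whereas your induction hypothesis (like the lemma itself) requires a specific character to occur as a \emph{quotient} of $(\rho')^{(1)}$, and a priori $\chi'$ could be any of the exponents $\nu^a,\dots,\nu^b$ in the support of $\chi_0$. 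Finally, the concluding ``parallel but longer computation'' that is supposed to show the irreducible quotients of all the resulting products $\chi'\times\rho'$ land in items (1)--(3), and to discard candidates such as twists of $\MM_k$ by checking their first derivatives, is precisely the content of the lemma and is asserted rather than proved; the derivative formulas of Section \ref{CompDer} cover $\LL_n$, $\MM_n$ and their contragredients but not all the subquotients your recursion would generate (nor the $e=1$ situation where $\chi'\times\rho'$ has several irreducible quotients). As written, the proposal is a programme with its hardest steps missing, not a proof; the direct route via the embedding $\rho\hookrightarrow\nu_{k-1}^{(n-1-k)/2}\times\mu$ is both shorter and avoids these difficulties.
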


\begin{proof}
We follow the proof given in the complex case in \cite[Lemma 6.2]{V}. 
The condition on $\rho$ is equivalent to saying that $\rho$ embeds into a 
representation of the form:
\begin{equation*}
\V(\mu) = \nu^{(n-1-k)/2}_{k-1}\times\mu,
\quad
\mu\in\GH_1.
\end{equation*}
If $\mu\notin\{\nu^{(n-2k-1)/2},\nu^{(n-1)/2}\}$,
this representation is irreducible (see Proposition \ref{Lanzmann}) 
thus $\rho$ is as in Case 1. 
Assume that $e>1$.
Thanks to Lemma \ref{UIQ}, Proposition \ref{P61} and \eqref{CONTRAL}, 
we have: 
\begin{enumerate}
\item 
$\V(\nu^{(n-1)/2})$ has a 
unique irreducible subrepresentation, which is $\nu_{k}^{(n-k)/2}$. 
Thus $\rho$ is as in Case 2.
\item 
$\V(\nu^{(n-2k-1)/2})$ has a 
unique irreducible subrepresentation, which is $\LL_{k}^*\cdot\nu^{(n-k)/2}$.  
Thus $\rho$ is as in Cases 2 or 3.
\end{enumerate}
Assume now that $e=1$.
Then, by Proposition \ref{ssV1}, any subrepresentation $\rho$ of 
$\V(\nu^{(n-1)/2})$ is as in Case 2 or 3.
Note that, in the case where $\qc$ divides $k$, the representation $\V_k$ is 
indecomposable, thus $\rho$ must be the character $\nu_{k}^{(n-k)/2}$.
This finishes the proof of Lemma \ref{Derivative}.
\end{proof}

In conclusion, we have the following result. 

\begin{prop}
\label{reduction1}
Assume $n\>3$.
Let $\pi\in\GH_n$ be $\H_{n}$-distinguished. 
Then there are 
$\rho\in\GH_{n-1}$ and $\chi\in\GH_1$ such that 
$\pi$ is an irreducible quotient of $\rho\times\chi$
and at least one of the following conditions holds: 
\begin{enumerate}
\item 
One has $\rho=\nu_{n-1}^{-1/2}$ or $\rho=\nu_{n-1}^{1/2}$.
\item
One has $\rho={\LL}^\vee_{n-1}\cdot\nu^{1/2}$.
\item
One has $\rho=\1_{n-2}\times\mu$ for some 
$\mu\in\GH_1-\{\nu^{-(n-1)/2},\nu^{(n-1)/2}\}$. 
\item
The representation $\rho\cdot\nu^{-1/2}$ is $\H_{n-1}$-distinguished and 
$\chi=\nu^{-(n-3)/2}$. 
\end{enumerate}
Moreover, if $e>1$, then $\pi$ is the unique irreducible quotient of 
$\rho\times\chi$. 
\end{prop}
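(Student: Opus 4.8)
The plan is to argue in two movements. First, realize $\pi$ as an irreducible quotient of some $\rho\times\chi$ with $\rho\in\GH_{n-1}$ and $\chi\in\GH_1$; then constrain the pair $(\rho,\chi)$ by applying the Three Orbits Lemma \ref{TOL} to $\rho\times\chi$ and unwinding the resulting trichotomy with the help of Lemma \ref{Derivative}. The final uniqueness clause, valid when $e>1$, will then fall out of Lemma \ref{UIQ}.

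To carry out the first movement I would split into the two cases of Corollary \ref{reduccusp}. If the cuspidal support of $\pi$ consists of characters, then by the cuspidal support theory of Paragraph \ref{repmodl} there are $\chi_1,\dots,\chi_n\in\GH_1$ and an ordering such that $\pi$ is a quotient of $(\chi_1\tdt\chi_{n-1})\times\chi_n$; by the second adjunction this forces $\bar{\rp}_{(n-1,1)}(\pi)\neq0$, and choosing an irreducible subrepresentation $\rho\otimes\chi$, with $\rho\in\GH_{n-1}$ and $\chi\in\GH_1$, of this finite-length representation of $\G_{n-1}\times\G_1$ and applying the second adjunction once more exhibits $\pi$ as a quotient of $\rho\times\chi$. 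If instead $\pi=\1_{n-2}\times\tau$ with $\tau\in\GH_2$ cuspidal, I would first use Proposition \ref{Rappel}(3) when $e>1$ (and Proposition \ref{ssV1} when $e=1$) to write $\1_{n-2}=\Z([-(n-3)/2,(n-3)/2])$ as a quotient of $\nu_{n-3}^{1/2}\times\nu^{-(n-3)/2}$; since $\nu^{-(n-3)/2}\times\tau$ is irreducible by Proposition \ref{Lanzmann}, hence isomorphic to $\tau\times\nu^{-(n-3)/2}$, exactness of parabolic induction shows that $\pi$ is a quotient of $(\tau\times\nu_{n-3}^{1/2})\times\nu^{-(n-3)/2}$, so again $\pi$ is a quotient of $\rho\times\chi$ with $\rho=\tau\times\nu_{n-3}^{1/2}\in\GH_{n-1}$ and $\chi=\nu^{-(n-3)/2}$.

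For the second movement, observe that $\rho\times\chi$, surjecting onto the distinguished representation $\pi$, is itself $\H_n$-distinguished, so Lemma \ref{TOL} applies with $k=n-1$. A character of $\G_1$ being cuspidal, its derivative $\chi^{(1)}$ is trivial by Lemma \ref{dercal}(1), so the condition bearing on $\tau^{*(1)}$ in alternative (C) is automatic and (C) reduces to ``$\rho^{(1)}$ has a trivial quotient''. Lemma \ref{TOL} thus yields one of: $\rho=\nu_{n-1}^{-1/2}$; or $\rho\cdot\nu^{-1/2}$ is $\H_{n-1}$-distinguished with $\chi=\nu^{-(n-3)/2}$; or $\rho^{(1)}$ has a trivial quotient. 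The first two outcomes are exactly alternatives (1) and (4) of the proposition. In the third, I would apply Lemma \ref{Derivative} with its parameter $k$ set to $n-1$ and its parameter $n$ set to the present $n$, so that the twist $\nu^{-(n-1-k)/2}$ appearing there is trivial and the hypothesis reads ``$\rho^{(1)}$ has a trivial quotient''; this forces $\rho$ to be $\1_{n-2}\times\mu$ with $\mu\in\GH_1-\{\nu^{-(n-1)/2},\nu^{(n-1)/2}\}$, or $\nu_{n-1}^{1/2}$, or $\LL_{n-1}^\vee\cdot\nu^{1/2}$, which are alternatives (3), (1), (2). Hence one of (1)--(4) always holds, and when $e>1$ Lemma \ref{UIQ} identifies $\pi$ with the unique irreducible quotient $\UIQ(\rho\times\chi)$.

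The step I expect to be the main obstacle is the case $\pi=\1_{n-2}\times\tau$ inside the first movement: because the cuspidal support of $\pi$ then contains a cuspidal block of degree $2$, one cannot produce the desired quotient by merely permuting cuspidal blocks, and one must instead exploit the explicit submodule structure of the trivial representation $\1_{n-2}$ given by Proposition \ref{Rappel}(3) (respectively Proposition \ref{ssV1} in the $e=1$ case). Everything after $(\rho,\chi)$ has been produced is a routine comparison of the outputs of Lemmas \ref{TOL} and \ref{Derivative} against the list (1)--(4).
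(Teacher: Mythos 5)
Your argument is correct and is essentially the paper's intended one: the paper states Proposition \ref{reduction1} as the combined conclusion of Corollary \ref{reduccusp}, Lemma \ref{TOL} applied with $k=n-1$ (whose alternatives (A), (B), (C) give respectively $\rho=\nu_{n-1}^{-1/2}$, condition (4), and the hypothesis of Lemma \ref{Derivative}), Lemma \ref{Derivative} with $k=n-1$, and Lemma \ref{UIQ} for the uniqueness when $e>1$. Your only addition is to spell out how $\pi$ is realized as a quotient of some $\rho\times\chi$ (via the adjunction in the character-support case and via the structure of $\nu_{n-3}^{1/2}\times\nu^{-(n-3)/2}$ when $\pi=\1_{n-2}\times\tau$), a step the paper leaves implicit, and your treatment of it is sound.
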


In order to prove our main theorem \ref{MAINTHEOREM}, 
our strategy is to study, by induction on $n\>2$, 
the irreducible quotients of $\rho\times\chi$
in all these cases when $e>1$,
and to prove that they are either in the list of Theorem 
\ref{MAINTHEOREM} or non-dis\-tin\-gui\-shed.

Assuming that Theorem \ref{MAINTHEOREM} holds for $\G_{n-1}$ with $n\>3$, 
we thus have to study the distinction of the following representations:

\vspace{.1cm}

Case 1:\
the irreducible quotients of $\nu_{n-1}^{-1/2}\times\chi$ and 
$\nu_{n-1}^{1/2}\times\chi$ for $\chi\in\GH_1$;

\vspace{.1cm}

Case 2:\
the irreducible quotients of $\1_{n-2}\times\mu\times\chi$
for $\mu\in\GH_1-\{\nu^{-(n-1)/2},\nu^{(n-1)/2}\}$, $\chi\in\GH_1$; 

\vspace{.1cm}

Case 3:\
the irreducible quotients of $\LL_{n-1}^\vee\cdot\nu^{1/2}\times\chi$
for $\chi\in\GH_1$;

\begin{enumerate}
\item[Case 4:]
the irreducible quotients of $\rho\times\nu^{-(n-3)/2}$ where $\rho$ is:

\begin{tabular}{lp{14cm}}
{(4.a)} &
the character $\nu_{n-1}^{1/2}$ (included in Case 1 above); \\
{(4.b)} & 
a representation $\1_{n-2}\times\mu$ with 
$\mu\in\GH_1-\{\nu^{-(n-1)/2},\nu^{(n-1)/2}\}$ 
(included in Case 2); \\
{(4.c)} &
a representation $\nu_{n-2}\times\mu$ with 
$\mu\in\GH_1-\{\nu^{-(n-3)/2},\nu^{(n+1)/2}\}$; \\
{(4.d)} &
a representation $\nu_{n-3}^{1/2}\times\tau$ with $\tau\in\GH_2$ 
infinite-dimensional; \\
{(4.e)} &
one of the representations $\LL_{n-1}\cdot\nu^{1/2}$ or 
$\LL_{n-1}^\vee\cdot\nu^{1/2}$
(see Case 3 above);
\end{tabular}
\end{enumerate}

Cases 1 and 4.c are treated in Section \ref{CQuot} for arbitrary $e\>1$, 
and Case 4.d is treated in Section \ref{GQuot} for $e>1$. 

We reduce Case 2 to studying 
$\UIQ(\mu\times\LL_{n-1}\cdot\nu^{-1/2})$ for 
$\mu\in\GH_1-\{\nu^{-(n-1)/2},\nu^{(n-1)/2}\}$
in Section \ref{TQuot}, when $e>1$.

In Section \ref{LQuot}, we do the remaining cases when $e>1$. 

\section{Computing the irreducible quotients of $\nu_{n-1}^{1/2}\times\chi$ 
  for $\chi\in\GH_1$} 
\label{CQuot}

\begin{lemm}
\label{L1}
Assume $e>1$.
Let $a,b\in\ZZ$ with $a\<b$.
For $\chi\in\GH_1$, write $\V(\chi)=\Z([a,b])\times\chi$.
\begin{enumerate}
\item
If $\chi\notin\{\nu^{a-1},\nu^{b+1}\}$, then $\V(\chi)$ is irreducible. 
\item
Assume that $\chi=\nu^{b+1}$ and $e$ does not divide $n$.
Then $\V(\nu^{b+1})$ has length $2$ and we have the following exact sequence: 
\begin{equation*}
0\to\Z([a,b+1])\to 
\V(\nu^{b+1})
\to\Z([a,b]+[b+1])\to0.
\end{equation*}
\item
Assume that $\chi=\nu^{a-1}$ and $e$ does not divide $n$.
Then $\V(\nu^{a-1})$ has length $2$ and we have the following exact sequence: 
\begin{equation*}
0\to\Z([a,b]+[a-1])\to\V(\nu^{a-1})
\to\Z([a-1,b])\to0.
\end{equation*}
\item
If $e$ divides $n$, then $\nu^{a-1}=\nu^{b+1}$ and 
$\V(\nu^{b+1})$ has length $3$ with:
\begin{equation*}
\UIS(\V(\nu^{b+1}))=\Z([a,b+1]),
\quad
\UIQ(\V(\nu^{b+1}))=\Z([a-1,b]).
\end{equation*}
\end{enumerate}
\end{lemm}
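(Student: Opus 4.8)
The plan is to deduce the whole lemma from results already available — Propositions \ref{Linked}, \ref{Lanzmann}, \ref{P61} and part (3) of Proposition \ref{Rappel} — using only twists by unramified characters. Set $n=b-a+2$, the degree of $\V(\chi)$; then $\nu^{a-1}=\nu^{b+1}$ in $\GH_1$ exactly when $q^{(b+1)-(a-1)}=1$ in $\R$, that is, exactly when $e$ divides $n$, which reconciles the two descriptions in (4) and the hypotheses of (2)--(3). If no unramified twist of $\chi$ equals a power of $\nu$, then the cuspidal support of $\Z([a,b])$ (a sum of powers of $\nu$) meets no unramified twist of the cuspidal support $[\chi]$, so $\V(\chi)$ is irreducible by Proposition \ref{Lanzmann}; since then $\chi\notin\{\nu^{a-1},\nu^{b+1}\}$, this settles (1) in that case. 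So from now on I take $\chi=\nu^{c}$ with $c\in\ZZ$ and write $\equiv$ for congruence modulo $e$.

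For (1): by the definition of linked segments preceding Proposition \ref{Linked}, the segments $[a,b]$ and $[c]$ are linked if and only if $c\equiv b+1$ or $c\equiv a-1$, i.e. if and only if $\nu^{c}\in\{\nu^{b+1},\nu^{a-1}\}$; so $\chi\notin\{\nu^{a-1},\nu^{b+1}\}$ forces $[a,b]$ and $[c]$ to be unlinked, hence $\V(\chi)=\Z([a,b])\times\Z([c])$ is irreducible by Proposition \ref{Linked}. For (2)--(4): twisting by $\nu^{k}\circ\det$ is an autoequivalence of the category of smooth representations of $\G_n$; it preserves length, sub- and quotient lattices and unique irreducible sub- and quotients, and by Property {\bf P4} it carries $\Z(\m)$ to $\Z(\m+k)$. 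Now
\[
\V(\nu^{b+1})\cdot\nu^{-(b+1)}=\Z([a-b-1,-1])\times1=\pi(a-b-1,-1),\qquad
\V(\nu^{a-1})\cdot\nu^{-(a-1)}=\Z([1,n-1])\times1=\pi(1,n-1)
\]
in the notation of Proposition \ref{P61}. Since $-1\equiv-1$ and $1\equiv1$ always, while $a-b-1\equiv1\Leftrightarrow n\equiv0\Leftrightarrow n-1\equiv-1$, we get: if $e\nmid n$ then $\pi(a-b-1,-1)$ falls in Case (3) and $\pi(1,n-1)$ in Case (2) of Proposition \ref{P61}, and untwisting the two exact sequences there — translating the occurring multisegments $[a-b-1,0]$, $[a-b-1,-1]+[0]$, resp. $[1,n-1]+[0]$, $[0,n-1]$, back by $b+1$, resp. $a-1$, via {\bf P4} — gives precisely the exact sequences of (2) and (3); if $e\mid n$ then $\pi(a-b-1,-1)$ falls in Case (4), so after untwisting $\V(\nu^{b+1})$ has length $3$, with irreducible subquotients $\Z([a-1,b])$, $\Z([a,b+1])$ and $\Z([a,b]+[b+1])$.

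It remains, in (4), to single out the socle and cosocle (irreducible by Lemma \ref{UIQ}, $\Z([a,b])$ being an irreducible representation of degree $n-1$). Applying part (3) of Proposition \ref{Rappel} to the segment $[a,b+1]$ shows $\Z([a,b])\times\nu^{b+1}$ has $\Z([a,b+1])$ as unique irreducible subrepresentation, and applying it to the segment $[a-1,b]$ shows $\Z([a,b])\times\nu^{a-1}$ has $\Z([a-1,b])$ as unique irreducible quotient; since $e\mid n$ forces $\nu^{a-1}=\nu^{b+1}$, both statements apply to $\V(\nu^{b+1})$, giving $\UIS(\V(\nu^{b+1}))=\Z([a,b+1])$ and $\UIQ(\V(\nu^{b+1}))=\Z([a-1,b])$.

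There is no real difficulty here: the content lies entirely in Propositions \ref{P61} and \ref{Rappel}, and the only thing to watch is the bookkeeping — choosing the twist that turns $\V(\chi)$ into one of the representations $\pi(\cdot,\cdot)$, matching the congruence class of the endpoints with the correct case of Proposition \ref{P61}, and translating the resulting multisegments back. Alternatively, one could prove (2) and (3) directly in the manner of the proof of Proposition \ref{P61}: compute $[\rp_{(n-1,1)}(\V(\chi))]$ by the geometric lemma, note that it is a sum of two irreducible pieces so that the length is $\leq2$, and then exhibit $\Z([a,b+1])$ (resp.\ the submodule $\Z([a,b]+[a-1])$) using Proposition \ref{Rappel}; but the twisting reduction to Proposition \ref{P61} is shorter.
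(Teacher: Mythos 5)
Your proof is correct and follows essentially the same route as the paper: case (1) via Propositions \ref{Lanzmann} and \ref{Linked}, and cases (2)--(4) by twisting with $\chi^{-1}$ (using {\bf P4}) to reduce to Proposition \ref{P61}. The only addition is that you make explicit, via Proposition \ref{Rappel}(3), the identification of the socle and cosocle in case (4), a detail the paper's one-line proof leaves implicit.
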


\begin{proof}
Case 1 follows from Propositions \ref{Lanzmann} and \ref{Linked}. 
The other cases reduce to Proposition \ref{P61} by twisting by the character 
$\chi^{-1}$, since 
$\V(\chi)\cdot\nu^{-c}=\Z([a-c,b-c])\times\chi\nu^{-c}$ 
for $c\in\ZZ$.
\end{proof}

From Lemma \ref{L1} we get the following proposition. 

\begin{prop}\label{L2}
Assume $e>1$.
For all $n\>1$, we have: 
\begin{equation*}
\UIQ(\nu_{n-1}^{1/2}\times\chi)=
\left\{
\begin{array}{ll}
\nu_{n-1}^{1/2}\times\chi & 
\text{if } \chi\notin\{\nu^{-(n-1)/2},\nu^{(n+1)/2}\}, \\
\1_{n} & \text{if } \chi=\nu^{-(n-1)/2}, \\
\LL_n & \text{if } \chi=\nu^{(n+1)/2}. 
\end{array}
\right.
\end{equation*}
\end{prop}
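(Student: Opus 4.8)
The plan is to reduce the whole statement to Lemma \ref{L1} by realizing $\nu_{n-1}^{1/2}$ as a character twist of a segment representation with \emph{integer} endpoints. We may assume $n\>2$, since for $n=1$ one has $\nu_{0}^{1/2}\times\chi=\chi$ and the claim is immediate. By Properties \textbf{P1} and \textbf{P4} we have $\nu_{n-1}^{1/2}=\Z([0,n-2])\cdot\nu^{-(n-3)/2}$; hence, setting $\mu=\nu^{-(n-3)/2}$ and $\chi'=\chi\nu^{(n-3)/2}$,
\[
\nu_{n-1}^{1/2}\times\chi=\bigl(\Z([0,n-2])\times\chi'\bigr)\cdot\mu .
\]
Twisting by a character commutes with parabolic induction and is an exact autoequivalence preserving the lattice of subrepresentations, hence commutes with $\UIQ$ (which exists because $e>1$, by Lemma \ref{UIQ}). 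Therefore $\UIQ(\nu_{n-1}^{1/2}\times\chi)=\UIQ(\Z([0,n-2])\times\chi')\cdot\mu$, and since $[0,n-2]$ has integer endpoints, Lemma \ref{L1} applies to $\Z([0,n-2])\times\chi'$ directly. Its two exceptional characters are $\nu^{-1}$ and $\nu^{n-1}$, and one checks at once that $\chi'=\nu^{-1}$ exactly when $\chi=\nu^{-(n-1)/2}$, and $\chi'=\nu^{n-1}$ exactly when $\chi=\nu^{(n+1)/2}$, so the trichotomy of Lemma \ref{L1} matches that of the statement.

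The remaining work is to read off the cases and untwist by $\mu$. If $\chi\notin\{\nu^{-(n-1)/2},\nu^{(n+1)/2}\}$, then $\chi'\notin\{\nu^{-1},\nu^{n-1}\}$, so $\Z([0,n-2])\times\chi'$ is irreducible by Lemma \ref{L1}(1) and $\UIQ(\nu_{n-1}^{1/2}\times\chi)=\nu_{n-1}^{1/2}\times\chi$. If $\chi=\nu^{(n+1)/2}$ and $e\nmid n$, then Lemma \ref{L1}(2) gives $\UIQ(\Z([0,n-2])\times\nu^{n-1})=\Z([0,n-2]+[n-1])$; twisting by $\mu$ and using \textbf{P4} turns this into $\Z([-\tfrac{n-3}{2},\tfrac{n-1}{2}]+[\tfrac{n+1}{2}])=\MM_n$, which equals $\LL_n$ because $e\nmid n$ (Example \ref{poutargue2}). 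If $\chi=\nu^{-(n-1)/2}$ and $e\nmid n$, then Lemma \ref{L1}(3) gives $\UIQ(\Z([0,n-2])\times\nu^{-1})=\Z([-1,n-2])$, which after twisting by $\mu$ becomes the single segment $\Z([-\tfrac{n-1}{2},\tfrac{n-1}{2}])$, a representation of $\G_n$ that by \textbf{P1} is the trivial character $\1_n$. Finally, if $e\mid n$, then $\nu^{-(n-1)/2}=\nu^{(n+1)/2}$ and $\LL_n=\1_n$; here Lemma \ref{L1}(4) applies to $\Z([0,n-2])\times\nu^{n-1}$, its unique irreducible quotient is $\Z([-1,n-2])$, and untwisting again yields $\Z([-\tfrac{n-1}{2},\tfrac{n-1}{2}])=\1_n$, in agreement with both remaining cases of the statement.

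I do not expect any substantive obstacle: all the representation-theoretic content is already contained in Lemma \ref{L1}, and what remains is bookkeeping. The three points that require attention are: (i) the exceptional sets must be read up to \emph{equality of characters}, so that an interior power $\nu^{c}$ with $c$ congruent to $-1$ or to $n-1$ modulo $e$ is automatically among the excluded characters and hence covered by Lemma \ref{L1}(1); (ii) the sub-cases $e\mid n$ and $e\nmid n$ must be kept separate when invoking Lemma \ref{L1}(2)--(4); and (iii) one must correctly identify, via \textbf{P1} and \textbf{P4}, the multisegment $\Z([-\tfrac{n-3}{2},\tfrac{n-1}{2}]+[\tfrac{n+1}{2}])$ with $\MM_n=\LL_n$, and the segment $\Z([-\tfrac{n-1}{2},\tfrac{n-1}{2}])$ with $\1_n$. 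With these identifications in hand, the three asserted formulas follow.
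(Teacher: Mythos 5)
Your proof is correct and follows essentially the same route as the paper: the paper deduces Proposition \ref{L2} directly from Lemma \ref{L1} (whose proof is itself a twist reducing to Proposition \ref{P61}), and your argument is precisely that deduction with the character twist $\nu^{-(n-3)/2}$, the matching of exceptional characters, and the identifications of $\Z([-\tfrac{n-3}{2},\tfrac{n-1}{2}]+[\tfrac{n+1}{2}])$ with $\LL_n$ and of $\Z([-\tfrac{n-1}{2},\tfrac{n-1}{2}])$ with $\1_n$ written out explicitly. The bookkeeping (including the $e\mid n$ versus $e\nmid n$ split) is accurate, so no gap remains.
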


Twisting by $\nu^{-1}$, we get the following. 

\begin{prop}\label{L3}
Assume $e>1$.
For all $n\>1$, we have: 
\begin{equation*}
\UIQ(\nu_{n-1}^{-1/2}\times\chi)=
\left\{
\begin{array}{ll}
\nu_{n-1}^{-1/2}\times\chi & 
\text{if } \chi\notin\{\nu^{-(n+1)/2},\nu^{(n-1)/2}\}, \\
\nu_{n}^{-1} & \text{if } \chi=\nu^{-(n+1)/2}, \\
\LL_{n}\cdot\nu^{-1} & \text{if } \chi=\nu^{(n-1)/2}. 
\end{array}
\right.
\end{equation*}
\end{prop}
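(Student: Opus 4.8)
The plan is to deduce Proposition \ref{L3} formally from Proposition \ref{L2} by twisting, since the two statements differ precisely by tensoring with the character $\nu^{-1}\circ\det$ of $\G_n$. First I would record the elementary identities
\[
\nu_{n-1}^{-1/2} = \nu_{n-1}^{1/2}\cdot\nu^{-1},
\qquad
\nu_n^{-1} = \1_n\cdot\nu^{-1},
\]
both immediate from the definition $\nu_m^{k/2}\colon g\mapsto(\sqrt q)^{-k\cdot{\rm val}(\det g)}$. Next I would use the fact that normalized parabolic induction commutes with $\det$-twists: for a representation $\pi$ of $\G_{n-1}$, a character $\psi\in\GH_1$ and $\mu\in\GH_1$ one has $(\pi\times\psi)\cdot\mu=(\pi\cdot\mu)\times(\psi\mu)$. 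Applying this with $\mu=\nu^{-1}$ and $\psi=\chi\nu$ gives
\[
\nu_{n-1}^{-1/2}\times\chi = \bigl(\nu_{n-1}^{1/2}\times(\chi\nu)\bigr)\cdot\nu^{-1}.
\]

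The second ingredient is that $\pi\mapsto\pi\cdot\nu^{-1}$ is an exact autoequivalence of the category of smooth $\R$-representations of $\G_n$, hence it carries a unique irreducible quotient to a unique irreducible quotient. So, assuming $e>1$ (where $\UIQ$ is defined via Lemma \ref{UIQ}),
\[
\UIQ\bigl(\nu_{n-1}^{-1/2}\times\chi\bigr) = \UIQ\bigl(\nu_{n-1}^{1/2}\times(\chi\nu)\bigr)\cdot\nu^{-1}.
\]
Then I would plug $\chi\nu$ into Proposition \ref{L2} and translate its three cases. The exceptional values $\chi\nu\in\{\nu^{-(n-1)/2},\nu^{(n+1)/2}\}$ become $\chi\in\{\nu^{-(n+1)/2},\nu^{(n-1)/2}\}$; twisting the corresponding outputs $\nu_{n-1}^{1/2}\times(\chi\nu)$, $\1_n$, $\LL_n$ back by $\nu^{-1}$ produces $\nu_{n-1}^{-1/2}\times\chi$, $\nu_n^{-1}$, $\LL_n\cdot\nu^{-1}$ respectively, which is exactly the claimed list.

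I do not expect any genuine obstacle: the only statements needing a word of justification are the compatibility of parabolic induction with $\det$-twists (standard, cf.\ \cite[III.1.16]{Vigb}) and the compatibility of "unique irreducible quotient" with the exact autoequivalence $\cdot\,\nu^{-1}$, both of which are purely formal. The one spot where a slip could occur is the bookkeeping of the shifted exponents, so I would explicitly verify $\nu^{-(n-1)/2}\cdot\nu^{-1}=\nu^{-(n+1)/2}$ and $\nu^{(n+1)/2}\cdot\nu^{-1}=\nu^{(n-1)/2}$ before concluding.
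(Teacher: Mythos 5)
Your proposal is correct and coincides with the paper's own argument: the paper deduces Proposition \ref{L3} from Proposition \ref{L2} exactly by twisting by $\nu^{-1}$, which is all you do, just with the formal compatibilities (induction versus $\det$-twists, and $\UIQ$ versus the exact autoequivalence $\pi\mapsto\pi\cdot\nu^{-1}$) spelled out. The exponent bookkeeping you check is also right, so there is nothing to add.
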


By duality, we get the following.

\begin{prop}\label{L4}
Assume $e>1$.
For all $n\>1$, we have: 
\begin{equation*}
\UIS(\nu_{n-1}^{-1/2}\times\chi)=
\left\{
\begin{array}{ll}
\nu_{n-1}^{-1/2}\times\chi & 
\text{if } \chi\notin\{\nu^{-(n+1)/2},\nu^{(n-1)/2}\}, \\
\1_{n} & \text{if } \chi=\nu^{(n-1)/2}, \\
\LL_{n}^{\vee} & \text{if } \chi=\nu^{-(n+1)/2}. 
\end{array}
\right.
\end{equation*}
\end{prop}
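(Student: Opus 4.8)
The plan is to deduce Proposition~\ref{L4} from Proposition~\ref{L2} purely formally, by passing to contragredients via the duality identity recorded right after Lemma~\ref{UIQ}, namely $\UIS(\rho\times\chi)^\vee=\UIQ(\rho^\vee\times\chi^{-1})$, equivalently $\UIS(\rho\times\chi)=\UIQ(\rho^\vee\times\chi^{-1})^\vee$. First I would apply this with $\rho=\nu_{n-1}^{-1/2}$, using $(\nu_{n-1}^{-1/2})^\vee=\nu_{n-1}^{1/2}$, to get
\begin{equation*}
\UIS(\nu_{n-1}^{-1/2}\times\chi)=\UIQ(\nu_{n-1}^{1/2}\times\chi^{-1})^\vee .
\end{equation*}

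Next I would read off the right-hand side from Proposition~\ref{L2}, substituting $\chi^{-1}$ for the character there, and split into three cases according to whether $\chi^{-1}$ lies outside $\{\nu^{-(n-1)/2},\nu^{(n+1)/2}\}$, equals $\nu^{-(n-1)/2}$, or equals $\nu^{(n+1)/2}$; note that this is exactly the trichotomy $\chi\notin\{\nu^{(n-1)/2},\nu^{-(n+1)/2}\}$, $\chi=\nu^{(n-1)/2}$, $\chi=\nu^{-(n+1)/2}$ appearing in the statement. Proposition~\ref{L2} then gives, respectively, $\UIQ(\nu_{n-1}^{1/2}\times\chi^{-1})=\nu_{n-1}^{1/2}\times\chi^{-1}$, $\UIQ(\nu_{n-1}^{1/2}\times\nu^{-(n-1)/2})=\1_n$, and $\UIQ(\nu_{n-1}^{1/2}\times\nu^{(n+1)/2})=\LL_n$.

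Finally I would take contragredients of these three representations. One has $\1_n^\vee=\1_n$, and $\LL_n^\vee$ is literally the representation occurring in the statement (see \eqref{CONTRAL}). For the generic case, parabolic induction reverses the order of the factors under the contragredient (\cite[1.9]{Z}, \cite[III.1.16]{Vigb}), so $(\nu_{n-1}^{1/2}\times\chi^{-1})^\vee$ has the same semi-simplification as $\nu_{n-1}^{-1/2}\times\chi$; but under the hypothesis $\chi\notin\{\nu^{(n-1)/2},\nu^{-(n+1)/2}\}$ the latter is irreducible by Proposition~\ref{Linked} (equivalently Proposition~\ref{Lanzmann}), so the two coincide, and we obtain $\UIS(\nu_{n-1}^{-1/2}\times\chi)=\nu_{n-1}^{-1/2}\times\chi$. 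Assembling the three cases gives the proposition. There is essentially no obstacle here; the only points deserving a line of justification are the bookkeeping of the excluded characters under $\chi\mapsto\chi^{-1}$ and the use of irreducibility to undo the order reversal in the generic case.
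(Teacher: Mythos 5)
Your argument is correct and is essentially the paper's own: Proposition~\ref{L4} is obtained there simply ``by duality'', i.e.\ by applying the identity $\UIS(\rho\times\chi)^\vee=\UIQ(\rho^\vee\times\chi^{-1})$ to Proposition~\ref{L2} and dualizing the three outcomes, exactly as you do. Your extra bookkeeping (the relabelling of the excluded characters under $\chi\mapsto\chi^{-1}$, and the use of irreducibility via Proposition~\ref{Linked} together with Lemma~\ref{PERM} to undo the order reversal in the generic case) is the correct way to fill in what the paper leaves implicit.
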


Twisting by $\nu$, we get the following.

\begin{prop}\label{L5}
Assume $e>1$.
For all $n\>1$, we have: 
\begin{equation*}
\UIS(\nu_{n-1}^{1/2}\times\chi)=
\left\{
\begin{array}{ll}
\nu_{n-1}^{1/2}\times\chi & 
\text{if } \chi\notin\{\nu^{-(n-1)/2},\nu^{(n+1)/2}\}, \\
\nu_{n} & \text{if } \chi=\nu^{(n+1)/2}, \\
\LL_{n}^{\vee}\cdot \nu & \text{if } \chi=\nu^{-(n-1)/2}. 
\end{array}
\right.
\end{equation*}
\end{prop}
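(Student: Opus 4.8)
The plan is to deduce the statement from Proposition~\ref{L4} by twisting with the character $\nu$ of $\G_n$, with essentially no extra work. First I would recall that twisting a smooth representation by a character of $\G_1$ pulled back along the determinant is an exact autoequivalence of the category of smooth $\R$-representations of $\G_n$ which is compatible with parabolic induction; in particular, for $\rho\in\GH_{n-1}$ and $\chi\in\GH_1$ it carries the unique irreducible subrepresentation of $\rho\times\chi$ provided by Lemma~\ref{UIQ} (available since $e>1$) to the unique irreducible subrepresentation of $(\rho\times\chi)\cdot\nu$, so that $\UIS(\pi\cdot\nu)=\UIS(\pi)\cdot\nu$ whenever $\UIS(\pi)$ is defined. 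Using the compatibility with induction, $(\s\times\tau)\cdot\nu\simeq(\s\cdot\nu)\times(\tau\cdot\nu)$, together with $\nu_{n-1}^{-1/2}\cdot\nu=\nu_{n-1}^{1/2}$, I would obtain $(\nu_{n-1}^{-1/2}\times\chi)\cdot\nu\simeq\nu_{n-1}^{1/2}\times\chi\nu$ for every $\chi\in\GH_1$.

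Then, given $\psi\in\GH_1$, I would set $\chi=\psi\nu^{-1}$, so that $\nu_{n-1}^{1/2}\times\psi\simeq(\nu_{n-1}^{-1/2}\times\chi)\cdot\nu$ and hence $\UIS(\nu_{n-1}^{1/2}\times\psi)=\UIS(\nu_{n-1}^{-1/2}\times\chi)\cdot\nu$, and I would simply read off Proposition~\ref{L4}. The exceptional value $\chi=\nu^{(n-1)/2}$ there corresponds to $\psi=\nu^{(n+1)/2}$, where $\UIS(\nu_{n-1}^{-1/2}\times\chi)=\1_n$ and $\1_n\cdot\nu=\nu_n$; the exceptional value $\chi=\nu^{-(n+1)/2}$ corresponds to $\psi=\nu^{-(n-1)/2}$, where $\UIS(\nu_{n-1}^{-1/2}\times\chi)=\LL_n^\vee$, giving $\LL_n^\vee\cdot\nu$; and in the remaining range $\psi\notin\{\nu^{-(n-1)/2},\nu^{(n+1)/2}\}$ one has $\chi\notin\{\nu^{-(n+1)/2},\nu^{(n-1)/2}\}$, so $\nu_{n-1}^{-1/2}\times\chi$ is irreducible and equals its own $\UIS$, hence so does $\nu_{n-1}^{1/2}\times\psi$. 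This reproduces the three lines of the statement.

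I do not expect a genuine obstacle here: the argument is purely formal once Proposition~\ref{L4} is available, and the only points to keep track of are the shift of the two exceptional characters under twisting by $\nu$ and the normalization identity $\1_n\cdot\nu=\nu_n$. If one preferred not to route through Proposition~\ref{L4}, one could instead rerun the proof of Proposition~\ref{L5} directly from Lemma~\ref{L1} by twisting the relevant products $\Z([a,b])\times\chi$ by a suitable power of $\nu$ and invoking Proposition~\ref{P61}, but going via Proposition~\ref{L4} is strictly shorter.
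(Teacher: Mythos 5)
Your proposal is correct and coincides with the paper's own argument: Proposition \ref{L5} is obtained from Proposition \ref{L4} precisely by twisting with $\nu$, using the compatibility of twisting with parabolic induction, the shift of the two exceptional characters, and $\1_n\cdot\nu=\nu_n$. Nothing further is needed.
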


In the case where $e=1$, we summarize below the results obtained in Section \ref{Castor}.

\begin{prop}\label{L6}
Assume $e=1$.
\begin{enumerate}
\item
If $\chi\neq\nu^{(n+1)/2}$, then $\nu_{n-1}^{1/2}\times\chi$ is irreducible. 
\item
If $\ell$ does not divide $n$, the irreducible quotients of 
$\nu_{n-1}^{1/2}\times\nu^{(n+1)/2}$ are $\1_n$ and $\MM_n$.
\item
If $\ell$ divides $n$, the irreducible quotient of 
$\nu_{n-1}^{1/2}\times\nu^{(n+1)/2}$ is $\1_n$.
\end{enumerate}
\end{prop}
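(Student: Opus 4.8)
The three assertions repackage the structure of $\V_n=\nu_{n-1}^{1/2}\times\nu^{(n+1)/2}$ already established in Section~\ref{Castor}, so the plan is to quote those results; the only genuinely new input is the irreducibility statement~(1). I would begin by recording the key feature of the $e=1$ situation: the character $\nu$ of $\mult\F$ is then trivial, being unramified and sending a uniformizer to $q^{-1}=1$ in $\R$. Hence $\nu^{k}=1$ for every $k\in\ZZ$, and in particular every character of the form $\nu^{(n+1)/2}\nu^{k}$ coincides with $\nu^{(n+1)/2}$.

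For~(1), I would write $\nu_{n-1}^{1/2}=\Z([-(n-3)/2,(n-1)/2])$ by~{\bf P1}; its cuspidal support is the sum of the classes $[\nu^{j}]$ with $j$ ranging over the segment, and by the preceding remark each such $\nu^{j}$ equals $\nu^{(n+1)/2}$, so $\nu_{n-1}^{1/2}$ has cuspidal support $(n-1)\,[\nu^{(n+1)/2}]$. If $\chi\neq\nu^{(n+1)/2}$, then for every $k\in\ZZ$ the cuspidal character $\nu^{(n+1)/2}\nu^{k}=\nu^{(n+1)/2}$ is not isomorphic to $\chi$, so Proposition~\ref{Lanzmann} gives that $\nu_{n-1}^{1/2}\times\chi$ is irreducible. (For $\chi$ an unramified twist $\nu^{c}$ one may instead use Proposition~\ref{Linked}, after observing that the segments $[-(n-3)/2,(n-1)/2]$ and $[c]$ are linked only when $c\equiv(n+1)/2$, that is, only when $\chi=\nu^{(n+1)/2}$.)

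For~(2) and~(3) the representation at hand is $\V_n$ as in~\eqref{DEFVN}, and since $\nu_n$ is trivial when $e=1$, formula~\eqref{DECV} reads $[\V_n]=[\MM_n]+[\1_n]$ when $\ell\nmid n$, and $[\V_n]=[\MM_n]+2\,[\1_n]$ when $\ell\mid n$. If $\ell\nmid n$, Proposition~\ref{ssV1}(2) says $\V_n$ is semisimple of length $2$, so $\V_n\simeq\MM_n\oplus\1_n$ and its irreducible quotients are exactly $\MM_n$ and $\1_n$. If $\ell\mid n$, Proposition~\ref{ssV1}(3) says $\V_n$ is indecomposable of length $3$ with constituents $\MM_n$ and $\1_n$ (the latter twice), and the $\G_n$-equivariant surjection $\upmu_{\X}\colon\V_n\to\1_n$ from the proof of Lemma~\ref{Pi_ndist} exhibits $\1_n$ as an irreducible quotient. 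The delicate point is that $\1_n$ is then the \emph{only} irreducible quotient, and I would establish this by descending to $\GL_n(q)$: the exact functor $\pi\mapsto\overline{\pi}$ of Section~\ref{Castor} does not annihilate the level-$0$ irreducible constituents of $\V_n$, so any irreducible quotient $Q$ of $\V_n$ yields a nonzero irreducible quotient $\overline{Q}$ of $\overline{\V}_n$; since $\overline{\V}_n$ is uniserial with head $\bar{\1}_n$ when $\ell\mid n$ (by \cite{Jamesb}, the input also behind the proof of Lemma~\ref{Api}), this forces $\overline{Q}\simeq\bar{\1}_n$ and hence $Q\simeq\1_n$. This last uniqueness step is the main obstacle; everything else is a direct transcription of Section~\ref{Castor}.
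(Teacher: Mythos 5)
Your proposal is correct, and for parts (1) and (2) it coincides with what the paper does: Proposition \ref{L6} is stated there with no separate proof, as a summary of Section \ref{Castor}, part (1) being the routine consequence of Proposition \ref{Lanzmann} (when $e=1$ every $\nu^k$, $k\in\ZZ$, is trivial, so the whole cuspidal support of $\nu_{n-1}^{1/2}$ reduces to $\nu^{(n+1)/2}$, exactly as you argue) and part (2) a restatement of Proposition \ref{ssV1}(2) together with \eqref{DECV}. For part (3) you take a detour that is slightly under-justified: you re-descend to $\GL_n(q)$ and assert that $\overline{\V}_n$ is uniserial with head $\bar{\1}_n$, which is more than Lemma \ref{Api} literally records (it only gives indecomposability of length $3$), and is not a statement you quote precisely from \cite{Jamesb}. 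The fact you actually need --- that $\pi_n$ is not a quotient of $\overline{\V}_n$, equivalently that $\MM_n$ is not a quotient of $\V_n$ --- follows at once from what is already established, by the same device used in the proofs of Lemma \ref{Api} and Proposition \ref{ssV1}: when $e=1$ both $\V_n$ and $\MM_n$ are selfdual and $\MM_n$ occurs with multiplicity one, so if $\MM_n$ were a quotient it would also be a subrepresentation, and the composite $\MM_n\hookrightarrow\V_n\twoheadrightarrow\MM_n$ would either be nonzero, splitting off $\MM_n$ and contradicting the indecomposability in Proposition \ref{ssV1}(3), or zero, forcing $\MM_n$ inside the kernel, whose constituents are trivial. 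Running this argument directly on $\V_n$ buys you economy (no return to the finite group and no appeal to an unreferenced uniseriality statement), while your finite-group route buys nothing extra here; with that small repair, or with a precise citation for the structure of $\overline{\V}_n$, your proof is complete and matches the paper's intent.
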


Thus we have treated Case 1 of Proposition \ref{reduction1}.

\begin{coro}
Let $e>1$ and $\mu\in\GH_1-\{\nu^{-(n-3)/2}, \nu^{(n+1)/2}\}$. 
Then:
\begin{equation*}
\UIQ(\nu_{n-2}\times\mu\times\nu^{-(n-3)/2})=
\left\{
\begin{array}{ll}
\mu \times \nu_{n-1}^{1/2} & \text{if $\mu\neq \nu^{-(n-1)/2}$},\\
\LL_n^{\vee}\cdot\nu & \text{if $\mu= \nu^{-(n-1)/2}$}. 
\end{array}
\right. \\
\end{equation*}
\end{coro}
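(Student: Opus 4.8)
The plan is to massage $\nu_{n-2}\times\mu\times\nu^{-(n-3)/2}$ into a product to which Lemma \ref{UIQ} and Proposition \ref{L5} apply. By Property {\bf P1}, $\nu_{n-2}=\Z([\tfrac{5-n}{2},\tfrac{n-1}{2}])$ and $\nu_{n-1}^{1/2}=\Z([\tfrac{3-n}{2},\tfrac{n-1}{2}])$; the key point is that $-(n-3)/2=\tfrac{3-n}{2}=\tfrac{5-n}{2}-1$, so that $[\tfrac{3-n}{2},\tfrac{n-1}{2}]$ is obtained from $[\tfrac{5-n}{2},\tfrac{n-1}{2}]$ by adjoining $[-(n-3)/2]$ at its left end.

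First I would check that $\nu_{n-2}\times\mu$ is irreducible for every $\mu\in\GH_1-\{\nu^{-(n-3)/2},\nu^{(n+1)/2}\}$. By Propositions \ref{Lanzmann} and \ref{Linked}, this product can fail to be irreducible only if $\mu$ is a character of the form $\nu^{t}$ with the segments $[\tfrac{5-n}{2},\tfrac{n-1}{2}]$ and $[t]$ linked, that is, only if $t\equiv\tfrac{5-n}{2}-1$ or $t\equiv\tfrac{n-1}{2}+1$; in either case $\mu\in\{\nu^{-(n-3)/2},\nu^{(n+1)/2}\}$, which is excluded. Hence $\rho:=\nu_{n-2}\times\mu$ lies in $\GH_{n-1}$, and Lemma \ref{PERM} gives $\nu_{n-2}\times\mu\simeq\mu\times\nu_{n-2}$; in particular $\nu_{n-2}\times\mu\times\nu^{-(n-3)/2}=\rho\times\nu^{-(n-3)/2}$ has a unique irreducible quotient by Lemma \ref{UIQ}.

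Next I would identify that quotient. Proposition \ref{Rappel}(3), applied to the segment $[\tfrac{3-n}{2},\tfrac{n-1}{2}]$ of length $n-1\geq 2$, shows that $\nu_{n-2}\times\nu^{-(n-3)/2}=\Z([\tfrac{5-n}{2},\tfrac{n-1}{2}])\times\nu^{(3-n)/2}$ has unique irreducible quotient $\Z([\tfrac{3-n}{2},\tfrac{n-1}{2}])=\nu_{n-1}^{1/2}$. Applying the exact functor $\mu\times(-)$ and using $\nu_{n-2}\times\mu\simeq\mu\times\nu_{n-2}$, I get a surjection
\begin{equation*}
\nu_{n-2}\times\mu\times\nu^{-(n-3)/2}\;\simeq\;\mu\times\bigl(\nu_{n-2}\times\nu^{-(n-3)/2}\bigr)\;\twoheadrightarrow\;\mu\times\nu_{n-1}^{1/2}.
\end{equation*}
By the variant of Lemma \ref{UIQ} for products of the form $\chi\times\rho$, the representation $\mu\times\nu_{n-1}^{1/2}$ has a unique irreducible quotient, equal to $\UIS(\nu_{n-1}^{1/2}\times\mu)$; since it is a quotient of $\nu_{n-2}\times\mu\times\nu^{-(n-3)/2}$, it coincides with the unique irreducible quotient of the latter. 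Thus $\UIQ(\nu_{n-2}\times\mu\times\nu^{-(n-3)/2})=\UIS(\nu_{n-1}^{1/2}\times\mu)$.

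It then remains to feed this into Proposition \ref{L5}. If $\mu\neq\nu^{-(n-1)/2}$, then combined with the hypothesis $\mu\neq\nu^{(n+1)/2}$ we have $\mu\notin\{\nu^{-(n-1)/2},\nu^{(n+1)/2}\}$, so Proposition \ref{L5} yields $\UIS(\nu_{n-1}^{1/2}\times\mu)=\nu_{n-1}^{1/2}\times\mu$, which is hence irreducible and, by Lemma \ref{PERM}, isomorphic to $\mu\times\nu_{n-1}^{1/2}$. If $\mu=\nu^{-(n-1)/2}$, Proposition \ref{L5} yields $\UIS(\nu_{n-1}^{1/2}\times\mu)=\LL_n^{\vee}\cdot\nu$, which is the second branch of the formula. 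The main obstacle is the bookkeeping in the irreducibility step together with the degenerate coincidence $\nu^{-(n-1)/2}=\nu^{(n+1)/2}$ when $\qc$ divides $n$: in that situation $\mu=\nu^{-(n-1)/2}$ is itself an excluded character, so the second branch never occurs (and one checks $\LL_n^{\vee}\cdot\nu=\nu_n^{-1}$ for consistency). Everything else follows formally from exactness of parabolic induction and the cited results.
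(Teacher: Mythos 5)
Your argument is correct and follows essentially the same route as the paper: you use the irreducibility of $\nu_{n-2}\times\mu$ to commute it past $\mu$, identify $\nu_{n-1}^{1/2}$ as the unique irreducible quotient of $\nu_{n-2}\times\nu^{-(n-3)/2}$ (via Proposition \ref{Rappel} rather than the equivalent twist of Proposition \ref{L2}), and thereby reduce to $\UIQ(\mu\times\nu_{n-1}^{1/2})=\UIS(\nu_{n-1}^{1/2}\times\mu)$, which Proposition \ref{L5} evaluates exactly as in the paper. The only slip is the cosmetic parenthetical at the end: when $\qc$ divides $n$ one has $\LL_n^{\vee}\cdot\nu=\1_n\cdot\nu=\nu_n$, not $\nu_n^{-1}$ --- harmless, since, as you correctly observe, that case is excluded by the hypothesis on $\mu$.
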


\begin{proof}
By assumption on $\mu$, the representation $\nu_{n-2}\times\mu$ is 
irreducible. 
It is thus isomorphic to $\mu\times\nu_{n-2}$. 
It thus suffices to consider the representation 
$\pi(\mu)=\UIQ(\mu\times \nu_{n-2}\times\nu^{-(n-3)/2})$.
By Proposition \ref{L2}, we have:
\begin{equation*}
\UIQ(\nu_{n-2}\times\nu^{-(n-3)/2}) = \nu_{n-1}^{1/2}
\end{equation*}
thus $\pi(\mu)$ is equal to $\UIQ(\mu\times\nu_{n-1}^{1/2})$.
By assumption on $\mu$, the representation 
$\mu\times\nu_{n-1}^{1/2}$ is reducible if and only if $\mu=\nu^{-(n-1)/2}$.
Finally, the representation:
\begin{equation*}
\pi(\nu^{-(n-1)/2}) 
= \UIQ(\nu^{-(n-1)/2}\times\nu_{n-1}^{1/2})
= \UIS(\nu_{n-1}^{1/2}\times\nu^{-(n-1)/2})
\end{equation*}
is equal to $\LL_n^{\vee}\cdot\nu$ by Pro\-po\-si\-tion \ref{L3}.
By Lemma \ref{Ldistchar}, it is not distinguished. 
\end{proof}

Thus we have treated Case 4.c of Proposition \ref{reduction1}.

\section{Computing $\UIQ(\1_{n-2}\times\mu\times\chi)$ for 
$\mu\in\GH_1-\{\nu^{-(n-1)/2},\nu^{(n-1)/2}\}$ and $\chi\in\GH_1$} 
\label{TQuot}

In this section, we fix a character $\mu\in\GH_1$ different from 
$\nu^{-(n-1)/2}$ and $\nu^{(n-1)/2}$, and we assume that $e>1$.
Note that this implies that 
$\1_{n-2}\times\mu=\mu\times\1_{n-2}$ is irreducible. 
For $\chi\in\GH_1$, write:
\begin{equation*}
\W(\chi)=\1_{n-2}\times\mu\times\chi.
\end{equation*}
We record below two facts in the form of the following 
lemma which will be used repeatedly in what follows.

\begin{lemm}\label{zero}
$\W(\chi)$ has a unique irreducible subrepresentation and a unique 
irreducible quotient. 
Moreover, one has:
\begin{equation*}
\UIQ(\chi\times\chi\nu)=
\left\{
\begin{array}{ll}
\St_2\cdot\chi\nu^{1/2} & \text{if $e>2$},\\
\1_2\cdot\chi\nu^{-1/2} & \text{if $e=2$}.
\end{array}
\right. 
\end{equation*}
\end{lemm}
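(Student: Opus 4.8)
The plan is to obtain both parts directly from Lemma \ref{UIQ} together with the explicit description of $\L([a,a+1])$ recalled just after its definition in Section \ref{blmanalog}; essentially no new computation is needed. For the uniqueness statement: under the standing hypothesis $\mu\notin\{\nu^{-(n-1)/2},\nu^{(n-1)/2}\}$ the representation $\rho=\1_{n-2}\times\mu$ is irreducible, as was already noted at the start of the section, hence $\rho\in\GH_{n-1}$ and $\W(\chi)=\rho\times\chi$ is precisely of the shape treated in Lemma \ref{UIQ}. Since $e>1$, that lemma endows $\W(\chi)$ with a unique irreducible quotient $\UIQ(\W(\chi))$ and a unique irreducible subrepresentation $\UIS(\W(\chi))$, which is exactly the first assertion.

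For the second assertion I would first observe that, parabolic induction being compatible with twisting by a character of $\G_1$, one has $\chi\times\chi\nu=(1\times\nu)\cdot\chi$; since twisting by $\chi$ is an autoequivalence of the category of smooth representations of $\G_2$, it carries the unique irreducible quotient to the unique irreducible quotient, so that $\UIQ(\chi\times\chi\nu)=\UIQ(1\times\nu)\cdot\chi$. As $e>1$, the very definition of $\L$ gives $\UIQ(1\times\nu)=\UIQ(\nu^0\times\nu^1)=\L([0,1])$, and the explicit formula recorded after that definition yields $\L([0,1])=\St_2\cdot\nu^{1/2}$ if $\qc\neq2$ and $\L([0,1])=\nu_2^{-1/2}=\1_2\cdot\nu^{-1/2}$ if $\qc=2$. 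Since $e>1$ forces $\qc=e$, the alternative $\qc\neq2$ versus $\qc=2$ is exactly the alternative $e>2$ versus $e=2$; twisting back by $\chi$ then gives $\UIQ(\chi\times\chi\nu)=\St_2\cdot\chi\nu^{1/2}$ for $e>2$ and $\UIQ(\chi\times\chi\nu)=\1_2\cdot\chi\nu^{-1/2}$ for $e=2$, as claimed.

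The argument is essentially formal, so I do not anticipate a genuine obstacle; the only points to keep in mind are that the formation of the unique irreducible quotient commutes with character twists (clear, being an equivalence of categories) and the harmless bookkeeping identity $\nu_2^{-1/2}=\1_2\cdot\nu^{-1/2}$ used in the case $e=2$.
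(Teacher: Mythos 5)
Your proof is correct and takes essentially the same route as the paper: the uniqueness assertion is exactly Lemma \ref{UIQ} applied to the irreducible representation $\1_{n-2}\times\mu$, and the computation of $\UIQ(\chi\times\chi\nu)$ reduces by a character twist to the structure of $1\times\nu$, which the paper extracts from Lemma \ref{L1} and you extract from the recorded formula for $\L([0,1])$ in Section \ref{blmanalog} --- the same fact in different packaging. One nitpick: in characteristic $0$ one has $\qc=0$ rather than $\qc=e$, but since $e$ is then infinite the alternative $\qc=2$ versus $\qc\neq2$ still corresponds to $e=2$ versus $e>2$, so your case division is unaffected.
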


\begin{proof}
The first statement follows from Lemma \ref{UIQ} and the second one from Lemma 
\ref{L1}. 
\end{proof}

\begin{lemm}\label{C1}
For any $\chi\notin\{\mu\nu,\mu\nu^{-1},\nu^{-(n-1)/2},\nu^{(n-1)/2}\}$, 
the representation $\W(\chi)$ is irreducible and dis\-tin\-gui\-shed. 
\end{lemm}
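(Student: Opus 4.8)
The plan is to prove the two assertions of Lemma \ref{C1} one at a time, each by reducing to facts already established. For irreducibility I would display $\W(\chi)=\1_{n-2}\times\mu\times\chi$ as a product of three pairwise unlinked segments and quote the linkedness criterion of \cite{MSc} (of which Proposition \ref{Linked} is the case where all segments lie on the line of $\nu$). For distinction I would regroup the factors as $\1_{n-2}\times(\mu\times\chi)$ and apply the converse half of Lemma \ref{TOL}.

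First, the irreducibility. Write $\1_{n-2}=\Z([-\frac{n-3}{2},\frac{n-3}{2}])$ and regard $\mu$ and $\chi$ as segments of length one on their own cuspidal lines. I would check that no two of the three segments are linked, equivalently that the three products $\1_{n-2}\times\mu$, $\1_{n-2}\times\chi$ and $\mu\times\chi$ are irreducible. The product $\1_{n-2}\times\mu$ is irreducible because the hypothesis $\mu\notin\{\nu^{-(n-1)/2},\nu^{(n-1)/2}\}$ says exactly that $\mu$ is different from $\nu^{(n-3)/2+1}$ and from $\nu^{-(n-3)/2-1}$, the two characters whose presence would link $\mu$ to $[-\frac{n-3}{2},\frac{n-3}{2}]$; by Lemma \ref{L1}(1), or by Proposition \ref{Lanzmann} when $\mu$ does not lie on the line of $\nu$, this forces irreducibility. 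The same argument handles $\1_{n-2}\times\chi$ since $\chi\notin\{\nu^{-(n-1)/2},\nu^{(n-1)/2}\}$, and $\mu\times\chi$ is irreducible because $\chi\notin\{\mu\nu,\mu\nu^{-1}\}$ (again Lemma \ref{L1}(1) or Proposition \ref{Lanzmann}). Hence $\W(\chi)$ is irreducible.

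Now, the distinction. By the previous step $\mu\times\chi$ is an irreducible principal series representation of $\G_2$, hence infinite-dimensional; the same then holds for its twist $(\mu\times\chi)\cdot\nu^{(n-2)/2}=(\mu\nu^{(n-2)/2})\times(\chi\nu^{(n-2)/2})$, which is still irreducible since the ratio of its two inducing characters is $\mu\chi^{-1}\ne\nu^{\pm1}$. By Theorem \ref{dimfor2}(1) this twist is therefore $\H_2$-distinguished. I then apply the converse part of Lemma \ref{TOL} with $k=n-2$, $\rho=\1_{n-2}\in\GH_{n-2}$ and $\tau=\mu\times\chi\in\GH_2$: condition (A) requires $\rho=\nu_{k}^{(n-2-k)/2}$, which reduces to $\nu_{n-2}^{0}=\1_{n-2}$ at $k=n-2$, together with $\H_2$-distinction of $\tau\cdot\nu^{k/2}=(\mu\times\chi)\cdot\nu^{(n-2)/2}$, which we have just checked. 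Hence $\W(\chi)=\rho\times\tau$ is $\H_n$-distinguished.

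The only step requiring a little vigilance is the dictionary between the hypotheses and non-linkedness: one must make sure, for instance, that an apparently borderline case such as $\mu=\nu^{(n-3)/2}$ is harmless, which it is because $\nu^{(n-3)/2}$ equals $\nu^{-(n-1)/2}$ precisely when $e\mid n-2$ and is then excluded by hypothesis, and one must keep track of the fact that the character $\nu_{k}^{(n-2-k)/2}$ of Lemma \ref{TOL}(A) degenerates to $\1_{n-2}$ at $k=n-2$. There is no genuine difficulty here: Lemma \ref{C1} is the generic, easy case, and the degenerate values $\chi\in\{\mu\nu,\mu\nu^{-1},\nu^{\pm(n-1)/2}\}$ — where the products above become reducible — are precisely what the remaining lemmas of Section \ref{TQuot} are designed to treat.
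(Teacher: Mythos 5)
Your argument is correct and matches the paper's proof in substance: the paper likewise gets irreducibility of $\W(\chi)$ from the non-linkedness criterion (Proposition \ref{Linked}, with Proposition \ref{Lanzmann} implicitly covering characters off the $\nu$-line) and deduces distinction from the converse part of Lemma \ref{TOL}, condition (A), applied with $k=n-2$ to the decomposition $\1_{n-2}\times(\mu\times\chi)$, the $\GL_2$-factor's twist being distinguished because it is irreducible and infinite-dimensional (Theorem \ref{dimfor2}). You have simply spelled out the bookkeeping the paper leaves implicit, and your checks (in particular that $\mu=\nu^{(n-3)/2}$ is only problematic when it coincides with $\nu^{-(n-1)/2}$, i.e.\ when $e$ divides $n-2$, in which case it is excluded) are accurate.
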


\begin{proof}
By Proposition \ref{Linked}, $\W(\chi)$ is irreducible. 
It satifies Condition (A) of Lemma \ref{TOL} with $k=n-2$, 
thus it is distinguished. 
\end{proof}

\begin{lemm}
\label{C2}
One has:
\begin{equation*}
\UIQ(\W(\mu\nu))=
\left\{
\begin{array}{ll}
\UIQ(\1_{n-2}\times\St_2\cdot\mu\nu^{1/2}) & \text{if $e>2$},\\
\UIQ(\1_{n-2}\times\1_2\cdot\mu\nu^{-1/2}) & \text{if $e=2$},
\end{array}
\right. \\
\end{equation*}
and $\UIQ(\W(\mu\nu^{-1}))=\UIQ(\1_{n-2}\times\1_2\cdot\mu\nu^{-1/2})$.  
\end{lemm}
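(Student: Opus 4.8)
The plan is to deduce both equalities from Lemma \ref{zero} (and, for the second one, Lemma \ref{L1}) by an elementary transitivity property of unique irreducible quotients, rather than by analysing $\W(\mu\nu)$ directly. First I would record, using transitivity of normalized parabolic induction, that $\W(\mu\nu)=\1_{n-2}\times(\mu\times\mu\nu)$ and $\W(\mu\nu^{-1})=\1_{n-2}\times(\mu\times\mu\nu^{-1})$.

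The transitivity property I would use is the following: if a finite length representation $\sigma$ has a unique irreducible quotient $\UIQ(\sigma)$, then every nonzero quotient $\sigma'$ of $\sigma$ also has a unique irreducible quotient, and it equals $\UIQ(\sigma)$ --- indeed each irreducible quotient of $\sigma'$ is an irreducible quotient of $\sigma$. Now $\mu\times\mu\nu$ surjects onto $\UIQ(\mu\times\mu\nu)$, and applying the exact functor $\1_{n-2}\times(-)$ gives a surjection $\W(\mu\nu)\twoheadrightarrow\1_{n-2}\times\UIQ(\mu\times\mu\nu)$. Since $\W(\mu\nu)$ has a unique irreducible quotient by the first assertion of Lemma \ref{zero}, the representation $\1_{n-2}\times\UIQ(\mu\times\mu\nu)$ has a unique irreducible quotient as well, and the two coincide. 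The last assertion of Lemma \ref{zero} identifies $\UIQ(\mu\times\mu\nu)$ with $\St_2\cdot\mu\nu^{1/2}$ when $e>2$ and with $\1_2\cdot\mu\nu^{-1/2}$ when $e=2$, which yields the first displayed equality. For the second one, the same argument reduces everything to computing $\UIQ(\mu\times\mu\nu^{-1})$. Here I would use the swap identity $\UIQ(\mu\times\mu\nu^{-1})=\UIS((\mu\nu^{-1})\times\mu)$ recalled just after Lemma \ref{UIQ}, together with Lemma \ref{L1}, which shows that a product $\chi\times\chi\nu$ has $\Z([0,1])\cdot\chi=\1_2\cdot\chi\nu^{1/2}$ as its unique irreducible subrepresentation for every $e>1$; taking $\chi=\mu\nu^{-1}$ gives $\UIQ(\mu\times\mu\nu^{-1})=\1_2\cdot\mu\nu^{-1/2}$, and hence $\UIQ(\W(\mu\nu^{-1}))=\UIQ(\1_{n-2}\times\1_2\cdot\mu\nu^{-1/2})$. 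When $e=2$ one has $\nu^{-1}=\nu$, so $\W(\mu\nu^{-1})=\W(\mu\nu)$ and the two formulas agree, as they should.

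The computations are all routine once these objects are in place. The one point that genuinely needs the transitivity property above --- and which I expect to be the only thing requiring care --- is that the representations $\1_{n-2}\times\St_2\cdot\mu\nu^{1/2}$ and $\1_{n-2}\times\1_2\cdot\mu\nu^{-1/2}$ appearing on the right-hand sides are not visibly of the form $\rho\times\chi$ with $\rho$ irreducible and $\chi$ a character, so it is not a priori clear that they have a unique irreducible quotient, i.e.\ that the symbol $\UIQ(-)$ applied to them even makes sense; the argument above supplies this for free, since each of them is a quotient of some $\W(\cdot)$, which does have a unique irreducible quotient by Lemma \ref{zero}.
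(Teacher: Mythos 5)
Your argument is correct and is essentially the paper's own proof, made explicit: the paper likewise observes that $\1_{n-2}\times\St_2\cdot\mu\nu^{1/2}$ (resp.\ $\1_{n-2}\times\1_2\cdot\mu\nu^{-1/2}$) is a quotient of $\W(\mu\nu)$ (resp.\ of $\W(\mu\nu^{-1})$) and then concludes by the uniqueness of the irreducible quotient of $\W(\chi)$ given in Lemma~\ref{zero}, which is exactly your transitivity step. Your identification of $\UIQ(\mu\times\mu\nu^{-1})$ via the swap identity after Lemma~\ref{UIQ} and Lemma~\ref{L1} merely spells out what the paper leaves implicit in its second appeal to Lemma~\ref{zero}.
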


\begin{proof}
First observe that, by Lemma \ref{zero}, $\W(\mu\nu)$ has 
$\1_{n-2}\times\St_2\cdot\mu\nu^{1/2}$ as a quotient if $e>2$ and 
$\W(\mu\nu^{-1})$ has $\1_{n-2}\times\1_2\cdot\mu\nu^{-1/2}$ as a quotient 
if $e\geq 2.$ Once again, applying Lemma \ref{zero} the statement is proved. 
\end{proof}

\begin{prop}
\label{C3}
Write $\Y(\mu)=\UIQ(\1_{n-2}\times\St_2\cdot\mu\nu^{1/2})$. 
Then:
\begin{equation*}
\Y(\mu)=
\left\{
\begin{array}{ll}
\1_{n-2}\times\St_2\cdot\mu\nu^{1/2} & \text{if $\mu\neq \nu^{-(n+1)/2}$ or $e=2$},\\
\LL_n^\vee & \text{if $\mu=\nu^{-(n+1)/2}$ and $e$ does not divide $n$ and $e>2$}.
\end{array}
\right. \\
\end{equation*}
\end{prop}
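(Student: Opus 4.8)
The plan is to compute $\Y(\mu)=\UIQ(\1_{n-2}\times\St_2\cdot\mu\nu^{1/2})$ by realising it as the unique irreducible quotient of a slightly larger induced representation whose $\UIQ$ is already known, the passage from one step to the next being controlled by Lemma \ref{UIQ}. Throughout set $a=(n-3)/2$, so that $\1_{n-2}=\Z([-a,a])$ by {\bf P1}. The first task is to decide when $\1_{n-2}\times\St_2\cdot\mu\nu^{1/2}$ is already irreducible. If $e=2$ then $\St_2\cdot\mu\nu^{1/2}$ is cuspidal by Lemma \ref{einstein}; if $\mu$ is ramified its cuspidal support lies on the $\nu^{\ZZ}$-line of $\mu$, disjoint from that of $\1_{n-2}$. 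In both cases Proposition \ref{Lanzmann} yields irreducibility. Otherwise $e>2$ and $\mu=\nu^c$ is unramified, so $\St_2\cdot\mu\nu^{1/2}=\L([c,c+1])$, and Proposition \ref{blm} shows that $\Z([-a,a])\times\L([c,c+1])$ is reducible exactly when $[-a,a]$ and $[c,c+1]$ are juxtaposed, i.e.\ when $\mu=\nu^{(n-1)/2}$ or $\mu=\nu^{-(n+1)/2}$. The first is excluded by the standing hypothesis, and $e\mid n$ would force $\nu^{-(n+1)/2}=\nu^{(n-1)/2}$; so, under the running assumptions, $\1_{n-2}\times\St_2\cdot\mu\nu^{1/2}$ is reducible precisely when $e>2$ and $\mu=\nu^{-(n+1)/2}$, in which case moreover $e\nmid n$. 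In every other case $\Y(\mu)=\1_{n-2}\times\St_2\cdot\mu\nu^{1/2}$, which is the first case of the statement.

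So assume $e>2$ and $\mu=\nu^{-(n+1)/2}$, hence $e\nmid n$. By Lemma \ref{zero} one has $\St_2\cdot\mu\nu^{1/2}=\UIQ(\nu^{-a-2}\times\nu^{-a-1})$ (note $-a-2=-(n+1)/2$ and $-a-1=-(n-1)/2$), so $\1_{n-2}\times\St_2\cdot\mu\nu^{1/2}$ is a quotient of $\1_{n-2}\times\nu^{-a-2}\times\nu^{-a-1}$. Since $e\nmid n$, the segments $[-a,a]$ and $[-a-2]$ are not linked, so $\rho_0:=\1_{n-2}\times\nu^{-a-2}=\nu^{-a-2}\times\1_{n-2}$ is an irreducible representation of $\G_{n-1}$ (Proposition \ref{Linked} and Lemma \ref{PERM}); hence $\1_{n-2}\times\St_2\cdot\mu\nu^{1/2}$ is a quotient of $\rho_0\times\nu^{-a-1}$. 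By Lemma \ref{UIQ} the representation $\rho_0\times\nu^{-a-1}$ has a unique irreducible quotient, so the same holds for $\1_{n-2}\times\St_2\cdot\mu\nu^{1/2}$ --- this is $\Y(\mu)$ --- and $\Y(\mu)=\UIQ(\rho_0\times\nu^{-a-1})$. Now $\rho_0\times\nu^{-a-1}=\nu^{-a-2}\times\bigl(\Z([-a,a])\times\nu^{-a-1}\bigr)$, and by Proposition \ref{Rappel}(3) we have $\Z([-a,a])\times\nu^{-a-1}\twoheadrightarrow\Z([-a-1,a])$; therefore $\rho_0\times\nu^{-a-1}\twoheadrightarrow\nu^{-a-2}\times\Z([-a-1,a])$, and invoking Lemma \ref{UIQ} a last time (in its $\chi\times\rho$ form) gives $\Y(\mu)=\UIQ\bigl(\nu^{-a-2}\times\Z([-a-1,a])\bigr)$.

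Finally I would identify this quotient. A length count from {\bf P1} gives $\Z([-a-1,a])=\nu_{n-1}^{-1/2}$, while $\nu^{-a-2}=\nu^{-(n+1)/2}$; so, using the identity $\UIQ(\chi\times\rho)=\UIS(\rho\times\chi)$ from Lemma \ref{UIQ},
\[
\Y(\mu)=\UIQ\bigl(\nu^{-(n+1)/2}\times\nu_{n-1}^{-1/2}\bigr)=\UIS\bigl(\nu_{n-1}^{-1/2}\times\nu^{-(n+1)/2}\bigr)=\LL_n^\vee,
\]
the last equality being the case $\chi=\nu^{-(n+1)/2}$ of Proposition \ref{L4}. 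This gives the second case of the statement. I expect the main obstacle to be the bookkeeping of the middle paragraph: at each stage one must confirm that the induced representation in hand has a unique irreducible quotient before using the ``quotient of a quotient'' principle, and the irreducibility of $\rho_0$ --- the hinge on which the computation turns --- is precisely where the hypothesis $e\nmid n$ is used.
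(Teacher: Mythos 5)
Your proof is correct and follows essentially the same route as the paper: irreducibility via the juxtaposition criterion (Proposition \ref{blm}) and cuspidality/disjoint supports in the remaining cases, then, for $\mu=\nu^{-(n+1)/2}$, realising $\Y(\mu)$ as the unique irreducible quotient of $\1_{n-2}\times\nu^{-(n+1)/2}\times\nu^{-(n-1)/2}$, commuting the factor $\nu^{-(n+1)/2}$ past $\1_{n-2}$ (this is where $e\nmid n$ enters), collapsing $\1_{n-2}\times\nu^{-(n-1)/2}$ onto $\nu_{n-1}^{-1/2}$, and identifying $\UIQ(\nu^{-(n+1)/2}\times\nu_{n-1}^{-1/2})=\UIS(\nu_{n-1}^{-1/2}\times\nu^{-(n+1)/2})=\LL_n^\vee$. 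The only difference is cosmetic: you re-derive the content of Lemma \ref{C2} inline via Lemma \ref{zero} and use Proposition \ref{Rappel}(3) and Proposition \ref{L4} where the paper cites Lemmas \ref{C2}, \ref{L1} and \eqref{CONTRAL}.
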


\begin{proof}
The statement follows from Proposition \ref{Linked} if $e=2$, 
and it follows from Proposition \ref{blm} if $\mu\neq\nu^{-(n+1)/2}$. 
Assume that $\mu=\nu^{-(n+1)/2}$ and $e$ does not divide 
$n$ and $e>2$. 
We have: 
\begin{eqnarray*}
\Y(\nu^{-(n+1)/2})&=&\UIQ(\W(\nu^{-(n-1)/2}))\\
&=&\UIQ(\nu^{-(n+1)/2}\times\1_{n-2}\times\nu^{-(n-1)/2})\\
&=&\UIQ(\nu^{-(n+1)/2}\times\nu_{n-1}^{-1/2})
\end{eqnarray*}
which is equal to $\LL_n^\vee$ by applying respectively Lemma 
\ref{C2}, Lemma \ref{PERM} (since $e$ does not divide $n$, the
representation $\1_{n-2}\times\nu^{-(n+1)/2}$ is irreducible by 
Proposition \ref{Linked}), Lemma \ref{L1} and \eqref{CONTRAL}. 
\end{proof}

\begin{prop}
\label{C4}
Write $\P(\mu)=\UIQ(\1_{n-2}\times\1_2\cdot\mu\nu^{-1/2})$. 
For $\mu\neq\nu^{-({n-3})/{2}}$, the representa\-tion $\P(\mu)$ is not 
distinguished, and we have:
\begin{equation*}
\P(\nu^{-({n-3})/{2}})=
\left\{
\begin{array}{ll}
\nu_{n-1}^{-1/2}\times\nu^{-({n-3})/{2}} & \text{if 
$e$ does not divide $n-2$ and $e>2$}, \\
\LL_n^\vee & \text{if 
$e=2$ and $n$ is odd}.
\end{array}
\right. \\
\end{equation*}
\end{prop}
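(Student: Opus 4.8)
The plan is to argue by a case analysis on the character $\mu$. First observe that, since $\1_{n-2}\times\mu$ is irreducible, Lemma~\ref{UIQ} makes $\P(\mu)$ well defined and identifies it with the unique irreducible quotient of $\W(\mu\nu^{-1})=(\1_{n-2}\times\mu)\times\mu\nu^{-1}$, of which $\1_{n-2}\times\1_2\cdot\mu\nu^{-1/2}$ is itself a quotient.

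Assume first that $\mu$ is not of the form $\nu^{c}$ with $c-(n-3)/2\in\ZZ$, so that the cuspidal supports of $\1_{n-2}$ and of the character $\1_2\cdot\mu\nu^{-1/2}$ share no component up to unramified twist. Then $\1_{n-2}\times\1_2\cdot\mu\nu^{-1/2}$ is irreducible by Proposition~\ref{Lanzmann}, hence equals $\P(\mu)$, and I would apply Lemma~\ref{TOL} with $k=n-2$ together with Theorem~\ref{dimfor2}: condition $(\A)$ forces the character $\1_2\cdot\mu\nu^{(n-3)/2}$ of $\G_2$ to be trivial, i.e. $\mu=\nu^{-(n-3)/2}$ (which is excluded here), while conditions $(\B)$ and $(\C)$ fail because the relevant representations (a nontrivial character of $\G_{n-2}$, resp. of $\G_1$) are not distinguished and have no trivial quotient. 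Hence $\P(\mu)$ is not distinguished. (The degenerate case $n=3$ is checked in the same way.)

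It remains to treat $\mu=\nu^{c}$, so that $\1_2\cdot\mu\nu^{-1/2}=\Z([c-1,c])$ lies on the line of $\1_{n-2}=\Z([-(n-3)/2,(n-3)/2])$. Twisting by $\nu^{1-c}$ reduces us to computing $\UIQ(\Z([a,b])\times\Z([0,1]))$ with $b=a+n-3$, where $a=1$ corresponds to $\mu=\nu^{-(n-3)/2}$ and $a=2$, $b=0$ correspond to the excluded values $\mu=\nu^{-(n-1)/2},\nu^{(n-1)/2}$; moreover, since $\nu^{-(n-3)/2}=\nu^{(n-1)/2}$ as soon as $\nu^{n-2}=\1$, the case $a=1$ forces $\nu^{n-2}\neq\1$, i.e. $e\nmid n-2$, so that one is automatically in one of the two situations of the statement. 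Since $\nu\times\1\twoheadrightarrow\Z([0,1])$ by Proposition~\ref{Rappel}(3), the representation $\Z([a,b])\times\Z([0,1])$ is a quotient of $\Z([a,b])\times\nu\times\1$, and I would identify its unique irreducible quotient among the subquotients listed in Proposition~\ref{length2seg} by peeling off the two outer characters, using the length-$\leq3$ products and explicit exact sequences of Lemma~\ref{L1} and Proposition~\ref{P61}. For $a\neq1$ this shows that the untwisted $\P(\mu)$ is either irreducible with all three conditions of Lemma~\ref{TOL} failing, or a nontrivial character of $\G_n$, or a twist $\MM_n\cdot\chi$ or $\MM_n^{\vee}\cdot\chi$ with $\chi\neq\1$ --- in each case not distinguished (using Lemmas~\ref{Ldistchar} and~\ref{Sndist} and Proposition~\ref{GK} in the last case) --- which proves the first assertion. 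For $a=1$ the peeling gives $\UIQ(\Z([1,n-2])\times\Z([0,1]))=\Z([0,n-2]+[1])$; untwisting by $\nu^{-(n-1)/2}$, this is $\nu_{n-1}^{-1/2}\times\nu^{-(n-3)/2}$, which is irreducible exactly when $e>2$ and $e\nmid n-2$ (the first case of the statement), while when $e=2$ and $n$ is odd the product $\Z([0,n-2])\times\nu$ is reducible and, rewriting $[-1]\equiv[1]$, one identifies $\Z([0,n-2]+[1])$ with $\MM_n^{\vee}\cdot\nu^{(n-1)/2}$, so that $\P(\nu^{-(n-3)/2})=\LL_n^{\vee}$ by \eqref{CONTRAL}.

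The step I expect to be the main obstacle is this identification of the unique irreducible quotient of $\Z([a,b])\times\Z([0,1])$: unlike the representations treated earlier, it is not of the form $\rho\times\chi$ with $\chi\in\GH_1$, so Lemma~\ref{UIQ} does not apply directly, and one must pin down the quotient among the candidates furnished by Proposition~\ref{length2seg}. Realizing it as a quotient of $\Z([a,b])\times\nu\times\1$ and iterating Lemma~\ref{L1} and Proposition~\ref{P61} does this, but the argument branches according to the residues of $a-1$, $a-2$, $b+1$ modulo $e$ and to whether $e=2$, and carrying this bookkeeping through is where the actual work lies.
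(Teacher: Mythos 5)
Your computation in the main case $\mu=\nu^{-(n-3)/2}$ is correct and essentially the paper's own: you commute the middle character (legitimate because $\1_{n-2}\times\nu^{-(n-3)/2}$ is irreducible, i.e. $e\nmid n-2$, which as you rightly note is automatic once $\mu\neq\nu^{(n-1)/2}$), quotient $\1_{n-2}\times\nu^{-(n-1)/2}$ onto $\nu_{n-1}^{-1/2}$, and finish with Lemma~\ref{L1}; your identification $\Z([0,n-2]+[1])=\MM_n^{\vee}\cdot\nu^{(n-1)/2}$ when $e=2$ is a harmless variant of the paper's appeal to Proposition~\ref{L4}, the observation following Lemma~\ref{UIQ} and \eqref{CONTRAL}. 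Your remark that $a\equiv1$ forces $e\nmid n-2$, so the two displayed cases are exhaustive, is a point the paper leaves implicit.

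The genuine gap is in the first assertion. The paper disposes of every $\mu\neq\nu^{-(n-3)/2}$ at once: an $\H_n$-invariant form on the quotient $\P(\mu)$ pulls back to $\1_{n-2}\times\1_2\cdot\mu\nu^{-1/2}$, and Lemma~\ref{TOL} with $k=n-2$ applies to this induced representation whether or not it is irreducible (only its two factors must be irreducible); condition (A) forces $\mu=\nu^{-(n-3)/2}$, while (B) needs the nontrivial character $\1_{n-2}\cdot\nu^{-1}$ to be distinguished and (C) needs $\mu^{-1}\nu^{-(n-3)/2}$ to have a trivial quotient, i.e. again $\mu=\nu^{-(n-3)/2}$. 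You use this argument only when the product is irreducible, and in the remaining reducible branch on the $\nu$-line (after the exclusions $\mu\neq\nu^{\pm(n-1)/2}$ this is essentially $\mu=\nu^{(n+1)/2}$, i.e. $b\equiv-1$) you propose instead to pin down $\UIQ$ explicitly and read off non-distinction from the list ``nontrivial character, or $\MM_n\cdot\chi$, $\MM_n^{\vee}\cdot\chi$ with $\chi\neq\1$''. That plan is left unexecuted and, as described, would fail: in this branch the unique irreducible quotient is (up to twist) $\Z([a,b]+[0,1])$, the contragredient of a representation of type $\Psi_n$ from Paragraph~\ref{Q1}, which is neither a character nor a twist of $\MM_n$ or $\MM_n^{\vee}$, so Lemmas~\ref{Ldistchar} and~\ref{Sndist} say nothing about it and one would need the derivative criterion of Lemma~\ref{BZapp} as in Lemma~\ref{Z1Z2}; moreover your peeling by Lemma~\ref{L1} and Proposition~\ref{P61} stalls after one step at $\nu\times\Z([a,b]+[0])$, which is not of the form $\Z(\Delta)\times\chi$ that these results treat. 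All of this extra work evaporates once Lemma~\ref{TOL} is applied to the induced representation itself rather than to its irreducible quotient.
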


\begin{proof}
The first assertion follows form Lemma \ref{TOL}.  
Assume now that $\mu=\nu^{-({n-3})/{2}}$. 
If $e>2$ does not divide $n-2$, then
$\P(\nu^{-({n-3})/{2}})=\UIQ(\W(\nu^{-({n-1})/{2}})$ by Lemma \ref{C2}. 
By Lemma \ref{L1} and Proposition \ref{Linked}, we have:
\begin{equation*}
\UIQ(\W(\nu^{-({n-1})/{2}})=\nu_{n-1}^{-1/2}\times\nu^{-({n-3})/{2}}. 
\end{equation*}
Assume now that $e=2$ and $n$ is odd.
By a similar argument as above, we deduce that:
\begin{equation*}
\P(\nu^{-({n-3})/{2}})=\UIQ(\nu^{-({n-3})/{2}}\times \nu_{n-1}^{-1/2}). 
\end{equation*}
By Proposition \ref{L4} and the observation following 
{Lemma} \ref{UIQ}, we get $\P(\nu^{-({n-3})/{2}})=\LL_n^{\vee}$. 
\end{proof} 

Note that $\1_{n-2}\times\mu\times\chi=\mu\times\1_{n-2}\times\chi$.
Thus:
\begin{eqnarray*}
\UIQ(\W(\nu^{(n-1)/2}))&=&
\left\{
\begin{array}{ll}
\UIQ(\mu\times\LL_{n-1}\cdot\nu^{-1/2}) & \text{if $e$ does not divide $n-1$},\\
\UIQ(\mu\times\nu_{n-1}^{-1/2}) & \text{if $e$ divides $n-1$},
\end{array}
\right. \\
\UIQ(\W(\nu^{-(n-1)/2}))&=&
\UIQ(\mu\times\nu_{n-1}^{-1/2}).
\end{eqnarray*}
We have the following proposition. 

\begin{prop}
\label{C5}
One has:
\begin{equation*}
\UIQ(\W(\nu^{-(n-1)/2}))=
\left\{
\begin{array}{ll}
\nu_{n-1}^{-1/2}\times\mu & \text{if $\mu\neq\nu^{-(n+1)/2}$},\\
\LL_n^\vee & \text{if $\mu=\nu^{-(n+1)/2}$ and $e$ does not divide $n$}. 
\end{array}
\right. \\
\end{equation*}
\end{prop}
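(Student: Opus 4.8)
The plan is to start from the identity $\UIQ(\W(\nu^{-(n-1)/2}))=\UIQ(\mu\times\nu_{n-1}^{-1/2})$ recorded just above the statement (which comes from $\1_{n-2}\times\mu\cong\mu\times\1_{n-2}$ by Lemma~\ref{PERM} together with $\UIQ(\1_{n-2}\times\nu^{-(n-1)/2})=\nu_{n-1}^{-1/2}$, an instance of Lemma~\ref{L1}), and to read off the right-hand side from Section~\ref{CQuot}. Since $\mu\in\GH_1$ and $\nu_{n-1}^{-1/2}\in\GH_{n-1}$, the ``swap'' formula following Lemma~\ref{UIQ} gives
\begin{equation*}
\UIQ(\mu\times\nu_{n-1}^{-1/2})=\UIS(\nu_{n-1}^{-1/2}\times\mu),
\end{equation*}
so that the proposition becomes a direct application of Proposition~\ref{L4} with $\chi=\mu$.

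First I would recall the standing hypothesis of this section, namely $\mu\notin\{\nu^{-(n-1)/2},\nu^{(n-1)/2}\}$; in particular $\mu\neq\nu^{(n-1)/2}$, so the branch of Proposition~\ref{L4} giving $\UIS(\nu_{n-1}^{-1/2}\times\chi)=\1_n$ cannot occur here, and only two cases remain. If $\mu\neq\nu^{-(n+1)/2}$, then $\mu\notin\{\nu^{-(n+1)/2},\nu^{(n-1)/2}\}$ and Proposition~\ref{L4} yields $\UIS(\nu_{n-1}^{-1/2}\times\mu)=\nu_{n-1}^{-1/2}\times\mu$; note that this already asserts the irreducibility of $\nu_{n-1}^{-1/2}\times\mu$, which is also clear from Proposition~\ref{Linked} since, writing $\nu_{n-1}^{-1/2}=\Z([-(n-1)/2,(n-3)/2])$, the only single-point segments linked to $[-(n-1)/2,(n-3)/2]$ are $[(n-1)/2]$ and $[-(n+1)/2]$, both excluded. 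If instead $\mu=\nu^{-(n+1)/2}$, then Proposition~\ref{L4} gives $\UIS(\nu_{n-1}^{-1/2}\times\mu)=\LL_n^{\vee}$; moreover, since $-(n+1)/2\equiv(n-1)/2$ forces $e$ to divide $n$ (the two half-integers differ by $-n$), the excluded value $\mu\neq\nu^{(n-1)/2}$ forces that $e$ does not divide $n$, in agreement with the hypothesis attached to this case in the statement.

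Combining the two cases gives the asserted formula for $\UIQ(\W(\nu^{-(n-1)/2}))$. Since the argument merely chains together the swap formula, Proposition~\ref{L4}, and two congruence bookkeeping checks on exponents, I do not expect any genuine obstacle; the only point that needs care is to track all the excluded characters so as to land in the correct branch of Proposition~\ref{L4}, and in particular to observe that the case $\mu=\nu^{-(n+1)/2}$ automatically entails that $e$ does not divide $n$, so that no separate treatment of the case $e\mid n$ is required.
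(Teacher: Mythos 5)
Your argument is correct and follows essentially the same route as the paper: the paper's proof simply invokes the propositions of Section~\ref{CQuot} (it cites Propositions~\ref{L4} and~\ref{L5}), and your chain --- the identity $\UIQ(\W(\nu^{-(n-1)/2}))=\UIQ(\mu\times\nu_{n-1}^{-1/2})$ recorded before the statement, the swap formula after Lemma~\ref{UIQ}, and Proposition~\ref{L4} --- is exactly the intended deduction, with the correct observation that $\mu\neq\nu^{(n-1)/2}$ rules out the $\1_n$ branch and makes the condition ``$e$ does not divide $n$'' automatic when $\mu=\nu^{-(n+1)/2}$.
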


\begin{proof}
This follows from Propositions \ref{L4} and \ref{L5}.
\end{proof}

It remains to study:
\begin{equation*}
\UIQ(\W(\nu^{(n-1)/2}))=\UIQ(\mu\times\LL_{n-1}\cdot\nu^{-1/2})
\end{equation*} 
when $e$ does not divide $n-1$. 
This will be done in Section \ref{LQuot}.

\section{Computing 
$\UIQ(\nu_{n-3}^{1/2}\times\tau\times\nu^{-(n-3)/2})$ 
for $\tau\in\GH_2$ infinite dimensional}
\label{GQuot}

In this section, we assume that $e>1$.
We consider all those infinite dimensional $\tau\in \GH_2$ such that
$\nu_{n-3}^{1/2}\times\tau$ is irreduci\-ble, that is:
\begin{enumerate}
\item
$\tau$ is cuspidal;
\item
$\tau$ is a Steinberg representation $\St_2\cdot\mu\nu^{1/2}$ with 
$\mu\notin\{\nu^{-(n-1)/2},\nu^{(n-1)/2}\}$ and $e>2$; 
\item
$\tau$ is a principal series $\l\times\mu$ with $\l\mu^{-1}\notin\{\nu^{-1},\nu\}$ and 
$\l,\mu\notin\{\nu^{-(n-3)/2},\nu^{(n-1)/2}\}$.
\end{enumerate}
In all these cases, we study the unique irreducible quotient:
\begin{equation}
\U(\tau)=\UIQ(\nu_{n-3}^{1/2}\times\tau\times\nu^{-(n-3)/2}).
\end{equation}
We first have the following results. 

\begin{lemm}
For all these $\tau$ as above, we have $\U(\tau)=\UIQ(\tau\times\1_{n-2})$.
\end{lemm}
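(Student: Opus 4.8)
The plan is to commute $\tau$ past $\nu_{n-3}^{1/2}$ and then to collapse the two remaining factors $\nu_{n-3}^{1/2}$ and $\nu^{-(n-3)/2}$ into $\1_{n-2}$. First I would use the hypothesis on $\tau$ --- namely that $\nu_{n-3}^{1/2}\times\tau$ is irreducible --- together with Lemma \ref{PERM} to obtain $\nu_{n-3}^{1/2}\times\tau\simeq\tau\times\nu_{n-3}^{1/2}$, hence
\begin{equation*}
\nu_{n-3}^{1/2}\times\tau\times\nu^{-(n-3)/2}\simeq\tau\times\bigl(\nu_{n-3}^{1/2}\times\nu^{-(n-3)/2}\bigr).
\end{equation*}

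Next I would invoke Proposition \ref{L2} applied to $\G_{n-2}$ (legitimate since $e>1$ is in force throughout this section): it gives that $\nu_{n-3}^{1/2}\times\nu^{-(n-3)/2}$ has $\1_{n-2}$ as its unique irreducible quotient, in particular surjects onto $\1_{n-2}$. Since normalized parabolic induction is exact, applying the functor $\tau\times(-)$ yields a surjection $\tau\times\nu_{n-3}^{1/2}\times\nu^{-(n-3)/2}\twoheadrightarrow\tau\times\1_{n-2}$, so by the displayed isomorphism $\tau\times\1_{n-2}$ is a quotient of $\nu_{n-3}^{1/2}\times\tau\times\nu^{-(n-3)/2}$. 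Now the latter representation is of the form $\rho\times\chi$ with $\rho=\nu_{n-3}^{1/2}\times\tau\in\GH_{n-1}$ and $\chi=\nu^{-(n-3)/2}\in\GH_1$, so by Lemma \ref{UIQ} it has a unique irreducible quotient, which is $\U(\tau)$ by definition. Consequently every irreducible quotient of $\tau\times\1_{n-2}$ is an irreducible quotient of $\nu_{n-3}^{1/2}\times\tau\times\nu^{-(n-3)/2}$ and hence is isomorphic to $\U(\tau)$; since $\tau\times\1_{n-2}$ has finite length it admits at least one such quotient, hence exactly one, and this is what $\UIQ(\tau\times\1_{n-2})$ is to denote. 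This gives $\U(\tau)=\UIQ(\tau\times\1_{n-2})$.

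I expect the only delicate point to be the claim that the unique irreducible quotient of $\nu_{n-3}^{1/2}\times\nu^{-(n-3)/2}$ really is the trivial character $\1_{n-2}$, and not some other constituent; this is precisely Proposition \ref{L2} with $n$ replaced by $n-2$ and $\chi=\nu^{-(n-3)/2}$, and alternatively one can twist the product to the shape $\Z([1,n-3])\times\nu^{0}$ and read off its unique irreducible quotient from Lemma \ref{L1}.
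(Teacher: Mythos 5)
Your proof is correct and follows essentially the same route as the paper's (one-line) argument: commute $\tau$ past $\nu_{n-3}^{1/2}$ using the assumed irreducibility of $\nu_{n-3}^{1/2}\times\tau$, and use that $\UIQ(\nu_{n-3}^{1/2}\times\nu^{-(n-3)/2})=\1_{n-2}$ together with exactness of parabolic induction and Lemma \ref{UIQ} applied to $(\nu_{n-3}^{1/2}\times\tau)\times\nu^{-(n-3)/2}$. The only cosmetic remark is that the uniqueness of the irreducible quotient of $\tau\times\1_{n-2}$ is cleanest phrased via cosocles (the cosocle of a nonzero quotient of $\nu_{n-3}^{1/2}\times\tau\times\nu^{-(n-3)/2}$ is a quotient of its irreducible cosocle $\U(\tau)$), which is what your argument amounts to.
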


\begin{proof}
It follows from the fact that 
$\nu_{n-3}^{1/2}\times\tau=\tau\times\nu_{n-3}^{1/2}$ and 
$\UIQ(\nu_{n-3}^{1/2}\times\nu^{-(n-3)/2})=\1_{n-2}$. 
\end{proof}

\begin{prop}
\label{Tcusp}
Assume that $\tau$ is cuspidal. 
Then $\U(\tau)=\tau\times\1_{n-2}$. 
\end{prop}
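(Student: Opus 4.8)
The plan is to reduce the statement to an irreducibility assertion and then invoke the cuspidal-support criterion of Proposition \ref{Lanzmann}. By the lemma just proved, one has $\U(\tau)=\UIQ(\tau\times\1_{n-2})$, so it suffices to show that the induced representation $\tau\times\1_{n-2}$ is already irreducible; in that case it is its own unique irreducible quotient, and we are done.

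To check irreducibility of $\tau\times\1_{n-2}$, I would compare cuspidal supports. Both factors are irreducible: $\tau$ is cuspidal in $\GH_2$ and $\1_{n-2}$ is a character of $\G_{n-2}$. By Property {\bf P1} one has $\1_{n-2}=\Z([-\frac{n-3}{2},\frac{n-3}{2}])$, so (using the description of cuspidal support via {\bf P2} and {\bf P3}) the cuspidal support of $\1_{n-2}$ is $[\nu^{-(n-3)/2}]+[\nu^{-(n-5)/2}]+\dots+[\nu^{(n-3)/2}]$, a sum of classes of characters of $\G_1$. On the other hand the cuspidal support of $\tau$ is the single class $[\tau]$, with $\tau$ a cuspidal representation of $\G_2$. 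For every $k\in\ZZ$, the twist $\tau\cdot\nu^k$ is again a representation of $\G_2$, hence is never isomorphic to a character of $\G_1$. Thus the hypothesis of Proposition \ref{Lanzmann} is satisfied, $\tau\times\1_{n-2}$ is irreducible, and combining with the first paragraph gives $\U(\tau)=\tau\times\1_{n-2}$.

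There is no genuine obstacle here: the proof rests entirely on Proposition \ref{Lanzmann}, and the only thing to verify is the disjointness, up to unramified twist, of the cuspidal supports of the two factors — which is immediate from the fact that the cuspidal data have different degrees ($2$ for $\tau$, $1$ for each constituent of the support of $\1_{n-2}$). The single point deserving a line of care is the identification of the cuspidal support of $\1_{n-2}$, which I would extract from {\bf P1} applied to the segment $[-\frac{n-3}{2},\frac{n-3}{2}]$.
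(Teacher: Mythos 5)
Your proof is correct and follows the same route as the paper: the paper's one-line proof simply asserts that $\tau\times\1_{n-2}$ is irreducible when $\tau$ is cuspidal (hence equals its own unique irreducible quotient), and the natural justification is exactly the cuspidal-support argument via Proposition \ref{Lanzmann} that you spell out, just as the paper does elsewhere (e.g.\ in the proof of Proposition \ref{blm}). Your extra care in identifying the cuspidal support of $\1_{n-2}$ via {\bf P1} is fine but not a departure from the paper's argument.
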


\begin{proof}
This follows from the fact that $\tau\times\1_{n-2}$ is irreducible when 
$\tau$ is cuspidal. 
\end{proof}

We now treat the cases where $\tau$ is not cuspidal.

\begin{prop}
\label{T2}
Assume 
$\tau=\l\times\mu$ with $\l\mu^{-1}\notin\{\nu^{-1},\nu\}$ and 
$\l,\mu\notin\{\nu^{-(n-3)/2},\nu^{(n-1)/2}\}$. 
Then we have:
\begin{equation*}
\U(\tau)=\l\times \mu \times \1_{n-2}
\quad
\text{for all $\l,\mu\neq\nu^{-(n-1)/2}$}
\end{equation*}
and, if $\mu=\nu^{-(n-1)/2}$ and $e$ does not divide $n-1$, 
then $\U(\tau)$ is not distinguished. 
\end{prop}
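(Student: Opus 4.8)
Recall that $\U(\tau)=\UIQ(\tau\times\1_{n-2})$, and hence $\U(\tau)=\UIQ(\lambda\times\mu\times\1_{n-2})$; the plan is to treat the two assertions separately. For the first assertion ($\lambda\neq\nu^{-(n-1)/2}$ and $\mu\neq\nu^{-(n-1)/2}$), I would prove that $\lambda\times\mu\times\1_{n-2}$ is already irreducible, so that it coincides with its unique irreducible quotient $\U(\tau)$. Writing $\1_{n-2}=\Z([-\frac{n-3}{2},\frac{n-3}{2}])$, it suffices by Propositions \ref{Lanzmann} and \ref{Linked} to check that no two of the three constituents --- the segment $[-\frac{n-3}{2},\frac{n-3}{2}]$ and the characters $\lambda,\mu$ --- are linked. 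If $\lambda$ (resp.\ $\mu$) is not an integral power of $\nu$ lying on the cuspidal line of $\1_{n-2}$, Proposition \ref{Lanzmann} applies directly; otherwise, writing $\lambda=\nu^a$, one reads off from Proposition \ref{Linked} that $[\lambda]$ is linked to $[-\frac{n-3}{2},\frac{n-3}{2}]$ only if $\lambda\in\{\nu^{(n-1)/2},\nu^{-(n-1)/2}\}$ and to $[\mu]$ only if $\lambda\mu^{-1}\in\{\nu,\nu^{-1}\}$, all of which are excluded by the standing hypotheses on $\tau$ together with $\lambda,\mu\neq\nu^{-(n-1)/2}$. Hence the product is irreducible.

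For the second assertion, since $\tau=\lambda\times\mu\simeq\mu\times\lambda$ I may assume $\mu=\nu^{-(n-1)/2}$. Applying Lemma \ref{L1}(3), which is available because $e\nmid n-1$, to $\1_{n-2}\times\nu^{-(n-1)/2}=\Z([-\frac{n-3}{2},\frac{n-3}{2}])\times\nu^{-(n-1)/2}$, and using the identity $\UIQ(\chi\times\rho)=\UIS(\rho\times\chi)$ noted after Lemma \ref{UIQ}, one obtains
\begin{equation*}
\UIQ(\nu^{-(n-1)/2}\times\1_{n-2})=\Z([-\tfrac{n-3}{2},\tfrac{n-3}{2}]+[-\tfrac{n-1}{2}])=\LL_{n-1}^\vee\cdot\nu^{1/2},
\end{equation*}
the second equality by \eqref{CONTRAL} and $\qc=e\nmid n-1$. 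Since parabolic induction is exact, $\lambda\times\nu^{-(n-1)/2}\times\1_{n-2}$ surjects onto $\lambda\times\LL_{n-1}^\vee\cdot\nu^{1/2}$, and as $e>1$ both representations have the same unique irreducible quotient; therefore $\U(\tau)=\UIQ(\lambda\times\LL_{n-1}^\vee\cdot\nu^{1/2})$.

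Now I would argue by contradiction. If $\U(\tau)$ were $\H_n$-distinguished, then so would be the representation $\lambda\times\LL_{n-1}^\vee\cdot\nu^{1/2}$, of which it is a quotient, and Lemma \ref{TOL} (with $k=1$, $\rho=\lambda$, second factor $\LL_{n-1}^\vee\cdot\nu^{1/2}$) would force one of the conditions (A), (B), (C). Condition (A) would require $\LL_{n-1}^\vee\cdot\nu$ to be $\H_{n-1}$-distinguished: impossible for $n-1\geq 3$ by Proposition \ref{GK} and Lemma \ref{Ldistchar}, while for $n=3$ the condition does not even arise, since it also demands $\lambda=\nu^{(n-3)/2}=\1_1$, excluded by the standing hypothesis. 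Condition (B) would force $\LL_{n-1}^\vee=\1_{n-1}$, impossible since $\qc=e\nmid n-1$. Condition (C) would require the first derivative of $(\LL_{n-1}^\vee\cdot\nu^{1/2})^\vee=\LL_{n-1}\cdot\nu^{-1/2}$ to admit $\1_{n-2}$ as a quotient; but by Corollary \ref{derLa}(1) (applicable since $\qc\nmid n-1$) and compatibility of derivatives with twists, this derivative equals $\nu_{n-3}^{-1/2}\times\nu^{(n-1)/2}$, whose cuspidal support is that of $\nu_{n-3}^{-1/2}$ augmented by $[\nu^{(n-1)/2}]$; since $e>1$ forces $[\nu^{(n-1)/2}]\neq[\nu^{(n-3)/2}]$, this differs from the cuspidal support of $\1_{n-2}$, so $\1_{n-2}$ is not even a subquotient of it. All three conditions fail, giving the desired contradiction, so $\U(\tau)$ is not distinguished.

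The step I expect to be the main obstacle is the second assertion. Unlike in the first case, the ``larger'' induced representation $\lambda\times\LL_{n-1}^\vee\cdot\nu^{1/2}$ need not be irreducible, so $\U(\tau)$ cannot be identified outright and one must extract non-distinction from Lemma \ref{TOL} read as a \emph{necessary} condition, which forces a careful verification that each of (A), (B), (C) fails. Within this, the genuinely delicate point is condition (A) in the boundary case $n=3$: there $\LL_2^\vee\cdot\nu=\St_2$ is in fact $\H_2$-distinguished, so the argument rests entirely on the standing exclusion $\lambda\neq\nu^{(n-3)/2}$ rather than on a direct appeal to Lemma \ref{Ldistchar}.
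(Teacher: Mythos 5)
Your proof is correct and follows essentially the same route as the paper: irreducibility via Propositions \ref{Lanzmann} and \ref{Linked} for the first assertion, then identifying $\U(\tau)=\UIQ(\l\times\LL_{n-1}^{\vee}\cdot\nu^{1/2})$ (the paper cites Proposition \ref{L4}; your use of Lemma \ref{L1} together with \eqref{CONTRAL} is the same content) and excluding distinction by Lemma \ref{TOL} with $k=1$. The only difference is that you spell out why conditions (A), (B), (C) all fail --- including the $n=3$ boundary case for (A) and the derivative computation for (C) --- details the paper leaves implicit.
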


\begin{proof}
The first assertion follows from Proposition \ref{Linked}.  
Assume now that $\mu=\nu^{-(n-1)/2}$ and $e$ does not divide $n-1$. 
It follows from Proposition \ref{L4} that:
\begin{equation*}
\U(\tau)=\UIQ(\l\times \LL_{n-1}^{\vee}\cdot\nu^{1/2}),
\end{equation*}
which is not distinguished by Lemma \ref{TOL} with $k=1$.  
\end{proof}

\begin{prop}
\label{T1}
Assume $e>2$ and $\tau=\St_2\cdot\mu\nu^{1/2}$ with 
$\mu\notin\{\nu^{-(n-1)/2},\nu^{(n-1)/2}\}$.
Then:
\begin{equation*}
\U(\tau)=\tau\times \1_{n-2}
\quad
\text{for all $\mu\neq \nu^{-(n+1)/2}$}
\end{equation*}
and $\U(\tau)$ is not distinguished for $\mu=\nu^{-(n+1)/2}$. 
\end{prop}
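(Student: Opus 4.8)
The plan is to exploit the identification $\U(\tau)=\UIQ(\tau\times\1_{n-2})$ proved in the first lemma of this section, together with the fact that, writing $\mu=\nu^{c}$, one has $\tau=\St_2\cdot\nu^{c+1/2}=\L([c,c+1])$ (recall $\qc=e>2$, so $\L([a,a+1])=\St_2\cdot\nu^{a+1/2}$), while $\1_{n-2}=\Z([-(n-3)/2,(n-3)/2])$. First I would dispose of the case $\mu\neq\nu^{-(n+1)/2}$: if $\mu$ is not of the form $\nu^{c}$ for a suitable half-integer $c$, the cuspidal supports of $\tau$ and $\1_{n-2}$ do not interact and $\tau\times\1_{n-2}$ is irreducible by Proposition \ref{Lanzmann}; otherwise $\tau=\L([c,c+1])$ and, by the juxtaposition criterion of Proposition \ref{blm}, the representation $\Z([-(n-3)/2,(n-3)/2])\times\L([c,c+1])$ is reducible exactly when the segments $[-(n-3)/2,(n-3)/2]$ and $[c,c+1]$ are juxtaposed, that is, when $c\equiv(n-1)/2$ or $c\equiv-(n+1)/2$. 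The first case is excluded by the standing hypothesis $\mu\neq\nu^{(n-1)/2}$ of this section and the second by $\mu\neq\nu^{-(n+1)/2}$; hence $\1_{n-2}\times\tau$ is irreducible, so isomorphic to $\tau\times\1_{n-2}$ by Lemma \ref{PERM}, and $\U(\tau)=\tau\times\1_{n-2}$.

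For the remaining case $\mu=\nu^{-(n+1)/2}$, so that $\tau=\St_2\cdot\nu^{-n/2}=\L([-(n+1)/2,-(n-1)/2])$, the plan is to identify $\U(\tau)$ explicitly and then apply a non-distinction criterion. Since $\L([-(n+1)/2,-(n-1)/2])=\UIQ(\nu^{-(n+1)/2}\times\nu^{-(n-1)/2})$, the representation $\tau\times\1_{n-2}$, and hence $\U(\tau)$, is a quotient of $\nu^{-(n+1)/2}\times\nu^{-(n-1)/2}\times\1_{n-2}$. I would then analyse this principal series: rewrite $\nu^{-(n-1)/2}\times\1_{n-2}$ and the successive products using Proposition \ref{Rappel}, Proposition \ref{P61}(2)--(3) and Propositions \ref{L1}--\ref{L5}, and at each step determine which irreducible subquotient $\U(\tau)$ actually is, by computing the relevant Jacquet modules through the geometric lemma and Property \textbf{P6}. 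The upshot should be that $\U(\tau)$ is a representation not occurring in the list of Theorem \ref{MAINTHEOREM} --- either a nontrivial character of $\G_n$ or a twist $\LL_n\cdot\chi$ with $\chi\neq1$ --- so that Lemma \ref{Ldistchar} together with Proposition \ref{GK} shows it is not distinguished; alternatively, once $\U(\tau)$ is known, its first two derivatives are computed from Lemma \ref{dercal} and Corollary \ref{derLa} and seen to have no quotient isomorphic to $\nu_{n-1}^{-1/2}$, respectively $\1_{n-2}$, whence non-distinction by Lemma \ref{BZapp}.

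The hard part will be precisely this explicit identification of $\U(\tau)$ when $\mu=\nu^{-(n+1)/2}$. The difficulty is structural rather than computational: the representation $\tau\times\1_{n-2}$ is reducible (its defining segments are juxtaposed) and is in fact $\H_n$-distinguished --- by Lemma \ref{TOL} applied with $k=2$ (condition (B), using that every twist of $\St_2$ is distinguished by Theorem \ref{dimfor2}), equivalently by Proposition \ref{GKext} --- so one cannot deduce anything about its unique irreducible quotient merely from distinction of the induced representation, nor do the intermediate principal series one passes through have unique irreducible quotients. Thus, just as in the general strategy outlined after Proposition \ref{reduction1}, $\U(\tau)$ must be realized as a quotient of a larger principal series and pinned down using the complete description of the subquotients of $\Z(\Delta)\times\Z(\Delta')$ with $\Delta'$ of length at most $2$ given by Propositions \ref{P61} and \ref{length2seg}, after which the non-distinction statements above go through.
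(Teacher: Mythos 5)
Your first half is fine and is exactly the paper's argument: writing $\tau=\L([c,c+1])\,$ when $\mu=\nu^c$, Proposition \ref{blm} (together with Proposition \ref{Lanzmann} when $\mu$ is not an unramified twist) shows $\tau\times\1_{n-2}$ is irreducible outside the two juxtaposed values $\mu=\nu^{(n-1)/2}$ (excluded) and $\mu=\nu^{-(n+1)/2}$, whence $\U(\tau)=\tau\times\1_{n-2}$ there. The case $\mu=\nu^{-(n+1)/2}$, however, is only a plan, and the plan hinges on a prediction that is false: $\U(\tau)$ is neither a nontrivial character of $\G_n$ nor a twist $\LL_n\cdot\chi$. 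Already for $n=3$ and $e>3$ one has $\U(\tau)=\UIQ(\St_2\cdot\nu^{-3/2}\times\1)=\St_3\cdot\nu^{-1}$, whose multisegment consists of three segments; for $n\geq4$ and $e>3$, $\U(\tau)$ is a quotient of $\nu_{n-3}^{1/2}\times\St_3\cdot\nu^{-(n-1)/2}$ (and of $\nu_{n-3}^{1/2}$ times a twist of $\MM_3$ when $e=3$), and nothing in your outline would force it to have the shape required by Lemma \ref{Ldistchar}, so that lemma cannot be the closing step. Your fallback via Lemma \ref{BZapp} is not executable either: you never identify $\U(\tau)$, Lemma \ref{dercal} and Corollary \ref{derLa} do not compute the derivatives of such a subquotient, and for $n=3$ the criterion is simply unavailable, since $(\St_3\cdot\nu^{-1})^{(2)}=\1_1$ by Example \ref{derst}, so hypothesis (2) of Lemma \ref{BZapp} fails for the very representation at hand.

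The missing idea — and the paper's actual route — is to avoid identifying $\U(\tau)$ altogether. Set $\L=\UIQ(\St_2\cdot\mu\nu^{1/2}\times\nu^{-(n-3)/2})$, a representation of $\G_3$; the paper identifies it as $\St_3\cdot\nu^{-(n-1)/2}$ if $e>3$ and as a twist of $\MM_3$ if $e=3$, and proves first that \emph{no twist of $\L$ is distinguished} (via \cite[Theorem 2]{P} or \cite[Remark 6.7]{V} in the first case, and Lemma \ref{Sndist} in the second). Since $\nu_{n-3}^{1/2}\times\tau\times\nu^{-(n-3)/2}$ surjects onto $\nu_{n-3}^{1/2}\times\L$ and has $\U(\tau)$ as its unique irreducible quotient, $\U(\tau)$ is a quotient of $\nu_{n-3}^{1/2}\times\L$; applying Lemma \ref{TOL} with $k=n-3$ (condition (A) is precisely the distinction of a twist of $\L$, which has just been excluded) shows that $\nu_{n-3}^{1/2}\times\L$ is not distinguished, hence neither is its quotient $\U(\tau)$. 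Your correct observation that $\tau\times\1_{n-2}$ is itself distinguished is exactly why one must pass to a \emph{different, non-distinguished} induced representation having $\U(\tau)$ as a quotient, rather than try to pin $\U(\tau)$ down inside the principal series $\nu^{-(n+1)/2}\times\nu^{-(n-1)/2}\times\1_{n-2}$. Note finally that your plan makes no distinction between $e>3$ and $e=3$, whereas the $\G_3$ ingredient and the reason for its non-distinction are different in the two cases.
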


\begin{rema}
If we assume that $e=2$ in Lemma \ref{T1},
then $\tau$ is cuspidal and this case has already been done in 
Lemma \ref{Tcusp}.
\end{rema}

\begin{proof}
Write $\tau=\L([0,1])\cdot\mu$.
By Proposition \ref{blm}, the representation $\tau\times \1_{n-2}$ is irreducible 
unless $\mu=\nu^k$ with $k$ a half-integer and 
the segments $[-(n-3)/2,(n-3)/2]$ and $[k,k+1]$ are juxtaposed, that is 
$\mu=\nu^{(n-1)/2}$ (which is not allowed) or $\mu=\nu^{-(n+1)/2}$.

Assume $\mu=\nu^{-(n+1)/2}$ and $e$ does not divide $n$
(thus $\mu\neq\nu^{(n-1)/2}$). 
Let $\L$ be the irreduci\-ble quotient of 
$\St_2\cdot\mu\nu^{1/2}\times \nu^{-(n-3)/2}$.
If $e>3$, note that $\St_3\cdot\nu^{-1}$ is the unique irreducible quotient of 
$\St_2\cdot \nu^{-3/2}\times \1$ (see p.~168 of \cite{P} and the exact
sequence (3.5) in \cite{V}). 
Twisting by $\nu^{-(n-3)/2}$, we see that $\L=\St_3\cdot\nu^{-(n-1)/2}$.
Moreover, by \cite[Theorem 2]{P} or \cite[Remark 6.7]{V},
no twist of $\L$ is distinguished. 
If $e=3$, $\L$ is equal to a twist of $\MM_3$, which is not 
distinguished by Lemma \ref{Sndist}. 
Hence, no twist of $\L$ is distinguished. 
Applying Lemma \ref{TOL} with $k=n-3$ to $\nu_{n-3}^{1/2}\times\L$ yields that 
it is not distinguished, and so $\U(\tau)$ is not distinguished.
\end{proof}

\section{The remaining cases}
\label{LQuot}

In this section, we assume that $e>1$ as in Sections \ref{TQuot} and \ref{GQuot}.
It remains for us to study the distinction of the following representations: 
\begin{enumerate}
\item
the irreducible quotients of
$\mu\times\LL_{n-1}\cdot\nu^{-1/2}$ 
for $\mu\in\GH_1-\{\nu^{-(n-1)/2},\nu^{(n-1)/2}\}$;
\item
the irreducible quotients of
$\LL_{n-1}^{\vee}\cdot \nu^{1/2}\times\chi$ for $\chi\in \GH_1$; 
\item
the irreducible quotient of 
$\LL_{n-1}\cdot\nu^{1/2}\times\nu^{-(n-3)/2}$.
\end{enumerate}
Note that we may assume $e$ does not divide $n-1$ 
(or else $\LL_{n-1}$ would be the trivial char\-ac\-ter).

The first case is the one that remains from Section \ref{TQuot},
the second one corresponds to Case 3 of Paragraph \ref{firstaid}
and the third one corresponds to the part of Case 4.e of Paragraph 
\ref{firstaid} which does not belong to Case 3. 

\subsection{Distinction of 
$\mu\times\LL_{n-1}\cdot\nu^{-1/2}$ and
$\LL_{n-1}^{\vee}\cdot \nu^{1/2} \times \chi$}
\label{LLntwists}
\label{Twolemmas}

In this paragraph, we show that, 
if $\LL_{n-1}^{\vee}\cdot \nu^{1/2} \times \chi$ is distinguished, 
then $\chi$ must be equal to $\nu^{-(n-3)/2}$. Given this, 
it will follow by Proposition \ref{GKext} that $\mu\times\LL_{n-1}\cdot\nu^{-1/2}$ 
is distinguished if and only if 
$\mu=\nu^{(n-3)/2}.$ 

\begin{lemm}
\label{LGL3}
Let $\chi\in\GH_1$ and $e>1$.
Then the representation $\St_2\cdot \nu^{-1/2}\times\chi$ 
is distinguished if and only if $\chi=1$. 
\end{lemm}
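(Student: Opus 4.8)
The plan is to prove the two implications separately. For the ``if'' direction, suppose $\chi = \1_1$ is trivial and write $\St_2\cdot\nu^{-1/2}\times\chi = \rho\times\tau$ with $\rho = \St_2\cdot\nu^{-1/2}\in\GH_2$, $\tau = \1_1\in\GH_1$, $n = 3$, $k = 2$. I would check condition (B) of Lemma \ref{TOL}: the twist $\rho\cdot\nu^{-(n-k)/2} = \St_2\cdot\nu^{-1}$ is an infinite-dimensional irreducible representation of $\G_2$, hence $\H_2$-distinguished by Theorem \ref{dimfor2}, while $\tau = \nu_{n-k}^{-(k-2)/2} = \1_1$. The converse part of Lemma \ref{TOL} then gives that $\St_2\cdot\nu^{-1/2}\times\1_1$ is $\H_3$-distinguished.

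For the ``only if'' direction, suppose $\chi$ is nontrivial and, for contradiction, that $\St_2\cdot\nu^{-1/2}\times\chi$ is $\H_3$-distinguished. Since $e > 1$ forces the characteristic of $\R$ to be different from $2$, Proposition \ref{GKext} shows that $\sigma := \chi^{-1}\times\St_2\cdot\nu^{1/2}$ is $\H_3$-distinguished as well. The point of swapping the factors this way is that the derivatives of $\sigma$ avoid the trivial character, whereas $(\St_2\cdot\nu^{-1/2}\times\chi)^{(2)}$ contains it (when $\qc\neq 2$), which would obstruct a direct application of Lemma \ref{BZapp}. Concretely, using the Leibniz rule (Lemma \ref{dercal}(3)) with $\St_2^{(1)} = \nu^{1/2}$ (which is $0$ when $\qc = 2$) and $\St_2^{(2)} = \1_0$, obtained from Example \ref{derst} and Lemma \ref{derPi}, together with $(\chi^{-1})^{(1)} = \1_0$, one computes in the Grothendieck group
\[
[\sigma^{(1)}] = [\St_2\cdot\nu^{1/2}] + [\chi^{-1}\times\nu], \qquad
[\sigma^{(2)}] = [\chi^{-1}] + [\nu],
\]
the last summand being dropped when $\qc = 2$.

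It then remains to verify, at the level of semisimplifications, the two hypotheses of Lemma \ref{BZapp} for $\sigma$. The irreducible subquotients of $\sigma^{(2)}$ are $\chi^{-1}$ and $\nu$; since $\chi\neq 1$ and $e > 1$ forces $\nu\neq\1_1$, neither of them is trivial, so $\sigma^{(2)}$ has no quotient isomorphic to $\1_{n-2} = \1_1$. For the remaining hypothesis I would compare cuspidal supports: that of $\nu_{n-1}^{-1/2} = \nu_2^{-1/2}$ is $[\nu^{-1}] + [\nu^0]$, that of $\chi^{-1}\times\nu$ is $[\chi^{-1}] + [\nu]$, and that of $\St_2\cdot\nu^{1/2}$ is $[\nu^0] + [\nu^1]$ when $\qc\neq 2$ and a single cuspidal class of $\G_2$ when $\qc = 2$; using $\chi\neq 1$ and $e > 1$ one checks that none of these equals $[\nu^{-1}] + [\nu^0]$, so $\nu_2^{-1/2}$ is not even a subquotient, a fortiori not a quotient, of $\sigma^{(1)}$. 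By Lemma \ref{BZapp}, $\sigma$ is not $\H_3$-distinguished, which is the desired contradiction. The only mildly delicate step is the cuspidal-support bookkeeping: the coincidence $[\nu^0]+[\nu^1] = [\nu^{-1}]+[\nu^0]$ can occur only for $e = 2$, in which case $\St_2\cdot\nu^{1/2}$ is cuspidal anyway and its cuspidal support is of a different shape, so the argument still goes through; but this is entirely routine.
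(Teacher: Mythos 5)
Your proof is correct, and it takes a genuinely different route from the paper's. The paper first handles $\chi\notin\{1,\nu,\nu^{-2}\}$, where $\St_2\cdot\nu^{-1/2}\times\chi$ is irreducible, by applying Lemma \ref{TOL} to the contragredient and invoking Proposition \ref{GK}; for the remaining twists $\chi\in\{\nu,\nu^{-2}\}$ it computes the composition series case by case ($e>3$, $e=2$, $e=3$), the last case requiring the length-$7$ analysis of the principal series $\nu^{-1}\times1\times\nu$, and then rules out each irreducible subquotient by Lemma \ref{Sndist}, Theorem \ref{THEOCUSP} and Lemma \ref{Ldistchar}. You instead dualize and swap the whole (possibly reducible) induced representation via Proposition \ref{GKext} --- legitimate since $e>1$ forces the characteristic of $\R$ to be odd --- and then apply the derivative criterion of Lemma \ref{BZapp} uniformly to $\chi^{-1}\times\St_2\cdot\nu^{1/2}$, computing the derivatives by the Leibniz rule (your values $\St_2^{(1)}=\nu^{1/2}$ or $0$, $\St_2^{(2)}=\1_0$ are correct, via $\St_2=\MM_2\cdot\nu^{-1}$ and Lemma \ref{derPi}) and excluding the offending quotients by a cuspidal-support comparison; the passage from semisimplification to quotients is fine, since not being a subquotient is stronger than not being a quotient, and your treatment of the $e=2$ coincidence via cuspidality of $\St_2$ is correct (one could even argue more simply that $\St_2\cdot\nu^{1/2}$ is never a character). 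What your approach buys is uniformity: no case split on $e$ or on $\chi\in\{\nu,\nu^{-2}\}$ and no composition-series computations, in the spirit of the arguments the paper itself uses for Lemma \ref{Sndist} and Lemma \ref{Z1Z2}; what it does not give is the explicit list of subquotients of $\B(\nu)$ that the paper's proof produces along the way, though that information is not needed for the statement.
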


\begin{proof}
Write $\B(\chi)=\St_2\cdot \nu^{-1/2}\times \chi$.  
If $\chi=1$, then $\B(1)$ is distinguished as it satisfies (B) of Lemma 
\ref{TOL} for $k=2$. 

Assume $\chi\notin \{1,\nu,\nu^{-2}\}$. 
Since $\chi$ is nontrivial, Lemma \ref{TOL} implies that 
$\B(\chi)^{\vee}$ is not distingui\-shed.
Since $\chi\notin \{\nu, \nu^{-2}\}$, Lemma \ref{LinkedL} shows that 
$\B(\chi)$ is irreducible. 
Thus, by Lemma \ref{GK}, $\B(\chi)$ is not distinguished.
It remains to consider the case when 
$\chi\in \{\nu,\nu^{-2}\}$. 

If $e>3$, then we remind that $[\St_2\cdot\nu^{3/2}\times 1]= \St_3\cdot \nu + \LL_3 $ 
as in the complex case (see p.~168 of \cite{P} or (3.5) in \cite{V}). 
First we twist $\St_2\cdot\nu^{3/2}\times 1$ by $\nu^{-2}$. 
Secondly, 
we take the contragredient $\St_2\cdot \nu^{-3/2}\times 1$ and twist by $\nu$.
These yield:
\begin{equation*}
[\B(\nu^{-2})]=\LL_3 \cdot\nu^{-2}+\St_3 \cdot\nu^{-1}, 
\quad [\B(\nu)]=\LL_3^{\vee}\cdot\nu + \St_3 
\end{equation*}
respectively. None of these subquotients are distinguished.  

If $e=2$, then $\St_2$ is cuspidal, thus $\B(\nu)$ is irreducible and the
result follows from Lemma \ref{TOL}.

We finally assume that $e=3$.
We first claim the principal series $\xi=\nu^{-1}\times 1 \times\nu$
has length $7$, with subquotients:
\begin{equation*}
\1_3^{},\ \nu_3^{},\ \nu_3^{-1},\ \Pi_3,\ \Pi_3\cdot\nu,\ \Pi_3\cdot \nu^{-1}
\text{ and the cuspidal representation } \St_3.
\end{equation*}
Indeed, $\xi$ contains $\1_3$ and $\Pi_3$ as well as their twists by $\nu$ and
$\nu^2$, and it also contains the cuspidal (thus nondegenerate) representation 
$\St_3$ with multiplicity $1$.
The Jacquet module $\rp_{(1,1,1)}(\xi)$ has length $6$, thus our claim 
follows. 
Now we have:
\begin{eqnarray*}
{}[\xi] &=& [\nu_2^{-1/2}\times \nu]+[\B(\nu)] \\
&=& (\1_3+\nu_3^{-1}+\Pi_3)+[\B(\nu)]
\end{eqnarray*}
by Proposition \ref{P61}. 
It follows that:
\begin{equation*}
[\B(\nu)]=\nu_3+\MM_3+\MM_3\cdot\nu+\St_3
\end{equation*}
in the Grothendieck group of finite length representations of $\G_3$.

By Lemma \ref{Sndist} and Theorem \ref{THEOCUSP}, 
none of these subquotients are distinguished. 
Since $\B(\nu^{-2})$ is equal to $\B(\nu)$, our lemma is proved. 
\end{proof}

Given $\chi\in\GH_1$, we now write:
\begin{equation*}
\A(\chi)=\LL_{n-1}^{\vee}\cdot\nu^{1/2}\times \chi.
\end{equation*} 
We study the distinction of $\A(\chi)$ in the following lemma. 

\begin{lemm}
\label{C6}
\label{C7}
Assume that $e$ does not divide $n-1$, 
and let $\chi\in\GH_1$.
Then $\A(\chi)$
is distingui\-shed if and only if $\chi=\nu^{-(n-3)/2}$. 
\end{lemm}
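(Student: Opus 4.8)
The strategy is to argue by induction on $n$, using the induction hypothesis for $\G_{n-1}$ and the combinatorial tools developed above. First I would fix $\chi\in\GH_1$ and write $\A(\chi)=\LL_{n-1}^{\vee}\cdot\nu^{1/2}\times\chi$; note that since $e$ does not divide $n-1$, the representation $\LL_{n-1}=\MM_{n-1}$ is a genuine multisegment representation, and by \eqref{CONTRAL} we have $\LL_{n-1}^{\vee}\cdot\nu^{1/2}=\Z([-\tfrac{n-2}{2},\tfrac{n-4}{2}]+[-\tfrac{n}{2}])\cdot\nu^{1/2}=\Z([-\tfrac{n-3}{2},\tfrac{n-3}{2}]+[-\tfrac{n-1}{2}])$. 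The easy direction is that $\A(\nu^{-(n-3)/2})$ is distinguished: one applies Lemma \ref{TOL} with $k=n-1$ and Condition (A), reducing to the fact that $\nu^{-(n-3)/2}\cdot\nu^{(n-1)/2}=\nu$ paired against $\LL_{n-1}^{\vee}$; more carefully, $\A(\nu^{-(n-3)/2})$ contains, as a subquotient, the representation $\V_n^\vee$ (or a twist thereof) whose distinction was established in Lemma \ref{Ldistchar}, and one needs to check the invariant linear form survives on the relevant irreducible constituent. Alternatively, and more in the spirit of Section \ref{TQuot}, since $\LL_{n-1}^{\vee}\cdot\nu^{1/2}$ is a subrepresentation of $\nu_{n-2}^{-1/2}\times\nu^{-(n-1)/2}$ (by Proposition \ref{L4} applied appropriately), one realizes $\A(\nu^{-(n-3)/2})$ inside a principal series and invokes the induction hypothesis together with Lemma \ref{TOL}.

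For the converse — that $\chi\neq\nu^{-(n-3)/2}$ forces $\A(\chi)$ non-distinguished — I would split into cases according to whether $\A(\chi)$ is irreducible. When $\A(\chi)$ is irreducible (which happens for all but finitely many $\chi$, by Proposition \ref{Linked}, since $\LL_{n-1}^{\vee}\cdot\nu^{1/2}=\Z([-\tfrac{n-3}{2},\tfrac{n-3}{2}]+[-\tfrac{n-1}{2}])$ and we ask that the segment $[\chi]$ not be linked to either segment), I would apply Proposition \ref{GK} to reduce to $\A(\chi)^{\vee}=\LL_{n-1}\cdot\nu^{-1/2}\times\chi^{-1}$ and then Lemma \ref{TOL} with $k=1$: Condition (A) there demands $\LL_{n-1}\cdot\nu^{-1/2}=\nu_{n-1}^{(1-1-(n-1))/2}=\nu_{n-1}^{-(n-1)/2}$, i.e. $\LL_{n-1}\cdot\nu^{-1/2}$ is a character, which is false since $e\nmid n-1$; Condition (B) demands $\chi^{-1}=\nu_1^{1/2}$ together with distinction of $\LL_{n-1}\cdot\nu^{-1/2}\cdot\nu^{-1/2}=\LL_{n-1}\cdot\nu^{-1}$, which by Lemma \ref{Ldistchar} is not distinguished; Condition (C) involves the derivatives — here I would use Corollary \ref{derLa} to compute $(\LL_{n-1}\cdot\nu^{-1/2})^{(1)}$ and check it has no trivial quotient after the appropriate twist unless $\chi=\nu^{-(n-3)/2}$. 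The finitely many reducible cases (where $[\chi]$ is linked to one of the two segments, i.e. $\chi\in\{\nu^{(n-1)/2},\nu^{-(n-1)/2},\nu^{-(n+1)/2},\nu^{(n-3)/2}\}$ roughly) must be handled individually: for each such $\chi$ one writes down the composition series of $\A(\chi)$ using Propositions \ref{P61} and \ref{length2seg} (this is where the segment $[-\tfrac{n-3}{2},\tfrac{n-3}{2}]$ of length $n-2$ meets a length-$1$ segment, plus the isolated point $[-\tfrac{n-1}{2}]$), identifies the irreducible subquotients as twists of $\LL_m$'s, Steinberg-type representations, and characters, and then rules out distinction of each via Lemma \ref{Ldistchar}, Lemma \ref{Sndist}, Theorem \ref{THEOCUSP} and Lemma \ref{BZapp} together with the derivative computations of Corollary \ref{derLa}.

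The main obstacle, as in the rest of Section \ref{LQuot}, is the reducible case: when $\A(\chi)$ fails to be irreducible I cannot simply quote Lemma \ref{TOL} for the irreducible quotient, because the quotient need not be visible as a product $\rho\times\chi'$ with controllable $\rho$; instead I must realize it inside a larger principal series $\Z(\Delta)\times\Z(\Delta')$ with $\ell(\Delta')\le 2$ and use the full strength of Propositions \ref{P61} and \ref{length2seg} to enumerate all subquotients, then eliminate the undesired ones with the Bernstein-Zelevinski filtration (Lemma \ref{BZapp}). The bookkeeping is delicate because the congruence class of $\chi$ modulo $e$ interacts with whether $e$ divides $n$ or $n-1$, so several sub-subcases ($e=2$, $e=3$, $e>3$; $e\mid n$ vs. $e\nmid n$) will need separate treatment, exactly paralleling the proof of Proposition \ref{BLM01} and Lemma \ref{LGL3}. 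I expect the final enumeration to show that in every reducible case the irreducible constituents are twists $\LL_m\cdot\chi''$ with $\chi''\neq 1$, twists of $\St_m$ with $m\ge 3$, or twists of $\MM_m$ with $\qc\mid m$, none of which is distinguished by the results already established, completing the induction.
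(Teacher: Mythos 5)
Your easy direction and your treatment of the irreducible case are essentially workable, modulo two slips: with $k=n-1$ the condition of Lemma \ref{TOL} that gives distinction of $\A(\nu^{-(n-3)/2})$ is (B), not (A) (condition (A) would force $\LL_{n-1}^{\vee}\cdot\nu^{1/2}$ to be the character $\nu_{n-1}^{-1/2}$, which fails since $e\nmid n-1$); and in the dual picture the relevant $k$ is $n-1$, not $1$. Moreover the dualization you perform is not a convenience but a necessity, for a reason you never flag: condition (C) of Lemma \ref{TOL} applied directly to $\A(\chi)$ with first factor $\rho=\LL_{n-1}^{\vee}\cdot\nu^{1/2}$ cannot be ruled out, because $\rho^{(1)}$ contains $\1_{n-2}$ as a subquotient (compute it from the exact sequence $0\to\LL_{n-1}^{\vee}\cdot\nu^{1/2}\to\1_{n-2}\times\nu^{-(n-1)/2}\to\nu_{n-1}^{-1/2}\to0$ of Lemma \ref{L1}; note that $\MM_{n-1}^{\vee}\neq\MM_{n-1}\cdot\nu^{-1}$ when $e\nmid n-1$, so this derivative is \emph{not} a twist of Corollary \ref{derLa}) --- this is exactly why $\rho$ appears in the list of Proposition \ref{reduction1}. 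Only after commuting to $\LL_{n-1}\cdot\nu^{-1/2}\times\chi^{-1}$, whose first factor has first derivative $\nu_{n-3}^{-1/2}\times\nu^{(n-1)/2}$ with no trivial quotient, do (A), (B), (C) all fail; and that commutation uses Proposition \ref{GK} or Lemma \ref{PERM}, hence irreducibility of $\A(\chi)$.

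The genuine gap is in the reducible cases, and it is not mere bookkeeping. First, Propositions \ref{P61} and \ref{length2seg} do not give the constituents of $\A(\chi)$, whose first factor is not of the form $\Z(\Delta)$; you are forced into the larger series $\chi\times\1_{n-2}\times\nu^{-(n-1)/2}$. Second, and fatally for your elimination strategy, your expectation that all constituents are non-distinguished is false: when $e$ divides $n$ --- in particular in the entire remaining case $e=2$, $\chi=\nu^{(n+1)/2}=\nu^{-(n-1)/2}$ --- the representation $\nu_{n-1}^{-1/2}\times\nu^{-(n-1)/2}$ is irreducible, distinguished (it is item (2) of Theorem \ref{MAINTHEOREM}), and occurs in that larger series; Lemma \ref{BZapp} cannot eliminate it, and non-distinction of $\A(\chi)$ simply does not follow from inspecting composition factors. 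The paper uses two mechanisms absent from your plan: for $e>2$ it treats all $\chi$ at once by realizing $\A(\chi)$, via Proposition \ref{C3}, as a quotient of $\nu_{n-3}^{1/2}\times\St_2\cdot\nu^{-(n-2)/2}\times\chi$ and applying Remark \ref{TOLext} to reduce to the $\GL_3$ statement of Lemma \ref{LGL3} (which must be proved first, as the base case $n=3$); for $e=2$ the remaining value $\chi=\nu^{(n+1)/2}$ is settled by a multiplicity-one argument: $\A$ is a quotient of $\V=\nu^{(n+1)/2}\times\1_{n-2}\times\nu^{(n+1)/2}$, whose space of $\H_n$-invariant forms is one-dimensional, and assuming $\A$ distinguished one identifies $\UIQ(\A)$ with the distinguished $\nu_{n-1}^{-1/2}\times\nu^{(n+1)/2}$ and reaches a contradiction after twisting by $\nu$ and applying Lemma \ref{TOL}. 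Without a substitute for these steps, your proof does not go through.
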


\begin{proof}
First, Lemma \ref{TOL} with $k=n-1$ shows that 
$\A(\nu^{-(n-3)/2})$ is distin\-guish\-ed.

For the converse, we may assume that $n\geq 4$ since we have treated the case
when $n=3$ in Lemma \ref{LGL3}.
Assume first that $e>2$. By Proposition \ref{C3}, $\A(\chi)$ is a quotient of:
\begin{equation*}
\nu_{n-3}^{1/2}\times\St_2^{}\cdot\nu^{-({n-2})/{2}}\times\chi,
\end{equation*}
which is distinguished by Remark \ref{TOLext} if and only if Condition (A) of 
Lemma \ref{TOL} is satis\-fied with $k=n-3$.
This is the case if and only 
if $\St_2\cdot\nu^{-1/2}\times\chi\nu^{(n-3)/2}$ is distinguished.
By Lemma \ref{LGL3}, this happens if and only if $\chi=\nu^{-(n-3)/2}$.

Assume now that $e=2$.
Note that the characters $\nu^{(n-1)/2}$ and $\nu^{(n+1)/2}$ are the only
ones that are obtained from $\nu^{-(n-3)/2}$ up to a translation of an integer 
power of $\nu$. 

Assume first that $\chi\notin \{\nu^{-(n-1)/2},\nu^{(n-1)/2}\}$. 
Then $\A(\chi)$ is irreducible by Proposition \ref{Lanzmann},
and Lemma \ref{TOL} implies that $\A(\chi)^{\vee}$ is not distinguished. 
By Proposition \ref{GK}, $\A(\chi)$ is not distinguished either. 

It remains to consider the case where $\chi=\nu^{-(n-1)/2}=\nu^{(n+1)/2}$. 
We write $\A=\A(\nu^{(n+1)/2)})$. 
By definition, $\LL_{n-1}^{\vee}\cdot \nu^{1/2}$ is the unique irreducible 
quotient of $\nu^{(n+1)/2}\times \1_{n-2}$. 
The representa\-tion $\A$ is thus a quotient of 
$\V=\nu^{(n+1)/2}\times \1_{n-2} \times \nu^{(n+1)/2}$. 
Now write the two exact sequences:
\begin{equation}
\label{A} 
0\to \nu_{n-1}^{-1/2}\to \nu^{(n+1)/2}\times \1_{n-2}\to 
\LL_{n-1}^{\vee}\cdot \nu^{1/2}\to 0
\end{equation} 
and:
\begin{equation}
\label{B} 
0\to \LL_{n-1}^{\vee}\cdot \nu^{1/2} \to \1_{n-2} 
\times \nu^{(n+1)/2}\to\nu_{n-1}^{-1/2} \to 0.
\end{equation} 
Computing $\eqref{A}\times\nu^{(n+1)/2}$, we get:
\begin{equation*}
0\to \W \to \V \ffr{\a} \A \to 0 
\end{equation*}
where $\W$ is the representation $\nu_{n-1}^{-1/2}\times\nu^{(n+1)/2}$, 
which is irreducible 
since $\nu^{(n+1)/2}\neq\nu^{(n-1)/2}$ and $\nu^{(n+1)/2}\neq \nu^{-(n+1)/2}$. 
Thus $\W$ is isomorphic to $\nu^{(n+1)/2}\times\nu_{n-1}^{-1/2}$. 
Computing $\nu^{(n+1)/2}\times\eqref{B}$ we get: 
\begin{equation*}
0\to\nu^{(n+1)/2}\times \LL_{n-1}^{\vee}\cdot \nu^{1/2} \to \V \ffr{\b} \W \to 0. 
\end{equation*}
Observe that $\W$ is distinguished by Lemma \ref{TOL}, 
thus $\V$ is also distinguished. 
Lemma \ref{TOL} (applied with $k=n-1$) also shows that the space of 
$\H_n$-invariant forms on $\V$ is one-dimensional. 

Now we claim $\A$ is not distinguished. 
Assume $\A$ is distinguished, 
and let $\T$ denote a nonzero invariant linear form on $\V$ which is trivial 
on $\K_1=\Ker(\a)$.
Since $\V$ has a one-dimensional space of invariant forms, 
$\T$ is proportional to any nonzero invariant linear form on $\V$ which is 
trivial on $\K_2=\Ker(\b)$.
Thus, $\T$ is zero on $\K_1+ \K_2$. 
Since $\T$ is nonzero, $\K_1+\K_2$ is different from the whole space $\V$. 
Since $\K_1$ is irreducible and isomorphic to $\W$, 
we get that $\K_1+\K_2=\K_2$, thus:
\begin{equation*}
\K_1 \subseteq \K_2 \simeq \nu^{(n+1)/2}\times \LL_{n-1}^{\vee}\cdot \nu^{1/2}.
\end{equation*}
It follows that:
\begin{equation*}
\W = \UIS(\nu^{(n+1)/2}\times \LL_{n-1}^{\vee}\cdot \nu^{1/2})
= \UIQ(\A).
\end{equation*}
Thus $\W\cdot \nu$ is the unique irreducible quotient of $\A\cdot \nu$.
Observe that $\W\cdot \nu= \nu_{n-1}^{1/2}\times \nu^{(n+3)/2}$ is isomorphic to
$\W^{\vee}$ and hence is distinguished by Proposition \ref{GK}. 
However, the representation 
$\A\cdot \nu= \LL_{n-1}^{\vee} \cdot \nu^{-1/2}\times \nu^{(n-1)/2}$
is not distinguished by Lemma \ref{TOL}, a contradiction. 
\end{proof}

\subsection{Distinction of 
$\UIQ(\LL_{n-1}^{\vee}\cdot \nu^{1/2} \times \nu^{-(n-3)/2})$ and 
$\UIQ(\nu^{(n-3)/2}\times\LL_{n-1}\cdot\nu^{-1/2})$}
\label{Q1}

By Lemma \ref{C6}, 
in order to finish Cases 1 and 2 of Section 12 for $e>1$,
it remains to discuss the distinction of the irreducible quotients:
\begin{equation*}
\UIQ(\LL_{n-1}^{\vee}\cdot\nu^{1/2}\times \nu^{-(n-3)/2})
\quad\text{and}\quad
\UIQ(\nu^{(n-3)/2}\times\LL_{n-1}\cdot\nu^{-1/2}).
\end{equation*}
Note that the latter is the contragredient of the former,
thus it is enough to study the distinction of the first one. 
Moreover, if $n=3$, then:
\begin{equation*}
\UIQ(\LL_{2}^{\vee}\cdot\nu^{1/2}\times\1) = \St_2\cdot \nu^{-1/2}\times 1
\end{equation*}
is distinguished by Lemmas \ref{LinkedL} and \ref{LGL3}. 
So we will assume that $n\geq 4$ in the remainder of this Section. 
In what follows, the computation  of distinguished quotients will fall into three cases:
\begin{enumerate}
\item
$e>2$ and $e$ does not divide $n-2$;
\item
$e>2$ and $e$ divides $n-2$ (this implies that $e$ does not divide $n$);
\item
$e=2$ (this implies that $e$ divides $n-2$ since $e$ does not divide $n-1$). 
\end{enumerate}
We start with the following lemma, which follows from Lemma \ref{L1}.

\begin{lemm}\label{appl0} 
Assume $e>1$. 
We have:
\begin{equation*}
[\nu_{n-1}^{-1/2}\times \nu^{-(n-3)/2}] = 
\left\{
\begin{array}{ll} 
\nu_{n-1}^{-1/2}\times \nu^{-(n-3)/2} & 
\text{if $e>2$ and $e$ does not divide $n-2$}, \\
\1_n+\Pi_n \cdot \nu^{-1} & \text{if $e>2$ and $e$ divides $n-2$}, \\
\nu_n^{-1}+ \Pi_n^{\vee} & \text{if $e=2$ and $e$ does not divide $n-2$}, \\
\1_n^{}+\nu_n^{-1}+\Pi_n^{\vee} & \text{if $e=2$ and $e$ divides $n-2$.}
\end{array}
\right.
\end{equation*}
\end{lemm}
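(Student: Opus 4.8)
The plan is to translate everything into the $\Z$-notation, reduce by a twist to a product of the form $\Z([a,b])\times\1$, and then read off the answer from Lemma \ref{L1} (equivalently from Proposition \ref{P61}, from which Lemma \ref{L1} is itself deduced by twisting).

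First I would note, using Property {\bf P1}, that $\nu_{n-1}^{-1/2}=\Z([-\tfrac{n-1}{2},\tfrac{n-3}{2}])$; hence, twisting by $\nu^{(n-3)/2}$ and using {\bf P4}, the representation $\nu_{n-1}^{-1/2}\times\nu^{-(n-3)/2}$ is a twist of $\pi=\Z([-1,n-3])\times\1$, which has degree $n$ and is of the shape treated in Lemma \ref{L1} with $a=-1$, $b=n-3$, $\chi=\1=\nu^{0}$. The elementary remark that organises the argument is that $\chi=\nu^{a-1}=\nu^{-2}$ holds exactly when $e=2$, while $\chi=\nu^{b+1}=\nu^{n-2}$ holds exactly when $e$ divides $n-2$; these two alternatives split the problem into the four cases of the statement.

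Then I would run through the four cases, each time applying Lemma \ref{L1} (or Proposition \ref{P61}) to $\pi$ and twisting the constituents back by $\nu^{-(n-3)/2}$, identifying each $\Z(\cdot)$ by inspection with the help of {\bf P1}, {\bf P4}, the defining formula \eqref{DEFPIN} for $\Pi_n$, and the expression for $\Pi_n^{\vee}$ given by {\bf P5} (or recorded in Example \ref{poutargue2}). If $e>2$ and $e$ does not divide $n-2$, then $\chi\notin\{\nu^{a-1},\nu^{b+1}\}$, so $\pi$ is irreducible by Lemma \ref{L1}(1) and the first line follows. If $e>2$ and $e$ divides $n-2$ (hence $e$ does not divide $n$), Lemma \ref{L1}(2) gives the length-$2$ sequence with sub $\Z([-1,n-2])$ and quotient $\Z([-1,n-3]+[n-2])$; untwisting, $\Z([-1,n-2])\cdot\nu^{-(n-3)/2}=\Z([-\tfrac{n-1}{2},\tfrac{n-1}{2}])=\1_n$ and $\Z([-1,n-3]+[n-2])\cdot\nu^{-(n-3)/2}=\Z([-\tfrac{n-1}{2},\tfrac{n-3}{2}]+[\tfrac{n-1}{2}])=\Pi_n\cdot\nu^{-1}$. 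If $e=2$ and $e$ does not divide $n-2$ (so $n$ is odd and $e$ does not divide $n$), Lemma \ref{L1}(3) gives the length-$2$ sequence with sub $\Z([-1,n-3]+[-2])$ and quotient $\Z([-2,n-3])$; untwisting, these become $\Z([-\tfrac{n-1}{2},\tfrac{n-3}{2}]+[-\tfrac{n+1}{2}])=\Pi_n^{\vee}$ and $\Z([-\tfrac{n+1}{2},\tfrac{n-3}{2}])=\nu_n^{-1}$. Finally, if $e=2$ and $e$ divides $n-2$ (hence $e$ divides $n$), Proposition \ref{P61}(4) (compatible with Lemma \ref{L1}(4)) gives that $\pi$ has length $3$ with constituents $\Z([-2,n-3])$, $\Z([-1,n-2])$ and $\Z([-1,n-3]+[0])$; untwisting — and using here that $[-\tfrac{n-3}{2}]=[-\tfrac{n+1}{2}]$ since $e=2$ — these are $\nu_n^{-1}$, $\1_n$ and $\Pi_n^{\vee}$, which is the last line.

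The computation is essentially bookkeeping, so I do not expect a real obstacle; the points that need care are maintaining the dictionary between the characters $\nu_m^{k/2}$ and the segments $\Z([a,b])$, the fact that in the last case Lemma \ref{L1} only supplies the socle and cosocle so that Proposition \ref{P61}(4) (or {\bf P2}--{\bf P3}) is needed to pin down the middle constituent, and the observation that when $e=2$ and $n$ is even one has $\Pi_n^{\vee}=\Pi_n\cdot\nu^{-1}$, so that writing $\Pi_n^{\vee}$ in the statement is consistent with the notation used in the two preceding cases.
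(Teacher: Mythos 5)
Your proposal is correct and follows the same route as the paper, which simply deduces the lemma from Lemma \ref{L1} (itself a twist of Proposition \ref{P61}); your twist by $\nu^{(n-3)/2}$ to reach $\Z([-1,n-3])\times\1$, the case split according to $\chi=\nu^{a-1}$ (i.e.\ $e=2$) versus $\chi=\nu^{b+1}$ (i.e.\ $e\mid n-2$), and the identifications $\Z([-\frac{n-1}{2},\frac{n-1}{2}])=\1_n$, $\Z([-\frac{n+1}{2},\frac{n-3}{2}])=\nu_n^{-1}$, $\Z([-\frac{n-1}{2},\frac{n-3}{2}]+[\frac{n-1}{2}])=\Pi_n\cdot\nu^{-1}$ and $\Z([-\frac{n-1}{2},\frac{n-3}{2}]+[-\frac{n+1}{2}])=\Pi_n^{\vee}$ all check out, including the use of Proposition \ref{P61}(4) to name the middle constituent when $e=2$ divides $n-2$.
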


We now define two irreducible representations of $\G_n$. 

\begin{defi}
Assume $e>1$ and $n\> 4$.
Define:
\begin{eqnarray*}
\Phi_n &=& \Z\(\left[-\frac{n-3}{2}, 
\frac{n-3}{2}\right]+\left[-\frac{n-1}{2},-\frac{n-3}{2}\right]\), \\
\Psi_n &=& \Z\(\left[-\frac{n-3}{2}, \frac{n-3}{2}\right]+
\left[-\frac{n+1}{2},-\frac{n-1}{2}\right]\).
\end{eqnarray*}
\end{defi}

Observe that $\Phi_n$ is selfdual if $e$ divides $n-2$
and $\Psi_n$ is selfdual if $e$ divides $n$. 
We also recall that $\Pi_n^{\vee}=\Pi_n^{}\cdot \nu^{-1}$ if $e$ divides $n$.

\begin{lemm}
\label{appl1} 
Assume $e>1$ and $n\geq4$,
and suppose $e$ does not divide $n-1$.
The irreducible subquotients of:
\begin{equation}
\label{ind11}
\1_{n-2}\times \1_2 \cdot \nu^{-(n-2)/2} = 
\Z\(\left[-\frac{n-3}{2},\frac{n-3}{2}\right]\) \times 
\Z\(\left[-\frac{n-1}{2},-\frac{n-3}{2}\right]\)
\end{equation}
are:
\begin{enumerate}
\item 
the representations $\nu_{n-1}^{-1/2}\times \nu^{-(n-3)/2}$ and $\Phi_n$
if $e$ does not divide $n-2$, 
\item 
the representations $\1_n^{}$, $\Pi_n^{\vee}\cdot\nu$, $\Pi_n^{}\cdot \nu^{-1}$ 
and $\Phi_n^{}$ if $e$ divides $n-2$. 
\end{enumerate} 
Moreover, all subquotients appear with multiplicity 1 if $e>2$. 
If $e=2$, only $\1_n$ 
may appear with multiplicity more than 1. 
\end{lemm}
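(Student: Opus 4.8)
The plan is to determine the irreducible subquotients of the product \eqref{ind11} by exploiting the combinatorial machinery already set up, specifically Proposition \ref{length2seg}, which describes the subquotients of $\Z([a,b])\times\Z([0,1])$ after a twist. First I would twist \eqref{ind11} by $\nu^{(n-1)/2}$ to bring it into the normalized form $\Z([1,n-1])\times\Z([0,1])$, so that $a=1$, $b=n-1$ in the notation of Proposition \ref{length2seg}; then I would translate the hypothesis "$e$ does not divide $n-1$" into "$b\nequiv-1$" (since $b=n-1\equiv0$ would mean $e\mid n-1$, so in fact $b\nequiv0$; more carefully one sees $b\equiv-1 \Leftrightarrow e\mid n \Leftrightarrow a\equiv1$, because $a-1=0$), and similarly "$e$ does not divide $n-2$" becomes "$a\nequiv2$" and "$e\mid n-2$" becomes "$a\equiv2$". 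This dictionary is the bookkeeping heart of the argument and I would set it out explicitly at the start.

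With this dictionary in hand, I would feed the congruence conditions into Proposition \ref{length2seg}. When $e$ does not divide $n-2$ we have $a\nequiv2$; combined with $b\nequiv-1$ (equivalently $a\nequiv1$, $b\nequiv0$), only Cases 1 and, say, some residual case of Proposition \ref{length2seg} survive, and unwinding $\Z([1,n-1]+[0,1])$ and the remaining surviving subquotient back through the twist $\nu^{-(n-1)/2}$, together with the identification $\Z([-\tfrac{n-3}{2},\tfrac{n-3}{2}]+[-\tfrac{n-1}{2},-\tfrac{n-3}{2}]) = \Phi_n$ and the fact (from Lemma \ref{appl0}, or directly from Proposition \ref{P61}) that $\nu_{n-1}^{-1/2}\times\nu^{-(n-3)/2}$ is the one-dimensional-composition-factor packet, would yield list (1). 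When $e\mid n-2$ we have $a\equiv2$; here I would check that $b\nequiv-1$ still forbids $b\equiv0$ and $b\equiv-1$ simultaneously with $a\equiv1$, so that Cases 1, 4(? — the $\Z([a-2,b])$ case), and possibly one more of Proposition \ref{length2seg} contribute. Untwisting, $\Z([a-2,b])$ corresponds to $\Z([-\tfrac{n+1}{2},\tfrac{n-3}{2}])$, a segment of length $n$, hence a character, and one matches it with $\Pi_n^\vee\cdot\nu$ or $\Pi_n\cdot\nu^{-1}$ via Example \ref{poutargue2} and the relation $\Pi_n^\vee=\Pi_n\cdot\nu^{-1}$ recorded before the lemma; the remaining pieces $\1_n$ and $\Phi_n$ come out as in list (2). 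Throughout, the multiplicities are read off from the final clause of Proposition \ref{length2seg}: all listed multiplicities are $1$ when $e>2$, and when $e=2$ only the "segment" subquotients (here $\1_n$) can acquire multiplicity $>1$.

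The main obstacle I anticipate is the careful case analysis making sure the list of surviving subquotients is exactly right — i.e. that no subquotient from Proposition \ref{length2seg} that the congruences allow has been omitted, and that none that is forbidden has been kept. In particular, the subtle point is distinguishing which of the "length $n$ segment" representations $\Z([1,n]+[0])$-type and $\Z([-1,n-1])$-type survive and how they recombine after the twist; one must also confirm that when $e\mid n-2$ the two twists $\Pi_n\cdot\nu^{-1}$ and $\Pi_n^\vee\cdot\nu$ really are distinct irreducible constituents (they are, since $e\nmid n$ forces $\Pi_n$ not selfdual here) and that both genuinely occur, using Proposition \ref{P61} applied to the appropriate sub-segment to guarantee nonvanishing rather than mere "$\trianglerighteq$" membership. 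I would handle this by cross-checking against the Jacquet-module computation: compute $\rp_{(n-1,1)}$ or $\rp_{(1,\dots,1)}$ of \eqref{ind11} via the geometric lemma and confirm the total length and the exponent-multiset match the proposed list, exactly as is done inside the proof of Proposition \ref{length2seg} itself.
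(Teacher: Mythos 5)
Your overall route --- twist \eqref{ind11} by $\nu^{(n-1)/2}$ and read the constituents off Proposition \ref{length2seg} --- is exactly the paper's, but the execution has a genuine error that propagates through the whole case analysis. The first factor $\1_{n-2}=\Z([-\frac{n-3}{2},\frac{n-3}{2}])$ is a segment of length $n-2$, so the twist yields $\Z([1,n-2])\times\Z([0,1])$, i.e.\ $a=1$ and $b=n-2$ (total degree $(n-2)+2=n$), not $a=1$, $b=n-1$ as you state (which would have degree $n+1$). With the correct normalization the dictionary is: $a\equiv1$ always; $a\not\equiv2$ always, because $e>1$; $b\equiv0\Leftrightarrow e\mid n-2$; and $b\equiv-1\Leftrightarrow e\mid n-1$, which is excluded by hypothesis. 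Your dictionary (``$e\nmid n-2$ becomes $a\not\equiv2$'', ``$b\equiv-1\Leftrightarrow e\mid n\Leftrightarrow a\equiv1$'') mixes statements that only hold for the correct $b$ with ones that are simply false, so feeding it into Proposition \ref{length2seg} selects the wrong cases.

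This matters because identifying which cases survive is the entire content of the lemma. The correct reading is: Case 1 gives $\Phi_n$; Case 3 (always active, since $a\equiv1$) gives $\Z([-\frac{n-1}{2},\frac{n-3}{2}]+[-\frac{n-3}{2}])$, which is $\nu_{n-1}^{-1/2}\times\nu^{-(n-3)/2}$ if $e\nmid n-2$ and $\Pi_n\cdot\nu^{-1}$ if $e\mid n-2$; Cases 2 and 6 (active exactly when $b\equiv0$, i.e.\ $e\mid n-2$) give $\Pi_n^{\vee}\cdot\nu$ and $\1_n$; and Cases 4 and 5, whose untwists are the characters $\nu_n$ and $\nu_n^{-1}$, never occur --- this is precisely where the hypotheses $e\nmid n-1$ and $e>1$ enter. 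You instead attribute $\Pi_n\cdot\nu^{-1}$ and $\Pi_n^{\vee}\cdot\nu$ to the segment case $\Z([a-2,b])$; that is impossible, since $\Z([a-2,b])$ is a character while these twists of $\Pi_n$ correspond to two-segment multisegments (and by Property {\bf P3} the multisegment attached to an irreducible $\Z(\n)$ is unique), and in any event that case is ruled out by $a\not\equiv2$. The hedges ``some residual case'' and ``Case 4(?)'' leave the actual bookkeeping undone. Two smaller points: your claim that $e\mid n-2$ forces $e\nmid n$ fails for $e=2$; the distinctness of $\Pi_n\cdot\nu^{-1}$ and $\Pi_n^{\vee}\cdot\nu$ follows instead from uniqueness of multisegments, their classes differing for every $e>1$. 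Finally, with the correct dictionary the only case of Proposition \ref{length2seg} that can contribute multiplicity greater than $1$ when $e=2$ is Case 6, i.e.\ $\1_n$, which is exactly the multiplicity assertion of the lemma; under your normalization this conclusion would not come out correctly either.
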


\begin{proof}
We apply Proposition \ref{length2seg}. 
The irreducible subquotients $\Phi_n$ and: 
\begin{equation*}
\Z\(\left[-\frac{n-1}{2},\frac{n-3}{2}\right]+\left[-\frac{n-3}{2}\right]\) = 
\left\{
\begin{array}{ll} 
\nu_{n-1}^{-1/2}\times \nu^{-(n-3)/2} & 
\text{if $e$ does not divide $n-2$}, \\
\Pi_{n}\cdot\nu^{-1} & \text{if $e$ divides $n-2$},
\end{array}
\right.
\end{equation*}
always occur in \eqref{ind11}. 
The irreducible subquotients $\Z([-(n-3)/2,(n-1)/2]+[-(n-1)/2])$ and 
$\Z([-(n-1)/2,(n-1)/2])=\1_n$ occur if and only if $e$ divides $n-2$.
The irreducible subquotients $\Z([-(n-3)/2,(n+1)/2])=\nu_n^{}$ and 
$\Z([-(n+1)/2,(n-3)/2])=\nu_n^{-1}$ do not occur, since $e$ 
does not divide $n-1$ and $e>1$.
\end{proof}

Similarly, by applying Proposition \ref{length2seg}, we have the following. 

\begin{lemm}
\label{appl3}
Assume $e>1$ and $n\geq 4$, 
and suppose $e$ does not divide $n-1$.
The irreducible subquotients of $\1_{n-2}\times \1_2\cdot \nu^{-n/2}$ are:
\begin{enumerate}
\item 
the representations $\nu_n$ and $\Psi_n$ if $e$ does not divide $n$, 
\item 
the representations $\nu_n^{}$, $\nu_n^{-1}$ and $\Psi_n^{}$ if $e$ divides $n$.
\end{enumerate}
Moreover, all subquotients appear with multiplicity 1 if $e>2$. 
If $e=2$, only $\nu_n$ may appear with multiplicity more than 1.
\end{lemm}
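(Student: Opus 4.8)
The plan is to reduce the statement to Proposition~\ref{length2seg} by a translation, exactly as in the proof of Lemma~\ref{appl1}. First I would rewrite the two tensor factors using the classification of Paragraph~\ref{repmodl}: by {\bf P1} one has $\1_{n-2}=\Z([-(n-3)/2,(n-3)/2])$, and by {\bf P1} together with {\bf P4} one has $\1_2\cdot\nu^{-n/2}=\Z([-(n+1)/2,-(n-1)/2])$. Twisting the product $\1_{n-2}\times\1_2\cdot\nu^{-n/2}$ by $\nu^{(n+1)/2}$ turns the second factor into $\Z([0,1])$ and, by {\bf P4} again, the first into $\Z([2,n-1])$, so that it becomes the representation $\pi(2,n-1)=\Z([2,n-1])\times\Z([0,1])$ of Proposition~\ref{length2seg}; note that $n\ge4$ ensures $2\le n-1$, so the proposition does apply. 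It then suffices to list the irreducible subquotients of $\pi(2,n-1)$ and translate them back by $\nu^{-(n+1)/2}$.

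Next I would specialize Proposition~\ref{length2seg} to $a=2$, $b=n-1$. Case~1 always contributes $\Z([2,n-1]+[0,1])$ with multiplicity one; since $a=2\equiv2$, Case~5 always contributes the segment $\Z([0,n-1])$; and Case~4 contributes the segment $\Z([2,n+1])$ precisely when $b=n-1\equiv-1$, that is, exactly when $e$ divides $n$. The standing hypothesis that $e$ does not divide $n-1$ rules out Cases~2 and~6 (both require $b\equiv0$), and $a=2$ rules out Case~3 (which would force $e$ to divide $1$); by the final clause of Proposition~\ref{length2seg} there are no further subquotients. Translating back by $\nu^{-(n+1)/2}$ and using {\bf P4}, the term $\Z([2,n-1]+[0,1])$ becomes $\Psi_n$ (this is the very definition of $\Psi_n$), while the two single-segment terms become, by {\bf P1}, the length-$n$ characters $\nu_n$ and $\nu_n^{-1}$: one of them (the retranslate of $\Z([0,n-1])$) is present for every $n\ge4$ with $e$ not dividing $n-1$, and the other (the retranslate of $\Z([2,n+1])$) only when $e$ divides $n$. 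This produces both lists in the statement.

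For the multiplicity assertion I would invoke that Cases~1 and~3 of Proposition~\ref{length2seg} always have multiplicity one, whereas Cases~4,~5 and~6 have multiplicity one as soon as $e>2$; hence, when $e>2$, every subquotient occurs with multiplicity one. When $e=2$, the hypothesis that $e$ does not divide $n-1$ forces $n$ to be even, so $e$ divides $n$, all three subquotients are present, and only the single-segment characters (which arise from the ``segment'' cases of Proposition~\ref{length2seg}) can a priori occur with multiplicity greater than one; the exact multiplicity is extracted from the $\rp_{(1,\dots,1)}$-Jacquet module computation in the proof of that proposition, where it is precisely the parity condition ``$s-r$ even'' that can create a higher multiplicity.

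The routine ingredients are the two translations by $\pm\nu^{(n+1)/2}$ and the identification, via {\bf P1}, of the single segments obtained after retranslation with the characters $\nu_n^{\pm1}$. The only step demanding genuine care is the $e=2$ multiplicity bookkeeping, where several pairs $(r,s)$ are compatible with the parity constraint; everything else is a direct transcription of the argument already carried out for Lemma~\ref{appl1}.
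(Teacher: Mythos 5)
Your route is exactly the paper's: its proof of this lemma is the single sentence that one applies Proposition \ref{length2seg} as for Lemma \ref{appl1}, and your reduction (twisting by $\nu^{(n+1)/2}$ to reach $\pi(2,n-1)=\Z([2,n-1])\times\Z([0,1])$), your case analysis (Case 1 always; Case 5 always since $2\equiv2$; Case 4 exactly when $e$ divides $n$; Cases 2 and 6 excluded because $e$ does not divide $n-1$; Case 3 excluded because $e>1$), and your multiplicity bookkeeping are all correct. For the $e>2$ multiplicity-one claim you should add the immediate remark that the three subquotients are pairwise non-isomorphic, which holds because $\nu^2\neq1$ when $e>2$.

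The one place where you stop short is precisely the step you declare routine: which single-segment retranslate is which character. Carrying it out with {\bf P1} and {\bf P4} gives $\Z([0,n-1])\cdot\nu^{-(n+1)/2}=\Z([-(n+1)/2,(n-3)/2])=\nu_n^{-1}$ and $\Z([2,n+1])\cdot\nu^{-(n+1)/2}=\Z([-(n-3)/2,(n+1)/2])=\nu_n$. Hence the character occurring under the standing hypotheses is $\nu_n^{-1}$, while $\nu_n$ occurs exactly when $e$ divides $n$; so your assertion that this ``produces both lists in the statement'' is not accurate for item (1), which as printed lists $\nu_n$. Indeed, when $e>2$ and $e$ does not divide $n$, the character $\nu_n$ cannot occur at all: by {\bf P3} its cuspidal support $[\nu^{-(n-3)/2}]+\dots+[\nu^{(n+1)/2}]$ would have to equal that of $\1_{n-2}\times\1_2\cdot\nu^{-n/2}$, namely $[\nu^{-(n+1)/2}]+\dots+[\nu^{(n-3)/2}]$, and these differ when $e$ divides neither $2$ nor $n$. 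So item (1) should read $\nu_n^{-1}$ in place of $\nu_n$ (a misprint that is harmless later in the paper, where the lemma is used only through $\Psi_n$ and in the case $e=2$, where $\nu_n=\nu_n^{-1}$). Your argument is correct once this identification is made explicit, but as written it claims to recover the statement verbatim, which the computation does not do; and the step genuinely demanding care was this one, not the $e=2$ multiplicity count, which is fine as soon as one notes $\nu_n=\nu_n^{-1}$ there.
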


\begin{lemm}\label{Ln1easy}
Assume $e$ does not divide $n-2$ nor $n-1$, thus $e>2$. 
Then:
\begin{equation*}
\UIQ(\LL_{n-1}^{\vee}\cdot \nu^{1/2}\times \nu^{-(n-3)/2}) 
= \1_{n-2}\times \St_2\cdot \nu^{-(n-2)/2}.
\end{equation*}
\end{lemm}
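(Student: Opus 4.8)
The plan is to exhibit both sides as constituents of the principal series $\Xi:=\1_{n-2}\times\nu^{-(n-1)/2}\times\nu^{-(n-3)/2}$ and to compare them by a short Jordan--H\"older argument. First I would record that $T:=\1_{n-2}\times\St_2\cdot\nu^{-(n-2)/2}$ is irreducible: since $e>2$ we may write $\St_2\cdot\nu^{-(n-2)/2}=\L([-\frac{n-1}{2},-\frac{n-3}{2}])$ and $\1_{n-2}=\Z([-\frac{n-3}{2},\frac{n-3}{2}])$, so by Proposition~\ref{blm} irreducibility amounts to the segments $[-\frac{n-3}{2},\frac{n-3}{2}]$ and $[-\frac{n-1}{2},-\frac{n-3}{2}]$ not being juxtaposed; they could be juxtaposed only if $e\mid n-1$ or $e\mid 1$, and neither holds.

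Next I would produce two exact sequences with middle term $\Xi$. On the one hand, since $e>2$, Lemma~\ref{L1}(2) applied to $\nu^{-(n-1)/2}\times\nu^{-(n-3)/2}$ shows this representation has length $2$ with subrepresentation the character $\nu_2^{-(n-2)/2}$ and quotient $\St_2\cdot\nu^{-(n-2)/2}$; inducing by $\1_{n-2}\times(-)$ gives
\begin{equation*}
0\to \1_{n-2}\times\nu_2^{-(n-2)/2}\to \Xi\to T\to 0.
\end{equation*}
On the other hand, since $e\nmid n-1$, Lemma~\ref{L1}(3) applied to $\Z([-\frac{n-3}{2},\frac{n-3}{2}])\times\nu^{-(n-1)/2}$, together with the identification $\Z([-\frac{n-3}{2},\frac{n-3}{2}]+[-\frac{n-1}{2}])=\LL_{n-1}^{\vee}\cdot\nu^{1/2}$ coming from \eqref{CONTRAL} and {\bf P4}, yields
\begin{equation*}
0\to \LL_{n-1}^{\vee}\cdot\nu^{1/2}\to \1_{n-2}\times\nu^{-(n-1)/2}\to \nu_{n-1}^{-1/2}\to 0 ,
\end{equation*}
and inducing by $(-)\times\nu^{-(n-3)/2}$ gives
\begin{equation*}
0\to \LL_{n-1}^{\vee}\cdot\nu^{1/2}\times\nu^{-(n-3)/2}\to \Xi\to R\to 0, \qquad R:=\nu_{n-1}^{-1/2}\times\nu^{-(n-3)/2}.
\end{equation*}
Here $R$ is irreducible by Lemma~\ref{L1}(1), because $\nu^{-(n-3)/2}\notin\{\nu^{-(n+1)/2},\nu^{(n-1)/2}\}$ (this uses $e\nmid 2$ and $e\nmid n-2$).

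The heart of the argument is then purely formal. From the two sequences, $[\Xi]=[\LL_{n-1}^{\vee}\cdot\nu^{1/2}\times\nu^{-(n-3)/2}]+[R]$ in the Grothendieck group, with $R$ irreducible. One checks $T\neq R$: both being irreducible, it is enough to compare the dimensions of their Jacquet modules along the Borel, which by the Geometric Lemma equal $\binom{n}{2}$ for $T$ and $n$ for $R$ (using that $\1_{n-2}$, $\St_2$ and $\nu_{n-1}^{-1/2}$ all have one--dimensional Jacquet modules with respect to the Borel), and $\binom{n}{2}>n$ since $n\geq 4$. Now let $N$ be the kernel of $\Xi\twoheadrightarrow T$. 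If $N$ contained $\LL_{n-1}^{\vee}\cdot\nu^{1/2}\times\nu^{-(n-3)/2}$, then $R=\Xi/(\LL_{n-1}^{\vee}\cdot\nu^{1/2}\times\nu^{-(n-3)/2})$ would surject onto $\Xi/N=T$, impossible as $R$ is irreducible and $R\neq T$. Hence $\LL_{n-1}^{\vee}\cdot\nu^{1/2}\times\nu^{-(n-3)/2}$ maps onto the nonzero subrepresentation $(\LL_{n-1}^{\vee}\cdot\nu^{1/2}\times\nu^{-(n-3)/2}+N)/N$ of the irreducible $T$, which therefore equals $T$; thus $T$ is a quotient of $\LL_{n-1}^{\vee}\cdot\nu^{1/2}\times\nu^{-(n-3)/2}$. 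Since $e>1$, the latter has a unique irreducible quotient by Lemma~\ref{UIQ}, and $T$ is irreducible, so $\UIQ(\LL_{n-1}^{\vee}\cdot\nu^{1/2}\times\nu^{-(n-3)/2})=T=\1_{n-2}\times\St_2\cdot\nu^{-(n-2)/2}$.

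The step I expect to be the main obstacle is the last one: everything hinges on knowing that the ``excess'' constituent $R$ of $\Xi$ is both irreducible and \emph{distinct} from $T$, for it is exactly this that upgrades ``$T$ is a quotient of $\Xi$'' to ``$T$ is a quotient of $\LL_{n-1}^{\vee}\cdot\nu^{1/2}\times\nu^{-(n-3)/2}$''. Identifying $R$ and checking $R\neq T$ (the dimension count above, or a comparison of the multisegments attached by {\bf P3}) is the only place genuine computation enters.
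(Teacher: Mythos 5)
Your argument is correct, but it follows a genuinely different route from the paper's. The paper first uses $e\nmid n-2$ to commute factors: $\1_{n-2}\times\nu^{-(n-3)/2}$ is irreducible, hence isomorphic to $\nu^{-(n-3)/2}\times\1_{n-2}$, so $\LL_{n-1}^{\vee}\cdot\nu^{1/2}\times\nu^{-(n-3)/2}$ is a quotient of $\nu^{-(n-1)/2}\times\nu^{-(n-3)/2}\times\1_{n-2}$; this last representation has a unique irreducible quotient, and since $\St_2\cdot\nu^{-(n-2)/2}\times\1_{n-2}$ is an irreducible (Proposition \ref{blm}) quotient of it, it must be that unique quotient, and the lemma follows at once. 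You instead stay inside $\Xi=\1_{n-2}\times\nu^{-(n-1)/2}\times\nu^{-(n-3)/2}$, produce the two exact sequences via Lemma \ref{L1} (applied after an unramified twist, since it is stated for integral segments -- harmless, as the paper does the same elsewhere), and run a kernel/Jordan--H\"older argument together with the verification $T\neq R$ by comparing Jacquet-module lengths; only then do you invoke Lemma \ref{UIQ} for the product $\LL_{n-1}^{\vee}\cdot\nu^{1/2}\times\nu^{-(n-3)/2}$ itself. What your version buys is that you only ever use the unique-irreducible-quotient statement in the form actually stated in the paper (for $\rho\times\chi$ with $\rho\in\GH_{n-1}$), whereas the paper's appeal to ``Proposition \ref{UIQ}'' for the triple product $\nu^{-(n-1)/2}\times\nu^{-(n-3)/2}\times\1_{n-2}$ really rests on the more general result of M\'\i nguez--S\'echerre behind it; the cost is the extra bookkeeping and the check $T\neq R$, which uses $n\geq4$ (this is fine: $n\geq4$ is the standing assumption of this part of the paper, the case $n=3$ having been settled separately just before).
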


\begin{proof}
Since $e$ does not divide $n-2$, the product 
$\1_{n-2} \times\nu^{-(n-3)/2}$ is irreducible, 
thus it is isomorphic to $\nu^{-(n-3)/2}\times \1_{n-2}$. 
Moreover, the representation
$\nu^{-(n-1)/2}\times \nu^{-(n-3)/2}\times \1_{n-2}$ has a 
unique irre\-ducible quotient by Proposition \ref{UIQ}. 
It follows that this unique irre\-ducible quotient is
$\St_2\cdot \nu^{-(n-2)/2}\times\1_{n-2}$, 
which is irreducible by Proposition \ref{blm}. 
\end{proof}

\begin{lemm}
\label{Ln1}
Assume $e>1$ and $n\geq 4$,
and suppose $e$ does not divide $n-1$.
If the re\-pre\-sentation $\UIQ(\LL_{n-1}^{\vee}\cdot \nu^{1/2}\times \nu^{-(n-3)/2})$ 
is distinguished, then it is either $\1_n$ or 
$\1_{n-2}\times \St_2\cdot \nu^{-(n-2)/2}$. 
\end{lemm}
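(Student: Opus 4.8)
The plan is to realise $Q:=\UIQ(\LL_{n-1}^{\vee}\cdot\nu^{1/2}\times\nu^{-(n-3)/2})$ as a quotient of a single principal series, list the composition factors of that principal series, and then eliminate all of them except the two allowed ones. Recall from Example \ref{poutargue2} (and properties {\bf P4}, {\bf P5}) that $\LL_{n-1}^{\vee}\cdot\nu^{1/2}=\Z([-(n-3)/2,(n-3)/2]+[-(n-1)/2])$. Twisting $\1_{n-2}\times\nu^{-(n-1)/2}$ by $\nu^{(n-1)/2}$ turns it into $\Z([1,n-2])\times\1_1$, so by Proposition \ref{P61}(2) (legitimate since $e\nmid n-1$) it has length $2$ with unique irreducible subrepresentation $\LL_{n-1}^{\vee}\cdot\nu^{1/2}$ and quotient $\nu_{n-1}^{-1/2}$; dually $\LL_{n-1}^{\vee}\cdot\nu^{1/2}$ is the unique irreducible quotient of $\nu^{-(n-1)/2}\times\1_{n-2}$. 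Hence $Q$ is a quotient of $\pi:=\nu^{-(n-1)/2}\times\1_{n-2}\times\nu^{-(n-3)/2}$, and, since $e>1$, it is the unique irreducible quotient of $\LL_{n-1}^{\vee}\cdot\nu^{1/2}\times\nu^{-(n-3)/2}$ by Lemma \ref{UIQ}. If $e\nmid n-2$ then $e>2$ and the statement is already Lemma \ref{Ln1easy}; so from now on assume $e\mid n-2$ (note that $e=2$ is automatically in this case, since $e\nmid n-1$ then forces $n$ even).

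Reordering the inducing factors, $[\pi]=[\nu^{-(n-1)/2}\times\nu^{-(n-3)/2}\times\1_{n-2}]$ in the Grothendieck group, and $\nu^{-(n-1)/2}\times\nu^{-(n-3)/2}$ has length $2$ with factors $\1_2\cdot\nu^{-(n-2)/2}=\Z([-(n-1)/2,-(n-3)/2])$ and a second factor $\Delta$, where $\Delta=\St_2\cdot\nu^{-(n-2)/2}$ if $e>2$ and $\Delta=\1_2\cdot\nu^{-n/2}$ if $e=2$. Thus $[\pi]=[\1_{n-2}\times\1_2\cdot\nu^{-(n-2)/2}]+[\1_{n-2}\times\Delta]$. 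The composition factors of $\1_{n-2}\times\1_2\cdot\nu^{-(n-2)/2}$ are listed in Lemma \ref{appl1}(2): $\1_n$, $\MM_n^{\vee}\cdot\nu$, $\MM_n\cdot\nu^{-1}$, $\Phi_n$. When $e>2$, $\1_{n-2}\times\Delta=\1_{n-2}\times\St_2\cdot\nu^{-(n-2)/2}$ is irreducible by Proposition \ref{blm} (its two segments are not juxtaposed, again because $e\nmid n-1$); when $e=2$, the factors of $\1_{n-2}\times\Delta=\1_{n-2}\times\1_2\cdot\nu^{-n/2}$ are those of Lemma \ref{appl3}(2): $\nu_n$, $\nu_n^{-1}$, $\Psi_n$. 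Finally the kernel of $\pi\twoheadrightarrow\LL_{n-1}^{\vee}\cdot\nu^{1/2}\times\nu^{-(n-3)/2}$ is $\nu_{n-1}^{-1/2}\times\nu^{-(n-3)/2}$, whose composition factors are given by Lemma \ref{appl0}. Subtracting these, one finds that $Q$, being a composition factor of $\LL_{n-1}^{\vee}\cdot\nu^{1/2}\times\nu^{-(n-3)/2}$, lies in $\{\MM_n^{\vee}\cdot\nu,\ \Phi_n,\ \1_{n-2}\times\St_2\cdot\nu^{-(n-2)/2}\}$ when $e>2$ and in $\{\1_n,\ \MM_n,\ \Phi_n,\ \nu_n,\ \Psi_n\}$ when $e=2$.

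It then remains to show that every member of these lists other than $\1_n$ and $\1_{n-2}\times\St_2\cdot\nu^{-(n-2)/2}$ fails to be distinguished. For $\MM_n^{\vee}\cdot\nu$ (the case $e>2$), Proposition \ref{GK} reduces this to $\MM_n\cdot\nu^{-1}$; by Lemma \ref{derPi} (using $e\nmid n$, which holds since $e>2$ and $e\mid n-2$) one has $(\MM_n\cdot\nu^{-1})^{(1)}=\1_{n-2}\times\nu^{(n-1)/2}$ and $(\MM_n\cdot\nu^{-1})^{(2)}=\nu_{n-2}^{-1}$, and neither admits $\nu_{n-1}^{-1/2}$, resp.\ $\1_{n-2}$, as a quotient — the first by a cuspidal support comparison using $e\nmid n-1$, the second since $\nu_{n-2}^{-1}\neq\1_{n-2}$ — so Lemma \ref{BZapp} applies. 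For $e=2$ (then $e\mid n$) the representation $\MM_n$ is not distinguished by Lemma \ref{Sndist}, and $\nu_n$ is a nontrivial character of $\G_n$, hence not distinguished because $\det\colon\G_{n-1}\to\mult\F$ is surjective. For the remaining self-dual representations $\Phi_n$ and $\Psi_n$ I would compute the first two derivatives directly: they are obtained from the derivatives of $\1_{n-2}\times\1_2\cdot\nu^{-(n-2)/2}$, resp.\ $\1_{n-2}\times\1_2\cdot\nu^{-n/2}$, which Leibniz's rule (Lemma \ref{dercal}(3)) expresses in terms of products of characters, by subtracting off the known derivatives of the other composition factors; one then checks that neither the first derivative has $\nu_{n-1}^{-1/2}$ as a quotient nor the second has $\1_{n-2}$, and invokes Lemma \ref{BZapp}. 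Since $Q$ lies in the lists above, the lemma follows.

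The hard part is this last step for $\Phi_n$ and $\Psi_n$. Unlike $\LL_n$ and $\MM_n$, these carry no previously recorded derivatives, and being $\Z$ of non-segment multisegments their derivatives are not given by the segment formulas of Lemma \ref{dercal}; extracting $\Phi_n^{(1)}$ and $\Phi_n^{(2)}$ from a reducible induced representation requires controlling not merely semisimplifications but actual subquotient positions, because Lemma \ref{BZapp} asks about \emph{quotients} of the derivatives rather than their classes in the Grothendieck group. Keeping track of which composition factor of each induced derivative is a quotient, together with the small values $e=2$ and $e=3$ (where extra multiplicities appear in Lemmas \ref{appl1} and \ref{appl3}, and where $\St_2$ becomes cuspidal), is where the real work lies.
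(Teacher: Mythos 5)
Your overall route is the paper's: realize the representation as a quotient of $\U=\nu^{-(n-1)/2}\times\1_{n-2}\times\nu^{-(n-3)/2}$, compute semisimplifications via Lemmas \ref{appl0}, \ref{appl1}, \ref{appl3} and Proposition \ref{blm}, and then eliminate the unwanted composition factors. But the decisive step --- that no twist of $\Phi_n$ or $\Psi_n$ is distinguished --- is exactly what you leave as ``the hard part \dots where the real work lies,'' so the proposal stops short of a proof at its crux. This elimination is the content of Lemma \ref{Z1Z2}, which is proved inside the paper's proof of Lemma \ref{Ln1}, and it does not require the delicate control of quotient positions you anticipate. For the first derivative one does not compute $\Phi_n^{(1)}$ or $\Psi_n^{(1)}$ at all: if $(\Phi_n\cdot\chi)^{(1)}$ had a quotient isomorphic to $\nu_{n-1}^{-1/2}$, Lemma \ref{Derivative} would force $\Phi_n\cdot\chi$ to be a character or to correspond to a multisegment consisting of a segment of length $n-1$ and one of length $1$, which is not the case (same for $\Psi_n\cdot\chi$). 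For the second derivatives a Grothendieck-group computation suffices because the answers come out irreducible: by Lemma \ref{appl3} and Lemma \ref{L1} (using $e\nmid n-1$) one gets $(\Psi_n\cdot\chi)^{(2)}=(\nu_{n-3}^{-1/2}\times\nu^{-(n+1)/2})\cdot\chi$ irreducible, and subtracting the known second derivatives of $\MM_n\cdot\nu^{-1}$ and $\MM_n^{\vee}\cdot\nu$ (Lemma \ref{derPi}, Remark \ref{derPicor}) from that of $\1_{n-2}\times\1_2\cdot\nu^{-(n-2)/2}$ gives $\Phi_n^{(2)}=\MM_{n-2}^{\vee}$; neither contains a trivial quotient, so Lemma \ref{BZapp} applies. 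So your worry about semisimplification versus actual quotients dissolves here, but as written the proposal defers precisely the nontrivial point.

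Two further slips in the bookkeeping. For $e=2$ the product $\nu^{-(n-1)/2}\times\nu^{-(n-3)/2}$ does not have length $2$: by Proposition \ref{P61}(4) it has length $3$, the third factor being $\St_2\cdot\nu^{-(n-2)/2}$, which is cuspidal by Lemma \ref{einstein}; hence $\1_{n-2}\times\St_2\cdot\nu^{-(n-2)/2}$ must be added to your $e=2$ candidate list (this is harmless for the conclusion, since it is one of the allowed quotients, but your class of $[\B]$ is wrong, and the paper's $e=2$ list does include this factor). Also $(\MM_n\cdot\nu^{-1})^{(1)}=\MM_n^{(1)}\cdot\nu^{-1}=\nu_{n-2}^{-1}\times\nu^{(n-1)/2}$, not $\1_{n-2}\times\nu^{(n-1)/2}$; your cuspidal-support argument still goes through, and alternatively $\MM_n^{\vee}\cdot\nu=\LL_n^{\vee}\cdot\nu$ (as $e\nmid n$ in that case) is non-distinguished directly by Lemma \ref{Ldistchar} and Proposition \ref{GK}. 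Your identification of this factor as $\MM_n^{\vee}\cdot\nu$ in the case $e>2$, $e\mid n-2$ is in fact the correct reading of Lemma \ref{appl1}.
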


\begin{proof}
If $e>2$ and does not divide $n-2$ we reduce to the case
of Lemma \ref{Ln1easy}. Therefore, we need
only consider either $e=2$ or $e$ divides $n-2.$
The representation $\B=\LL_{n-1}^{\vee}\cdot \nu^{1/2}\times \nu^{-(n-3)/2}$ 
is a quotient of $\U=\nu^{-(n-1)/2}\times\1_{n-2} \times \nu^{-(n-3)/2}$. 
Observe that we have $[\U]= [\P] + [\B]$ where $\P=\nu_{n-1}^{-1/2}\times \nu^{-(n-3)/2}.$ 
Now $\U$ has the same semisimplification as 
$\1_{n-2} \times \nu^{-(n-1)/2}\times \nu^{-(n-3)/2}$,
thus we have:
\begin{eqnarray*}
[\U]&=&
\left\{
\begin{array}{ll}
\1_{n-2}\times \St_2\cdot \nu^{-(n-2)/2} + [\1_{n-2}\times \1_2\cdot\nu^{-(n-2)/2}] 
& \text{if $e>2$},\\
\1_{n-2}\times \St_2\cdot \nu^{-(n-2)/2} + [\1_{n-2}\times \1_2 \cdot\nu^{-(n-2)/2}] + 
[\1_{n-2}\times \1_2\cdot \nu^{-n/2}] & \text{if $e=2$},
\end{array}
\right. 
\end{eqnarray*}
since $\1_{n-2}\times \St_2\cdot\nu^{-(n-2)/2}$ is irreducible by Proposition \ref{BLM01}.
Since $e$ does not divide $n-1,$ the irreducible subquotients occurring in 
$\1_{n-2}\times \1_2\cdot \nu^{-(n-2)/2}$ by Lemma \ref{appl1} are: 
\begin{equation*}
\1_n^{},
\
\nu^{}_n,
\
\MM^{}_n\cdot \nu^{-1},
\
\MM_n^{\vee}\cdot \nu,
\
\Phi_n^{}.
\end{equation*}
Moreover, all of them occur with multiplicity $1$ except $\1_n$, 
which may appear with larger multiplicity if $e=2$. 
Also, By Lemma \ref{appl3}, since $e$ does not divide $n-1$, 
the irreducible subquotients occurring in 
$\1_{n-2}\times \1_2\cdot \nu^{-n/2}$ are $\nu_n^{}$, $\nu_n^{-1}$ and $\Psi_n$.
Here $\Psi_n$ always occurs with multiplicity one and if $e=2$ 
the other factors might appear with larger multiplicity. 
We will now obtain $[\B]$ by comparing $[\U]$ obtained 
from the two different expressions for $[\U]$ above. 

Assume that $e>2$ and $e$ divides $n-2$. Then by Lemma \ref{appl0} we have 
$[\P]=\1_n + \MM_n\cdot \nu^{-1}$. 
Hence we have:
\begin{equation*} 
[\B]=\Phi_n^{} + \MM_n^{\vee} + \1_{n-2}^{} \times \St_2 \cdot \nu^{-(n-2)/2}.
\end{equation*} 

Next assume that $e=2$. 
Then $e$ necessarily divides $n-2$ (since $e$ does not divide $n-1$). 
By Lemma \ref{appl0}, we have 
$[\P]= \1_n^{} + \nu_n^{} + \MM_n^{\vee}$. 
Recall that if $e$ divides $n$ then $\MM_n^{\vee}= \MM_n \cdot \nu^{-1}$. 
Hence the only possible irreducible subquotients of $\B$ are:
\begin{equation*} 
\1_n, \ \nu_n, \ \1_{n-2} \times \St_2 \cdot \nu^{-(n-2)/2}, 
\ \MM_n^{\vee} \cdot \nu, \ \Phi_n, \ \Psi_n. 
\end{equation*}
The proof of Lemma \ref{Ln1} will be complete if we prove the following lemma.

\begin{lemm}\label{Z1Z2} 
Assume $e>1$ and $n\geq 4$,
and suppose that $e$ does not divide $n-1$.
For any cha\-racter $\chi\in \GH_1$, the twists
$\Phi_n\cdot \chi$ and $\Psi_n\cdot \chi$ are not distinguished. 
\end{lemm}

\begin{proof}
Observe that $\Phi_n\cdot \chi$ and $\Psi_n \cdot \chi$
have only first and second derivatives which are nonzero. 
Thus we will use Lemma \ref{BZapp}. 

Assume the first derivative of $\Phi_n\cdot\chi$ has a 
quotient isomorphic to $\nu_{n-1}^{-1/2}$.
By Lemma \ref{Derivative}, this would imply that $\Phi_n\cdot\chi$ is a 
character, or that the multisegment that corresponds to it is made of 
one segment of length $n-1$ and one of length $1$, which is not the case. 
The same argument holds for $\Psi_n\cdot\chi$.

From Lemma \ref{appl3}, we see that the 
second derivative of $\Psi_n\cdot\chi$ is 
$(\nu_{n-3}^{-1/2}\times\nu^{-(n+1)/2})\cdot\chi$,
and since $e$ does not divide $n-1$ it is irreducible 
for all $\chi\in \GH_1$ by Lemma \ref{L1}. 
Thus it does not have any character as a quotient.
Now we have:
\begin{eqnarray*}
[\1_{n-2}\times \1_2\cdot \nu^{-(n-2)/2}]^{(2)} &=&
[\nu_{n-3}^{-1/2}\times\nu^{-(n-1)/2}] \\
&=& \left\{
\begin{array}{ll}
 \nu_{n-2}^{-1}+\1_{n-2}^{}+\MM_{n-2}^{\vee} & \text{if $e$ divides $n-2$}.\\
 \nu_{n-2}^{-1}+\MM_{n-2}^{\vee} & \text{if $e$ does not divide $n-2$}.
\end{array}
\right.
\end{eqnarray*}
By Lemma \ref{derPi} and Corollary \ref{derPicor}, we have 
$(\MM_n\cdot \nu^{-1})^{(2)}=\nu_{n-2}$ and
$(\MM_n^{\vee}\cdot \nu)^{(2)}=\1_{n-2}$. 
Therefore, we conclude using Lemma
\ref{appl1} that the second derivative of $\Phi_n$ is $\MM_{n-2}^{\vee}$.
By Lemma \ref{BZapp}, $\Phi_n\cdot \chi$ and $\Psi_n\cdot \chi$ 
are not distinguished. 
\end{proof}

This ends the proof of Lemma \ref{Ln1}.
\end{proof}

\subsection{Distinction of $\UIQ(\LL_{n-1}\cdot \nu^{1/2}\times \nu^{-(n-3)/2})$}
\label{Q2}

We begin this paragraph 
with a simple lemma which we will need in the sequel. 
We remind that $n\>4$ and $e$ does not divide $n-1$.

\begin{lemm}
\label{cruc}
Let $n\geq 4$. Assume that $e>1$ and let 
$\l,\mu\in\GH_1-\{\nu^{-(n-3)/2}\}$.
Then the induced representation 
$\1_{n-2}\times \l\times \mu$ has a unique irreducible quotient. 
\end{lemm}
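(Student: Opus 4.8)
The plan is to induct in stages, writing $V=(\1_{n-2}\times\l)\times\mu$ and reducing everything to Lemma~\ref{UIQ}. First, $\rho:=\1_{n-2}\times\l$ is a representation of $\G_{n-1}$, so by Lemma~\ref{UIQ} it has a unique irreducible quotient $B$ and a unique irreducible subrepresentation. If $\rho$ is irreducible, then $V=\rho\times\mu$ with $\rho\in\GH_{n-1}$ and Lemma~\ref{UIQ} finishes the proof at once. Otherwise, since $\1_{n-2}=\Z([-\frac{n-3}{2},\frac{n-3}{2}])$, Propositions~\ref{Lanzmann} and~\ref{Linked} force $\l\in\{\nu^{(n-1)/2},\nu^{-(n-1)/2}\}$ --- the only characters making $[\l]$ linked to $[-\frac{n-3}{2},\frac{n-3}{2}]$ --- and, because $e$ does not divide $n-1$, Lemma~\ref{L1}(2)--(3) shows that $\rho$ has length $2$. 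Write $0\to A\to\rho\to B\to0$, with $A$ (the socle) irreducible; one of $A,B$ is the character $\nu_{n-1}^{\pm1/2}$ and the other the $\Z$ of a multisegment of length $n-1$, and $A\neq B$.

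Next I would dispose of all but finitely many $\mu$ by re-ordering the two rightmost factors. Whenever $\l\times\mu$ is irreducible, Lemma~\ref{PERM} gives $\l\times\mu\simeq\mu\times\l$, hence $V\simeq(\1_{n-2}\times\mu)\times\l$; if moreover $\1_{n-2}\times\mu$ is irreducible --- automatic for $\mu$ ramified by Proposition~\ref{Lanzmann}, and true for every unramified $\mu$ not congruent to $\nu^{\pm(n-1)/2}$ --- then Lemma~\ref{UIQ} again finishes the proof. There remain only the handful of exceptional unramified characters $\mu$ at which $\1_{n-2}\times\mu$ or $\l\times\mu$ is reducible; for these $V$ is a genuine principal series. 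Applying the exact functor $-\times\mu$ to $0\to A\to\rho\to B\to0$ yields $0\to A\times\mu\to V\to B\times\mu\to0$; as $A$ and $B$ are irreducible, Lemma~\ref{UIQ} shows $A\times\mu$ and $B\times\mu$ each have a unique irreducible quotient. A short diagram chase then shows that every irreducible quotient of $V$ equals either $Q:=\UIQ(B\times\mu)$ --- which is genuinely realised, via $V\twoheadrightarrow B\times\mu\twoheadrightarrow Q$ --- or $Q':=\UIQ(A\times\mu)$.

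The heart of the proof is to rule out $Q'$ as a quotient of $V$ when $Q'\neq Q$, equivalently to show the displayed sequence does not split. I would do this by computing $\Hom_{\G_n}(V,Q')$, which by second adjointness equals $\Hom_{\M_{(n-1,1)}}(\rho\otimes\mu,\rp_{\overline{(n-1,1)}}(Q'))$. Since $\rho\otimes\mu$ has simple cosocle $B\otimes\mu$, a nonzero element of this space forces $B\otimes\mu$ to be a subquotient of $\rp_{\overline{(n-1,1)}}(Q')$, hence --- as $Q'$ is a subquotient of $A\times\mu$ --- of $\rp_{\overline{(n-1,1)}}(A\times\mu)$. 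The Geometric Lemma together with Proposition~\ref{Rappel} shows that the only constituent of $\rp_{\overline{(n-1,1)}}(A\times\mu)$ whose $\G_1$-component is $\mu$ is $A\otimes\mu$ itself, so any nonzero map $\rho\otimes\mu\to\rp_{\overline{(n-1,1)}}(Q')$ would restrict to a nonzero map $\rho\to A$ --- impossible, since $A$ is the socle and $B\neq A$ the cosocle of $\rho$. Thus $\Hom_{\G_n}(V,Q')=0$ and $V$ has simple cosocle $Q$. The residual sub-cases --- those in which $A\times\mu$ or $B\times\mu$ is itself reducible, or in which $\mu$ appears as a $\G_1$-factor of $\rp_{\overline{(n-1,1)}}(A)$ --- must be treated separately through the multisegment classification, using Propositions~\ref{P61}, \ref{length2seg} and~\ref{blm} to identify $Q$ and $Q'$ explicitly and observe that they coincide. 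I expect this final Jacquet-module and multisegment bookkeeping to be the only real obstacle; it is marginally more delicate when $e=2$, where Proposition~\ref{length2seg} permits higher multiplicities and one leans on Proposition~\ref{BLM01} to shorten the list of possible constituents.
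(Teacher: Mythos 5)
Your route is genuinely different from the paper's, and its core mechanism does work, but as written it stops short at exactly the decisive point. The paper's proof is much more direct: for $\l\neq\mu$ it computes $\rp_{(n-2,1,1)}(\1_{n-2}\times\l\times\mu)$ by the geometric lemma, checks that $\1_{n-2}\otimes\l\otimes\mu$ occurs there with multiplicity one when $\l,\mu\neq\nu^{(n-3)/2}$, invokes \cite[Lemme 2.4]{MSc} to get a unique irreducible \emph{sub}representation, and then passes to contragredients to convert the hypothesis into $\l,\mu\neq\nu^{-(n-3)/2}$ and the conclusion into a unique irreducible quotient; the case $\l=\mu$ is quoted from \cite[Lemma 6.1]{MSu}. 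That argument needs neither the standing assumption that $e$ does not divide $n-1$ (which your length-$2$ analysis of $\1_{n-2}\times\l$ via Lemma \ref{L1} does use) nor second adjointness, and it yields the stronger statement that the cosocle is simple, since an irreducible socle of the contragredient rules out a quotient of the form $\UIQ\oplus\UIQ$ as well; your argument, which only constrains the isomorphism class of irreducible quotients, would need a supplementary multiplicity-one remark to recover that.

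The genuine gap is in your last step. The whole burden of the proof is the claim that $\mu$ does not occur among the $\G_1$-factors of $\bar{\rp}_{(n-2,1)}(A)$ -- this is precisely where the hypothesis $\mu\neq\nu^{-(n-3)/2}$ must be consumed -- yet you defer it as a ``residual sub-case'' and propose, if it fails, to ``observe that $Q$ and $Q'$ coincide'', which is neither proved nor plausible as a general fallback (and, had $Q\simeq Q'$, your own vanishing $\Hom_{\G_n}(V,Q')=0$ would contradict the surjection $V\twoheadrightarrow Q$). In fact the deferred check can be carried out and the residual sub-cases are empty: for $\l=\nu^{(n-1)/2}$ one has $A=\nu_{n-1}^{1/2}=\Z([-\frac{n-3}{2},\frac{n-1}{2}])$ and Proposition \ref{Rappel}(2) gives the single $\G_1$-factor $\nu^{-(n-3)/2}$; for $\l=\nu^{-(n-1)/2}$ one has $A=\LL_{n-1}^{\vee}\cdot\nu^{1/2}$, which is \emph{not} a segment, so Proposition \ref{Rappel} alone does not apply -- you must instead compare $\bar{\rp}_{(n-2,1)}$ of $\1_{n-2}\times\nu^{-(n-1)/2}$ with that of its quotient $B=\nu_{n-1}^{-1/2}$, and this subtraction again leaves only the $\G_1$-factor $\nu^{-(n-3)/2}$, excluded by hypothesis. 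Until this computation is done, the proof is incomplete; once it is done, your argument goes through (the phrase ``equivalently, the sequence does not split'' is inaccurate but harmless, since what you actually prove is the stronger vanishing of $\Hom_{\G_n}(V,Q')$), at the cost of extra inputs (second adjointness, $e\nmid n-1$) that the paper's two-line Jacquet-module argument avoids.
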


\begin{proof}
If $\l=\mu$, the result follows from \cite[Lemma 6.1]{MSu}. 
We thus assume that $\l\neq \mu$. 
By the geometric lemma, the semi-simplification of the Jacquet module 
$\rp_{(n-2,1,1)}(\1_{n-2}\times \l \times \mu)$ is the sum of the 
following representations:
\begin{enumerate}
\item
$\1_{n-2}\otimes \l \otimes \mu$,
\item
$\1_{n-2}\otimes \mu \otimes \l$,
\item
$[\nu_{n-3}^{-1/2}\times \l] \otimes \nu^{(n-3)/2} \otimes \mu$,
\item
$[\nu_{n-3}^{-1/2}\times \mu] \otimes \nu^{(n-3)/2}\otimes \l$,
\item
$[\nu_{n-3}^{-1/2}\times \l] \otimes \mu \otimes \nu^{(n-3)/2}$,
\item
$[\nu_{n-3}^{-1/2}\times \mu] \otimes \l \otimes \nu^{(n-3)/2}$,
\item
$[\nu_{n-4}^{-1}\times \l \times \mu] \otimes \nu^{(n-5)/2}\otimes \nu^{(n-3)/2}$,
\end{enumerate} 
in the Grothendieck group of finite length representations of the Levi 
subgroup $\G_{n-2}\times\G_1\times\G_1$.
If $\l,\mu\neq\nu^{(n-3)/2}$ then by \cite[Lemme 2.4]{MSc} the representation
$\1_{n-2}\times \l\times \mu$ has a unique irreduci\-ble subrepresentation.
The result follows by taking contragredients.
\end{proof}

\begin{lemm}
\label{Ln2easy}
Assume that $e>2$ and $e$ does not divide $n-2$. 
Then:
\begin{equation*}
\UIQ(\LL_{n-1}\cdot \nu^{1/2}\times \nu^{-(n-3)/2})=\LL_n.
\end{equation*}
\end{lemm}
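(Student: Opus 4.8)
The plan is to reduce the statement to the single assertion that $\LL_n$ occurs as a quotient of $\LL_{n-1}\cdot\nu^{1/2}\times\nu^{-(n-3)/2}$. Indeed, since $e>1$ and $\LL_{n-1}\cdot\nu^{1/2}\in\GH_{n-1}$, $\nu^{-(n-3)/2}\in\GH_1$, Lemma \ref{UIQ} guarantees that this induced representation has a unique irreducible quotient, so exhibiting one irreducible quotient equal to $\LL_n$ finishes the proof. Throughout I use that $e\nmid n-1$ (this was reduced to at the start of the section), so that $\LL_{n-1}=\MM_{n-1}$.

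The first ingredient is a pair of realizations. By {\bf P4} and Example \ref{poutargue2}, $\LL_{n-1}\cdot\nu^{1/2}=\Z([-\tfrac{n-5}{2},\tfrac{n-1}{2}]+[\tfrac{n+1}{2}])$, and Proposition \ref{P61} (applied after the twist making the $\G_1$-factor trivial) gives a length-two exact sequence
$$0\longrightarrow\nu_{n-1}^{3/2}\longrightarrow(\1_{n-2}\cdot\nu)\times\nu^{(n+1)/2}\longrightarrow\LL_{n-1}\cdot\nu^{1/2}\longrightarrow0,$$
where $\1_{n-2}\cdot\nu=\Z([-\tfrac{n-5}{2},\tfrac{n-1}{2}])$ and $\nu_{n-1}^{3/2}=\Z([-\tfrac{n-5}{2},\tfrac{n+1}{2}])$. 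On the other hand, Proposition \ref{Rappel}(3) shows that $(\1_{n-2}\cdot\nu)\times\nu^{-(n-3)/2}$ has $\nu_{n-1}^{1/2}$ as its unique irreducible quotient; hence $\V_n=\nu_{n-1}^{1/2}\times\nu^{(n+1)/2}$, and therefore $\LL_n$, is a quotient of $(\1_{n-2}\cdot\nu)\times\nu^{-(n-3)/2}\times\nu^{(n+1)/2}$.

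Inducing the displayed sequence by $\nu^{-(n-3)/2}$ on the right, and writing $B=(\1_{n-2}\cdot\nu)\times\nu^{(n+1)/2}\times\nu^{-(n-3)/2}$, one gets
$$0\longrightarrow\nu_{n-1}^{3/2}\times\nu^{-(n-3)/2}\longrightarrow B\longrightarrow\LL_{n-1}\cdot\nu^{1/2}\times\nu^{-(n-3)/2}\longrightarrow0.$$
I would then prove two points. First, $\LL_n$ is a quotient of $B$: if $e\nmid n$ the segments $[\tfrac{n+1}{2}]$ and $[-\tfrac{n-3}{2}]$ are unlinked (here one uses $e\nmid n-2$ and $e\nmid n$), so $B\simeq(\1_{n-2}\cdot\nu)\times\nu^{-(n-3)/2}\times\nu^{(n+1)/2}$ by Proposition \ref{Linked} and Lemma \ref{PERM}, and this surjects onto $\nu_{n-1}^{1/2}\times\nu^{(n+1)/2}=\V_n$, hence onto $\LL_n$; if $e\mid n$ then $\LL_n=\1_n$ and $\nu^{(n+1)/2}=\nu^{-(n-1)/2}$ as characters, and the same conclusion is reached by the analogous rearrangement together with Proposition \ref{L2}. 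Second, $\LL_n$ is not a subquotient of $\nu_{n-1}^{3/2}\times\nu^{-(n-3)/2}$: by Proposition \ref{P61} the latter has at most three irreducible subquotients, each of the form $\Z(\m)$ for an explicit multisegment $\m$, and a direct comparison of multisegments (using {\bf P3}, $e>2$ and $e\nmid n-1$) shows that none of them equals $\LL_n=\MM_n$ (respectively $\1_n$ when $e\mid n$). Granting these, any surjection $B\twoheadrightarrow\LL_n$ must kill the subrepresentation $\nu_{n-1}^{3/2}\times\nu^{-(n-3)/2}$ — its image there is a quotient of that subrepresentation and a submodule of the simple module $\LL_n$, hence zero — so it factors through $\LL_{n-1}\cdot\nu^{1/2}\times\nu^{-(n-3)/2}$. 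Thus $\LL_n$ is a quotient of $\LL_{n-1}\cdot\nu^{1/2}\times\nu^{-(n-3)/2}$, and by Lemma \ref{UIQ} it is its unique irreducible quotient.

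The main obstacle is the second point, together with the bookkeeping behind the first in the degenerate case $e\mid n$: one must pin down exactly which irreducible subquotients occur in the auxiliary principal series, and since for $e\mid n$ several of the products in sight are reducible, the clean ``permute and project'' argument available for $e\nmid n$ must be replaced by a more explicit identification of $\1_n$ as a quotient of $B$ (e.g.\ via $\nu^{(n+1)/2}=\nu^{-(n-1)/2}$ and Proposition \ref{L2}). Everything else is formal: exactness of parabolic induction, the length-two structure coming from Proposition \ref{P61}, and the uniqueness statement of Lemma \ref{UIQ}.
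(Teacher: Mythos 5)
Your argument for the case $e\nmid n$ is correct and is essentially the paper's own proof: both you and the paper realize $\LL_{n-1}\cdot\nu^{1/2}\times\nu^{-(n-3)/2}$ as a quotient of the same auxiliary representation $B=\nu_{n-2}\times\nu^{(n+1)/2}\times\nu^{-(n-3)/2}$ and commute the last two characters to exhibit $\V_n=\nu_{n-1}^{1/2}\times\nu^{(n+1)/2}$, hence $\LL_n$, as a quotient. You conclude differently: the paper applies Lemma \ref{cruc} to $B\cdot\nu^{-1}$ to get that $B$ has a unique irreducible quotient, which identifies the two $\UIQ$'s at once, whereas you show that any surjection $B\twoheadrightarrow\LL_n$ kills the kernel $\nu_{n-1}^{3/2}\times\nu^{-(n-3)/2}$, using Proposition \ref{P61} and {\bf P3} to check that $\LL_n$ does not occur there. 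For $e\nmid n$ both endings are sound; yours avoids Lemma \ref{cruc} at the cost of the extra bookkeeping on the kernel.

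The genuine gap is your branch with $e\mid n$. There the segments $\left[\frac{n+1}{2}\right]$ and $\left[-\frac{n-3}{2}\right]$ are linked (they differ by $\nu^{n-1}=\nu^{-1}$), so Proposition \ref{Linked} gives reducibility and no rearrangement of $B$ of the kind you invoke exists; moreover the conclusion you aim for is unreachable. Indeed, a surjection $\LL_{n-1}\cdot\nu^{1/2}\times\nu^{-(n-3)/2}\twoheadrightarrow\1_n$ would dualize to an embedding $\1_n\hookrightarrow\LL_{n-1}^{\vee}\cdot\nu^{-1/2}\times\nu^{(n-3)/2}$, and Frobenius reciprocity together with $\rp_{(n-1,1)}(\1_n)=\nu_{n-1}^{-1/2}\otimes\nu^{(n-1)/2}$ (Proposition \ref{Rappel}(1)) would force $\LL_{n-1}^{\vee}\cdot\nu^{-1/2}=\nu_{n-1}^{-1/2}$ and $\nu^{(n-3)/2}=\nu^{(n-1)/2}$, i.e. $e\mid n-1$ and $e\mid 1$, which is impossible; the same computation on the $(n-2,1,1)$ Jacquet module shows $\1_n$ is not a quotient of $B$ either. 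So when $e\mid n$ the asserted identity $\UIQ(\LL_{n-1}\cdot\nu^{1/2}\times\nu^{-(n-3)/2})=\LL_n=\1_n$ cannot hold, and "the same conclusion is reached by the analogous rearrangement" cannot be repaired. You were right to isolate this case — the paper's own proof tacitly assumes $e\nmid n$ at exactly the same spot, since irreducibility of $\nu^{(n+1)/2}\times\nu^{-(n-3)/2}$ requires $e\nmid n$ as well as $e\nmid n-2$ — but the correct outcome of the analysis is that the case $e\mid n$ must be excluded from (or treated by a different statement than) this lemma, not folded into it.
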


\begin{proof}
The representation $\C=\LL_{n-1}\cdot \nu^{1/2}\times\nu^{-(n-3)/2}$ 
is a quotient of:
\begin{equation*}
\W=\nu_{n-2}\times\nu^{(n+1)/2}\times\nu^{-(n-3)/2}.
\end{equation*}
If we apply Lemma \ref{cruc} with $\l=\nu^{(n-1)/2}$ and $\mu=\nu^{-(n-1)/2}$,
which is possible since $e>2$ and $e$ does not divide $n-2$,
we deduce that $\W\cdot\nu^{-1}$ (thus $\W$) has a unique irreducible 
quotient. 
Since $\nu^{(n+1)/2}\times\nu^{-(n-3)/2}$ is irreducible, it is isomorphic to 
$\nu^{-(n-3)/2}\times\nu^{(n+1)/2}$.
Thus $\nu_{n-1}^{1/2}\times\nu^{(n+1)/2}$ is a quotient 
of $\W$, and it has the unique irreducible quotient $\LL_n$. 
\end{proof}

\begin{lemm}\label{Ln2}
Assume that $e>1$ and $n\geq 4$. 
If $\UIQ(\LL_{n-1}\cdot \nu^{1/2}\times \nu^{-(n-3)/2})$ 
is distinguished, then it is $\LL_n$. 
\end{lemm}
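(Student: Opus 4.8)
The goal is Lemma~\ref{Ln2}: if $\UIQ(\LL_{n-1}\cdot\nu^{1/2}\times\nu^{-(n-3)/2})$ is distinguished, it must be $\LL_n$. By Lemma~\ref{Ln2easy} we may assume $e=2$ or $e\mid n-2$ (and still $e\nmid n-1$, $n\ge4$). The natural route is to enlarge the principal series: write $\C=\LL_{n-1}\cdot\nu^{1/2}\times\nu^{-(n-3)/2}$, which is a quotient of $\W=\nu_{n-2}\times\nu^{(n+1)/2}\times\nu^{-(n-3)/2}$, and study $[\W]$ in the Grothendieck group. Since $\LL_{n-1}$ is the unique irreducible quotient of $\nu_{n-2}\times\nu^{(n+1)/2}$, we have $[\nu_{n-2}\times\nu^{(n+1)/2}]=\LL_{n-1}\cdot\nu^{1/2}+[\text{stuff}]$ by Proposition~\ref{P61} (twisted), so that $[\W]=[\C]+[(\text{stuff})\times\nu^{-(n-3)/2}]$. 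One then identifies $[\W]$ a second way by first reducing $\nu^{(n+1)/2}\times\nu^{-(n-3)/2}$ — which, depending on whether $e$ divides $n-1$, is either irreducible or related to a Steinberg-type piece — and feeding this into Propositions~\ref{P61}, \ref{length2seg}, together with the subquotient lists in Lemmas~\ref{appl1}, \ref{appl3}, and the decompositions in Lemma~\ref{appl0}. Comparing the two expressions isolates $[\C]$, hence $[\LL_n]$ plus a short, explicit list of possible extra irreducible subquotients (twists of $\MM_n$, $\MM_n^\vee$, $\1_n$, $\nu_n$, $\Phi_n$, $\Psi_n$, and $\1_{n-2}\times\St_2\cdot\nu^{-(n-2)/2}$), exactly paralleling the bookkeeping already carried out in the proof of Lemma~\ref{Ln1}.

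**Ruling out the other candidates.** Having pinned down the short list of irreducible subquotients of $\W$ lying above the socle, I would show that $\UIQ(\C)$ cannot be any of them except $\LL_n$. For the twists of $\MM_n$ and $\MM_n^\vee$ we invoke Lemma~\ref{Sndist} (when $e\mid n$) or Lemma~\ref{Ldistchar} together with Corollary~\ref{derLa} and Lemma~\ref{BZapp} to see that no nontrivial twist is distinguished; for $\Phi_n\cdot\chi$ and $\Psi_n\cdot\chi$ we quote Lemma~\ref{Z1Z2} directly; for $\nu_n$ and nontrivial twists of $\1_n$ the representation is one-dimensional and a nontrivial character, hence non-distinguished by (the $\GL_n$ analogue of) Theorem~\ref{dimfor2}; and $\1_{n-2}\times\St_2\cdot\nu^{-(n-2)/2}$ is handled by checking its derivatives against Lemma~\ref{BZapp}, using Lemma~\ref{dercal} and Example~\ref{derst}. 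The upshot is that among all irreducible subquotients of $\W$, the only distinguished ones other than $\1_n$ itself are $\LL_n$ and possibly the $\1_{n-2}\times\St_2$ piece, so I must separately exclude the latter as a \emph{quotient} of $\C$ — not merely as a subquotient of $\W$.

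**Identifying the quotient.** It remains to argue that $\UIQ(\C)$ is genuinely $\LL_n$ and not, say, $\1_{n-2}\times\St_2\cdot\nu^{-(n-2)/2}$ or a twist of $\1_n$. For this I would exploit the position of $\C$ inside $\W$: since $\nu^{(n+1)/2}\times\nu^{-(n-3)/2}$ is irreducible (as $e\nmid n-2$ is false in our remaining cases — so I must be careful here and instead use that $\nu^{-(n-3)/2}$ commutes past in the relevant range, or pass through $\nu_{n-1}^{1/2}\times\nu^{(n+1)/2}$ after rearranging), $\nu_{n-1}^{1/2}\times\nu^{(n+1)/2}$ appears as a quotient of $\W$, and its unique irreducible quotient is $\LL_n$ by Example~\ref{poutargue}. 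A map $\W\twoheadrightarrow\C$ composed with $\C\twoheadrightarrow\UIQ(\C)$ must therefore be compatible with the maps identifying $\LL_n$; a multiplicity and adjunction argument (in the style of the proof of Lemma~\ref{C6}, comparing kernels of the two surjections out of $\W$) then forces $\UIQ(\C)=\LL_n$.

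**Main obstacle.** The hard part is the second computation of $[\W]$ when $e=2$, where Propositions~\ref{P61} and \ref{length2seg} only guarantee multiplicity $1$ for some subquotients and the characters $\1_n$, $\nu_n$ may occur with higher multiplicity; keeping the Grothendieck-group bookkeeping tight enough that the comparison still isolates $[\C]$ up to the controlled list requires care, and one likely needs the derivative computations (Corollary~\ref{derLa}, Lemma~\ref{derPi}) applied to $\W$ via Leibniz (Lemma~\ref{dercal}(3)) as an independent check. The secondary difficulty is the final "it is a quotient, not just a subquotient" step, which does not follow formally from semisimplification data and genuinely needs the submodule-structure argument; this is precisely where the exactness of derivatives plus Lemma~\ref{BZapp} does the real work of eliminating the distinguished-but-wrong candidate $\1_{n-2}\times\St_2\cdot\nu^{-(n-2)/2}$ from being the quotient of $\C$.
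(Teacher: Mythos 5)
Your overall strategy -- realize $\C=\LL_{n-1}\cdot\nu^{1/2}\times\nu^{-(n-3)/2}$ as a quotient of $\W=\nu_{n-2}\times\nu^{(n+1)/2}\times\nu^{-(n-3)/2}$, compute $[\W]$ in two ways to isolate $[\C]$, and discard the non-distinguished constituents -- is the paper's strategy (the paper does the bookkeeping more slickly: in all remaining cases $e$ divides $n-2$, so $\nu^{(n-1)/2}=\nu^{-(n-3)/2}$ and $[\W]=[\U\cdot\nu]$, letting one simply twist the list already computed in Lemma \ref{Ln1} by $\nu$ and subtract $[\P^\vee\cdot\nu]$). But there is a genuine gap in how you treat the Steinberg-type constituent, and it is exactly where the twist matters. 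The representation $\1_{n-2}\times\St_2\cdot\nu^{-(n-2)/2}$ is \emph{distinguished} -- it satisfies condition (A) of Lemma \ref{TOL} with $k=n-2$ and is item (3) of Theorem \ref{MAINTHEOREM} -- so your plan to dispose of it ``by checking its derivatives against Lemma \ref{BZapp}'' cannot succeed; no argument will show it is non-distinguished. Moreover it is not in fact a subquotient of $\C$ here: because the whole list of Lemma \ref{Ln1} gets twisted by $\nu$, what occurs in $[\C]$ (when $e>2$, $e\mid n-2$) is $\nu_{n-2}\times\St_2\cdot\nu^{-(n-4)/2}$, whose cuspidal support differs from that of the untwisted representation. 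So the claim you actually need -- and never argue -- is that this $\nu$-twist is \emph{not} distinguished; your proposal neither identifies this as the relevant constituent nor supplies a proof for it, while the candidate you do worry about is a non-issue created by the untracked twist. (Your treatment of $\Phi_n\cdot\chi$, $\Psi_n\cdot\chi$ via Lemma \ref{Z1Z2}, of twists of $\MM_n$, $\MM_n^\vee$ via Lemmas \ref{Sndist} and \ref{Ldistchar}, and of nontrivial character constituents is fine and matches the paper.)

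Your final ``identifying the quotient'' step is also off target, in two ways. First, the statement is conditional: once one knows that the unique distinguished irreducible subquotient of $\C$ is $\Pi_n$ (if $e\nmid n$) resp.\ $\1_n$ (if $e\mid n$), i.e.\ $\LL_n$, the lemma follows immediately, since $\UIQ(\C)$ is a subquotient of $\C$; there is no need -- and in these degenerate cases no truth -- to the unconditional assertion $\UIQ(\C)=\LL_n$. Second, the mechanism you propose for it relies on commuting $\nu^{(n+1)/2}$ past $\nu^{-(n-3)/2}$ to exhibit $\nu_{n-1}^{1/2}\times\nu^{(n+1)/2}$ as a quotient of $\W$; as you yourself notice parenthetically, that product is reducible precisely in the remaining cases ($e\mid n-2$ or $e=2$), which is why Lemma \ref{Ln2easy} needed $e\nmid n-2$, and the substitute ``multiplicity and adjunction argument in the style of Lemma \ref{C6}'' is not worked out. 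Dropping this step entirely and instead proving the non-distinction of the twisted constituents listed above is what closes the argument.
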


\begin{proof}
If $e>2$ and does not divide $n-2$ we reduce to the case 
of Lemma \ref{Ln2easy}.
We may assume that $e=2$ or $e$ divides $n-2.$ In this proof,  
$\W,\C$ are  as in  Lemma \ref{Ln2easy} and $\U,\P$ are as in Lemma \ref{Ln1}. 
Assume that $e$ divides $n-2.$ Then $\nu^{(n-1)/2}=\nu^{-(n-3)/2}$ and therefore
\begin{equation*}
\W\cdot\nu^{-1}= \1_{n-2} \times \nu^{-(n-3)/2} \times \nu^{-(n-1)/2}.
\end{equation*}
Therefore, we have 
$$[\W]=[\U\cdot \nu] \quad\text{~and~}\quad[\W]= [\P^{\vee}\cdot \nu] + [\C]$$
where $\P^{\vee}\cdot \nu = \nu_{n-1}^{3/2}\times \nu^{(n-1)/2}.$ 

If $e>2$ and $e$ divides $n-2$, 
then we twist the subquotients of $\U$ in the proof of Lemma \ref{Ln1} by 
$\nu$ to get: 
\begin{equation*} 
[\W]= \nu_{n-2} \times \St_2 \cdot \nu^{-(n-4)/2} + \Phi_n\cdot \nu + \Pi_n + 
\Pi_n\cdot \nu^2 + \nu_n 
\end{equation*}
and $[\P^{\vee}\cdot \nu]= \nu_n+ \Pi_n\cdot \nu^{2}$. 
It follows that:
\begin{equation*} 
[\C]= \nu_{n-2} \times \St_2 \cdot \nu^{-(n-4)/2} + \Phi_n\cdot \nu + \Pi_n.
\end{equation*}
Hence the only distinguished subquotient is $\Pi_n$ which is the definition of $\LL_n$ when $e$ does not divide $n.$

Now $e=2$, which necessarily divides $n-2$. 
Then $\P^{\vee}\cdot \nu$ is isomorphic to $\P$.
We twist the subquotients of $\U$ in the proof of Lemma \ref{Ln1} by $\nu$ to 
conclude that the only possible irreducible subquotients of $\W$ are:
\begin{equation*}
\1_n, \ \nu_n, \ \MM_n, \ \MM_n^{\vee}, \ \Phi_n \cdot \nu, \ \Psi_n \cdot \nu
\end{equation*}
with all representations except possibly $\1_n$ and $\nu_n$ appearing with 
multiplicity greater than $1$. 
Since $[\P]= \1_n + \nu_n + \MM_n^{\vee}$ it follows that the only possible 
irreducible subquotients of $\C$ are: 
\begin{equation*}
\1_n, \ \nu_n, \ \MM_n, \ \Phi_n \cdot \nu, \ \Psi_n \cdot \nu.
\end{equation*}
Hence the only distinguished subquotient is $\1_n$ which is the definition of 
$\LL_n$ when $e$ divides $n.$ 
This completes the proof of the Lemma. 
\end{proof}

\begin{rema}
\label{REMARQUEFINALE}
Given $\pi\in \GH_n$, we write $d(\pi)$ for the dimension of 
$\Hom_{\H_n}(\pi,\R)$. 
We notice that, unlike in the complex case \cite{AGS}, when $\R$ is of positive 
characteristic, it is not true in general that $d(\pi)\<1$ for all 
$\pi\in\GH_n$. 
For example, when $n=2$ and $e=1$, we have 
constructed $\pi\in\GH_n$ with $d(\pi)=2$
(see Theorem \ref{gl2mult}). 
More generally, when $e=1$ and $\ell$ divides $n$, we have 
shown (see Remark \ref{Pi_nmult2}) that $d(\Pi_n)=2$. 
However, when $e>1$ we expect that $d(\pi)\<1$ for all $\pi\in\GH_n$, 
as in the case $n=2$. 
It is interesting to note the analogy of the situation in the case of $e=1$ of 
Theorem \ref{gl2mult} with \cite[Corollary 3.3]{YBS}, where the author shows 
that $d(\pi)\<2$ where $\G=\GL_n(\mathbb{F}_q)$ and $\R$ is an 
algebraically closed field of characteristic prime to $2$ and $q$. 
\end{rema} 

\providecommand{\bysame}{\leavevmode ---\ }


\begin{thebibliography}{10}

\bibitem{AGS}
A.~Aizenbud, D.~Gourevitch and E.~Sayag,
\textit{$(\GL_{n+1},\GL_{n})$ is a Gelfand pair for any local field $F$}, 
{Compositio Math.}, \textbf{144} (2008), 
1504--1524.

\bibitem{BZ}
J.~Bernstein and A.~Zelevinski,
\textit{Induced representations of reductive {$p$}-adic groups. {I}}, 
{Ann. Sci. \'Ecole Norm. Sup. (4)} \textbf{10} (1977), n°4, 
441--472.

\bibitem{BLM}
I.~Badulescu, E.~Lapid and A.~M{\'\i}nguez,
\textit{Une condition suffisante pour l'irréductibilité d'une in\-dui\-te
  parabolique de $\GL_m(D)$}, 
Ann. Inst. Fourier (Grenoble), \textbf{63} (2013), n°6, 2239--2266. 

\bibitem{Datf}
J.-F. Dat,
\textit{Finitude pour les représentations lisses de groupes {$p$}-adiques}, 
{J. Inst. Math. Jussieu} \textbf{8} (2009), n°2, 
261--333.

\bibitem{Flicker}
Y.~Flicker, 
\textit{A Fourier summation formula for the symmetric space $\GL_{n}/\GL_{n-1}$}, 
Compositio Math. \textbf{88} (1993), 
39--117.

\bibitem{GGP1}
Wee Teck Gan, Benedict H. Gross and Dipendra Prasad,
\textit{Symplectic local root numbers, central critical L values, 
and restriction problems in the representation theory of classical groups}, 
Sur les conjectures de Gross et Prasad. I. Astérisque No. 346 (2012), 
1--109. 

\bibitem{GGP2}
\bysame , 
\textit{Restrictions of representations of classical groups: examples}, 
Sur les conjectures de Gross et Prasad. I. Astérisque No. 346 (2012), 
111--170. 

\bibitem{Jamesb}
G.~James,
\textit{Representations of general linear groups}, 
London Mathematical Society Lecture Note Series, vol.~94, 
Cambridge University Press, 1984.

\bibitem{M}
A.~M\'\i nguez, 
\textit{Correspondance de Howe explicite~: paires duales de type II}, 
{Ann. Sci. Éc. Norm. Supér. (4)} {\bf 41}, n°5 (2008), 
717--741.

\bibitem{Ml}
\bysame , 
\textit{Correspondance de Howe $\ell$-modulaire~: paires duales de type II}.\\
See \verb!http://webusers.imj-prg.fr/~alberto.minguez/Alberto_Minguez/Publications.html!

\bibitem{MSb}
A.~M{\'{\i}}nguez and V.~S{\'e}cherre,
 \textit{Représentations banales de $\GL(m,D)$}, 
{Compos. Math.} \textbf{149} (2013), 
679--704.

\bibitem{MSu}
\bysame , 
 \textit{Unramified $\ell$-modular representations of ${\rm GL}(n,F)$ and its inner 
  forms}, {IMRN} (2014), n°8, 
209--2118.

\bibitem{MSc}
\bysame , 
 \textit{Représentations lisses modulo $\ell$ de $\GL_m(\D)$}, 
{Duke Math. J.}, {\bf 163} (2014), n°4, 
795--887.

\bibitem{MSt}
\bysame ,   
\textit{Types modulo $\ell$ pour les formes intérieures de ${\rm GL}_{n}$ sur
  un corps local non archimédien}, à paraître dans Proc. Lond. Math. Soc.
Avec un appendice par V.~Sécherre et S.~Stevens. 

\bibitem{P}
D.~Prasad, 
\textit{On the decomposition of a representation of ${\rm GL}(3)$ restricted 
  to ${\rm GL}(2)$ over a $p$-adic field}, 
{Duke Math. J.} {\bf 69}, no. 1 (1993), 
167--177. 

\bibitem{SSb}
V.~S{\'e}cherre and S.~Stevens,
\textit{Block decomposition of the category of $\ell$-modular smooth 
representations of ${\rm GL}_n(\F)$ and its inner forms},
to appear in Ann. Scient. Éc. Norm. Sup. (2015).

\bibitem{YBS}
Y.~B.~Shalom, 
\textit{Weak Gelfand pair property and application
to $(\GL(n+1),\GL(n))$ over finite fields}.
Available at \verb!http://arxiv.org/pdf/1210.7440v1.pdf!

\bibitem{T}
M.~Tadi{\'c}, 
\textit{Induced representations of ${\rm GL}(n,A)$
  for {$p$}-adic division algebras $A$}, 
{J. Reine Angew. Math.} \textbf{405} (1990), 
48--77.

\bibitem{V}
C.~G.~Venketasubramanian, 
\textit{On representations of ${\rm GL}(n)$ distinguished by ${\rm GL}(n-1)$ 
over a $p$-adic field}, {Israel J. Math.}, \textbf{194}, (2013), 
1--44.

\bibitem{VigComp}
M.-F. Vign{\'e}ras,
\textit{Repr\'esentations modulaires de $GL(2,F)$ en caract\'eristique $l,$ 
  $F$ corps {$p$}-adique, $l\neq p$}, 
Compositio Mathematica, {\bf{72}}, n°2 (1989), 33--66.

\bibitem{Vigb}
M.-F. Vign{\'e}ras,
\textit{Repr\'esentations {$l$}-modulaires d'un groupe r\'eductif {$p$}-adique
  avec $l\neq p$}, 
Progress in Mathematics, vol. 137, Birkhäuser Boston Inc., Boston, MA, 1996.

\bibitem{Vigs}
\bysame , 
\textit{Induced $R$-representations of $p$-adic reductive
  groups}, 
{Selecta Math. (N.S.)} \textbf{4} (1998), n°4, 
549--623.
With an appendix by Alberto Arabia.

\bibitem{Vigl}
\bysame , \textit{Correspondance de Langlands semi-simple pour {$\GL_n(\F)$} 
  modulo {$\ell\neq p$}}, {Invent.  Math} \textbf{144} (2001), 
177--223. 

\bibitem{Vigw}
\bysame , \textit{On highest Whittaker models and integral structures}, 
 {Contributions to Automorphic forms, Geometry and Number theory: 
   Shalikafest 2002}, John Hopkins Univ. Press, 2004, 773--801.

\bibitem{W}
J.-L.~Waldspurger, 
\textit{Sur les valeurs de certaines
    fonctions $L$ automorphes en leur centre de symétrie}, 
    {Compositio Mathematica} {\bf 54}, no. 2 (1985), 173--242.  

\bibitem{Z}
A.~Zelevinski, 
\textit{Induced representations of reductive
  {${\mathfrak{p}}$}-adic groups. {II}. {O}n irreducible representations of
  {${\rm GL}(n)$}}, {Ann. Sci. \'Ecole Norm. Sup. (4)} \textbf{13}
  (1980), n°2, 165--210.

\end{thebibliography}
\end{document}